\numberwithin{equation}{section} %% Comment out for sequentially-numbered
\numberwithin{figure}{section} %% Comment out for sequentially-numbered
\def\ft#1{{\mathsf #1}}
\def\chow{{\mathscr X}}
\def\hcoY{{\mathscr Y}}
\def\Hes{{\mathscr H}}
\def\UU{{\mathscr U}}
\def\Zpq{{\mathscr Z}}
\def\hchow{{\mathaccent20\chow}}
\def\Prt{{\mathscr P}}
\def\Lrho{\text{\large{\mbox{$\rho\hskip-2pt$}}}}
\def\tLrho{\text{\large{\mbox{$\tilde\rho\hskip-2pt$}}}}
\def\Lpi{\text{\large{\mbox{$\pi\hskip-2pt$}}}}
\font\eu=eusm10 at 10pt
\def\eF{\text{\eu F}}
\def\eG{\text{\eu G}}
\def\eQ{\text{\eu W}}
\def\eS{\text{\eu U}}
\newcommand{\vcorr}[3][1]{%
  \begingroup
    \tabcolsep=.5\tabcolsep
    \sbox0{%
      \begin{tabular}[b]{@{}l}%
        #3%
         \tabularnewline
        %\hline
      \end{tabular}%
    }%
    \settoheight{\dimen0 }{%
      \rotatebox{#2}{%
        \copy0 %
        \kern-\tabcolsep
      }%
    }%
    \rule{0pt}{#1\dimen0}%
    \setlength{\wd0 }{1em}%
    \setlength{\ht0 }{1em}%
    \rotatebox{#2}{\usebox{0}}%
  \endgroup
}
\newenvironment{fcaption}{\begin{list}{}{
\setlength{\leftmargin}{35pt}
\setlength{\rightmargin}{35pt}
\setlength{\labelsep}{5pt}
}}{\end{list}}
\newenvironment{Screen}{\begingroup \endgroup}
\newenvironment{myitem}{\begin{list}{}{
\setlength{\leftmargin}{25pt}
\setlength{\labelsep}{5pt}
}}{\end{list}}
\newenvironment{myitem2}{\begin{list}{}{
\setlength{\leftmargin}{0.5cm}
\setlength{\itemindent}{-0.3cm}
\setlength{\itemsep}{0cm}
}}{\end{list}}
\newtheorem{thm}{Theorem}[subsection]
\newtheorem{prop}[thm]{Proposition}
\newtheorem{lem}[thm]{Lemma}
\theoremstyle{definition}
\newtheorem{defn}[thm]{Definition}
\newtheorem{expl}[thm]{Example}
\theoremstyle{remark}
\newtheorem*{rem}{Remark}
\def\towerI{
\begin{matrix}
\emptyset \subset &\chow_0  \; \subset 
& \hskip-0.5cm \Sec^1 \chow_0 \hskip-0.5cm  
& \; \subset \;
 \Sec^2 \chow_0  \;\subset\; \cdots \;\subset\; \Sec^{n-1} \chow_0  
 \;\subset  
& \hskip-0.5cm \Sec^{n} \chow_0.  \\ 
&\vcorr{90}{=} \;\;\;\;& \vcorr{90}{=} && \hskip-0.5cm \vcorr{90}{=} &\\
&v_2(\mP(V)) &  \chow && \mP(\ft{ S}^2 V)  
\end{matrix}
}
\def\towerII{
\begin{matrix}
 \mP({\ft S}^2 V^*) \supset  &(\chow_0)^* \supset 
 &(\Sec^1\chow_0)^*  \;\supset 
 (\Sec^2 \chow_0)^* \supset \cdots \supset (\Sec^{n-1} \chow_0)^*  
 \supset  \emptyset.  \; \\ 
 &  \vcorr{90}{$=$} \quad & \;\; \vcorr{90}{=}  \hfill  \\
& \Hes \quad  & \;\; \chow^* \hfill \\
\end{matrix}
}
\def\xyquiverI{
\begin{matrix}
\begin{xy}
(5,-12)*+{\circ}="c3",
(20,-12)*+{\circ}="c2",
(35,0)*+{\circ}="c1a",
(35,-24)*+{\circ}="c1b",
(0,-12)*{\mathcal{F}_3},
(20,-15)*{\mathcal{F}_2},
(38,3)*{\mathcal{F}_{1a}},
(38,-27)*{\mathcal{F}_{1b}},
(18,1)*{{\ft S}^2V^*},
(18,-25)*{\wedge^2V^*},
(30,-8)*{V^*},
(15,-10)*{V^*},
(30,-16)*{V^*},
\ar "c3";"c2"
\ar "c2";"c1a"
\ar "c2";"c1b"
\ar @/^1pc/@{->} "c3";"c1a"
\ar @/_1pc/@{->} "c3";"c1b"
\end{xy}
\end{matrix} }
\def\FigQuad{\resizebox{11cm}{!}{\includegraphics{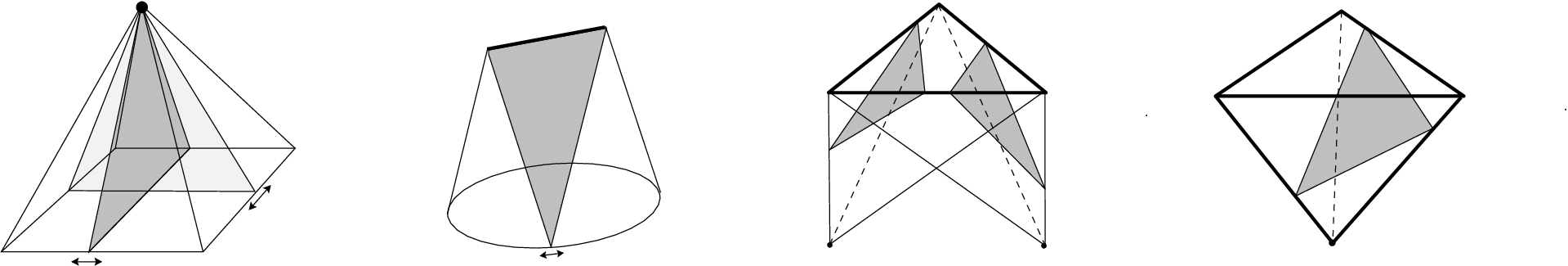}}}
\def\FigQuadDisplay{
\begin{xy}
(0,0)*{\FigQuad},
%%%%%%%%%%%%%%%
(-45,12)*{[V_1]},
(-16,12)*{[V_2]},
( 11,12)*{[V_3]},
( 40,12)*{[V_4]},
%%%%%%%%%%%%%%%
(-45,-12)*{\mP^1 \sqcup \mP^1},
(-16,-12)*{\mP^1},
( 11,-12)*{\mP^3*\mP^3},
( 40,-12)*{2\,\mP^3}
\end{xy}
}
\def\xyHYZG{
\begin{matrix}
\begin{xy}
(35,0)*+{\Zpq}="Z",  (60,0)*+{\mathrm{G}(3,V)}="G",
(35,-13)*+{\hcoY}="Y",
(35,-26)*+{\Hes,}="H",
(18,-13)*+{G_\hcoY^1\;\;\subset \;\;\;G_\hcoY \;\subset\;\;}, 
(15,-20)*+{\vcorr{-90}{$\simeq$} \;\qquad\;\; \vcorr{-90}{$\simeq$}},
(15,-26)*+{v_2(\mathbb{P}(V))\subset \ft{S}^2\mathbb{P}(V) \;\subset \;},
\ar^{\Lpi_\Zpq} "Z";"Y"
\ar^{\Lrho_\hcoY} "Y";"H"
\ar_{\Lrho_\Zpq\;\;}^{\;_{\mathbb{P}^8\text{-bundle}}\;\;\;\;\;} "Z";"G"
\end{xy}
\end{matrix}
}
\def\xyYYYYZYH{
\begin{matrix}
\begin{xy}
(40,0)*+{\Zpq}="Z",  (55,0)*+{\mathrm{G}(3,V)}="G",
(40,-12)*+{\hcoY}="Y",
(40,-24)*+{\Hes}="H",
(14,0)*+{\hcoY_2}="Yii",
(1,-12)*+{\hcoY_3}="Yiii", (5,-12)*+{}="yiii", (22,-12)*+{}="ty",
(27,-12)*+{\widetilde{\hcoY}}="tY",
(14,-24)*+{\overline{\hcoY}}="bY", (19,-24)*+{}="by", (35,-24)*+{}="h",
(34,-10)*+{\,_{\Lrho_{\widetilde{\hcoY}}}},
(30,-18)*+{\,_{\tilde{\varphi}_{DS}}},
\ar^{} "Z";"Y"
\ar^{\Lrho_\hcoY\;\;} "Y";"H"
\ar^{} "Z";"G"
\ar "Yii";"tY"
\ar "Yii";"Yiii"
\ar "Yiii";"bY"
\ar  "tY";"H"
\ar  "tY";"Y"
\ar "tY";"bY"
\ar @{-->}^{\,_\text{(anti-)flip}} "yiii";"ty"
\ar @{-->}_{\varphi_{DS}} "by";"h"
\end{xy}
\end{matrix}
}
\def\FigHilbCoYs{\resizebox{10cm}{!}{\includegraphics{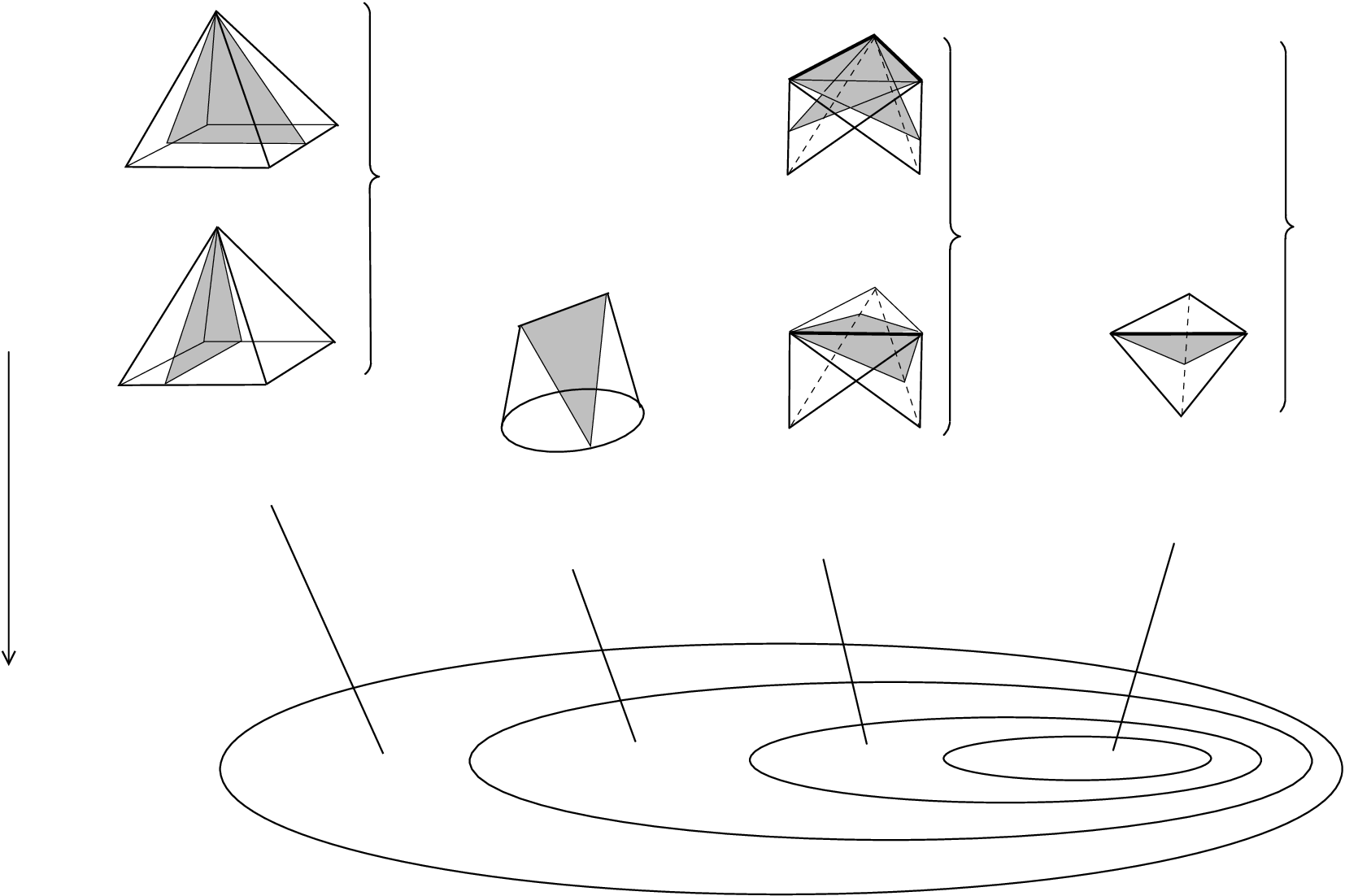}}}
\def\upIN{\vcorr{90}{$\in$}}
\def\xyFigCoYs{
\begin{xy}
(0,0)*{\FigHilbCoYs},
%%%%%%%%%%%%%%%%%%%%%%%%%%%%%%%
(-49,13)*{\widetilde\hcoY},
(-49,-20)*{\Hes},
(-18,-27)*{\Hes^4},
( 0,-26)*{\Hes^3},
( 17,-25)*{G_\hcoY^2},
(30,-24)*{G_\hcoY^1},
(-15,20)*{\tau\text{-conics}},
(-15,17)*{\,_{(\text{rk} \tau =3)}},
%%%%%%%%%%%%%%%%%%%%%%%%%%%%%%%
(-7,-3)*{\rho\text{-conic}},
(-7,-6)*{\,_{(\text{rk} \rho =3)}},
%%%%%%%%%%%%%%%%%%%%%%%%%%%%%%
( 13, 18)*{\tau\text{-conics}},
( 13, 15)*{\,_{(\text{rk} \tau =2)}},
( 13,-1)*{\rho\text{-conics}},
( 13,-4)*{\,_{(\text{rk} \rho =2)}},
%( 26.5,17)*{},
%( 26.5,15)*{l_1\cup l_2},
%%%%%%%%%%%%%%%%%%%%%%%%%%%%%%%
(38, 22)*{\tilde{\varphi}_{DS}^{-1}([V_4])},
(38,16)*{\text{\upIN}},
(38,-1)*{\rho\text{-conics}},
(38,-4)*{\,_{(\text{rk} \rho=1)}},
%%%%%%%%%%%%%%%%%%%%%%%%%%%%%%%
( 56, 18)*{\text{double lines}},
( 52, 14)*{\text{and}},
( 53, 10)*{\sigma\text{-planes}},
%%%%%%%%%%%%%%%%%%%%%%%%%%%%%%%
(0,0)*{}
\end{xy}
}
\def\FigwG25{\resizebox{7cm}{!}{\includegraphics{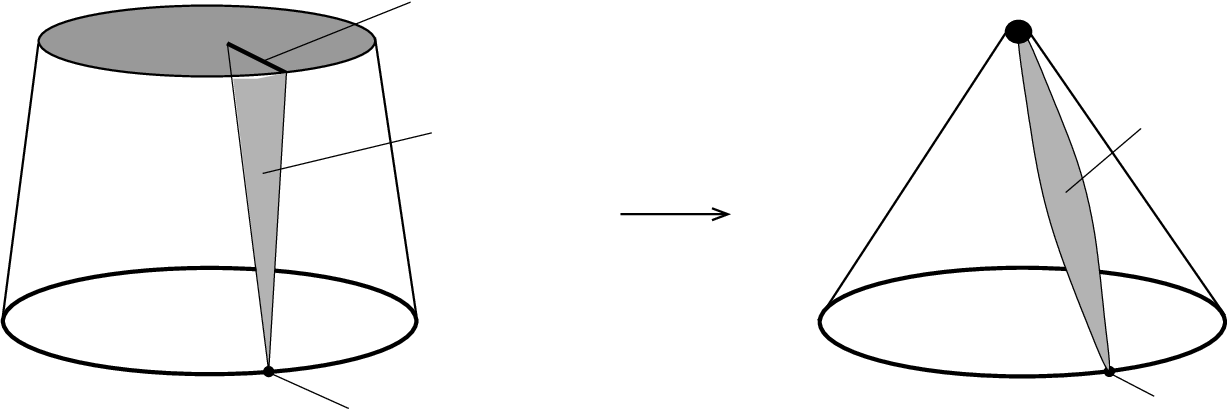}}}
\def\xyFigwG{
\begin{xy}
(0,0)*{\FigwG25},
%%%%%%%%%%%%%%%%%%
(-22,-18)*{\mP(\sO_{\rG(2,V_4)}\oplus\sU_{\rG(2,V_4)}^*(1))},
( 28,-18)*{\Lrho_{\widetilde\hcoY}^{-1}([V_4])\simeq \rm{w}\rG(2,5)},
%%%%%%%%%%%%%%%%%%
(-4,-7)*{\rG(2,V_4)},
( 43,-7)*{\rG(2,V_4)},
(-9,-11)*{\rho\text{-conic}},
( 36,-11)*{\rho\text{-conic}},
(-5,12)*{\sigma\text{-conics}},
(-3,5)*{\tau\text{-conics}},
( 37,5)*{\tau\text{-conics}},
%%%%%%%%%%%%%%%%%
(-15,15)*{E_\sigma=\mP(\sU_{\rG(2,V_4)}^*(1))},
( 30,14)*{\mP(V_4)\simeq \mP^3},
( 34,10)*{\sigma\text{-planes}},
(0,0)*{}
\end{xy}
}
\def\FigYsRed{\resizebox{10cm}{!}{\includegraphics{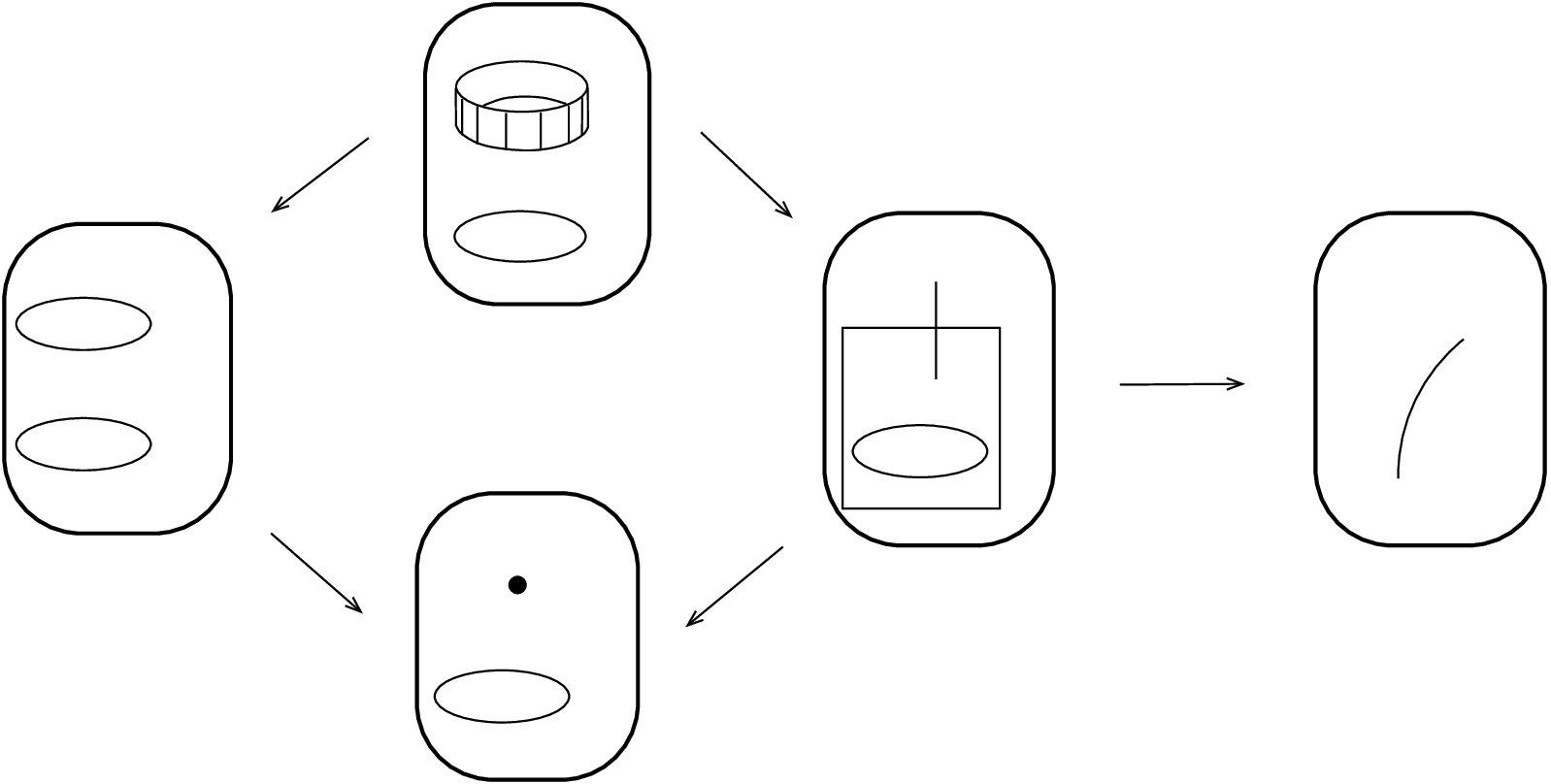}}}
\def\FigYsDisplay{
\begin{xy}
(0,0)*{\FigYsRed},
%%%%%%%%%%%%%%%%%%
(-54,5.5)*{\hcoY_3},
(-38.5,3.5)*{\Prt_\rho},
(-38.5,-5)*{\Prt_\sigma},
(-28,12)*{\,_{\Lrho_{\hcoY_2}}},
%%%%%%%%%%%%%%%%%%
{(-41.5,-12) \ar (-41.5,-20)^{\Lpi_{\hcoY_3}}_{\overset{\rG(3,6)\text{-}}{\text{bundle}}}},
(-41.5,-24)*{\mP(V)},
%%%%%%%%%%%%%%%%%
{(-31,0) \ar @{-->} (-2,0)^{\text{(anti-)flip}}},
%%%%%%%%%%%%%%%%%
(-25,24)*{\hcoY_2},
(-10, 16.5)*{F_\rho},
(-11,9)*{\Prt_\sigma},
(-2,11)*{\,_{\tLrho_{\hcoY_2}}},
%%%%%%%%%%%%%%%%%
(10,15)*{\widetilde\hcoY},
(7,1.7)*{F_{\widetilde\hcoY}},
(13,7.5)*{G_\rho},
(15,-5)*{\Prt_\sigma},
(26,-3)*{\,_{\Lrho_{\widetilde\hcoY}}},
%%%%%%%%%%%%%%%%%
(-16,-3.8)*{\overline{\hcoY}},
(-28,-8.7)*{\,_{\Lrho_{\hcoY_3}}},
(-16,-15)*{\rG(2,V)},
(-11.5,-20.5)*{\overline{\Prt}_\sigma},
%%%%%%%%%%%%%%%%%
(41.5,14)*{\hcoY},
(44.5,-3)*{G_\hcoY},
%%%%%%%%%%%%%%%%%%%
%(-26.5,36)*{\hcoY_1},
%(-10.5,34)*{F_\rho},
%(-10.5,25)*{\Gamma_\sigma},
%%%%%%%%%%%%%%%%%%
%(21.5,36)*{\hcoY_0},
%(13.5,34)*{\Gamma_\rho},
%(16.5,25)*{\Gamma_\sigma},
%%%%%%%%%%%%%%%%%%
\end{xy}
}
\def\xyQuiverII{
\begin{matrix}
\begin{xy}
(5,-12)*+{\circ}="c3",
(20,-12)*+{\circ}="c2",
(35,0)*+{\circ}="c1a",
(35,-24)*+{\circ}="c1b",
(0,-12)*{\mathcal{E}_3},
(20,-15)*{\mathcal{E}_2},
(38,3)*{\mathcal{E}_{1a}},
(38,-27)*{\mathcal{E}_{1b}},
(18,1)*{\wedge^2V},
(18,-25)*{{\ft S}^2V},
(30,-8)*{V},
(15,-9)*{V},
(30,-16)*{V},
\ar "c3";"c2"
\ar "c2";"c1a"
\ar "c2";"c1b"
\ar @/^1pc/@{->} "c3";"c1a"
\ar @/_1pc/@{->} "c3";"c1b"
\end{xy}
\end{matrix}
}
\begin{document}
\global\long\def\sA{\mathcal{A}}
 \global\long\def\sB{\mathcal{B}}
 \global\long\def\sC{\mathcal{C}}
 \global\long\def\sD{\mathcal{D}}
 \global\long\def\sE{\mathcal{E}}
 \global\long\def\sF{\eF}
 \global\long\def\sG{\eG}
 \global\long\def\sH{\mathcal{H}}
 \global\long\def\sI{\mathcal{I}}
 \global\long\def\sJ{\mathcal{J}}
 \global\long\def\sK{\mathcal{K}}
 \global\long\def\sL{\mathcal{L}}
 \global\long\def\sN{\mathcal{N}}
 \global\long\def\sM{\mathcal{M}}
 \global\long\def\sO{\mathcal{O}}
 \global\long\def\sP{\mathcal{P}}
 \global\long\def\sS{\mathcal{S}}
 \global\long\def\sR{\mathcal{R}}
 \global\long\def\sQ{\mathcal{Q}}
 \global\long\def\sT{\mathcal{T}}
 \global\long\def\sU{\mathcal{U}}
 \global\long\def\sV{\mathcal{V}}
 \global\long\def\sW{\mathcal{W}}
 \global\long\def\sX{\mathcal{X}}
 \global\long\def\sY{\mathcal{Y}}
 \global\long\def\sZ{\mathcal{Z}}
 \global\long\def\tA{{\widetilde{A}}}
 \global\long\def\mA{\mathbb{A}}
 \global\long\def\mC{\mathbb{C}}
 \global\long\def\mF{\mathbb{F}}
 \global\long\def\mG{\mathbb{G}}
 \global\long\def\G{{\bf G}}
 \global\long\def\mN{\mathbb{N}}
 \global\long\def\mP{\mathbb{P}}
 \global\long\def\mQ{\mathbb{Q}}
 \global\long\def\mZ{\mathbb{Z}}
 \global\long\def\mW{\mathbb{W}}
 \global\long\def\Ima{\mathrm{Im}\,}
 \global\long\def\Ker{\mathrm{Ker}\,}
 \global\long\def\Alb{\mathrm{Alb}\,}
 \global\long\def\ap{\mathrm{ap}}
 \global\long\def\Bs{\mathrm{Bs}\,}
 \global\long\def\Chow{\mathrm{Chow}}
 \global\long\def\CP{\mathrm{CP}}
 \global\long\def\Div{\mathrm{Div}\,}
 \global\long\def\divi{\mathrm{div}\,}
 \global\long\def\expdim{\mathrm{expdim}\,}
 \global\long\def\ord{\mathrm{ord}\,}
 \global\long\def\Aut{\mathrm{Aut}\,}
 \global\long\def\Hilb{\mathrm{Hilb}}
 \global\long\def\Hom{\mathrm{Hom}}
 \global\long\def\id{\mathrm{id}}
 \global\long\def\Ext{\mathrm{Ext}}
 \global\long\def\sHom{\mathcal{H}{\!}om\,}
 \global\long\def\Lie{\mathrm{Lie}\,}
 \global\long\def\mult{\mathrm{mult}}
 \global\long\def\opp{\mathrm{opp}}
 \global\long\def\Pic{\mathrm{Pic}\,}
 \global\long\def\Pf{{\bf Pf}}
 \global\long\def\Sec{\mathrm{Sec}}
 \global\long\def\Spec{\mathrm{Spec}\,}
 \global\long\def\Sym{\mathrm{Sym}}
 \global\long\def\sQpec{\mathcal{S}{\!}pec\,}
 \global\long\def\Proj{\mathrm{Proj}\,}
 \global\long\def\Rhom{{\mathbb{R}\mathcal{H}{\!}om}\,}
 \global\long\def\aw{\mathrm{aw}}
 \global\long\def\exc{\mathrm{exc}\,}
 \global\long\def\emb{\mathrm{emb\text{-}dim}}
 \global\long\def\codim{\mathrm{codim}\,}
 \global\long\def\OG{\mathrm{OG}}
 \global\long\def\pr{\mathrm{pr}}
 \global\long\def\Sing{\mathrm{Sing}\,}
 \global\long\def\Supp{\mathrm{Supp}\,}
 \global\long\def\SL{\mathrm{SL}\,}
 \global\long\def\Reg{\mathrm{Reg}\,}
 \global\long\def\rank{\mathrm{rank}\,}
 \global\long\def\VSP{\mathrm{VSP}\,}
 \global\long\def\B{B}
 \global\long\def\Q{Q}
 \global\long\def\rG{\mathrm{G}}

\title{Duality between $\ft{S}^{2}\mP^{4}$ and the Double Quintic Symmetroid}

\author{Shinobu Hosono and Hiromichi Takagi}
\begin{abstract}
Let $\chow=\ft{S}^{2}\mP^{4}$ be the second symmetric product of
$\mP^{4}$ and $\hcoY$ the double cover of the symmetric determinantal
quintic hypersurface in $\mP^{14}$ considered in \cite{Tj}. We study
homological properties of $\chow$ and $\hcoY$ which indicate the
homological projective duality between (suitable noncommutative resolutions
of) $\chow$ and $\hcoY$. Among other things, we construct good desingularizations
$\hchow$ and $\widetilde{\hcoY}$ of $\chow$ and $\hcoY$, respectively,
and also a dual Lefschetz collection in $\sD^{b}(\hchow)$ and a Lefschetz
collection in $\sD^{b}(\widetilde{\hcoY})$. These are expected to
give respective (dual) Lefschetz decompositions of suitable noncommutative
resolutions of $\sD^{b}(\chow)$ and $\sD^{b}(\hcoY)$. 
\end{abstract}
\maketitle
\vspace{0.5cm}

\section{Introduction}

In the previous work \cite{HoTa1}, we have encountered an interesting
new geometry of Reye congruences in dimension three through our study
of mirror symmetry of Calabi-Yau manifolds. Reye congruences in dimension
two have been attracting attention for long in relation to geometries
of Enriques surfaces \cite{Co}. In dimension three, they define smooth
Calabi-Yau threefolds. It was found in \cite{HoTa1} that each of
these Calabi-Yau threefolds is paired with another smooth Calabi-Yau
threefold which arises naturally in the projective geometry of Reye
congruences.

Let $\chow={\ft S}^{2}\mP(V)$ be the symmetric product of the projective
space $\mP(V)\cong\mP^{n}$. In terms of the so-called Chow form,
we can embed $\chow$ into the projective space $\mP({\ft S}^{2}V)$
of symmetric $(n+1)\times(n+1)$ matrices. Then $\chow$ is identified
with the Chow variety of $0$-cycles of length two, and may also be
identified with the rank $\leq2$ locus of $\mP({\ft S}^{2}V)$ in
the natural stratification by matrix rank. It is a well-known fact
in classical projective geometry that this stratification is reversed
to the corresponding one in the dual projective space $\mP({\ft S}^{2}V^{*})$.

The Reye congruences are defined as general linear sections of $\chow$
by $(n+1)$ linear forms on $\mP({\ft S}^{2}V)$. Since giving $(n+1)$
linear forms is equivalent to fixing a linear subspace $L\simeq\mC^{n+1}\subset{\ft S}^{2}V^{*}$
in the vector space ${\ft S}^{2}V^{*}$ dual to $\ft{S}^{2}V$, the
corresponding Reye congruence may be written by $X=\chow\cap\mP(L^{\perp})$.
In this form, one may notice that the Reye congruence is in accord
with the Mukai's constructions~\cite{Mu} of Fano manifolds associated
to homogeneous spaces. For example, in his classification of prime
Fano threefolds, the Fano threefolds of genus $7,8,9,10$ are constructed
by a similar linear sections of suitable homogeneous spaces. Furthermore,
as an outcome of his construction, in the case of genus 7 for example,
it was found that the intermediate Jacobian of the Fano threefold
is isomorphic to the Jacobian of the curve which is obtained as the
`orthogonal' linear section of the projective dual of the homogeneous
space.

Observing this similarity, we first considered in \cite{HoTa1} the
Hessian hypersurface in the dual projective space by $H=\Hes\cap\mP(L)$,
where $\Hes$ represents the rank $\leq n$ locus in $\mP({\ft S}^{2}V^{*})$.
Assume that $n$ is even. While the Reye congruence $X$ is a smooth
Calabi-Yau manifold, $H$ is a Calabi-Yau variety which is singular
along a codimension two subvariety. The new geometry found in {[}\textit{loc.cit.}{]}
is a double covering $Y\to H$ branched along the singular locus of
$H$. It was shown that $Y$ is a smooth Calabi-Yau threefold when
$n=4$.

To clarify the relations to the previous construction, let us note
that $\Hes$ is the determinantal hypersurface of degree $n+1$. We
call $\Hes$ \textit{symmetroid} in this paper. When $n$ is even,
we will define the double covering $\hcoY\to\Hes$ branched along
the rank $\leq(n-1)$ locus of $\Hes$ (Proposition \ref{cla:double}).
We call $\hcoY$ \textit{double symmetroid}. $\hcoY$ is singular
for $n\geq4$ but still has nice properties in view of the minimal
model program (Proposition \ref{cla:ZY}). We denote by $\hcoY\cap\mP(L)$
the pull-back of $\Hes\cap\mP(L)$ under $\hcoY\to\Hes$. We show
that $Y=\hcoY\cap\mP(L)$ is a Calabi-Yau variety in general (Proposition
\ref{prop:CYgen}), and is smooth when $n=4$ \textcolor{black}{as
studied in the previous work. }

The recent proposal in \cite{HPD1,HPD2}, called homological projective
duality, describes the Mukai's construction in terms of the derived
category of coherent sheaves and a suitable decomposition (Lefschetz
decomposition) of it. More generally, for a singular variety, the
proposal deals with the so-called noncommutative resolution which
is a full subcategory of the derived category of a suitable resolution
of the singularities. The classical examples of the Fano threefolds
of genus $7,8,9,10$ due to Mukai, which are related to some nice
homogeneous spaces, are described in this framework \cite{HypSec}.
Also the homological projective dual of the Grassmann variety ${\rm G}(2,7)$
was shown to be the noncommutative resolution of its projective dual
variety $\mathrm{Pf}(7)$, the Pfaffian variety \cite{HPD2}. Under
this duality, two Calabi-Yau threefolds are obtained \cite{BC,HPD2}
as suitable linear sections in a similar way to the above. While we
can observe many similarities between our case and the Grassmann-Pfaffian
case, there are also many dis-similarities between the two. One important
difference we should note is that $\chow$ as well as $\hcoY$ are
not homogeneous spaces although they admit natural quasi-homogeneous
$\SL(n)$ actions.

\vspace{0.2cm}

In this article, we make a first step toward formulating the new geometry
appeared in the previous work within the framework of the homological
projective duality.

\vspace{0.3cm}

The homological projective duality, if applies to our case, provides
a systematic way to describe the derived categories of the (noncommutative
resolutions of) linear sections of $\chow$ and $\hcoY$. Showing
the homological projective duality in general consists of two major
steps of finding suitable categorical/noncommutative resolutions and
making suitable (dual) Lefschetz decompositions of them. The categorical/noncommutative
resolutions of $\chow$ and $\hcoY$, respectively, should be identified
up to equivalences inside the derived categories $\sD^{b}(\hchow)$
and $\sD^{b}(\widetilde{\hcoY})$ as full subcategories by finding
suitable resolutions $\hchow\to\chow$ and $\widetilde{\hcoY}\to\hcoY$.

A natural resolution of $\chow$ is given by the Hilbert-Chow morphism
$\hchow=\Hilb^{2}\mP^{n}$ $\to\chow$(cf. Subsection \ref{HilbChow}).
In this case, it should be rather easy to find the noncommutative
resolution of $\chow$ in the derived category $\sD^{b}(\hchow)$
based on the theory of \cite{Lef} (see \cite{HoTa4}). In contrast
to this, the singularity of $\hcoY$ turns out to be more involved
(see Subsection \ref{sub:tildeY-Y}). Because of this complication,
the theorem \cite[Theorem 4.3]{Lef} does not apply to this for example,
and having the noncommutative resolution seems to be a more difficult
task although we will find a nice resolution $\widetilde{\hcoY}\to\hcoY$
(\textbf{Subsections~\ref{subsection:BlowUp}}, \textbf{\textcolor{black}{\ref{sub:tildeY-Y}}}
). In this paper, as a strong indication for the homological projective
duality between $\hcoY$ and $\chow$, we find a Lefschetz collection
which generates a full subcategory of $\sD^{b}(\widetilde{\hcoY})$
(\textbf{Theorem~\ref{thm:Gvan}}) and also the corresponding dual
Lefschetz collection in $\sD^{b}(\hchow)$ (\textbf{Theorem~\ref{thm:Gvan1}}).
These two theorems are main results of this paper. We expect that
these (dual) Lefschetz collections are actually the (dual) Lefschetz
decompositions of the noncommutative resolutions of $\hcoY$ and $\chow$.

\vspace{0.3cm}

The construction of this paper is as follows: In Section \ref{section:Chow},
we summarize some basic properties of the variety $\chow$, and construct
the Hilbert-Chow morphism $\hchow\to\chow$. Using these, we construct
the dual Lefschetz collection in $\sD^{b}(\hchow)$. Also some basic
properties of Calabi-Yau manifolds of Reye congruences are summarized.
In Section \ref{section:double}, we introduce the determinantal hypersurface
(symmetroid) $\Hes$, and using the geometry of singular quadric parametrized
by $\Hes$, we define its double cover, i.e., the double symmetroid
$\hcoY$. We define Calabi-Yau variety $Y$ in $\hcoY$, and for $n=4$,
we determine topological invariants of $Y$ from geometries of $\hcoY$
(\textbf{Proposition \ref{prop:Y}}). In Section \ref{sec:The-resolution-Y},
we study the birational geometry of $\hcoY$ by introducing a subvariety
$\overline{\hcoY}$ in $\rG(3,\wedge^{3}V)$. Although we will not
go into the details, we find that the birational geometry of $\hcoY$
has a close relation to the Hilbert scheme of conics on Grassmann
${\rm G}(3,V)$ (Proposition \ref{prop:Y34}). As a resolution of
the singularity of $\overline{\hcoY},$ we introduce the Grassmann
bundle ${\rm G}(3,T(-1)^{\wedge2})$ over $\mP(V)$, and the so-called
two ray game (Sarkisov link) of this Grassmann bundle is studied in
detail to obtain our desingularization $\widetilde{\hcoY}$ of $\hcoY$
(see (\ref{eq:smallDiag}) and also Fig.4 in Section \ref{sec:SheavesDef}).
The morphism $\widetilde{\hcoY}\to\hcoY$ turns out to be a divisorial
contraction which is negative with respect to the canonical divisor
(Propositions \ref{cla:FY} and \ref{cla:F}). In Section \ref{sec:SheavesDef},
we define three locally free sheaves $\widetilde{\sS}_{L}^{*}$, $\widetilde{\sQ}$,
and $\widetilde{\sT}$ on $\widetilde{\hcoY}$ (Definitions \ref{def:SQT1}
and \ref{def:tidelT}), which will generate a Lefschetz collection
in $\sD^{b}(\widetilde{\hcoY})$. In Section \ref{subsubsection:Global},
we further study the exceptional divisor $F_{\widetilde{\hcoY}}$
of divisorial contraction $\widetilde{\hcoY}\to\hcoY$. We describe
some birational models of $F_{\widetilde{\hcoY}}$ in Proposition
\ref{cla:F'} and (\ref{eq:house}). This section is necessary for
our cohomology calculations in the subsequent section. In Section
\ref{section:comp}, we find a Lefschetz collection in $\sD^{b}(\widetilde{\hcoY})$
and observe a certain duality between the quiver diagrams associated
to the Lefschetz collection in $\sD^{b}(\widetilde{\hcoY})$ and the
dual Lefschetz collection in $\sD^{b}(\hchow)$ respectively ((\ref{eq:quiver1})
and (\ref{eq:quiver2})).

\vspace{1cm}

%%%%%%%%%%%%%%%%%%%%%%%%%%%%%%%%%%%%%%%%%%%%%%%%%%%%%%%%%%%%%%%%%%%%%%%%%%%%%%%%%%%%%%%%%%%%%%%%%%%%%%%%%%%%%%%%%%%%%%%%%%%%%%%%%%%%%%%%%%%%%%%%%%%%%%%%%%%%%%%%
\vspace{0.5cm}
 \textbf{Acknowledgements:} This paper is supported in part by Grant-in
Aid Scientific Research (C 18540014, S.H.) and Grant-in Aid for Young
Scientists (B 20740005, H.T.). The authors are grateful to Prof. R.F.
for variable suggestions to improve our construction of $\widetilde{\hcoY}$.
They also thank Nicolas Addington and Sergey Galkin for useful communications.

\noindent %%%%%%%%%%%%%%%%%%%%%%%%%%%%%%%%%%%%%%%%%%%%%%%%%%%%%%%%%%%%%%%%%%%%%%%%%%%%%%%%%%%%%%%%%%%%%%%%%%%%%%%%%%%%%%%%%%%%%%%%%%%%%%%%%%%%%%%%%%%%%%%%%%%%%%%%%%%%%%%%
\newpage{}

\vspace{0.3cm}
\noindent\textbf{\textcolor{black}{Notation: }}Throughout the paper,
we work over $\mC$, the complex number field. We will use the following
notation which simplifies lengthy formulas:

\vspace{0.3cm}

\global\long\def\Homega{H_{\mP(\Omega(1))}}
 \global\long\def\Hwomega{H_{\mP(\Omega(1)^{\wedge2})}}
 \global\long\def\Lwomega{L_{\mP(\Omega(1)^{\wedge2})}}
 \global\long\def\HGo{H_{\mathrm{G}(\Omega(1)^{\wedge2})}}
 \global\long\def\HGt{H_{\mathrm{G}(\wedge^{2} T(-1))}}

\global\long\def\Lt{L_{\mP(T(-1))}}
 \global\long\def\Lwt{L_{\mP(T(-1)^{\wedge2})}}
 \global\long\def\Ht{H_{\mP(T(-1))}}
 \global\long\def\Hwt{H_{\mP(T(-1)^{\wedge2})}}

\begin{myitem} 

\item $V$: a (fixed) $n+1$ dimensional complex vector space. $\mP^{n}:=\mP(V)$. 

\item $V_{i}$: an $i$-dimensional vector subspace of $V$. \item $\Omega(1):=\Omega_{\mP(V)}(1)$. 

\item $\Omega(1)^{\wedge i}:=\wedge^{i}(\Omega_{\mP(V)}(1))$ for
$i\geq2$. 

\item $T(-1):=T_{\mP(V)}(-1)$. 

\item $T(-1)^{\wedge i}:=\wedge^{i}(T(-1))$ for $i\geq2$. 

\item $\sO(i):=\sO_{\mP(V)}(i)$ for $i\in\mZ$. \end{myitem}

%%%%%%%%%%%%%%%%%%%%%%%%%%%%%%%%%%%%%%%%%%%%%%%%%%%%%%%%%%%%%%%%%%%%%%%%%%%%%%%%%%%%%%%%%%%%%%%%%%%%%%%%%%%%%%%%%%%%%%%%%%%%%%%%%%%%%%%%%%%%%%%%%%%%%%%%%%%%%%%

$\;$

$\;$

$\;$

$\;$

$\;$

$\;$

\section{Preliminaries}

For the computations of cohomology groups which appear in this paper,
we use the Bott theorem about the cohomology groups of Grassmann bundles
extensively.

\vspace{0.3cm}

For a locally free sheaf $\mathcal{E}$ of rank $r$ on a variety
and a nonincreasing sequence $\beta=(\beta_{1},\beta_{2},\dots,\beta_{r})$
of integers, we denote by $\Sigma^{\beta}\mathcal{E}$ the associated
locally free sheaf with the Schur functor $\Sigma^{\beta}$.

\begin{thm} \textbf{$(${\bf Bott theorem}$)$} \label{thm:Bott}
Let $\pi\colon\mathrm{G}(r,\sA)\to X$ be a Grassmann bundle for a
locally free sheaf $\sA$ on a variety $X$ of rank $n$ and $0\to\sS\to\sA\to\sQ\to0$
the universal exact sequence. For $\beta:=(\alpha_{1},\dots,\alpha_{r})\in\mZ^{r}$
$(\alpha_{1}\geq\dots\geq\alpha_{r})$ and $\gamma:=(\alpha_{r+1},\dots,\alpha_{n})\in\mZ^{n-r}$
$(\alpha_{r+1}\geq\dots\geq\alpha_{n})$, we set $\alpha:=(\beta,\gamma)$
and $\sV(\alpha):=\Sigma^{\beta}\sS^{*}\otimes\Sigma^{\gamma}\sQ^{*}$.
Finally, let $\rho:=(n,n-1,\dots,1)$, and, for an element $\sigma$
of the $n$-th symmetric group $\mathfrak{S}_{n}$, we set $\sigma^{\bullet}(\alpha):=\sigma(\alpha+\rho)-\rho$. 

\begin{myitem2} \item[\rm (1)] If $\sigma(\alpha+\rho)$ contains
two equal integers, then $R^{i}\pi_{*}\sV(\alpha)=0$ for any $i\geq0$. 

\item[\rm (2)] If there exists an element $\sigma\in\mathfrak{S}_{n}$
such that $\sigma(\alpha+\rho)$ is strictly decreasing, then $R^{i}\pi_{*}\sV(\alpha)=0$
for any $i\geq0$ except $R^{l(\sigma)}\pi_{*}\sV(\alpha)=\Sigma^{\sigma^{\bullet}(\alpha)}\sA^{*}$,
where $l(\sigma)$ represents the length of $\sigma\in\mathfrak{S}_{n}$. 

\end{myitem2}

\end{thm}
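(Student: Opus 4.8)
The plan is to reduce to the classical Borel--Weil--Bott theorem on a single Grassmannian and then upgrade to the relative statement. Since $\pi\colon \mathrm{G}(r,\sA)\to X$ is a smooth projective morphism and $\sV(\alpha)=\ft{\Sigma}^\beta\sS\otimes\ft{\Sigma}^\gamma\sQ^*$ is locally free, I would first invoke cohomology and base change: once the fiberwise cohomology $H^i(\mathrm{G}(r,n),\ft{\Sigma}^\beta S\otimes\ft{\Sigma}^\gamma Q^*)$ is known, its local constancy (it is either zero, or concentrated in a single degree of predictable rank) forces every $R^i\pi_*\sV(\alpha)$ to be locally free and to commute with base change. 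The identification of the nonzero sheaf with $\ft{\Sigma}^{\sigma^\bullet(\alpha)}\sA^*$ is then dictated by $GL$-functoriality of the associated-bundle construction: the absolute computation produces a $GL_n$-representation, and the same Schur functor applied to $\sA^*$ gives the relative answer. This reduces everything to the absolute case over a point.

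For the absolute Borel--Weil--Bott statement I would argue inductively on the full flag variety. Let $q\colon \mathrm{Fl}\to \mathrm{G}(r,n)$ be the complete flag variety, with its tautological filtration whose successive line-bundle quotients $\sL_1,\dots,\sL_n$ are indexed by the entries of a weight. By relative Borel--Weil for the Levi factor (equivalently the Cauchy formula), $\ft{\Sigma}^\beta S\otimes\ft{\Sigma}^\gamma Q^*$ is the degree-zero pushforward under $q$ of the line bundle $\sL^{\alpha}:=\bigotimes_i\sL_i^{\alpha_i}$, so the cohomology I want equals $H^\bullet(\mathrm{Fl},\sL^\alpha)$. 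I would compute the latter by pushing $\sL^\alpha$ down to a point one step at a time: forgetting the $i$-th subspace of the flag exhibits $\mathrm{Fl}$ as a $\mP^1$-bundle, and $\sL^\alpha$ restricts on each $\mP^1$-fiber to $\sO(\alpha_i-\alpha_{i+1}-1)$, where the $-1$ is exactly the $\rho$-shift. The three cases of cohomology on $\mP^1$ then reproduce the combinatorics of the dot-action: the vanishing $H^\bullet(\mP^1,\sO(-1))=0$ yields total vanishing precisely when $\sigma(\alpha+\rho)$ has two equal entries, while each simple reflection that strictly decreases the relevant pair contributes one unit of cohomological degree through an $H^1$, so the total degree is the minimal number of reflections, namely $l(\sigma)$, and the surviving weight is $\sigma^\bullet(\alpha)$.

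The main obstacle will be the bookkeeping in this last step: one must verify that the successive $\mP^1$-pushforwards can always be ordered so that the intermediate weights stay in the range where the elementary $\mP^1$ computation applies, that the accumulated degree shift matches $l(\sigma)$ for the unique $\sigma$ making $\sigma(\alpha+\rho)$ strictly decreasing, and that the repeated-entry condition is genuinely equivalent to encountering an $\sO(-1)$ somewhere in the tower. Getting the convention $\rho=(n,\dots,1)$, the direction of duality in $\sA^*$, and the formula $\sigma^\bullet(\alpha)=\sigma(\alpha+\rho)-\rho$ all mutually consistent is the delicate part; by contrast the base-change reduction of the first paragraph is routine once the absolute answer is in hand.
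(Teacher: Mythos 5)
The paper itself offers no argument for this theorem: its ``proof'' is a pointer to the literature, namely Bott \cite{Bo}, Demazure \cite{D}, and Weyman \cite[(4.19) Corollary]{W}. Your proposal is, in substance, a reconstruction of Demazure's argument from \cite{D}: reduce the relative statement to the absolute one, then compute $H^{\bullet}(\mathrm{G}(r,n),\ft{\Sigma}^{\beta}S\otimes\ft{\Sigma}^{\gamma}Q^*)$ by passing to the full flag variety and pushing a line bundle down a tower of $\mP^1$-fibrations, the three cases of $H^{\bullet}(\mP^1,\sO(d))$ generating the dot-action combinatorics. That is a legitimate and complete route, and your two-step structure is the standard one. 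In particular, the relative-to-absolute reduction is best phrased exactly as you do, via the frame-bundle/associated-bundle construction: $\mathrm{GL}_n$-equivariance of the absolute computation is what identifies the pushforward with $\ft{\Sigma}^{\sigma^{\bullet}(\alpha)}\sA^*$, whereas Grauert's theorem alone only yields local freeness and base change. One slip worth correcting: the restriction of $\sL^{\alpha}$ to a fiber of the $i$-th $\mP^1$-fibration is $\sO(\pm(\alpha_i-\alpha_{i+1}))$, not $\sO(\alpha_i-\alpha_{i+1}-1)$; the $\rho$-shift does not enter the fiber degree but the dictionary between that degree and the dot action --- total vanishing occurs when $\alpha_i-\alpha_{i+1}=-1$, i.e.\ precisely when $(\alpha+\rho)_i=(\alpha+\rho)_{i+1}$, while for fiber degree $\leq -2$ relative Serre duality produces the exchange $H^{j+1}(\sL^{\lambda})\simeq H^{j}(\sL^{s_i\cdot\lambda})$ with $s_i\cdot\lambda=s_i(\lambda+\rho)-\rho$. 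With that correction, the bookkeeping you flag as the main obstacle (ordering the reflections, matching the accumulated degree with $l(\sigma)$) goes through by induction on the number of simple reflections needed to make $\alpha+\rho$ strictly decreasing, exactly as in \cite{D}.
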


\begin{proof} See \cite{Bo}, \cite{D}, or \cite[(4.19) Corollary]{W}.
\end{proof}

\vspace{0.3cm}
 In this paper, we adopt the following definition of Calabi-Yau variety
and also Calabi-Yau manifold.

\begin{defn} We say a normal projective variety $X$ \textit{a Calabi-Yau
variety} if $X$ has only Gorenstein canonical singularities, the
canonical bundle of $X$ is trivial, and $h^{i}(\sO_{X})=0$ for $0<i<\dim X$.
If $X$ is smooth, then $X$ is called \textit{a Calabi-Yau manifold}.
A smooth Calabi-Yau threefold is abbreviated as a Calabi-Yau threefold.~\hfill{}{[}{]}\end{defn}

\vspace{1cm}
 %%%%%%%%%%%%%%%%%%%%%%%%%%%%%%%%%%%%%%%%%%%%%%%%%%%%%%%%%%%%%

\section{The Geometry of $\ft{S}^{2}\mP(V)$}

\label{section:Chow}

\subsection{$\ft{S}^{2}\mP(V)$ and quadrics in $\mP(V)$}

\label{Chow}

Let ${\chow}:=\ft{S}^{2}\mP(V)$ be the symmetric product of $\mP(V)$.
Since we can identify $\chow$ with the Chow variety of 0-cycles in
$\mP(V)$ of length $2$, ${\chow}$ can be embedded in the projective
space $\mP(\ft{S}^{2}V)$ for the symmetric product $\ft{S}^{2}V:={\rm Sym}^{2}V$
in terms of the so-called \textit{Chow form}; \begin{equation}
w_{ii}=x_{i}y_{i},\, w_{ij}:=x_{i}y_{j}+x_{j}y_{i}\quad(i\not=j),\label{eq:Chow}\end{equation}
where $w_{ij}=w_{ji}\,(1\leq i,j\leq n+1)$ and $x_{i},y_{i}$ $(1\leq i\leq n+1)$
are coordinates of $\mP(\ft{S}^{2}V)$ and $\mP(V)$, respectively
(cf. \cite[Theorem 2.2]{GKZ}). If we view this as giving a morphism
$\mP(V)\times\mP(V)\to\mP(\ft{S}^{2}V)$, the isomorphism of $\ft{S}^{2}\mP(V)$
to the Chow variety follows from the fact that $w_{ij}$ in (\ref{eq:Chow})
generates all the invariant polynomials under $x_{i}\leftrightarrow y_{i}$
{[}loc.cit.{]}. We may identify $\mP(\ft{S}^{2}V)$ with the dual
to the space of the symmetric $(1,1)$-divisors on $\mP(V)\times\mP(V)$.
$\ft{S}^{2}\mP(V)$ in $\mP(\ft{S}^{2}V)$ is defined by the zero
set of all the $3\times3$ minors of $(n+1)\times(n+1)$ symmetric
matrix $(w_{ij})$ representing the coordinate of $\mP(\ft{S}^{2}V)$.

Quadrics in $\mP(V)$ are given by the equations ${\empty^{t}{\bm{x}}}A{\bm{x}}=0$
with ${\empty^{t}{\bm{x}}}=\left(\begin{matrix}x_{1},\cdots,x_{n+1}\end{matrix}\right)$
and an $(n+1)\times(n+1)$ symmetric matrix $A$. We denote by $Q_{A}$
the quadric defined by a symmetric matrix $A$, and by $q_{A}({\bm{x}})$
the quadratic form ${\empty^{t}{\bm{x}}}A{\bm{x}}$. The projectivization
of the vector space of all the $(n+1)\times(n+1)$ symmetric matrices
may be identified with $\mP(\ft{S}^{2}V^{*})$, which is dual to $\mP(\ft{S}^{2}V)$.
More explicitly, we write the dual pairing by \[
A\cdot w=\sum_{1\leq i\leq j\leq n+1}a_{ij}w_{ij},\]
 where $w=(w_{ij})\in\mP(\ft{S}^{2}V)$. Using (\ref{eq:Chow}), we
have the equality \begin{equation}
A\cdot w_{{\bm{x}}{\bm{y}}}={\empty^{t}{\bm{x}}}A{\bm{y}},\label{eq:pairing}\end{equation}
 where $w_{{\bm{x}}{\bm{y}}}$ is the image in $\mP(\ft{S}^{2}V)$
of $({\bm{x}},{\bm{y}})\in\mP(V)\times\mP(V)$.

\vspace{0.5cm}

\subsection{Projective duals of ${\rm Sec}^{i}\chow_{0}$}

%%%%%%%%%%%%%%%%%%%%%%%%%%%%%%%%%%%%%%%%%%%%%%%%%%%%%%%%
\label{subsection:PD}

When we restrict the morphism $\mP(V)\times\mP(V)\to\mP({\ft S}^{2}V)$
described above to the diagonal, we obtain the second Veronese morphism
of $\mP(V)$. We denote by $\chow_{0}=v_{2}(\mP(V))$ its image. Then
it is easy to see that $\ft{S}^{2}\mP(V)$ is defined by all the $2\times2$
minors of the generic $(n+1)\times(n+1)$ symmetric matrix $(w_{ij})$,
namely, $\chow_{0}$ is the locus of symmetric matrices of rank one.
By the characterizations of $\chow$ and $\chow_{0}$ with rank condition
as above, we see that $\chow$ is the secant variety of $\chow_{0}$,
namely, \[
\chow=\overline{\cup\{\langle p,q\rangle\mid p,q\in{{}{\chow}}_{0},p\not=q\}},\]
 where $\langle p,q\rangle$ is the line through $p$ and $q$.

The second Veronese variety $\chow_{0}$ is one of the Scorza varieties
classified by Zak (see \cite{Z}, \cite{Ch}). Associated to $\chow_{0}$,
we naturally have the tower of higher secant varieties. Recall that,
for projective varieties $X,Y\subset\mP^{N}$, in general, their \textit{join}
is defined as \[
J(X,Y):=\overline{\cup\{\langle p,q\rangle\mid p\in X,q\in Y,p\not=q\}}.\]
 The higher secant varieties are defined inductively by $\Sec^{m}X:=J(\Sec^{m-1}X,X)$
with $\Sec^{0}X:=X$. Note that $\Sec^{1}X$ is nothing but the secant
variety of $X$. Since $\chow_{0}$ is the locus of symmetric matrices
of rank one, one may identify $\Sec^{i}\chow_{0}$ with the locus
of symmetric matrices of rank $\leq i+1$ since a general point of
$\Sec^{i}\chow_{0}$ corresponds to a sum of $(i+1)$ matrices of
rank one. In particular, it holds that $\Sec^{n}\chow_{0}=\mP(\ft{S}^{2}V)$
and $\Sec^{n-1}\chow_{0}$ is the hypersurface of degree $n+1$ defined
by the determinant of the generic symmetric matrix $(w_{ij})$. In
summary, we have the tower of the secant varieties: \begin{equation}
\towerI\label{eq:tower1}\end{equation}
 It is known that $\dim\Sec^{i}\chow_{0}=(i+1)n-\frac{i(i-1)}{2}$.
This tower gives the orbit decomposition of the action of $\SL_{n+1}$
on $\mP(\ft{S}^{2}V)$. Precisely, $\Sec^{i}\chow_{0}\setminus\Sec^{i-1}\chow_{0}$
is an $\SL_{n+1}$-orbit for any $i$ since any two symmetric matrices
of the same rank are transformed to each other by $\SL_{n+1}$. It
is known that $\Sec^{i+1}\chow_{0}$ is the singular locus of $\Sec^{i}\chow_{0}$.
In particular, $\chow_{0}=\Sing\chow$.

In the dual projective space $\mP(\ft{S}^{2}V^{*})$ to $\mP(\ft{S}^{2}V)$,
we consider the dual varieties $(\Sec^{i}\chow_{0})^{*}$ of $\Sec^{i}\chow_{0}$,
namely, $(\Sec^{i}\chow_{0})^{*}$ is the closure of the locus of
the hyperplanes of $\mP({\ft S}^{2}V)$ tangent to $\Sec^{i}\chow_{0}$
at smooth points. Verifying the tangent space of $\Sec^{i}\chow_{0}$
at a general point, we see that $(\Sec^{i}\chow_{0})^{*}$ is the
locus of symmetric matrices $A$ of rank $\leq n-i$. In summary,
we have the dual tower to (\ref{eq:tower1}): \begin{equation}
\towerII\label{eq:tower2}\end{equation}
 Throughout this paper, we set \[
\Hes:=(\chow_{0})^{*}.\]
 $\Hes$ is the symmetric determinantal hypersurface in $\mP(\ft{S}^{2}V^{*})$,
which is called \textit{the symmetroid}.

\vspace{0.5cm}

\subsection{$\hchow=\Hilb^{2}\mP(V)$ and $\mathrm{G}(2,V)$}

%%%%%%%%%%%%%%%%%%%%%%%%%%%%%%%%%%%%%%%%%%%%%%%%%%%%%%%%%%%% 
\label{HilbChow}

By the construction of the double cover $\mP(V)\times\mP(V)\to\chow$,
we see that $\chow$ has quotient singularities of type $\frac{1}{2}(1^{n})$
along the singular locus $\chow_{0}=v_{2}(\mP(V))$. In particular,
$\chow$ is Gorenstein if and only if $n$ is even. Hereafter in this
subsection, we assume that $n$ is even. Now we set \[
{\hchow}:=\Hilb^{2}\mP(V),\]
 which is the Hilbert scheme of $0$-dimensional subschemes of $\mP(V)$
of length $2$ (we simply call it the Hilbert scheme of two points
on $\mP(V)$). Recall that the Hilbert-Chow morphism $f\colon\hchow\to\chow$
is the blow-up of $\chow$ along the singular locus $\chow_{0}$. 

Since the $0$-dimensional subscheme on $\mP(V)$ of length two determines
a line on $\mP(V)$, we have a natural morphism $g\colon{\hchow}\to\mathrm{G}(2,V)$:
\[
\xymatrix{ & {\hchow}\ar[dl]_{f}\ar[dr]^{g}\\
\chow &  & \mathrm{G}(2,V).}
\]
 $\;$

Let $\sF$ be the universal subbundle of rank two on $\mathrm{G}(2,V)$.
Then the Hilbert scheme $\hchow$ of two points on $\mP(V)$ is isomorphic
to $\mP(\ft{S}^{2}\sF)$. This follows from the fact that the fiber
of $g$ over a point $[l]\in\mathrm{G}(2,V)$ is identified with ${\rm Hilb}^{2}\mP(l)=\ft{S}^{2}\mP(l)\simeq\mP(\ft{S}^{2}l)$,
where $l\simeq\mC^{2}$ is the affine two plane representing $[l]$. 

We set \[
\text{\ensuremath{H_{{\hchow}}=f^{*}\sO_{\chow}(1)}and \ensuremath{L_{{\hchow}}=g^{*}\sO_{\mathrm{G}(2,V)}(1)}.}\]
 By $\hchow\simeq\mP(\ft{S}^{2}\sF)$, we see that the morphism $f$
follows from the natural morphism $\mP(\ft{S}^{2}\sF)\to\ft{S}^{2}\mP(V)$.
Hence $H_{{\hchow}}$ is the tautological divisor for $\mP(\ft{S}^{2}\sF)$.

Let us denote by $E_{f}$ the $f$-exceptional divisor. We see that
$E_{f}\simeq\mP(\sF)$, namely, $E_{f}$ is isomorphic to the total
space of the universal family of lines on $\mP(V)$ since $E_{f}\subset\Hilb^{2}\mP(V)$
parameterizes pairs of points $x\in\mP(V)$ and lines $l$ through
$x$. Moreover, from the relative Euler sequence $0\to\sO_{\hchow}\to g^{*}(\ft{S}^{2}\sF)\otimes\sO_{\hchow}(1)\to T_{\hchow/G(2,V)}\to0$,
we have \begin{equation}
K_{\hchow}=-3H_{\hchow}-(n-2)L_{\hchow}.\label{eq:Khchow2}\end{equation}
 On the other hand, since $f\colon\hchow\to\chow$ is the blow-up
of $\chow$ along the singular locus $\chow_{0}$, we have \begin{equation}
K_{\hchow}=-(n+1)H_{\hchow}+\frac{n-2}{2}E_{f}.\label{eq:Khchow1}\end{equation}
Therefore we have \begin{equation}
\frac{n-2}{2}E_{f}\sim(n-2)(H_{\hchow}-L_{\hchow}).\end{equation}

\vspace{0.5cm}

\subsection{Constructing a dual Lefschetz collection in $\sD^{b}(\hchow)$}

%%%%%%%%%%%%%%%%%%%%%%%%%%%%%%%%%%%%%%%%%%%%%%%%%%%%%%%%%%%%%\label{subsection:LefX}

We recall some basic definitions from the theory of triangulated categories
(cf.~\cite{Bondal,BO}).

\begin{defn} \label{defn:excepcollect} An object $\sE$ in a triangulated
category $\sD$ is called \textit{an exceptional object} if $\Hom(\sE,\sE)\simeq\mC$
and $\Hom^{\bullet}(\sE,\sE)=0$ for $\bullet\not=0$. \hfill{}{[}{]}
\end{defn}

\begin{defn}  A triangulated subcategory $\sD'$ of $\sD$ is called
\textit{admissible} if there are right and left adjoint functors for
the inclusion functor $i_{*}:\sD'\to\sD$. \hfill{}{[}{]}\end{defn}

\begin{defn} \label{def:semi-orth} A sequence $\sD_{1},\dots,\sD_{m}$
of\textcolor{black}{{} admissible} triangulated subcategories in a triangulated
category $\sD$ is called a \textit{semiorthogonal collection} if
$\Hom_{\sD}(\sD_{i},\sD_{j})=0$ for any $i>j$. Moreover, if $\sD_{1},\dots,\sD_{m}$
generates $\sD$, then it is called a \textit{semiorthogonal decomposition}.

A semiorthogonal collection of exceptional objects $\sE_{1},\dots,\sE_{n}$
is called \textit{an exceptional collection}. Moreover, if $\Hom^{\bullet}(\sE_{i},\sE_{j})=0$
holds for any $i,j$ and any $\bullet\not=0$, then it is called \textit{an
strongly exceptional collection}. \hfill{}{[}{]} \end{defn}

Hereafter, in this article, we restrict our attention to the cases
of the derived categories of bounded complexes of coherent sheaves
on a variety. In such cases, a special type of semiorthogonal collection
plays an important role (cf.~\cite{HPD1,HPD2}).

\begin{defn} For a variety $X$, a \textit{Lefschetz collection}
of $\sD^{b}(X)$ is a semiorthogonal collection of the following form:
\[
\sD_{0},\sD_{1}(1),\dots,\sD_{m-1}(m-1),\]
 where it holds that $0\subset\sD_{m-1}\subset\sD_{m-2}\subset\dots\subset\sD_{0}\subset\sD^{b}(X)$
and $(k)$ means the twist by $L^{\otimes k}$ with a fixed invertible
sheaf $L$. Moreover, if $\sD_{0},\sD_{1}(1),...,\sD_{m-1}(m-1)$
generate $\sD^{b}(X)$, then it is called a \textit{Lefschetz decomposition}.

Similarly, a \textit{dual Lefschetz collection} of $\sD^{b}(X)$ is
a semiorthogonal collection of the following form: \[
\sD_{m-1}(-(m-1)),\sD_{m-2}(-(m-2)),\dots,\sD_{0},\]
 where it holds that $0\subset\sD_{m-1}\subset\sD_{m-2}\subset\dots\subset\sD_{0}\subset\sD^{b}(X)$.
Moreover, if $\sD_{m-1}(-(m-1)),\sD_{m-2}(-(m-2)),\dots,\sD_{0}$
generate $\sD^{b}(X)$, then it is called a \textit{dual Lefschetz
decomposition}. \hfill{}{[}{]} \end{defn}

\;

\;

Now, based on the geometry of the projective bundle $\hchow=\mP(\ft{S}^{2}\sF)$
over $G(2,V)$, we construct a dual Lefschetz collection in $\sD^{b}(\hchow)$
by restricting our attention to the case $n=4$. Other cases of $n>4$
should be done in a similar way, but we confine ourselves to this
case to avoid possible complications.

We may naturally conceive the sheaves $\sO_{\hchow}$ and $g^{*}\sF^{*}$
as the objects in the (dual) Lefschetz collection. Recall the isomorphism
$\hchow\simeq\mP(\ft{S}^{2}\sF)$ and consider associated Euler sequence
$0\to\sO_{\mP(\ft{S}^{2}\sF)}(-1)\to g^{*}\ft{S}^{2}\sF\to T_{\mP(\ft{S}^{2}\sF)/G(2,V)}(-1)\to0$.
Twisting this by $2L_{\hchow}$ we obtain an injection \begin{equation}
\varphi:\,\sO_{\hchow}(-H_{\hchow}+2L_{\hchow})\to(g^{*}\ft{S}^{2}\sF)(2L_{\hchow})\simeq g^{*}{\ft S}^{2}\sF^{*},\label{eq:F1a}\end{equation}
 where we use $\sF\otimes\sO_{\mathrm{G}(2,V)}(1)=\Sigma^{(0,-1)}\sF^{*}\otimes\Sigma^{(1,1)}\sF^{*}\simeq\sF^{*}$.
The cokernel of this injection, which is $T_{\mP(\ft{S}^{2}\sF)/G(2,V)}(-H_{\hchow}+2L_{\hchow})$,
plays a role in the following theorem:

\vspace{0.5cm}
 \begin{thm} \label{thm:Gvan1} 

$\;$

\begin{myitem2}

\item[\rm (1)] Let \[
(\mathcal{F}_{3},\mathcal{F}_{2},\mathcal{F}_{1a},\mathcal{F}_{1b})=(\,\sO_{\hchow},\; g^{*}\sF^{*},\;{\rm Coker}\,\varphi,\;\sO_{\hchow}(L_{\hchow})\,)\]
 be an ordered collection of sheaves on $\hchow$. Then $(\sK_{i})_{1\leq i\leq4}:=$
$(\mathcal{F}_{1b}^{*},\mathcal{F}_{1a}^{*},\mathcal{F}_{2}^{*},\mathcal{F}_{3}^{*})$
is a strongly exceptional collection of $\sD^{b}(\hchow)$, namely
satisfies \[
H^{\bullet}({\sK}_{i}^{*}\otimes{\sK}_{j})=0\text{ for }1\leq i,j\leq4\text{ and }\bullet>0\]
 and $H^{0}({\sK}_{i}^{*}\otimes{\sK}_{j})=0\,(i>j),\; H^{0}({\sK}_{i}^{*}\otimes{\sK}_{i})\simeq\mC\,(1\leq i\leq4)$.
%%%%%%

\item[\rm (2)] For $i<j$, $H^{0}({\sK}_{i}^{*}\otimes{\sK}_{j})$
are given by \[
\begin{aligned} & H^{0}(\mathcal{F}_{3}^{*}\otimes\mathcal{F}_{2})\simeq V^{*},\, H^{0}(\mathcal{F}_{3}^{*}\otimes\mathcal{F}_{1a})\simeq\ft{S}^{2}V^{*},\, H^{0}(\mathcal{F}_{3}^{*}\otimes\mathcal{F}_{1b})\simeq\wedge^{2}V^{*},\\
 & H^{0}(\mathcal{F}_{2}^{*}\otimes\mathcal{F}_{1a})\simeq V^{*},\, H^{0}(\mathcal{F}_{2}^{*}\otimes\mathcal{F}_{1b})\simeq V^{*},\;\; H^{0}(\mathcal{F}_{1a}^{*}\otimes\mathcal{F}_{1b})=0,\end{aligned}
\]
 which may be summarized in the following quiver diagram: \begin{equation}
\xyquiverI\label{eq:quiver1}\end{equation}

\item[\rm (3)] Set $\sD_{\hchow}:=\langle\mathcal{F}_{1b}^{*},\mathcal{F}_{1a}^{*},\mathcal{F}_{2}^{*},\mathcal{F}_{3}^{*}\rangle\subset\sD^{b}(\hchow).$
Then \[
\sD_{\hchow}(-4),\dots,\sD_{\hchow}(-1),\sD_{\hchow}\]
 is a dual Lefschetz collection, where $(-t)$ represents the twist
by the sheaf $\sO_{\hchow}(-tH_{\hchow})$. 

\end{myitem2}\end{thm}

\vspace{0.8cm}
 We prepare the following lemma for our proof of the theorem.

\begin{lem} \label{cla:key1} Set $\sC_{ij}=\sK_{i}^{*}\otimes\sK_{j}\,(1\leq i,j\leq4)$.
For $\sC=\sC_{ij}$, it holds that \[
H^{\bullet}(\hchow,\sC(-t))\simeq H^{8-\bullet}(\hchow,\sC^{*}(t-5))\text{ for any }t.\]
 \end{lem}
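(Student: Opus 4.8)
The statement is a cohomological symmetry on a smooth projective variety of dimension $8$ (note $\hchow=\Hilb^2\mP^4$ is $8$-dimensional when $n=4$), and the indices strongly suggest Serre duality: the total cohomological degree $\bullet+(8-\bullet)=8=\dim\hchow$, and the twists $(-t)$ versus $(t-5)$ should combine with the canonical bundle. So the first thing I would do is recall Serre duality in the form
\[
H^{\bullet}(\hchow,\sG)\simeq H^{8-\bullet}(\hchow,\sG^*\otimes K_{\hchow})^*
\]
for any locally free sheaf $\sG$, and then verify that applying it to $\sG=\sC(-t)$ reproduces the right-hand side of the lemma.

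The key computation is matching the canonical bundle against the required twist. With $\sG=\sC(-tH_{\hchow})$ we have $\sG^*\otimes K_{\hchow}=\sC^*(tH_{\hchow})\otimes K_{\hchow}$, so I must show that
\[
\sC^*(tH_{\hchow})\otimes K_{\hchow}\simeq \sC^*((t-5)H_{\hchow}),
\]
i.e.\ that $K_{\hchow}\simeq \sO_{\hchow}(-5H_{\hchow})$. The first step here is to establish that each $\sC_{ij}=\sK_i^*\otimes\sK_j$ is invariant under $L_{\hchow}$-twists in the relevant sense, or more precisely that $K_{\hchow}$ has \emph{no} $L_{\hchow}$-component once restricted to these particular sheaves. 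I would invoke the two formulas for $K_{\hchow}$ recorded earlier: from \eqref{eq:Khchow2} we have $K_{\hchow}=-3H_{\hchow}-(n-2)L_{\hchow}$, which for $n=4$ reads $K_{\hchow}=-3H_{\hchow}-2L_{\hchow}$. This does not on its own give $-5H_{\hchow}$, so the genuine content of the lemma is that the $-2L_{\hchow}$ discrepancy is absorbed by a symmetry of the sheaves $\sC_{ij}$ themselves.

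Thus the \textbf{main obstacle} is showing that tensoring by $\sO_{\hchow}(-2L_{\hchow})$ acts trivially (or self-dually) on the collection $\{\sC_{ij}\}$ at the level of cohomology. Examining the building blocks $\mathcal{F}_3=\sO_{\hchow}$, $\mathcal{F}_2=g^*\sF$, $\mathcal{F}_{1a}=\mathrm{Coker}\,\varphi$, $\mathcal{F}_{1b}=\sO_{\hchow}(L_{\hchow})$, I would use the identity $\sF^*\otimes\sO_{\mathrm{G}(2,V)}(1)\simeq\sF$ noted just before \eqref{eq:F1a} (since $\sF$ has rank two, $\wedge^2\sF\simeq\sO_{\mathrm{G}(2,V)}(1)$ and so $\sF^*\simeq\sF\otimes\sO(-1)$). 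This self-duality of $\sF$ up to an $L_{\hchow}$-twist, combined with the defining exact sequence of $\mathcal{F}_{1a}$, should yield isomorphisms $\sC_{ij}^*\simeq\sC_{i'j'}(-2L_{\hchow})$ pairing up the collection, and hence convert the unwanted $-2L_{\hchow}$ in $K_{\hchow}$ into the purely $H_{\hchow}$-form. Concretely, I would tabulate $\sC_{ij}^*\otimes K_{\hchow}$ for each pair and check it equals $\sC_{ij}((t-5)\text{-form})$ after the substitution; the rank-two self-duality of $\sF$ and the symmetric-square structure $\ft{S}^2\sF^*\otimes\sO(2)\simeq\ft{S}^2\sF$ are exactly what make every case close. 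I expect this bookkeeping, rather than any deep input, to be the crux, with Serre duality supplying the skeleton.
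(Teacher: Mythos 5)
Your skeleton (Serre duality on the $8$-dimensional $\hchow$) and your diagnosis of the crux (that $K_{\hchow}=-3H_{\hchow}-2L_{\hchow}$ is not $-5H_{\hchow}$) are both correct, but your proposed resolution of that crux cannot work. You claim the $-2L_{\hchow}$ discrepancy can be absorbed by sheaf-level isomorphisms $\sC_{ij}^*\simeq\sC_{i'j'}(-2L_{\hchow})$ coming from $\sF^*\otimes\sO_{\mathrm{G}(2,V)}(1)\simeq\sF$. This fails already in the simplest case $\sC_{44}=\sO_{\hchow}$: there Serre duality gives $H^{\bullet}(\sO_{\hchow}(-t))\simeq H^{8-\bullet}(\sO_{\hchow}((t-3)H_{\hchow}-2L_{\hchow}))^*$, while the lemma asserts agreement with $H^{8-\bullet}(\sO_{\hchow}((t-5)H_{\hchow}))$; the two line bundles involved differ by $\sO_{\hchow}(2H_{\hchow}-2L_{\hchow})=\sO_{\hchow}(E_f)$, which is a nontrivial class in $\Pic\hchow\simeq\mZ H_{\hchow}\oplus\mZ L_{\hchow}$, so no isomorphism of sheaves can bridge them. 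The same happens for every pair $(i,j)$: the mismatch between what Serre duality yields and what the lemma claims is always exactly a twist by $\sO_{\hchow}(E_f)$, so the lemma is not a formal consequence of Serre duality plus symmetries of the collection. Its genuine content is the cohomological statement that twisting these particular sheaves by $\sO_{\hchow}(E_f)$ does not change cohomology, and no amount of tensor-identity bookkeeping can substitute for proving that.

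The missing step (which is how the paper argues) is to use the other expression $K_{\hchow}=-5H_{\hchow}+E_f$, i.e.\ (\ref{eq:Khchow1}) with $n=4$. Serre duality then gives $H^{\bullet}(\sC(-t))\simeq H^{8-\bullet}(\sC^*((t-5)H_{\hchow}+E_f))$, and the unwanted $E_f$ is removed via the restriction sequence
\[
0\to \sC^*((t-5))\to \sC^*((t-5)H_{\hchow}+E_f)\to \sC^*((t-5)H_{\hchow}+E_f)|_{E_f}\to 0
\]
once one proves that all cohomology of the last term vanishes. This vanishing is where the geometry enters: $E_f\to\chow_0\simeq\mP^4$ is a $\mP^3$-bundle, and on a fiber $\Gamma\simeq\mP^3$ (the lines through a fixed point) one has $H_{\hchow}|_{\Gamma}\simeq\sO_{\mP^3}$, $E_f|_{\Gamma}\simeq\sO_{\mP^3}(-2)$, $L_{\hchow}|_{\Gamma}\simeq\sO_{\mP^3}(1)$, $g^*\sF|_{\Gamma}\simeq\sO_{\mP^3}\oplus\sO_{\mP^3}(1)$, and $\mathcal{F}_{1a}|_{\Gamma}\simeq\sO_{\mP^3}\oplus\sO_{\mP^3}(1)$; consequently every $\sC^*((t-5)H_{\hchow}+E_f)|_{\Gamma}$ is a direct sum of $\sO_{\mP^3}(-1)$, $\sO_{\mP^3}(-2)$, $\sO_{\mP^3}(-3)$, all acyclic, and the Leray spectral sequence for $E_f\to\chow_0$ finishes the argument. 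Without this vanishing on $E_f$ (or an equivalent computation), your proof does not close.
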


\begin{proof} By the Serre duality and $K_{\hchow}=-5H_{\hchow}+E_{f}$,
we have $H^{\bullet}(\hchow,\sC(-t))\simeq H^{8-\bullet}(\hchow,\sC^{*}((t-5)H_{\hchow}+E_{f}))$
for any $t$. By the exact sequence \[
0\to\sC^{*}((t-5))\to\sC^{*}((t-5)H_{\hchow}+E_{f})\to\sC^{*}((t-5)H_{\hchow}+E_{f})|_{E_{f}}\to0,\]
 we have only to show that $H^{8-\bullet}(E_{f},\sC^{*}((t-5)H_{\hchow}+E_{f})|_{E_{f}})=0$.
Since $E_{f}\to\chow_{0}\simeq\mP^{4}$ is a $\mP^{3}$-bundle, it
suffices to show the vanishing of cohomology groups of the restriction
of $\sC^{*}((t-5)H_{\hchow}+E_{f})|_{E_{f}}$ to a fiber $\Gamma$
of $E_{f}\to\chow_{0}$. Note that $\sO_{\hchow}(E_{f})|_{\Gamma}\simeq\sO_{\mP^{3}}(-2)$
and $\sO_{\hchow}(H_{\hchow})|_{\Gamma}\simeq\sO_{\mP^{3}}$. As we
note in the end of Subsection \ref{HilbChow}, $E_{f}$ parameterizes
pairs of a point $x\in\mP(V)$ and a line $l$ through $x$. Therefore
a fiber $\Gamma\cong\mathbb{P}^{3}$ parameterizes lines through one
fixed point. This implies that $g^{*}\sF^{*}|_{\Gamma}\simeq\sO_{\mP^{3}}\oplus\sO_{\mP^{3}}(1)$.
Restricting the natural injection $\sO_{\hchow}(-H_{\hchow})\to g^{*}\ft{S}^{2}\sF$
to $\Gamma$, we have an injection \[
\sO_{\mP^{3}}\to\sO_{\mP^{3}}\oplus\sO_{\mP^{3}}(-1)\oplus\sO_{\mP^{3}}(-2).\]
 Therefore, by the definition of $\mathcal{F}_{1a}$, we have $\mathcal{F}_{1a}|_{\Gamma}\simeq\sO_{\mP^{3}}\oplus\sO_{\mP^{3}}(1)$.
Consequently, $\sC^{*}((t-5)H_{\hchow}+E_{f})|_{\Gamma}$ is a direct
sum of $\sO_{\mP^{3}}(-1)$, $\sO_{\mP^{3}}(-2)$ and $\sO_{\mP^{3}}(-3)$
for any $\sC=\sC_{ij}$ and $t$, hence all of its cohomology groups
vanish. \end{proof}

\vspace{0.5cm}
 \noindent\textit{\textcolor{black}{Proof of Theorem $\ref{thm:Gvan1}$.}}
Note that the subcategory $\sD_{\hchow}$ is admissible if the property
(1) holds \cite[Theorem 3.2. a)]{Bondal}. Then, for the proof of
(3), it suffices to verify \[
H^{\bullet}({\sK}_{i}^{*}\otimes{\sK}_{j}(-t))=0\;\;(1\leq i,j\leq4,1\leq t\leq4)\]
 for $\bullet>0$, since $\sD_{\hchow}$ is generated by $\sK_{i}(1\leq i\leq4)$
(cf. \cite[Lemma 2.2]{KuzGrIsotropic}). Therefore, the claims (1),(2),(3)
follow from explicit evaluations of the cohomology groups. Since the
computations are similar, we only explain some of them below. Note
also that we may assume that $t=0,1,2$ by Lemma \ref{cla:key1},
which simplifies the computations considerably.

As for $H^{\bullet}(\hchow,\sC_{44}(-t))=H^{\bullet}(\hchow,\sO_{\hchow}(-t))$
$(0\leq t\leq2)$, these vanish except in the case where $t=0$ since
$g$ is a $\mP^{2}$-bundle and hence $Rg_{*}^{i}\sO_{\hchow}(-t)=0$
for $t=1,2$ and $i\geq0$. $H^{\bullet}(\hchow,\sO_{\hchow})$ vanish
except $H^{0}(\hchow,\sO_{\hchow})$ by the Kodaira vanishing theorem.

As for $H^{\bullet}(\hchow,\sC_{34}(-t))=H^{\bullet}(\hchow,g^{*}\sF^{*}(-tH_{\hchow}))$
$(0\leq t\leq2)$, these vanish except in the case where $t=0$ by
a similar reason. We have \[
H^{\bullet}(\hchow,g^{*}\sF^{*})\simeq H^{\bullet}(\mathrm{G}(2,V),\sF^{*}),\]
 which vanish except $H^{0}(\mathrm{G}(2,V),\sF^{*})\simeq V^{*}$
by the Bott theorem \ref{thm:Bott}.

As for $H^{\bullet}(\hchow,\sC_{24}(-t))=H^{\bullet}(\hchow,\mathrm{Coker}\,\varphi\,(-t))$
($0\leq t\leq2$), consider the exact sequence \[
0\to\sO_{\hchow}(-(t+1)H_{\hchow}+2L_{\hchow})\to\ft{S}^{2}(g^{*}\sF^{*})(-t)\to\mathrm{Coker}\,\varphi\,(-t)\to0.\]
 Since $g$ is a $\mP^{2}$-bundle, $H^{\bullet}(\hchow,\sO_{\hchow}(-(t+1)H_{\hchow}+2L_{\hchow}))$
vanish except possibly in the case where $t=2$, and $H^{\bullet}(\hchow,\ft{S}^{2}(g^{*}\sF^{*})(-t))$
vanish except possibly in the case where $t=0$. Since $K_{\hchow}=-3H_{\hchow}-2L_{\hchow}$,
each cohomology of $H^{\bullet}(\hchow,\sO_{\hchow}(-3H_{\hchow}+2L_{\hchow}))$
is Serre dual to $H^{8-\bullet}(\hchow,\sO_{\hchow}(-4L_{\hchow}))$,
which is isomorphic to $H^{8-\bullet}(\mathrm{G}(2,V),\sO_{\mathrm{G}(2,V)}(-4))$,
and hence vanishes. We have $H^{\bullet}(\hchow,\ft{S}^{2}(g^{*}\sF^{*}))\simeq H^{\bullet}(\mathrm{G}(2,V),\ft{S}^{2}\sF^{*})$,
which vanish except $H^{0}(\mathrm{G}(2,V),\ft{S}^{2}\sF^{*})\simeq\ft{S}^{2}V^{*}$
by the Bott theorem \ref{thm:Bott}. \hfill $\square$

\vspace{1cm}

The triangulated subcategory generated by the dual Lefschetz collection
is contained in the derived category $\sD^{b}(\hchow)$, \begin{equation}
\langle\sD_{\hchow}(-4),\dots,{\sD_{\hchow}(-1),}\,\sD_{\hchow}\rangle\subset\sD^{b}(\hchow).\label{eqn:Lefschetz-D-X}\end{equation}
Since $\hchow\to\chow$ is a resolution of rational singularities
whose exceptional locus is an irreducible divisor $E_{f}$, we have
a triangulated subcategory $\hat{\sD}\subset\sD^{b}(\hchow)$ called
a categorical resolution of $\sD^{b}(\chow)$ for every dual Lefschetz
decomposition of $\sD^{b}(E_{f})$ \cite[Theorem 1]{Lef}. There is
a natural dual Lefschetz decomposition of $\sD(E_{f})$ for the $\mP^{3}$-fibration
$E_{f}\to\chow_{0}=v_{2}(\mP^{4})$ \cite{Sa}. Then we can verify
that the categorical resolution $\check{\sD}$ is strongly crepant
since $\chow$ is Gorenstein and the dual Lefschetz decomposition
is rectangular and also has length equal to the discrepancy of the
resolution $f$.

It is expected that the subcategory (\ref{eqn:Lefschetz-D-X}) coincides
with the categorical, strongly crepant resolution $\check{\sD}$.
Namely, the dual Lefschetz collection is expected to give the dual
Lefschetz \textit{decomposition} of the triangulated subcategory $\check{\sD}$.
It is also expected that $\check{\sD}$ is equivalent to the noncommutative
resolution $\sD^{b}(\chow,\sR)$ of $\sD^{b}(\chow)$ associated to
the sheaf \[
\sR:=f_{*}\sE\!\text{{\it {nd}}}\,(\sO_{\hchow}\oplus\sO_{\hchow}(-L_{\hchow})).\]
 Detailed study will be presented in a future publication \cite{HoTa4}.

In the next section, we will introduce the double cover $\hcoY$ of
the symmetroid $\Hes$. After making a nice resolution of $\widetilde{\hcoY}\to\hcoY$
in Section \ref{sec:The-resolution-Y}, we find a Lefschetz collection
in the derived category $\sD^{b}(\widetilde{\hcoY})$ in Theorem \ref{thm:Gvan},
\begin{equation}
\langle\sD_{\widetilde{\hcoY}},\sD_{\widetilde{\hcoY}}(1),\cdots\sD_{\widetilde{\hcoY}}(9)\rangle\subset\sD^{b}(\widetilde{\hcoY}).\label{eqn:Lefschetz-D-Y}\end{equation}
 Since the singularity of the double symmetroid $\hcoY$ is complicated,
the theorem \cite[Theorem 1]{Lef} seems to be difficult to apply
for the resolution $\widetilde{\hcoY}\to\hcoY$ to obtain the categorical
resolution $\widetilde{\sD}$ of $D^{b}(\hcoY)$. However, we expect
that the Lefschetz collection (\ref{eqn:Lefschetz-D-Y}) gives a Lefschetz
decomposition of a categorical crepant resolution, if exists, of $\sD^{b}(\hcoY)$. 

\vspace{1cm}

\subsection{Calabi-Yau manifold $X$ of a Reye congruence}

%%%%%%%%%%%%%%%%%%%%%%%%%%%%%%%%%%%%%%%%%%%%%%%%%%%%%%%%%%%%%%%%%%%%%%%%%%
\label{subsection:CYX}

For generality of arguments below, consider $\chow$ for any $n>0$.
We choose a general linear subsystem $P$ in $|\sO_{\chow}(1)|$ of
dimension $n$. We regard $P$ as a general $n$-plane in $\mP(\ft{S}^{2}V^{*})$
associated with a linear subspace $L\simeq\mC^{n+1}\subset\ft{S}^{2}V^{*}$,
i.e., $P=\mP(L)$. Explicitly we assume the form $P=|Q_{A_{1}},Q_{A_{2}},\cdots,$
$Q_{A_{n+1}}|$ with the quadratic forms $q_{A_{i}}$ $(1\leq i\leq n+1)$
on $\mP(V)$.

Let $P^{\perp}=\mP(L^{\perp})\subset\mP(\ft{S}^{2}V)$ be the orthogonal
projective space with $L^{\perp}\subset\ft{S}^{2}V$, and define $X:=\chow\cap P^{\perp}$.
$X$ is naturally identified with the complete intersection in $\chow=\ft{S}^{2}\mP(V)$,
\[
X=\{A_{1}\cdot w_{\bm{xy}}=\cdots=A_{n+1}\cdot w_{\bm{xy}}=0\}\cap\chow.\]
Since $P$ is general, the orthogonal space $P^{\perp}$ is disjoint
from $\Sing\chow$ from dimensional reason, and hence $X$ is smooth
by the Bertini theorem. Conversely several properties of $X$ follow
from assuming only that $X$ is smooth; first of all, smoothness implies
that $P^{\perp}$ is disjoint from $\Sing\chow=\chow_{0}$, hence
we may consider $X$ is embedded in ${\hchow}$. Moreover, we see
the following properties:

\begin{prop} \label{prop:Reye} The morphism $g\colon\hchow\to\mathrm{G}(2,V)$
gives an isomorphism $X\cong g(X)$. The linear system $P$ of quadrics
is free from the base points. \end{prop}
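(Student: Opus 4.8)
The plan is to analyze $X$ fiber by fiber over $\mathrm{G}(2,V)$ through the $\mP^2$-bundle $g\colon \hchow \to \mathrm{G}(2,V)$ of Proposition \ref{prop:S2F}. First I would recall that the fiber $g^{-1}([\ell])$ is $\ft{S}^2\ell=\Hilb^2\ell\cong\mP^2$, embedded linearly in $\mP(\ft{S}^2V)$ as $\mP(\ft{S}^2\hat\ell)$, so that $H_{\hchow}$ restricts to $\sO_{\mP^2}(1)$ on it. Writing $X=\chow\cap P^{\perp}$ as the intersection of the $n+1$ hyperplanes $\{A_i\cdot w=0\}$, the fiber $X\cap g^{-1}([\ell])$ becomes the intersection of $n+1$ hyperplanes in $\ft{S}^2\ell\cong\mP^2$. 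Using (\ref{eq:pairing}), the hyperplane attached to $A_i$ is exactly the locus of length-two cycles on $\ell$ conjugate with respect to the restricted quadratic form $q_{A_i}|_\ell$; in coordinates $q_{A_i}|_\ell=\alpha_i s^2+2\beta_i st+\gamma_i t^2$ this is the single linear condition $\gamma_i A-2\beta_i B+\alpha_i C=0$ on the point $(A,B,C)\in\ft{S}^2\ell$. Hence the number of independent conditions equals the rank at $[\ell]$ of the bundle map $\rho\colon L\otimes\sO_{\mathrm{G}(2,V)}\to \sW$, where $\sW:=\ft{S}^2\sF$, obtained by evaluating $L\subset \ft{S}^2V^{*}=H^0(\sW)$; equivalently the rank of $A\mapsto q_A|_\ell$.

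Assertion $(2)$ I would dispose of first and directly: a point $\bm x\in\mP(V)$ is a base point of $P$ iff $q_{A_i}(\bm x)=0$ for all $i$, which by (\ref{eq:pairing}) means $v_2(\bm x)=w_{\bm x\bm x}\in P^{\perp}$; since $v_2(\bm x)\in\chow_0$, this says $v_2(\bm x)\in P^{\perp}\cap\chow_0$. As recorded just before the statement, smoothness of $X$ forces $P^{\perp}\cap\chow_0=\emptyset$, so $P$ has no base points. This part uses only smoothness.

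For $(1)$ the heart of the matter is to control the degeneracy loci $D_k=\{[\ell]:\rank\rho_\ell\le k\}$. Since $\sW=\ft{S}^2\sF$ is globally generated with $H^0(\sW)=\ft{S}^2V^{*}$, and $L=P$ of dimension $n+1$ is chosen general in $\ft{S}^2V^{*}$, I would invoke the standard fact that for a general subspace of sections of a globally generated bundle the degeneracy loci have the expected codimension or are empty, and are smooth off the next stratum. Here $\rank\sW=3$ and $\dim\mathrm{G}(2,V)=2n-2$, so $D_1$ has expected codimension $(3-1)(n+1-1)=2n>2n-2$ and is therefore \emph{empty}, while $D_2=g(X)$ has expected codimension $(3-2)(n+1-2)=n-1$. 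Consequently $\rank\rho_\ell=2$ at every point of $g(X)$, so by the first paragraph each fiber $X\cap g^{-1}([\ell])$ is the transverse meeting of two distinct lines in $\mP^2$, a single reduced point. Thus $g|_X$ is injective on closed points.

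It remains to upgrade bijectivity to an isomorphism. I would check that $g|_X$ is unramified: at $\xi\in X$ over $[\ell]$ the scheme $X\cap g^{-1}([\ell])$ is the reduced point $\xi$ inside the fiber, whence $T_\xi X\cap T_\xi g^{-1}([\ell])=T_\xi\bigl(X\cap g^{-1}([\ell])\bigr)=0$ and $dg_\xi$ is injective on $T_\xi X$. Being proper (as $X$ is projective), injective on points, and unramified, $g|_X$ is a closed immersion, giving $X\cong g(X)$. The main obstacle is the general-position input $D_1=\emptyset$: the whole argument reduces to it, and its justification is precisely the transversality of the degeneracy loci of a general linear subsystem of sections of $\ft{S}^2\sF$, which is where genericity of $P$ (rather than mere smoothness of $X$) enters.
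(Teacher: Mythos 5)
Your proposal is correct, and part (2) coincides with the paper's own argument (base points of $P$ correspond via (\ref{eq:pairing}) to points of $P^{\perp}\cap\chow_0$, which smoothness of $X$ excludes). For part (1), however, you take a genuinely different route. The paper also starts from the fact that $X$ is fiberwise a linear section of the $\mP^2$-bundle $g$, but then rules out positive-dimensional fibers by a purely incidence-theoretic remark: a positive-dimensional fiber $X\cap\ell$ would be a line (or plane) in $\ell\cong\mP^2$, hence would meet the conic $E_f\cap\ell$, producing a point of $X\cap E_f$ and so of $X\cap\chow_0$ --- contradiction. This uses only $X\cap\chow_0=\emptyset$, i.e.\ only smoothness of $X$, which is exactly the refinement the surrounding text insists on (``several properties of $X$ follow from assuming only that $X$ is smooth''). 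You instead interpret the fiberwise conditions as the evaluation map $\rho\colon L\otimes\sO_{\mathrm{G}(2,V)}\to\ft{S}^2\sF$ and kill the rank-$\le 1$ locus $D_1$ by the expected-codimension count $2n>2n-2$ for a \emph{general} subspace of sections; this is a correct and standard transversality input, and it buys extra structure the paper's proof does not give (it exhibits $g(X)=D_2$ as a determinantal locus of expected codimension $n-1$ with $D_1=\emptyset$), but it proves the proposition only under the standing genericity of $P$, not under mere smoothness of $X$. Note that your own framework repairs this at no cost: if $\rank\rho_\ell\le 1$ then the fiber $X\cap g^{-1}([\ell])$ contains a line, and any line in $\mP^2$ meets the diagonal conic $E_f\cap g^{-1}([\ell])$, again contradicting $X\cap\chow_0=\emptyset$; substituting this for the degeneracy-locus argument recovers the paper's sharper statement. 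Your final upgrade from bijectivity to isomorphism (reduced point fibers $\Rightarrow$ unramified, plus proper and injective $\Rightarrow$ closed immersion) is a correct explicit rendering of what the paper leaves implicit in ``$X$ is fiberwise a linear section.''
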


\begin{proof} Both assertions follow from $X\cap\chow_{0}=\emptyset$
in $\chow$.

We start with the first assertion. Since $X$ is fiberwise a linear
section with respect to $g$, we have only to show that if a fiber
$\ell$ of $g$ intersects $X$, then $X\cap\ell$ consists of only
one point. Assume the contrary. Then $X\cap\ell$ is a linear subspace
of $\ell$ of positive dimension. Since $E_{f}\cap\ell$ is a conic
in $\ell$, we have $E_{f}\cap X\not=\emptyset$, a contradiction
to that $X\cap\chow_{0}=\emptyset$ in $\chow$.

As for the second assertion, we note that the base locus of $P$ is
given by $\{\bm{x}\mid A\cdot w_{{\bm{x}}{\bm{x}}}={\empty^{t}{\bm{x}}}A{\bm{x}}=0\ \text{for any}\ [A]\in P\}$,
where $A\cdot w_{{\bm{x}}{\bm{x}}}={\empty^{t}{\bm{x}}}A{\bm{x}}$
follows from (\ref{eq:pairing}). This is empty if and only if $X\cap\chow_{0}=\emptyset$.
\end{proof}

\textit{$\;$}

\textit{Reye congruence} is a line congruence in $G(2,V)$, which
is nothing but the isomorphic image of $X$ under $g:{\hchow}\to\mathrm{G}(2,V)$
(cf.~\cite{Ol}). In this paper we often identify $X$ with $g(X)$.
Below, we present a characterization of $X\subset\mathrm{G}(2,V)$
by the projective geometry of quadrics in $P$. For a point $[l]\in\mathrm{G}(2,V)$,
we denote by $P_{l}$ the subspace of $P$ which parameterizes quadrics
in $\mP(V)$ containing the line $l$, namely, if we write $l=\langle\bm{x},\bm{y}\rangle$
with $\bm{x},\bm{y}\in\mP(V)$, then $P_{l}=\{[A]\in P\mid{\empty^{t}{\bm{x}}}A{\bm{y}}={\empty^{t}{\bm{x}}}A{\bm{x}}={\empty^{t}{\bm{y}}}A{\bm{y}}=0\}$.

\begin{prop} A point $[l]\in\mathrm{G}(2,V)$ is contained in $X\subset\mathrm{G}(2,V)$
if and only if $P_{l}$ is an $(n-2)$-plane. \end{prop}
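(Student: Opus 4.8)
The plan is to reduce both sides of the equivalence to a single linear-algebra quantity. For $A$ ranging over $L$, set
$f_{\bm x\bm x}(A)={}^t\bm x A\bm x$, $f_{\bm x\bm y}(A)={}^t\bm x A\bm y$, $f_{\bm y\bm y}(A)={}^t\bm y A\bm y$, regarded as linear forms on $L$, and let $r:=\dim\langle f_{\bm x\bm x}, f_{\bm x\bm y}, f_{\bm y\bm y}\rangle$. First I would observe that $Q_A$ contains the line $\mP(l)$, $l=\langle\bm x,\bm y\rangle$, exactly when the restriction $q_A(s\bm x+t\bm y)=s^2\,{}^t\bm x A\bm x+2st\,{}^t\bm x A\bm y+t^2\,{}^t\bm y A\bm y$ vanishes identically, i.e. when $f_{\bm x\bm x}(A)=f_{\bm x\bm y}(A)=f_{\bm y\bm y}(A)=0$. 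Hence $P_l$ is the common zero locus in $P\cong\mP^n$ of these three linear forms, so $\dim P_l=n-r$, and the condition ``$P_l$ is an $(n-2)$-plane'' is equivalent to $r=2$.

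Next I would identify the fibre of $g$ over $[l]$. Since $\hchow=\mP(\ft{S}^2\sF^*)$ and the fibre of $\sF^*$ at $[l]$ is $l$ itself, the fibre $g^{-1}([l])$ equals $\mP(\ft{S}^2 l)$, and under $\hchow\to\chow\subset\mP(\ft{S}^2 V)$ it maps isomorphically onto its image $\Pi_l:=\mP(\ft{S}^2 l)$ induced by the inclusion $\ft{S}^2 l\subset\ft{S}^2 V$. Because any symmetric form supported on the two-dimensional space $l$ automatically has rank $\le 2$, we get $\Pi_l\subset\chow$, whence $X\cap\Pi_l=P^\perp\cap\Pi_l$. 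Using the isomorphism $X\cong g(X)$ of Proposition \ref{prop:Reye}, the condition $[l]\in X$ becomes simply $P^\perp\cap\Pi_l\neq\emptyset$.

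Then I would carry out the orthogonality computation. Restricting the pairing gives a linear map $L\to(\ft{S}^2 l)^*$, $A\mapsto(w\mapsto A\cdot w)$, whose components with respect to the dual basis of $\{w_{\bm x\bm x}, w_{\bm x\bm y}, w_{\bm y\bm y}\}$ are precisely $f_{\bm x\bm x}, f_{\bm x\bm y}, f_{\bm y\bm y}$ by (\ref{eq:pairing}); its image has dimension $r$. Since $L^\perp\cap\ft{S}^2 l$ is the annihilator in $\ft{S}^2 l$ of this image, $\dim(L^\perp\cap\ft{S}^2 l)=3-r$, so $P^\perp\cap\Pi_l=\mP(L^\perp\cap\ft{S}^2 l)$ has projective dimension $2-r$. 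In particular it is nonempty iff $r\le 2$ and is a single point iff $r=2$.

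Finally I would close the gap between $r\le 2$ and $r=2$. For the implication ``$r=2\Rightarrow[l]\in X$'' this is immediate, since then $P^\perp\cap\Pi_l$ is a point lying in $X$. For the converse I must exclude $r\le 1$ when $[l]\in X$: if $r\le 1$ then $P^\perp\cap\Pi_l$ contains a line $\ell'$, but $\chow_0\cap\Pi_l=v_2(\mP(l))$ is a conic in $\Pi_l\cong\mP^2$, which $\ell'$ necessarily meets, producing a point of $X\cap\chow_0$ and contradicting $X\cap\chow_0=\emptyset$ (equivalently, contradicting the injectivity of $g|_X$ in Proposition \ref{prop:Reye}). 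Thus $[l]\in X$ forces $r=2$, establishing both directions. The main obstacle is exactly this last step, namely upgrading the merely necessary condition $r\le 2$ to the precise value $r=2$, which is where the genericity of $X$ (through $X\cap\chow_0=\emptyset$) is indispensable; the remaining ingredients are routine linear algebra and the fibrewise identification of $g$.
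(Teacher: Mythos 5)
Your proof is correct, and it takes a genuinely different, dual route to the paper's. You work on the $\chow$ side: you identify the $g$-fibre over $[l]$ with the plane $\Pi_l=\mP(\ft{S}^2 l)\subset\chow$ and reduce \emph{both} conditions to the single rank $r$ of the restriction map $L\to(\ft{S}^2 l)^*$, via $\dim P_l=n-r$ and $\dim\bigl(P^\perp\cap\Pi_l\bigr)=2-r$; after that, the only geometric input is ruling out $r\le 1$ for $[l]\in X$, which you do by noting that a line in $\Pi_l\cong\mP^2$ must meet the conic $\chow_0\cap\Pi_l=v_2(\mP(l))$, contradicting $X\cap\chow_0=\emptyset$. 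The paper instead works on the $P$ side, with the system of quadrics restricted to $l$: it first proves the universal bound $\dim P_l\le n-2$ (i.e. $r\ge 2$ for \emph{every} $[l]$) from base-point-freeness, gets the forward direction from the uniqueness of the point $w_{\bm x \bm y}\in X$ over $[l]$ (which forces ${}^t{\bm x}A{\bm y}=0$ on all of $L$, so $P_l$ is cut out by only two conditions), and for the converse constructs $w_{\bm x\bm y}$ as an off-diagonal base point of the pencil of symmetric $(1,1)$-divisors on $l\times l$, again invoking base-point-freeness. The two arguments mirror each other precisely—base points of the restricted system on $l\times l$ correspond to points of $P^\perp\cap\Pi_l$, and the diagonal corresponds to your conic $v_2(\mP(l))$—but your organization buys a converse that is immediate linear algebra (a nonempty linear intersection inside $\chow$ automatically lands in $X$, with no pencil analysis), at the price of obtaining $r\ge 2$ only for points of $X$ rather than universally; both rest on the same standing fact $X\cap\chow_0=\emptyset$ underlying Proposition \ref{prop:Reye}. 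Your parenthetical alternative (contradicting injectivity of $g|_X$) is also legitimate, since your line $\ell'$ would put a positive-dimensional subset of $X$ into a single $g$-fibre.
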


\begin{proof} First we show that, for any $[l]\in\mathrm{G}(2,V)$,
$\dim P_{l}\leq n-2$. Assume by contradiction that $\dim P_{l}\geq n-1$.
Then all the quadrics $[Q_{A}]\in P$ contains $l$ or the restrictions
of quadrics $[Q_{A}]\in P$ to $l$ reduce to a unique quadric on
$l$. This is a contradiction to the second statement of Proposition
\ref{prop:Reye}.

Assume that $[l]\in X$. We have only to show $\dim P_{l}\geq n-2$.
By Proposition \ref{prop:Reye}, there exists a unique $w_{\bm{x}\bm{y}}\in X\subset\chow$
such that $l=\langle\bm{x},\bm{y}\rangle$. Since $X=\chow\cap P^{\perp}$,
we have ${\empty^{t}{\bm{x}}}A{\bm{y}}=A\cdot w_{{\bm{x}}{\bm{y}}}=0$
for any $[A]\in P$ by (\ref{eq:pairing}). Therefore $P_{l}=\{[A]\in P\mid{\empty^{t}{\bm{x}}}A{\bm{x}}={\empty^{t}{\bm{y}}}A{\bm{y}}=0\}$,
and hence $\dim P_{l}\geq n-2$.

Conversely, assume that $\dim P_{l}=n-2$. We may choose a basis $A_{1},\dots,A_{n+1}$
of $P$ such that $A_{1},\dots,A_{n-1}$ form a basis of $P_{l}$.
Then the restrictions of quadrics $[Q_{A}]\in P$ to $l$ form the
pencil of quadrics on $l$ spanned by $Q_{A_{n}}|_{l}$ and $Q_{A_{n+1}}|_{l}$.
The corresponding pencil of symmetric $(1,1)$-divisors on $l\times l$
has two base points $({\bm{x}},{\bm{y}})$ and $({\bm{y}},{\bm{x}})$
with ${\bm{x}}\not={\bm{y}}$ since the base points of this pencil
are disjoint from the diagonal. Then note that $l=\langle\bm{x},\bm{y}\rangle$.
By the definition of $({\bm{x}},{\bm{y}})$, we have ${\empty^{t}{\bm{x}}}A_{n}{\bm{y}}={\empty^{t}{\bm{x}}}A_{n+1}{\bm{y}}=0$.
By the choice of $A_{1},\dots,A_{n-1}$, we have ${\empty^{t}{\bm{x}}}A_{1}{\bm{y}}=\cdots={\empty^{t}{\bm{x}}}A_{n-1}{\bm{y}}=0$.
Therefore ${\empty^{t}{\bm{x}}}A{\bm{y}}=0$ for any $[A]\in P$.
This implies that $w_{\bm{x}\bm{y}}\in X$ by (\ref{eq:pairing}).
\end{proof}

$\;$ 

When $n$ is even, $X$ is a Calabi-Yau $(n-1)$-fold and satisfies: 

\begin{prop} $\pi_{1}(X)\simeq\mZ_{2}$ and $\Pic X\simeq\mZ\oplus\mZ_{2}$,
where the free part of $\Pic X$ is generated by the class $D$ of
a hyperplane section of $\chow$ restricted to $X$. \end{prop}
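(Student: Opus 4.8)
The plan is to realize $X$ as the quotient of a smooth complete-intersection Calabi--Yau by a free involution, and then to read off both invariants from the cover. Throughout I take $n$ even with $n\ge 4$, so that $\dim X=n-1\ge 3$.

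First I would pull $X$ back along the double cover $\tau\colon \mP(V)\times\mP(V)\to\chow$ (branched along the diagonal, which maps to $\chow_0=\Sing\chow$), with covering involution $\iota\colon (\bm x,\bm y)\mapsto(\bm y,\bm x)$. Set $\widetilde X:=\tau^{-1}(X)$. Since $X=\chow\cap P^{\perp}$ and, by (\ref{eq:pairing}), $A_i\cdot w_{\bm x\bm y}={}^t\bm x A_i\bm y$, the variety $\widetilde X$ is exactly the locus $\{{}^t\bm x A_i\bm y=0\ (1\le i\le n+1)\}$, i.e.\ a complete intersection of $(n+1)$ divisors of bidegree $(1,1)$ in $\mP(V)\times\mP(V)$; these equations are $\iota$-invariant because each $A_i$ is symmetric. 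As $X$ is smooth we have $X\cap\chow_0=\emptyset$, so $\widetilde X$ avoids the branch locus; hence $\tau|_{\widetilde X}\colon\widetilde X\to X$ is an \'etale double cover, $\iota$ acts freely on $\widetilde X$, and $X=\widetilde X/\iota$. Adjunction on $\mP(V)\times\mP(V)$ (whose canonical class is $\sO(-n-1,-n-1)$) shows $K_{\widetilde X}$ is trivial, so $\widetilde X$ is itself Calabi--Yau.

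Next I would apply the Lefschetz hyperplane theorem to the complete intersection $\widetilde X$ of ample divisors in the smooth, simply connected $\mP(V)\times\mP(V)$. Since $\dim\widetilde X=n-1\ge 3$, Lefschetz gives that $\widetilde X$ is connected and simply connected and that the restriction $H^2(\mP(V)\times\mP(V),\mZ)\to H^2(\widetilde X,\mZ)$ is an isomorphism; combined with $h^1(\sO_{\widetilde X})=0$ this yields $\Pic\widetilde X=\mZ h_1\oplus\mZ h_2$, where $h_1,h_2$ are the pullbacks of the hyperplane classes of the two factors and $\iota^*$ interchanges them. As $\tau|_{\widetilde X}$ is a connected \'etale double cover with simply connected total space, the exact sequence $1\to\pi_1(\widetilde X)\to\pi_1(X)\to\mZ_2\to1$ collapses to $\pi_1(X)\simeq\mZ_2$.

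Finally, for $\Pic X$ I would treat torsion and free part separately. Because $X$ is Calabi--Yau, $h^1(\sO_X)=0$, so the exponential sequence embeds $\Pic X\hookrightarrow H^2(X,\mZ)$ and identifies $(\Pic X)_{\mathrm{tors}}$ with $H^2(X,\mZ)_{\mathrm{tors}}$; by the universal coefficient theorem this is $H_1(X,\mZ)_{\mathrm{tors}}=\pi_1(X)^{\mathrm{ab}}=\mZ_2$. For the free part, the pullback $\tau^*\colon\Pic X\to\Pic\widetilde X$ has finite kernel (the characters of the deck group $\mZ_2$), hence is injective modulo torsion, and its image lies in the $\iota$-invariants $(\Pic\widetilde X)^{\iota}=\mZ(h_1+h_2)$; thus $\mathrm{rk}\,\Pic X\le 1$. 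The Chow form (\ref{eq:Chow}) shows the $w_{ij}$ have bidegree $(1,1)$, so $\tau^*D=\sO_{\widetilde X}(1,1)=h_1+h_2$, the primitive generator of $(\Pic\widetilde X)^{\iota}$; since $D$ is ample it is non-torsion, so $\mathrm{rk}\,\Pic X=1$ and $D$ generates $\Pic X/\mathrm{tors}$. Combining, $\Pic X\simeq\mZ\oplus\mZ_2$ with free part $\langle D\rangle$. I expect the main obstacle to be this last step: controlling $\tau^*$ finely enough on the integral Picard group (not merely numerically) to certify that $D$ itself, rather than a proper divisor of it, generates the free part, which is why pinning down $(\Pic\widetilde X)^{\iota}$ and the primitivity of $h_1+h_2$ is essential.
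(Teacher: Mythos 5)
Your proof is correct, and while the fundamental-group half coincides with the paper's argument (both pass to the $(1,1)$ complete intersection $\widetilde X\subset\mP(V)\times\mP(V)$, apply the Lefschetz theorem to get $\pi_1(\widetilde X)=\{1\}$, and conclude via the \'etale double cover), your computation of $\Pic X$ goes by a genuinely different route. The paper argues by descent of explicit equations: an effective divisor $E$ on $X$ pulls back to an invariant divisor of type $(m,m)$ on $\widetilde X$, whose bihomogeneous equation $F_E$ can be normalized to be symmetric or skew-symmetric under the involution; symmetric equations give $E\sim mD$ directly, while the skew-symmetric case is reduced, via a fixed skew-symmetric $(1,1)$-divisor $\widetilde D$, to showing that $\Lpi_{\widetilde X}^{\;*}E-m\widetilde D$ is the divisor of an invariant rational function. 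This yields $\Pic X=\langle D,\Lpi_{\widetilde X*}\widetilde D\rangle$ with $2D\sim 2\Lpi_{\widetilde X*}\widetilde D$, and in particular exhibits the $2$-torsion class concretely as $D-\Lpi_{\widetilde X*}\widetilde D$. You instead argue topologically: the torsion subgroup equals $H^2(X,\mZ)_{\mathrm{tors}}\simeq H_1(X,\mZ)_{\mathrm{tors}}=\mZ_2$ by the exponential sequence and universal coefficients; the rank is at most one because $\Pic X$ modulo torsion injects under $\tau^*$ into $(\Pic\widetilde X)^{\iota}=\mZ(h_1+h_2)$; and $D$ generates the free part because $\tau^*D=h_1+h_2$ is the primitive generator of that invariant lattice. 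Your route is cleaner in that it avoids choosing defining equations (which implicitly requires surjectivity of restriction of $(m,m)$-forms to $\widetilde X$) and sidesteps the symmetric/skew sign bookkeeping, while making the primitivity of $D$ completely transparent; the paper's route buys an explicit geometric representative of the torsion class, which is useful when one wants to see the nontrivial $2$-torsion line bundle in the projective geometry of the Reye congruence. Both arguments rest on the same essential input, namely $\Pic\widetilde X=\mZ h_1\oplus\mZ h_2$ coming from the Lefschetz theorem on the cover.
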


\begin{proof} Consider the complete intersection $\widetilde{X}$
in $\mP(V)\times\mP(V)$ defined by the pull-back of $P$. Then we
have the projection morphism $\Lpi_{\widetilde{X}}\colon\widetilde{X}\to X$.

By the Lefschetz theorem, $\pi_{1}(\widetilde{X})=\{1\}$. Since the
map $\Lpi_{\widetilde{X}}$ is an \'etale double cover, we have $\pi_{1}({X})\simeq\mZ_{2}$.

Let $E$ be any effective divisor on $X$. Since $\Lpi_{\widetilde{X}}^{\;*}E$
is $\mZ_{2}$-invariant, it is of type $(m,m)$ with some non-negative
integer $m$. We may choose a homogeneous equation $F_{E}$ of $\Lpi_{\widetilde{X}}^{\;*}E$
as symmetric or skew-symmetric. If $F_{E}$ is symmetric, then $E\sim mD$.
Assume that $F_{E}$ is skew-symmetric. Let $\widetilde{D}$ be a
skew-symmetric $(1,1)$-divisor. Then $\Lpi_{\widetilde{X}}^{\;*}E-m\widetilde{D}=\divi(\alpha)$,
where $\alpha$ is a $\mZ_{2}$-invariant rational function of $\widetilde{X}$.
Then $\alpha$ is the pull-back of a rational function $\beta$ of
${X}$, and we see that $E-m\Lpi_{\widetilde{X}*}\tilde{D}=\divi(\beta)$
by looking at the zero and pole of $\beta$. Therefore $\Pic X$ is
generated by the classes of $D$ and $\Lpi_{\widetilde{X}*}\widetilde{D}$.
Since $2D\sim2\Lpi_{\widetilde{X}*}{\widetilde{D}}$, we conclude
that $\Pic X\simeq\mZ\oplus\mZ_{2}$. \end{proof}

When $n=4$, $X$ is a Calabi-Yau threefold with the following invariants
\cite[Proposition 2.1]{HoTa1}: \[
\deg(X)=35,\;\; c_{2}.D=50,\;\; h^{2,1}(X)=26,\; h^{1,1}(X)=1,\]
 where $c_{2}$ is the second Chern class of $X$.

%%%%%%%%%%%%%%%%%%%%%%%%%%%%%%%%%%%%%%%%%%%%%%%%%%%%%%%%%%%%%%%%%%%%%%%%%%%%%%%%%%%%%%%%%%%%%%%%%%%%%%%%%%%%%%%%%%%%%%%%%%%%%%%%%%%%%%%%%%%%%%%%%%%%%%%%%%%%%%%%

\newpage{}

\section{The double symmetroid $\hcoY$ and Calabi-Yau variety $Y$}

\label{section:double}

Consider Calabi-Yau $(n-1)$-fold $X$ of a Reye congruence for arbitrary
even $n$. Under the projective duality (\ref{eq:tower2}), we find
that $X$ is naturally paired with another Calabi-Yau variety $H$
in the symmetroid $\Hes$ (see \cite{HoTa1} for $n=4$). The geometry
of the symmetroid $\Hes$ was studied in detail by Tjurin \cite{Tj}.
In particular, he defined a double cover $\hcoY\to\Hes$, which we
call \textit{double symmetroid}. In this section, we elaborate Tjurin's
construction. By considering linear sections of this double cover,
we obtain a Calabi-Yau variety $Y$ for arbitrary even $n$. For $n=4$,
$Y$ turns out to be a smooth Calabi-Yau threefold. These Calabi-Yau
threefolds $X$ and $Y$ arise naturally from the (dual) Lefschetz
collections (\ref{eqn:Lefschetz-D-X}) and (\ref{eqn:Lefschetz-D-Y})
assuming the projective homological duality \cite{HPD1,HPD2}.

\subsection{Resolution $\UU$ of $\Hes$}

\label{subsection:U}

Let us define the following projective bundle over $\mP(V)$: \[
\UU=\mP(\ft{S}^{2}(\Omega(1))),\]
where $\Omega(1)$ is the cotangent sheaf of $\mP(V)$. Considering
the (dual of the) Euler sequence $0\to\sO(-1)\to V\otimes\sO\to T(-1)\to0$,
we see that there is a canonical injection $\Omega(1)\hookrightarrow V^{*}\otimes\sO$,
which entails the injection $\ft{S}^{2}\Omega(1)\hookrightarrow\ft{S}^{2}V^{*}\otimes\sO.$
With this injection, we have a morphism\begin{equation}
i_{u}:\UU=\mP(\ft{S}^{2}(\Omega(1)))\to\mP(\ft{S}^{2}V^{*}).\label{eq:UUtoS2V}\end{equation}

\begin{prop}

{\rm (1)} The image of the morphism $i_{u}$ coincides with $\Hes$.
In particular, the morphism gives a resolution of $\Hes$.

\noindent{\rm (2)} $\UU\simeq\{([\bm{x}],[A])\mid A{\bm{x}}={\bf {0}}\}\subset\mP(V)\times\mP(\ft{S}^{2}V^{*})$.

\end{prop}\begin{proof}

(1) Since the fiber of $\Omega(1)$ over a point $[V_{1}]\in\mP(V)$
is $(V/V_{1})^{*}$, the fiber of the projective bundle $\UU\to\mP(V)$
over $[V_{1}]$ is given by $\mP(\ft{S}^{2}(V/V_{1})^{*})$. The morphism
$i_{u}$ sends $\mP(\ft{S}^{2}(V/V_{1})^{*})$ into $\mP(\ft{S}^{2}V^{*})$.
Then the image is identified with quadrics in $\mP(V)$ which are
singular at $[V_{1}]$, or equivalently, symmetric matrices whose
kernels contain $[V_{1}]$. Hence the image is contained in the symmetroid
$\Hes$. The surjectivity is clear since $\Hes$ consists of singular
symmetric matrices. Finally, $\UU$ is smooth since it is a projective
bundle. 

(2) Let us denote the r.h.s.$\;$by $\UU'$. There is a natural projection
from $\UU'$ to $\mP(V)$. Then the fiber $\UU'_{[\bm{x}]}$ over
$[\bm{x}]\in\mP(V)$ is the projective space of singular symmetric
matrices whose kernels contain $V_{1}=\mC\bm{x}$. Namely, $\UU'_{[\bm{x}]}$
coincides with the isomorphic image of $\mP(\ft{S}^{2}(V/V_{1})^{*})$
above. Hence $\UU'\simeq\UU$. \end{proof}

We summarize the resolution in the diagram,\[
\xymatrix{\mP(V) & \ar[l]_{\pi_{\UU}\qquad}\UU=\mP(\ft{S}^{2}(\Omega(1)))\ar[r]^{\qquad\quad i_{u}} & \Hes.}
\]

$\;$

\subsection{The double covering $\hcoY$ of $\Hes$}

\label{subsection:double}

Here we construct the double cover $\hcoY$ of $\Hes$ by considering
$\frac{n}{2}$-planes contained in each singular quadric.

Let us first consider a variety $\Zpq$ which parameterizes the pairs
of quadrics $Q$ and $\frac{n}{2}$-planes $\mP(\Pi)$ such that $\mP(\Pi)\subset Q$.
To parametrize $\frac{n}{2}$-planes in $\mP(V)$, consider the Grassmann
$\mathrm{G}(\frac{n+2}{2},V)$. Let \begin{equation}
0\to{\eQ}^{*}\to V^{*}\otimes\sO_{\mathrm{G}(\frac{n+2}{2},V)}\to\eS^{*}\to0\label{eq:Q*S}\end{equation}
 be the dual of the universal exact sequence on $\mathrm{G}(\frac{n+2}{2},V)$,
where $\eQ$ is the universal quotient bundle of rank $\frac{n}{2}$
and $\eS$ is the universal subbundle of rank $\frac{n+2}{2}$. For
an $\frac{n}{2}$-plane $\mP(\Pi)\subset\mP(V)$, there exists a natural
surjection $\ft{S}^{2}V^{*}\to\ft{S}^{2}H^{0}(\mP(\Pi),\sO_{\mP(\Pi)}(1))$
such that the projectivization of the kernel consisting of the quadrics
containing $\mP(\Pi)$. By relativizing this surjection over $\mathrm{G}(\frac{n+2}{2},V)$,
we obtain the following surjection: $\ft{S}^{2}V^{*}\otimes\sO_{\mathrm{G}(\frac{n+2}{2},V)}\to\ft{S}^{2}\eS^{*}.$
Let $\sE^{*}$ be the kernel of this surjection, and consider the
following exact sequence: \begin{equation}
0\to\sE^{*}\to\ft{S}^{2}V^{*}\otimes\sO_{\mathrm{G}(\frac{n+2}{2},V)}\to\ft{S}^{2}\eS^{*}\to0.\label{eq:sE0}\end{equation}
 Set $\Zpq=\mP(\sE^{*})$ and denote by $\Lrho_{\Zpq}$ the projection
$\Zpq\to\mathrm{G}(\frac{n+2}{2},V)$. By (\ref{eq:sE0}), $\Zpq$
is contained in $\mathrm{G}(\frac{n+2}{2},V)\times\mP(\ft{S}^{2}V^{*})$.
Since the fiber of $\sE^{*}$ over $[\Pi]$ parameterizes quadrics
in $\mP(V)$ containing $\mP(\Pi)$, we have \[
\Zpq=\{([\Pi],[Q])\mid\mP(\Pi)\subset Q\}\subset\mathrm{G}(\frac{n+2}{2},V)\times\mP(\ft{S}^{2}V^{*}).\]
 Note that $Q$ in $([\Pi],[Q])\in\Zpq$ is a singular quadric since
a smooth quadric does not contain $\frac{n}{2}$-planes. Hence the
symmetroid $\Hes$ is the image of the natural projection $\Zpq\to\mP(\ft{S}^{2}V^{*})$.
Now we introduce \[
\xymatrix{ & \Zpq\;\ar[r]^{\;\;\Lpi_{\Zpq}\;\;} & \;\hcoY\;\ar[r]^{\;\;\Lrho_{\hcoY}\;\;} & \;\Hes}
\]
by the Stein factorization of $\Zpq\to\Hes$. By (\ref{eq:sE0}),
the tautological divisor $H_{\mP(\sE^{*})}$ of $\mP(\sE^{*})\to\mathrm{G}(\frac{n+2}{2},V)$
is nothing but the pull-back of a hyperplane section of $\Hes$. We
set \[
M_{\Zpq}:=H_{\mP(\sE^{*})}=\Lpi_{\Zpq}^{\;*}\circ\Lrho_{\hcoY}^{\;*}\sO_{\Hes}(1).\]
 We denote by $\Zpq_{[Q]}$ the fiber of $\Zpq\to\Hes$ over a point
$[Q]\in\Hes$.

\vspace{0.3cm}
 \begin{lem} \label{Z_Q} For a quadric $Q$ of rank $n$, the fiber
${\Zpq}_{[Q]}$ is the orthogonal Grassmann $\OG(\frac{n}{2},n)$
\textcolor{black}{which consists of two connected components. }\end{lem}

\begin{proof} The quadric $Q$ of rank $n$ induces a non-degenerate
symmetric bilinear form $q$ on the quotient $V/V_{1}$, where $V_{1}$
is the $1$-dimensional vector space such that $[V_{1}]$ is the vertex
of $Q$. Then $\frac{n}{2}$-planes on $Q$ naturally correspond to
the maximal isotropic subspaces in $V/V_{1}$ with respect to $q$,
which are parameterized by the orthogonal Grassmann $\OG(\frac{n}{2},n)$.
\end{proof}

\vspace{0.3cm}
 \begin{prop} \label{cla:double} The morphism $\hcoY\to\Hes$ is
of degree two and is branched along the locus of quadrics of rank
less than or equal to $n-1$. \end{prop}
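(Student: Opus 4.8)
The plan is to establish the degree and the branch locus of $\hcoY \to \Hes$ by analyzing the fibers $\Zpq_{[Q]}$ according to the rank of $Q$, and then invoking the Stein factorization. By Lemma \ref{Z_Q}, for a quadric $Q$ of rank exactly $n$ (the generic case on $\Hes$), the fiber $\Zpq_{[Q]}$ is a disjoint union of two copies of $\OG(\frac{n}{2},n)$, each of which is connected. First I would use the fact that $\Lpi_\Zpq \colon \Zpq \to \hcoY$ contracts the connected components of the fibers of $\Zpq \to \Hes$ to points (this is the defining property of the Stein factorization, since $\hcoY \to \Hes$ is finite). Therefore the number of points of $\hcoY$ over a given $[Q]$ equals the number of connected components of $\Zpq_{[Q]}$. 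Over the rank-$n$ locus this number is two, which shows that $\Lrho_\hcoY \colon \hcoY \to \Hes$ has generic degree two.

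Next I would determine the branch locus by examining fibers over quadrics of rank $\leq n-1$. The key geometric input is the classical fact governing when the two families of maximal isotropic subspaces of a quadratic form merge: for a non-degenerate form on an even-dimensional space the two rulings are distinct, but when the rank drops the distinction between the two families of $\frac{n}{2}$-planes collapses. Concretely, for a quadric $Q$ of rank $n-1$ (so that $Q$ has a two-dimensional vertex, i.e.\ the associated form is non-degenerate on an odd-dimensional quotient of dimension $n-1$), the maximal isotropic subspaces no longer split into two families: the orthogonal Grassmannian $\OG$ of an odd-rank form is connected. I would argue that this connectedness of $\Zpq_{[Q]}$ over the rank-$\leq n-1$ locus forces the two sheets of $\hcoY$ to come together, so that $\Lrho_\hcoY$ is ramified precisely there. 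This matches the description of $\hcoY$ as the double cover of $\Hes$ branched along the rank $\leq n-1$ locus.

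The main obstacle I expect is the careful fiber analysis over the rank $\leq n-1$ locus, making precise that the single connected family of $\frac{n}{2}$-planes on a lower-rank quadric yields exactly one point of $\hcoY$ (rather than, say, a non-reduced double point or a positive-dimensional locus that would spoil the degree-two count). The clean way to handle this is to reduce the isotropic-subspace geometry on the singular quadric $Q$ to that on its smooth quotient: if $Q$ has rank $r < n$ with vertex $\mP(V_{n+1-r})$, then the $\frac{n}{2}$-planes on $Q$ correspond to isotropic subspaces of appropriate dimension containing the vertex, and the family of these is connected exactly when the associated non-degenerate form has odd rank $r = n-1$. I would therefore verify that for rank $n-1$ the relevant $\OG$ is connected while for rank $n$ it has two components, and confirm via the local structure of the double cover (e.g.\ by computing $\Lrho_{\hcoY *}\sO_{\hcoY}$ and identifying the involution swapping the two families of isotropic planes) that the ramification divisor is reduced and supported exactly on $\{\rank \leq n-1\}$.

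Finally, I would tie these observations together: the Stein factorization produces a finite morphism $\Lrho_\hcoY$ whose fibers over the rank-$n$ locus consist of two reduced points and whose fibers over the rank $\leq n-1$ locus consist of a single point, which together with normality of $\hcoY$ (inherited from the construction as a Stein factorization of a morphism from the smooth projective bundle $\Zpq = \mP(\sE^*)$) identifies $\hcoY \to \Hes$ as the double cover branched along the rank $\leq n-1$ locus, as claimed.
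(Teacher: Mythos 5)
Your proposal is correct and follows essentially the same route as the paper: the degree-two claim comes from the two connected components of $\Zpq_{[Q]}$ over the rank-$n$ locus (Lemma~\ref{Z_Q}) together with the Stein factorization, and the branch locus from the connectedness of the family of $\frac{n}{2}$-planes on quadrics of rank $\leq n-1$, which the paper simply asserts. One small caution: your reduction identifying $\frac{n}{2}$-planes with isotropic subspaces containing the vertex is literally valid only for ranks $n$ and $n-1$ (for rank $\leq n-2$ the $\frac{n}{2}$-planes are no longer maximal in $Q$ and need not contain the whole vertex, cf.\ Proposition~\ref{cla:double2}), but this is harmless since the branch locus is closed and already contains the dense rank-$(n-1)$ stratum of the rank $\leq n-1$ locus.
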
 

\begin{proof} By Lemma~\ref{Z_Q}, the degree of $\hcoY\to\Hes$
is two since $\Zpq_{[Q]}$ has two connected components for a quadric
$Q$ of rank $n$. If a quadric $Q$ has rank less than or equal to
$n-1$, the family of $\frac{n}{2}$-planes in $Q$ is connected.
Hence we have the assertion. \end{proof}

By this proposition, we see that $\hcoY$ parameterizes connected
families of $\frac{n}{2}$-planes in singular quadrics in $\mP(V)$
(cf. Fig.1).

\vspace{0.3cm}
 \begin{defn} \label{defn:Gy} Related to the morphism $\Lrho_{\hcoY}:\hcoY\to\Hes$,
we define $G_{\hcoY}$ to be the inverse image by $\Lrho_{\hcoY}$
of the locus of quadrics of rank less than or equal to $n-2$.~{\hfill{}{[}{]}}
\end{defn}

Since $G_{\hcoY}$ is contained in the ramification locus of $\Lrho_{\hcoY}$,
it is clearly isomorphic to the locus of quadrics of rank less than
or equal to $n-2$.

$\hcoY$ has the following nice properties in view of the minimal
model program.

\vspace{0.3cm}
 \begin{prop} \label{cla:ZY} The Picard number of $\Zpq$ is two
and $\Lpi_{\Zpq}\colon\Zpq\to\hcoY$ is a Mori fiber space. In particular,
$\hcoY$ is a $\mQ$-factorial Gorenstein canonical Fano variety with
Picard number one. Moreover, $\hcoY$ is smooth outside $G_{\hcoY}$
and the Fano index of $\hcoY$ is $\frac{n(n+1)}{2}$. \end{prop}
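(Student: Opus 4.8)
The plan is to analyze the geometry of $\Zpq=\mP(\sE^*)$ over $\mathrm{G}(\tfrac{n+2}{2},V)$ together with the Mori-theoretic contraction $\Lpi_\Zpq\colon \Zpq\to\hcoY$, and to transfer the good properties of $\Zpq$ down to $\hcoY$. Since $\Zpq$ is a projective bundle over a smooth Grassmannian, it is smooth of Picard number two, with the two extremal rays generated respectively by the fibers of $\Lrho_\Zpq\colon\Zpq\to\mathrm{G}(\tfrac{n+2}{2},V)$ and by the fibers of $\Zpq\to\Hes$. First I would identify the two contractions of $\Zpq$ explicitly: one is the projective-bundle projection $\Lrho_\Zpq$, and the other is the Stein factorization $\Lpi_\Zpq\colon\Zpq\to\hcoY$ of $\Zpq\to\Hes$. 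To see that $\Lpi_\Zpq$ is a \emph{Mori fiber space}, I would compute $K_\Zpq$ from the relative Euler sequence of $\mP(\sE^*)$ together with $K_{\mathrm{G}}$ and the Chern classes coming from (\ref{eq:sE0}), and check that $-K_\Zpq$ is $\Lpi_\Zpq$-ample and that the generic fiber is positive-dimensional (indeed by Lemma \ref{Z_Q} the generic fiber of $\Zpq\to\Hes$ is $\OG(\tfrac n2,n)$, of positive dimension, and over the ramification divisor the two components glue into a single connected fiber). Picard number two for $\Zpq$ is immediate; that the two rays just described are genuinely the extremal ones follows because each is contractible, which exhibits both and hence exhausts the two-dimensional cone.

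Next I would descend to $\hcoY$. Because $\Lpi_\Zpq$ is a Mori fiber space contracting one of the two extremal rays of $\Zpq$, the target $\hcoY$ has Picard number one. To obtain $\mQ$-factoriality and the Gorenstein canonical property, I would use that $\hcoY\to\Hes$ is a finite (degree two) cover of a projective space hypersurface: $\Hes$ itself is a hypersurface of degree $n+1$ in $\mP(\ft S^2V^*)$, hence Gorenstein, and the double cover $\hcoY$ branched along the rank $\le n-1$ locus (Proposition \ref{cla:double}) inherits a trivial-dualizing-sheaf-type adjunction formula $K_{\hcoY}=\Lrho_\hcoY^*(K_\Hes)+\tfrac12\Lrho_\hcoY^*(\text{branch})$; computing this gives $-K_\hcoY$ as a positive multiple of $\sO_\hcoY(1)$, from which Fano-ness follows and the Fano index can be read off. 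I would independently confirm the index by pushing down the canonical class computation on $\Zpq$ through $\Lpi_\Zpq$, since $K_\Zpq=\Lpi_\Zpq^*K_\hcoY$ plus relative terms that vanish on fibers; matching coefficients should yield the stated index $\tfrac{n(n+1)}{2}$.

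For the singularity statements, I would argue as follows. Since $\UU=\mP(\ft S^2(\Omega(1)))$ is smooth and maps to $\Hes$ with fibers $\mP^{n-i}$ over the rank-$i$ stratum, the symmetroid $\Hes$ is already smooth exactly on the rank-$n$ (and generically rank-$(n-1)$) locus and singular in deeper strata. On $\hcoY$, the double-cover structure desingularizes the rank $\le n-1$ branch behavior partially: over the rank-$n$ locus the map $\hcoY\to\Hes$ is \'etale of degree two (two sheets, by Lemma \ref{Z_Q}), and over the rank-$(n-1)$ locus it is simply ramified, so $\hcoY$ is smooth there. The remaining singularities of $\hcoY$ therefore lie precisely over the rank $\le n-2$ locus, which is by Definition \ref{defn:Gy} exactly $G_\hcoY$; this gives the claim that $\hcoY$ is smooth outside $G_\hcoY$. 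Canonicity I would check by analyzing the discrepancies of $\Lpi_\Zpq$ (or of a further explicit resolution) along $G_\hcoY$, confirming all discrepancies are $\ge 0$.

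The main obstacle I anticipate is pinning down precisely the contraction $\Lpi_\Zpq$ along the ramification and deeper strata: away from the rank-$n$ locus the fibers of $\Zpq\to\Hes$ change type (the two $\OG$-components collapse to one connected family over rank $\le n-1$, and degenerate further over $G_\hcoY$), so verifying that $\Lpi_\Zpq$ is a genuine Mori fiber space (equidimensionality or at least the $K$-negativity and relative Picard rank one on \emph{all} fibers, not just generic ones) requires a careful fiber-by-fiber analysis using Lemma \ref{Z_Q} and its degenerations. Closely tied to this is the verification of $\mQ$-factoriality and canonicity at points of $G_\hcoY$, where the local structure of the double cover is most delicate; I expect this to be the technical heart of the argument, with the Fano index and Picard-number-one assertions being comparatively formal consequences once the contraction is understood.
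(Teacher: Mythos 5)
Your overall skeleton matches the paper's: $\rho(\Zpq)=2$ since $\Zpq=\mP(\sE^*)$ is a projective bundle over a Grassmannian, $\Lpi_\Zpq$ has relative Picard number one and its general fiber is the Fano variety $\OG(\frac n2,n)$ by Lemma \ref{Z_Q}, hence it is a Mori fiber space and $\rho(\hcoY)=1$; and the Fano index comes from $K_{\hcoY}=\Lrho_{\hcoY}^{\;*}K_{\Hes}$ because the branch locus has codimension at least two. (Note that your formula involving $\tfrac12\Lrho_\hcoY^{\;*}(\text{branch})$ only makes sense because that term is zero: the branch locus is not a divisor, so there is no divisorial correction at all.) However, the three genuinely delicate assertions --- $\mQ$-factoriality, canonicity, and smoothness along the inverse image of the rank-$(n-1)$ locus --- are not established by your argument.

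First, $\mQ$-factoriality: you propose to deduce it from the finite double cover $\hcoY\to\Hes$ of a hypersurface. This cannot work: $\Hes$ is singular in codimension two, so Grothendieck's factoriality theorem for hypersurfaces does not apply (determinantal hypersurfaces are typically not $\mQ$-factorial), and $\mQ$-factoriality does not pass along finite covers in either direction. The paper gets it from the Mori fiber space structure you already established, via \cite[Lemma 5-1-5]{KMM}: the target of an extremal contraction of fiber type from a smooth ($\mQ$-factorial) variety is $\mQ$-factorial. Second, canonicity: you propose to ``analyze discrepancies of $\Lpi_\Zpq$ or of a further explicit resolution,'' but $\Lpi_\Zpq$ is of fiber type and has no discrepancies, and no resolution of $\hcoY$ is available at this point (the paper constructs $\widetilde\hcoY$ only much later, in Subsection \ref{subsection:IV}, and only for $n=4$); as written this step is vacuous. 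The missing idea is again Mori-theoretic: by \cite[Corollary 4.6]{Fuj} the base of a Mori fiber space is klt, and klt together with Gorenstein forces the (integral) discrepancies, which exceed $-1$, to be nonnegative, i.e.\ canonical. Third, smoothness outside $G_\hcoY$: your inference ``the cover is simply ramified over the rank-$(n-1)$ locus, so $\hcoY$ is smooth there'' is invalid as a general implication --- a double cover ramified over a singular locus need not be smooth. The required input is the local structure of $\Hes$: by the proof of \cite[Lemma 3.6]{HoTa1}, $\Hes$ has ordinary double points (transversally $\mC^2/\{\pm1\}$) along the rank-$(n-1)$ locus, and it is precisely the double cover branched there that undoes this quotient and is smooth upstairs. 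Without these three inputs the proof does not close.
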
 

\begin{proof} Since $\Zpq$ is a projective bundle over $\mathrm{G}({\frac{n+2}{2}},V)$,
its Picard number is two. {Note that} the relative Picard number
of $\Lpi_{\Zpq}\colon\Zpq\to\hcoY$ is one, then {we see that} the
Picard number of $\hcoY$ is one. Since a general fiber of $\Lpi_{\Zpq}$
is a Fano variety by Lemma \ref{Z_Q}, $\Lpi_{\Zpq}$ is a Mori fiber
space. By \cite[Lemma 5-1-5]{KMM}, $\hcoY$ is $\mQ$-factorial,
and, by \cite[Corollary 4.6]{Fuj}, $\hcoY$ has only kawamata log
terminal singularities. $\hcoY$ is a Gorenstein Fano variety since
it is a double cover of a Gorenstein Fano variety $\Hes$ and the
ramification locus has codimension greater than one by Proposition
\ref{cla:double}. Thus $\hcoY$ has only canonical singularities.

As we mentioned in Subsection \ref{subsection:PD}, the singular locus
of $\Hes$ is the locus of quadrics of rank less than or equal to
$n-1$. Since $\Lrho_{\hcoY}\colon\hcoY\to\Hes$ is \'etale outside
this locus, $\hcoY$ is smooth outside the inverse image of this locus.
Moreover, by the proof of \cite[Lemma 3.6]{HoTa1}, $\Hes$ has only
ordinary double points along the locus of quadrics of rank $n-1$.
Since $\Lrho_{\hcoY}$ is ramified along this locus, $\hcoY$ is smooth
along the inverse image of this locus. Therefore $\hcoY$ is smooth
outside $G_{\hcoY}$.

Since $K_{\Hes}=\sO_{\Hes}(\frac{n(n+1)}{2})$, the Fano index of
$\hcoY$ is $\frac{n(n+1)}{2}$. \end{proof}

\begin{rem} We will show that $\hcoY$ has only terminal singularities
when $n=4$ (see Proposition \ref{cla:F}). \hfill{}{[}{]} \end{rem}

\vspace{0.5cm}

When $n=4$, we have more detailed descriptions of the fibers of $\Lrho_{\hcoY}$.

\begin{prop} \label{cla:double2} If $\rank Q=4$, then $\Zpq_{[Q]}$
is a disjoint union of two smooth rational curves. Each connected
component is identified with a conic on $\mathrm{G}(3,V)$. If $\rank Q=3$,
then $\Zpq_{[Q]}$ is a smooth rational curve, which is also identified
with a conic on $\mathrm{G}(3,V)$. If $\rank Q=2$, then $\Zpq_{[Q]}$
is the union of two $\mP^{3}$'s intersecting at one point. If $\rank Q=1$,
then $\Zpq_{[Q]}$ is a $($non-reduced\,$)$ $\mP^{3}$. \end{prop}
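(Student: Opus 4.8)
The plan is to reinterpret $\Zpq_{[Q]}$ as the family of $3$-dimensional subspaces $\Pi\subset V$ (equivalently $2$-planes $\mP(\Pi)\subset\mP(V)$) that are isotropic for the quadratic form $q_A$, and to reduce every case to the combinatorics of isotropic subspaces of a nondegenerate form. Writing $K=\ker A$, so that $\mP(K)=\Sing Q$ and $\dim K = 5-\rank Q$ (recall $\dim V=5$), I would first record the elementary reduction lemma: a subspace $\Pi$ is isotropic for $q_A$ if and only if its image $\overline{\Pi}=\Pi/(\Pi\cap K)\subset V/K$ is isotropic for the induced \emph{nondegenerate} form $\overline q$ on $V/K$. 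After this, the analysis of $\Zpq_{[Q]}$ splits according to $\rank Q\in\{1,2,3,4\}$, the governing invariant being the dimension $\lfloor \rank Q/2\rfloor$ of a maximal isotropic subspace of $V/K$ together with the freedom left in choosing $\Pi\cap K$.

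For $\rank Q=4$ ($\dim K=1$) and $\rank Q=3$ ($\dim K=2$), a dimension count shows that any isotropic $\overline{\Pi}$ has dimension $\le\lfloor\rank Q/2\rfloor$, forcing $K\subset\Pi$; hence $\Pi\mapsto\overline{\Pi}$ identifies $\Zpq_{[Q]}$ with the space of maximal isotropics of $V/K$. For rank $4$ this is $\OG(2,4)$, two disjoint copies of $\mP^1$; for rank $3$ it is the smooth conic $C_0=\{\overline q=0\}\subset\mP(V/K)=\mP^2$ of isotropic lines. To upgrade these to the stated \emph{conic} statements I would use that the sublocus $\{[\Pi]\mid K\subset\Pi\}\subset\mathrm{G}(3,V)$ is a sub-Grassmannian $\mathrm{G}(3-\dim K,\,V/K)$ embedded linearly under Pl\"ucker, via $\wedge^3\Pi=(\wedge^{\dim K}K)\wedge(\wedge^{3-\dim K}\overline{\Pi})$. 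In the rank-$3$ case this is a linearly embedded $\mP(V/K)$, inside which $C_0$ is literally a conic; in the rank-$4$ case each component is one ruling of the smooth quadric surface $S=\{\overline q=0\}\subset\mP^3$, and a Schubert/degree computation (a general line of $\mP^3$ meets $S$ in two points, each on a unique line of the ruling) shows the ruling has degree $2$ in the Pl\"ucker embedding, hence is a conic.

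For $\rank Q=2$ ($\dim K=3$) the nondegenerate form on $V/K$ has exactly two isotropic lines $\ell_1,\ell_2$, so an isotropic $\Pi$ is either $K$ itself (when $\overline\Pi=0$) or has $\overline\Pi=\ell_i$; in the latter case $\Pi$ is a hyperplane of the $4$-dimensional space $\pi^{-1}(\ell_i)$ (with $\pi\colon V\to V/K$), and these hyperplanes form a $\mP^3$ sharing the single common element $K$. This produces two $\mP^3$'s meeting at the one point $[K]$. For $\rank Q=1$ ($\dim K=4$) the line $V/K$ carries a form with no isotropic vector, so $\Pi\subset K$ and the support of $\Zpq_{[Q]}$ is $\mathrm{G}(3,K)\cong\mP^3$.

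The step I expect to be the real obstacle is not the set-theoretic description but the \emph{scheme} structure, and in particular the non-reducedness claimed in the rank-$1$ case. Here I would use that, by construction from (\ref{eq:sE0}), the fibre $\Zpq_{[Q]}$ is the zero scheme of the section $\overline A\in H^0(\mathrm{G}(3,V),\ft{S}^2\eS)$ which is the image of $A$ under $\ft{S}^2 V^*\otimes\sO\to\ft{S}^2\eS$. When $\rank Q=1$ we may write $A=\ell^{2}$ with $\ell\in V^*$, so $\overline A=\overline\ell^{\,2}$ is the symmetric square of the section $\overline\ell\in H^0(\eS)$ whose zero locus is $\mathrm{G}(3,K)$; consequently the zero scheme of $\overline A$ is cut out by the square of the ideal of $\mathrm{G}(3,K)$ and is therefore non-reduced. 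In the remaining cases I would likewise check that $\overline A$ cuts out the reduced loci described above transversally, so that the identifications are genuine scheme-theoretic isomorphisms and each curve is smooth as asserted.
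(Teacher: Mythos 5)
Your set-theoretic core is correct and is essentially the paper's own argument in linear-algebraic dress: the paper also reduces everything to planes through the vertex of $Q$ (via Lemma \ref{Z_Q} and the description of $Q$ as a cone over a smooth quadric of rank $\rank Q$), identifies the rank-$4$ and rank-$3$ fibers with the rulings of a quadric surface, resp.\ with the conic $\{\overline q=0\}\subset\mP(V/K)$, and treats ranks $2$ and $1$ directly. Your dimension count forcing $K\subset\Pi$, the linear sub-Grassmannian embedding $\wedge^3\Pi=(\wedge^{\dim K}K)\wedge(\wedge^{3-\dim K}\overline\Pi)$, and the degree-$2$ Schubert computation for the rulings supply details that the paper leaves to classical facts, and your proof of non-reducedness in the rank-$1$ case via $\overline A=\overline\ell^{\,2}$ is correct and sharper than the paper's parenthetical remark.

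The step that fails is precisely the one you flagged as the ``real obstacle'': the claim that in the remaining cases $\overline A$ cuts out the reduced loci transversally. This is false for $\rank Q=3$, at \emph{every} point of the fiber. Concretely, take $q_A=x_1x_2+x_3^2$, $K=\langle e_4,e_5\rangle$, and $\Pi_0=\langle e_1,e_4,e_5\rangle$; in the standard affine chart of $\mathrm{G}(3,V)$ centered at $[\Pi_0]$, with rows $u_1=(1,a_2,a_3,0,0)$, $u_2=(0,b_2,b_3,1,0)$, $u_3=(0,c_2,c_3,0,1)$, the ideal of the zero scheme of $\overline A$ is generated by the Gram entries $a_2+a_3^2,\ b_3^2,\ c_3^2,\ b_3c_3,\ \tfrac12 b_2+a_3b_3,\ \tfrac12 c_2+a_3c_3$. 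The linear parts of these generators span only $\langle a_2,b_2,c_2\rangle$, so $b_3,c_3$ do \emph{not} lie in the ideal: the Zariski tangent space is $3$-dimensional along the $1$-dimensional fiber, which is therefore everywhere non-reduced (a thickened conic). This is not an accident but is forced by Proposition \ref{cla:double}: the rank-$3$ locus is the branch locus of $\hcoY\to\Hes$, so the scheme fiber of $\hcoY\to\Hes$ there has length two, and the scheme fiber of $\Zpq\to\Hes$ inherits that thickening; transversality of $\overline A$ would contradict the branching. The same phenomenon occurs at the point $[K]$ of the rank-$2$ fiber, where the ideal generated by the entries of $PQ^{T}+QP^{T}$ (with $P=(a_1,b_1,c_1)^{T}$, $Q=(a_2,b_2,c_2)^{T}$) omits $a_1b_2$ and is strictly smaller than the ideal $(a_1,b_1,c_1)\cap(a_2,b_2,c_2)$ of the reduced union. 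So the rank-$3$ and rank-$2$ assertions of the proposition must be read, as the paper's own proof reads them, as describing the (reduced) parameter space of planes lying in $Q$; your set-theoretic argument proves exactly that, and you should drop, rather than attempt, the transversality verification for ranks $2$ and $3$ --- only the rank-$4$ fiber is actually reduced.
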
 

\begin{proof} If $\rank Q=4$, the fiber $\Zpq_{[Q]}$ consists of
two disconnected components, and is isomorphic to the orthogonal Grassmann
$\OG(2,4)$ by Lemma~\ref{Z_Q}. To be more explicit, let $[V_{1}]\in\mP(V)$
be the vertex of $Q$. Then the quadric $Q$ is the cone over $\mP^{1}\times\mP^{1}$
with the vertex $[V_{1}]$. There are two distinct $\mP^{1}$-families
of lines on $\mP^{1}\times\mP^{1}$. Each of the families can be understood
as a corresponding conic on $\mathrm{G}(2,V/V_{1})$, which gives
one of the connected components of $\OG(2,4)$. Under the natural
map $\mathrm{G}(2,V/V_{1})\rightarrow\mathrm{G}(3,V)$, we have a
$\mP^{1}$ family of $2$-planes on $Q$ parameterized by a conic
on $\mathrm{G}(3,V)$.

If $\rank Q=3$, the vertex of the quadric $Q$ is a line $[V_{2}]\subset\mP(V)$.
The quadric $Q$ is the cone over a conic with the vertex $[V_{2}]$.
The conic is contained in $\mP(V/V_{2})=\mathrm{G}(1,V/V_{2})$, and
can be identified with a conic in $\mathrm{G}(3,V)$ under the natural
map $\mathrm{G}(1,V/V_{2})\rightarrow\mathrm{G}(3,V)$.

If $\rank Q=2$, then the quadric $Q$ has a vertex $[V_{3}]\subset\mP(V)$
and is the union of two $3$-planes intersecting along the $2$-plane
$[V_{3}]$. Hence $\Zpq_{[Q]}\subset\mathrm{G}(3,V)$ is given by
the union of the corresponding $\mP^{3}$'s, i.e., $\mathrm{G}(3,4)$'s
in $\mathrm{G}(3,V)$, which intersect at one point $[V_{3}]$.

If $\rank Q=1$, then $Q$ is a double $3$-plane. Thus $\Zpq_{[Q]}$
is a (non-reduced) $\mP^{3}\cong\mathrm{G}(3,4)$. \end{proof}

\[
\FigQuadDisplay\]
 \vspace{0.2cm}
 \begin{fcaption} 

\item \textbf{Fig.1. Quadrics $Q$ in $\mP(V)$ and families of planes
therein.} The singular loci of $Q$ are written by $[V_{k}]$ with
$k=5-{\rm {rk}\, Q}$. Also the parameter spaces of the planes in
each $Q$ are shown. See also Fig.2 in Section \ref{sec:The-resolution-Y}.
\end{fcaption} %%%%%%%%%%%%%%%%%%%%%%%%%%%%%%%%%%%%%%%%%%%%%%%%%%%%%%%%%%%%%%%
\vspace{0.5cm}

We write by $G_{\hcoY}^{1}$ (resp. $G_{\hcoY}^{2}$) the inverse
image under $\Lrho_{\hcoY}$ of the locus of quadrics of rank one
(resp. two). We see that $G_{\hcoY}\simeq\ft{S}^{2}\mP(V^{*})$, $G_{\hcoY}^{1}\simeq v_{2}(\mP(V^{*}))$
and $G_{\hcoY}^{2}=G_{\hcoY}\setminus G_{\hcoY}^{1}$. Using these,
we summarize our construction above for $n=4$ in the following diagram:
\begin{equation}
\xyHYZG\label{eq:Z}\end{equation}
where $\Lpi_{\Zpq}$ is a $\mP^{1}$-fibration over $\hcoY\setminus G_{\hcoY}$
by Proposition \ref{cla:double2}. In Section \ref{sec:The-resolution-Y},
we will construct a nice desingularization $\widetilde{\hcoY}$ of
$\hcoY$. There we will also study the geometry of $\widetilde{\hcoY}\to\hcoY$
along the loci $G_{\hcoY}$ and $G_{\hcoY}^{1}$ in full details. 

$\;$

\subsection{Calabi-Yau variety $Y$}

\label{subsection:CY3Y}

Assume that a Reye congruence Calabi-Yau $(n-1)$-fold $X=\chow\cap P^{\perp}$
is given by a general $n$-plane $P$ in $\mP(\ft{S}^{2}V^{*})$,
i.e., $P=\mP(L)$ with $L\simeq\mC^{n+1}\subset\ft{S}^{2}V^{*}$.
Define $H:=\Hes\cap P$. Then $H$ is a determinantal hypersurface
of degree $n+1$ in $P\simeq\mP(L)$ and hence the canonical bundle
of $H$ is trivial.

Let $Y$ be the pull-back of $H$ on $\hcoY$. According to \cite{HPD1},
we say that $Y$ \textit{is orthogonal} to $X$ and vice versa.

\vspace{0.3cm}
 \begin{Screen} \begin{prop} \label{prop:CYgen} $Y$ is a Calabi-Yau
variety. \end{prop}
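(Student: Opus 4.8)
The plan is to realize $Y$ as a sufficiently general complete intersection inside $\hcoY$ and then run the standard machinery (Bertini for canonical singularities, adjunction, Kodaira--Kawamata--Viehweg vanishing), exploiting that $\hcoY$ is already known to be a $\mQ$-factorial Gorenstein canonical Fano variety of Picard number one (Proposition \ref{cla:ZY}). Set $A:=\Lrho_\hcoY^{\;*}\sO_\Hes(1)$. Since $P=\mP(L)$ is a general $n$-plane, $H=\Hes\cap P$ is the intersection of $\Hes$ with $m:=\dim\mP(\ft{S}^2 V^*)-n=\tfrac{n(n+1)}{2}$ general hyperplanes, and pulling these back by $\Lrho_\hcoY$ exhibits $Y=\Lrho_\hcoY^{-1}(H)$ as the complete intersection in $\hcoY$ of $m$ general members $D_1,\dots,D_m$ of $|A|$. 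Here $A$ is Cartier (a pullback of the line bundle $\sO_\Hes(1)$) and ample (a pullback of an ample class under the finite morphism $\Lrho_\hcoY$), and the subsystem $\Lrho_\hcoY^{\;*}|\sO_\Hes(1)|\subseteq|A|$ is base-point-free. Writing $Y_0:=\hcoY$ and $Y_k:=Y_{k-1}\cap D_k$, I would study $Y=Y_m$ through this flag of general hyperplane sections.

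Next, the singularities. Because the $D_k$ are general members of a base-point-free system, the Bertini theorem for canonical singularities gives inductively that each $Y_k$ is a normal projective variety with canonical singularities, normality of a general hyperplane section being preserved in dimension $\geq 2$ (which holds throughout since $\dim Y=n-1\geq 3$). Gorensteinness is then automatic: $\hcoY$ is Gorenstein and each $D_k\in|A|$ is Cartier, so each $Y_k$ is cut out by a regular sequence of Cartier divisors and hence remains Gorenstein. I expect this singularity analysis to be the main obstacle: for $n\geq 6$ the section $Y$ genuinely meets $\Sing\hcoY\subseteq G_\hcoY$ (a dimension count gives $\dim(G_\hcoY\cap Y)=n-6\geq 0$, using $\codim_{\mP(\ft{S}^2V^*)}G_\hcoY=6$), so one really needs the general-hyperplane-section statement for canonical Gorenstein singularities rather than a mere avoidance argument; for $n=4$ the same count gives $G_\hcoY\cap Y=\emptyset$, so $Y$ is in fact smooth.

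For the canonical class I would use adjunction together with the Fano index. As $\Lrho_\hcoY$ is \'etale in codimension one (its ramification locus, the rank $\leq n-1$ quadrics, has codimension two by Proposition \ref{cla:double}), one has $K_\hcoY=\Lrho_\hcoY^{\;*}K_\Hes=-\tfrac{n(n+1)}{2}A$, consistent with the Fano index $\tfrac{n(n+1)}{2}$ of Proposition \ref{cla:ZY}. Since the $D_k$ are Cartier and the $Y_k$ Gorenstein, iterated adjunction gives
\[
K_Y=\big(K_\hcoY+\sum\nolimits_{k=1}^{m} D_k\big)\big|_Y=\big(m-\tfrac{n(n+1)}{2}\big)\,A|_Y=\sO_Y ,
\]
the numerical coincidence $m=\tfrac{n(n+1)}{2}$ forcing the canonical bundle to be trivial.

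Finally, the vanishing $h^i(\sO_Y)=0$ for $0<i<\dim Y$. By adjunction $K_{Y_k}=(k-\tfrac{n(n+1)}{2})A|_{Y_k}$, so for every $k<m$ the section $Y_k$ is a Fano variety with canonical (hence rational) Gorenstein singularities, whence $H^i(\sO_{Y_k})=0$ for all $i>0$ by Kawamata--Viehweg vanishing. Applying this to the Fano $n$-fold $Y_{m-1}$ and combining the exact sequence $0\to\sO_{Y_{m-1}}(-A)\to\sO_{Y_{m-1}}\to\sO_Y\to 0$ with Kodaira vanishing $H^{i+1}(Y_{m-1},\sO_{Y_{m-1}}(-A))=0$ for $i+1<\dim Y_{m-1}=n$ yields $H^i(\sO_Y)=0$ for $0<i<\dim Y$; connectedness of the ample complete intersection gives $H^0(\sO_Y)\simeq\mC$. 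Equivalently, one may resolve $\sO_Y$ by the Koszul complex of $D_1,\dots,D_m$ on $\hcoY$ and feed Kodaira vanishing on $\hcoY$ into the hypercohomology spectral sequence. Together with the previous steps this shows $Y$ is a Calabi-Yau variety in the sense of the paper.
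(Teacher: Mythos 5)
Your proof is correct, and its skeleton---Bertini for canonical singularities applied to the Gorenstein canonical Fano $\hcoY$ of Proposition \ref{cla:ZY}, adjunction, and Kodaira--Kawamata--Viehweg vanishing---is the same as the paper's, but two of the three steps are organized along genuinely different lines. For the triviality of $K_Y$ the paper never works upstairs in $\hcoY$: it notes that $H=\Hes\cap P$ is a hypersurface of degree $n+1$ in $P\simeq\mP^n$, so $K_H\sim 0$ by adjunction, and that the branch locus of the double cover $Y\to H$ has codimension $\geq 2$ (Proposition \ref{cla:double}), so there is no ramification divisor and $K_Y$ is the pull-back of $K_H$, hence trivial; you instead realize $Y$ as a complete intersection of $m=\tfrac{n(n+1)}{2}$ general members of the base-point-free subsystem $\Lrho_{\hcoY}^{\;*}|\sO_{\Hes}(1)|$ and cancel $K_{\hcoY}=-mA$ against $\sum_k D_k$. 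The two computations are equivalent (both ultimately rest on the degree of $\Hes$ and the codimension of the branch locus), but the paper's is shorter, while yours makes the Gorenstein bookkeeping and the role of the Fano index of Proposition \ref{cla:ZY} explicit. For the vanishing $h^i(\sO_Y)=0$, $0<i<\dim Y$, the paper only says this is ``standard'' from $h^i(\sO_{\hcoY})=0$ via KKV---essentially the Koszul-resolution argument you mention as an alternative---whereas your primary route descends through the chain of intermediate sections $Y_k$, each a Fano variety with canonical, hence rational, Gorenstein singularities, so that $h^{>0}(\sO_{Y_k})=0$ by KKV, finishing with the restriction sequence for the last cut; both mechanisms are valid, and yours is more self-contained. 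Finally, your observation that for $n\geq 6$ the section $Y$ meets $G_{\hcoY}\supseteq\Sing\hcoY$ (dimension count $n-6\geq 0$), so that one genuinely needs Bertini to \emph{preserve} canonical singularities rather than an avoidance argument, is correct and consistent with the paper, which likewise invokes ``a version of the Bertini theorem'' and reserves the smoothness claim for $n=4$, where $Y\cap G_{\hcoY}=\emptyset$.
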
 \end{Screen}

\begin{proof} The canonical divisor $K_{Y}$ is trivial since $K_{H}\sim0$
and the branch locus of $Y\to H$ has the codimension greater than
or equal to two.

By Proposition \ref{cla:ZY}, $\hcoY$ has only Gorenstein canonical
singularities and $h^{i}(\sO_{\hcoY})=0$ for $i>0$. Then, by a version
of the Bertini theorem,\textcolor{red}{{} }\textcolor{black}{which says
that general sections of Gorenstein canonical varieties are also Gorenstein
and canonical}, we see that $Y$ has only Gorenstein canonical singularities.
It is standard to derive $h^{i}(\sO_{Y})=0$ for $0<i<\dim Y$ from
$h^{i}(\sO_{\hcoY})=0$ for $i>0$ by the Kodaira-Kawamata-Viewheg
vanishing theorem. \end{proof}

In the rest of this paper, we restrict our attention to $n=4$. In
this case, $Y$ is smooth by Proposition \ref{cla:ZY} since $Y\cap G_{\hcoY}=\emptyset$
by dimensional reason. This Calabi-Yau manifold $Y$ coincides with
the double covering $Y$ defined in \cite{HoTa1}, where $Y$ is called
the \textit{$($shifted$)$ Mukai dual}\, to $X$. 

In the previous paper \cite[Prop.3.11 and Prop.3.12]{HoTa1}, we have
determined invariants of the Calabi-Yau threefold $Y$. Here we reproduce
these invariants using the construction summarized in (\ref{eq:Z}).

\;

\;

Let us first recall (\ref{eq:sE0}) for the definition of $\sE^{*}$
and set $\sE:=(\sE^{*})^{*}$. Then we have

\begin{lem} \label{cla:c1c2} $c_{1}(\sE)=c_{1}(\sO_{\mathrm{G}(3,V)}(4))$.
\end{lem}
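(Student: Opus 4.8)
The plan is to extract $c_1(\sE)$ directly from the defining exact sequence (\ref{eq:sE0}), reducing the whole computation to the first Chern class of $\ft{S}^2\eS$, which is then handled by the splitting principle. Since we are in the case $n=4$, the relevant base is $\mathrm{G}(3,V)$ and $\eS$ has rank $3$. Because $\ft{S}^2 V^*\otimes \sO_{\mathrm{G}(3,V)}$ is a trivial bundle, applying the Whitney sum formula to (\ref{eq:sE0}) gives $c(\sE^*)\,c(\ft{S}^2\eS)=1$; comparing the degree-one terms yields $c_1(\sE^*)=-c_1(\ft{S}^2\eS)$, and hence $c_1(\sE)=c_1(\ft{S}^2\eS)$.

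Next I would compute $c_1(\ft{S}^2\eS)$ by the splitting principle. Writing the Chern roots of $\eS$ as $x_1,x_2,x_3$, the symmetric square $\ft{S}^2\eS$ has Chern roots $x_i+x_j$ with $i\leq j$, so that $c_1(\ft{S}^2\eS)=\sum_{i\leq j}(x_i+x_j)=4(x_1+x_2+x_3)=4\,c_1(\eS)$; more generally $c_1(\ft{S}^2 E)=(\rank E+1)\,c_1(E)$. It then remains to identify $c_1(\eS)$ with the Plücker class: since $\eS$ is the dual of the universal subbundle on $\mathrm{G}(3,V)$, its determinant $\wedge^3\eS$ is the Plücker line bundle $\sO_{\mathrm{G}(3,V)}(1)$, whence $c_1(\eS)=c_1(\sO_{\mathrm{G}(3,V)}(1))$.

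Combining the two steps gives $c_1(\sE)=4\,c_1(\sO_{\mathrm{G}(3,V)}(1))=c_1(\sO_{\mathrm{G}(3,V)}(4))$, as asserted. This argument is essentially formal, so there is no genuine obstacle; the only points demanding care are the bookkeeping of signs (the sequence (\ref{eq:sE0}) presents $\sE^*$, not $\sE$), the rank-dependent factor $\rank\eS+1=4$ in the symmetric-square formula, and the observation that it is $\det\eS$ rather than $\det\eS^*$ that equals $\sO_{\mathrm{G}(3,V)}(1)$, precisely because $\eS$ is already the dual of the tautological subbundle.
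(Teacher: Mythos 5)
Your proof is correct and follows essentially the same route as the paper's: both extract $c_1(\sE)=c_1(\ft{S}^2\eS)$ from the sequence (\ref{eq:sE0}) via Whitney, use the rank-$3$ fact $c_1(\ft{S}^2\eS)=4\,c_1(\eS)$, and identify $c_1(\eS)$ with $c_1(\sO_{\mathrm{G}(3,V)}(1))$; the paper phrases the last step via the Schubert class $\sigma_1$, while you phrase it as $\det\eS$ being the Pl\"ucker line bundle, which is the same identification. Your added details (splitting principle, sign bookkeeping for $\sE^*$ versus $\sE$) are fine and merely make explicit what the paper leaves implicit.
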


\begin{proof} Note that $c_{1}(\sO_{\mathrm{G}(3,V)}(1))$ is given
by the Schubert cycle $\sigma_{1}$, which is $c_{1}(\eS^{*})$ in
our notation. Since ${\rm rk}\,\eS=3$, we have $c_{1}(\sE)=c_{1}(\ft{S}^{2}\eS^{*})=4c_{1}(\eS^{*})$.
Thus we have the assertion. \end{proof}

Now let us note the relative Euler sequence associated with the projective
bundle $\Lrho_{\Zpq}\colon\Zpq=\mP(\sE^{*})\to\mathrm{G}(3,V)$: \begin{equation}
0\to\sO_{\Zpq}\to\Lrho_{\Zpq}^{\;*}\sE^{*}(M_{\Zpq})\to T_{\Zpq}\to\Lrho_{\Zpq}^{\;*}T_{\mathrm{G}(3,V)}\to0.\label{eq:EulerZG}\end{equation}
 From this we obtain the following:

\vspace{0.3cm}
 \begin{lem} \label{lem:cohZ} Let $N_{\Zpq}:=\Lrho_{\Zpq}^{\;*}\sO_{G(3,V)}(1)$.
$K_{\Zpq}=-9M_{\Zpq}-N_{\Zpq}$ holds for the canonical divisor $K_{\Zpq}$
on $\Zpq$, and we have the following cohomologies for the sheaves
on $\Zpq$ and for $0\leq k\leq10$ $(-1\leq k\leq10$ for $(2))$:
\[
\begin{aligned}(1)\;\; & H^{\bullet}(\sO_{\Zpq}(-(k+1)M_{\Zpq}+N_{\Zpq}))=0.\\
(2)\;\; & H^{\bullet}(\sO_{\Zpq}(-kM_{\Zpq}))\simeq H^{\bullet}(\Lrho_{\Zpq}^{\;*}\sE^{*}(-(k\text{{--}}1)M_{\Zpq}))\simeq\begin{cases}
\ft{S}^{2}V & (\bullet,k)=(0,-1)\\
\mC & (\bullet,k)=(0,0),(13,10)\\
0 & (otherwise)\end{cases}.\\
(3)\;\; & H^{\bullet}(\Lrho_{\Zpq}^{\;*}T_{\mathrm{G}(3,V)}(-kM_{\Zpq}))\simeq H^{\bullet}(T_{\Zpq}(-kM_{\Zpq}))\simeq\begin{cases}
sl(V) & (\bullet,k)=(0,0)\\
\mC & (\bullet,k)=(12,10)\\
0 & (otherwise)\end{cases},\end{aligned}
\]
 \end{lem}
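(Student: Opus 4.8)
The plan is to reduce every cohomology group to a cohomology group on the base $\mathrm{G}(3,V)$ by pushing forward along the $\mP^8$-bundle $\Lrho_\Zpq\colon\Zpq=\mP(\sE^*)\to\mathrm{G}(3,V)$, and then to evaluate those base cohomologies with the Bott theorem \ref{thm:Bott}. First I would record the relative data of this projective bundle: $\Lrho_{\Zpq *}\sO_\Zpq(mM_\Zpq)=\ft{S}^m\sE$ for $m\geq 0$ with all higher direct images vanishing in the range $m>-9$, and, by relative Serre duality, $R^8\Lrho_{\Zpq *}\sO_\Zpq(mM_\Zpq)\simeq(\ft{S}^{-m-9}\sE\otimes\det\sE)^\vee$ for $m\leq -9$, using $\omega_{\Zpq/\mathrm{G}(3,V)}=\sO_\Zpq(-9M_\Zpq)\otimes\Lrho_\Zpq^{\;*}\det\sE$. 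The formula $K_\Zpq=-9M_\Zpq-N_\Zpq$ then drops out of the four-term Euler sequence (\ref{eq:EulerZG}) together with $\det\sE=\sO_{\mathrm{G}(3,V)}(4)$ (Lemma \ref{cla:c1c2}) and $K_{\mathrm{G}(3,V)}=\sO_{\mathrm{G}(3,V)}(-5)$.

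For the two asserted isomorphisms I would split (\ref{eq:EulerZG}) into the relative Euler sequence $0\to\sO_\Zpq\to\Lrho_\Zpq^{\;*}\sE^*(M_\Zpq)\to T_{\Zpq/\mathrm{G}(3,V)}\to 0$ and the relative tangent sequence $0\to T_{\Zpq/\mathrm{G}(3,V)}\to T_\Zpq\to\Lrho_\Zpq^{\;*}T_{\mathrm{G}(3,V)}\to 0$. Twisting each by $\sO_\Zpq(-kM_\Zpq)$ and passing to the long exact sequences, both isomorphisms in (2) and (3) follow at once from the single vanishing
\[
H^\bullet(\Zpq,\,T_{\Zpq/\mathrm{G}(3,V)}(-kM_\Zpq))=0\qquad(0\leq k\leq 10).
\]
This I would establish by pushing the twisted relative Euler sequence forward: for $2\leq k\leq 9$ the two outer terms push to acyclic sheaves, while $k=0,1$ reduce respectively to $H^\bullet(\mathrm{G}(3,V),\sE\otimes\sE^*)$ and $H^\bullet(\mathrm{G}(3,V),\sE^*)$, and $k=10$ reduces to a top direct image governed by $\sE^*(-4)$.

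It remains to compute the base cohomologies that occur. Since $\sE$ is not a homogeneous bundle, I would resolve its Schur functors through the defining sequence (\ref{eq:sE0}), namely $0\to\sE^*\to\ft{S}^2 V^*\otimes\sO\to\ft{S}^2\eS\to 0$ and its dual $0\to\ft{S}^2\eS^*\to\ft{S}^2 V\otimes\sO\to\sE\to 0$, so that through the associated long exact sequences every group is expressed in terms of cohomology of genuine Schur functors $\ft{\Sigma}^\beta\eS\otimes\ft{\Sigma}^\gamma\eQ^*$, which the Bott theorem \ref{thm:Bott} evaluates directly. The key inputs obtained this way are $H^\bullet(\mathrm{G}(3,V),\sE^*)=0$ and $H^\bullet(\mathrm{G}(3,V),T_{\mathrm{G}(3,V)})=\mC^{24}$ concentrated in degree $0$, together with the extremal evaluations $H^\bullet(\mathrm{G}(3,V),\sE^*(-4))$ and $H^\bullet(\mathrm{G}(3,V),T_{\mathrm{G}(3,V)}\otimes\sE^*(-4))$ being one-dimensional in degrees $5$ and $4$ respectively; after the degree shift by $8$ coming from the top direct image along $\Lrho_\Zpq$, these produce exactly the classes in degrees $13$ and $12$ recorded in (2) and (3).

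I expect the main obstacle to lie in the acyclicity statements where the Euler characteristic already vanishes, so that one must show a genuine cohomology vanishing rather than a mere cancellation of dimensions. Two instances stand out. The $k=0$ isomorphisms require $H^\bullet(\mathrm{G}(3,V),\sE\otimes\sE^*)=\mC$ in degree $0$ only, i.e.\ that $\sE$ is simple with acyclic endomorphism sheaf; proving this rests on the plethysm $\ft{S}^2\eS\otimes\ft{S}^2\eS^*=\ft{\Sigma}^{(2,0,-2)}\eS\oplus\ft{\Sigma}^{(1,0,-1)}\eS\oplus\sO_{\mathrm{G}(3,V)}$ and a Bott check that the two nontrivial summands are acyclic. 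More seriously, the extremal twist in part (1) with $k=10$ becomes, after Serre duality on $\Zpq$, the vanishing of $H^\bullet(\mathrm{G}(3,V),\ft{S}^2\sE(-2))$; here $\ft{S}^2\sE$ must be resolved through a symmetric-power filtration of (\ref{eq:sE0}), and at each $\chi=0$ juncture the contributing groups land in adjacent degrees, so the real work is to verify that the connecting homomorphisms in the resulting long exact sequences are isomorphisms. Some of these (for example the one arising from the tautological surjection $\Lrho_\Zpq^{\;*}\sE\to\sO_\Zpq(M_\Zpq)$, which pushes forward to the identity) are forced to be isomorphisms for structural reasons, but the multiplication maps entering the symmetric-power filtration are where the argument is most delicate and will require the closest attention.
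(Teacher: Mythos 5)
Your overall strategy is the same as the paper's: get $K_\Zpq$ from the determinant of (\ref{eq:EulerZG}), reduce everything to $\mathrm{G}(3,V)$ by pushing forward along the $\mP^8$-bundle (after Serre duality for part (1)), and evaluate on the base with the Bott theorem after resolving $\sE$ through (\ref{eq:sE0}). Your one organizational difference—deducing both isomorphisms in (2) and (3) from the single vanishing $H^\bullet(T_{\Zpq/\mathrm{G}(3,V)}(-kM_\Zpq))=0$, $0\leq k\leq 10$—is a clean consolidation; the paper instead computes the groups on the two sides of each isomorphism separately, but the content is identical. Your key inputs all check out: the plethysm $\ft{S}^2\eS\otimes\ft{S}^2\eS^*=\ft{\Sigma}^{(2,0,-2)}\eS\oplus\ft{\Sigma}^{(1,0,-1)}\eS\oplus\sO_{\mathrm{G}(3,V)}$ is correct and its two nontrivial summands are Bott-acyclic, so $H^\bullet(\sE\otimes\sE^*)=\mC$; likewise $H^\bullet(\sE^*)=0$, $H^\bullet(\sE^*(-4))=\mC$ in degree $5$, and $H^\bullet(T_{\mathrm{G}(3,V)}\otimes\sE^*(-4))=\mC$ in degree $4$, which after the shift by $8$ give the classes in degrees $13$ and $12$.

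The only place your plan stops short is exactly the place you flagged: part (1) at $k=10$, i.e.\ $H^\bullet(\mathrm{G}(3,V),\ft{S}^2\sE(-2))=0$, and your worry there is genuine—the paper's own proof (via $\sK$ and the sequences $0\to\ft{S}^2(\ft{S}^2\eS^*)\to\sK\to\ft{S}^2\eS^*\otimes\sE\to0$) hits it too and calls it ``straightforward.'' Indeed, with that filtration Bott gives $H^\bullet(\ft{S}^2(\ft{S}^2\eS^*)(-2))=\mC$ in degree $4$ (from $\ft{\Sigma}^{(2,2,0)}\eS^*(-2)=\ft{\Sigma}^{(-2,-4,-4)}\eS$) and $H^\bullet(\ft{S}^2\eS^*\otimes\sE(-2))=\mC$ in degree $3$, so the desired vanishing is \emph{equivalent} to a connecting map $\mC\to\mC$ being an isomorphism, which neither your write-up nor the paper verifies. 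The gap closes at once, however, if you present $\sK$ differently: $\sK$ is the image of the multiplication map $\ft{S}^2\eS^*\otimes\ft{S}^2V\otimes\sO\to\ft{S}^2(\ft{S}^2V)\otimes\sO$, whose kernel is $\wedge^2(\ft{S}^2\eS^*)$, giving
\[
0\to\wedge^2(\ft{S}^2\eS^*)\to\ft{S}^2\eS^*\otimes\ft{S}^2V\otimes\sO\to\sK\to0 .
\]
After twisting by $\sO(-2)$ both outer terms are Bott-acyclic: $\wedge^2(\ft{S}^2\eS^*)(-2)=\ft{\Sigma}^{(3,1,0)}\eS^*(-2)=\ft{\Sigma}^{(-2,-3,-5)}\eS$ has $\alpha+\rho=(3,1,-2,2,1)$ with a repeated entry, and $\ft{S}^2\eS^*(-2)=\ft{\Sigma}^{(-2,-2,-4)}\eS$ has $\alpha+\rho=(3,2,-1,2,1)$ with a repeated entry. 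Hence $H^\bullet(\sK(-2))=0$ term by term, and $H^\bullet(\ft{S}^2\sE(-2))\simeq H^{\bullet+1}(\sK(-2))=0$ with no analysis of maps at all. (Equivalently: realize $\Zpq\subset\mP(\ft{S}^2V^*)\times\mathrm{G}(3,V)$ as the zero locus of a section of $\ft{S}^2\eS\boxtimes\sO(1)$ and use the Koszul resolution; only $\wedge^j$ with $j\leq 2$ contribute, and the same two Bott vanishings finish it.) With this substitution your outline becomes a complete proof.
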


\begin{proof} The claimed formula of $K_{\Zpq}$ follows by taking
the determinant of the Euler sequence (\ref{eq:EulerZG}). We should
note that ${\rm rank}~\sE=9$ and $N_{\Zpq}=\Lrho_{\Zpq}^{\;*}\sO_{\mathrm{G}(3,V)}(1)$.

For the calculations of the cohomologies in (1), (2), (3), we use
the Serre duality, the Kodaira vanishing theorem and also the Bott
theorem \ref{thm:Bott} as well as the defining exact sequence (\ref{eq:sE0})
of $\sE^{*}$.

(1) By the Serre duality, we have $H^{\bullet}(\sO_{\Zpq}(-(k+1)M_{\Zpq}+N_{\Zpq})\simeq H^{14-\bullet}(\sO_{\Zpq}((k-8)M_{\Zpq}-2N_{\Zpq}))$.
From this, the claimed vanishings follow for the range $0\leq k\leq7$
for all $\bullet$ since $\Lrho_{\Zpq}\colon\Zpq\to G(3,V)$ is a
$\mP^{8}$-bundle. When $k=8$, the vanishing follows from $H^{14-\bullet}(\sO_{\Zpq}(-2N_{\Zpq}))\simeq H^{14-\bullet}(\mathrm{G}(3,V),\sO_{\mathrm{G}(3,V)}(-2))=0$.
When $k=9$, we need to evaluate $H^{14-\bullet}(\sO_{\Zpq}(M_{\Zpq}-2N_{\Zpq}))\simeq H^{14-\bullet}(\mathrm{G}(3,V),\sE(-2))$.
By tensoring the dual of (\ref{eq:sE0}) by $\sO_{\mathrm{G}(3,V)}(-2)$
and using the Bott theorem, it is easy to obtain the claimed vanishing.
When $k=10$, we have $H^{14-\bullet}(\sO_{\Zpq}(2M_{\Zpq}-2N_{\Zpq}))\simeq H^{14-\bullet}(\mathrm{G}(3,V),\ft{S}^{2}\sE(-2))$.
For the calculation of the cohomologies of $\ft{S}^{2}\sE(-2)=\ft{S}^{2}\sE\otimes\sO_{\mathrm{G}(3,V)}(-2)$,
we note the following exact sequence\footnote[1]{The authors would like to thank Prof. R.F.  for suggesting this exact sequence.}:
\[
0\to\wedge^{2}(\ft{S}^{2}\eS)\to\ft{S}^{2}V\otimes\ft{S}^{2}\eS\to\ft{S}^{2}(\ft{S}^{2}V)\otimes\sO_{\rG(3,V)}\to\ft{S}^{2}\sE\to0.\]
 Tensoring by $\sO_{\mathrm{G}(3,V)}(-2)$ and using the Bott theorem,
the claimed vanishing follows for all degree.

(2) Since the calculations of $H^{\bullet}(\sO_{\Zpq}(-kM_{\Zpq}))$
is easy, we omit them. For the cohomologies $H^{\bullet}(\Lrho_{\Zpq}^{\;*}\sE^{*}(-(k-1)M_{\Zpq}))$,
when $k=0$, we have to evaluate $H^{\bullet}(\Lrho_{\Zpq}^{\;*}\sE^{*}(M_{\Zpq}))=H^{\bullet}(\mathrm{G}(3,V),\sE^{*}\otimes\sE)$.
This can be done by considering two short exact sequences; one from
tensoring the defining exact sequence (\ref{eq:sE0}) by $\sE$ and
the other from tensoring the dual of (\ref{eq:sE0}) by $\ft{S}^{2}\eS^{*}$.
The cases of other values of $k$ are rather easy, so we omit their
details.

(3) For the calculations of $H^{\bullet}(\Lrho_{\Zpq}^{\;*}T_{\mathrm{G}(3,V)}(-kM_{\Zpq}))$,
we use $T_{\mathrm{G}(3,V)}=\eS^{*}\otimes\eQ$. For example, for
$k=0$, we evaluate $H^{\bullet}(\Lrho_{\Zpq}^{\;*}T_{\mathrm{G}(3,V)})=H^{\bullet}(\mathrm{G}(3,V),\eS^{*}\otimes\eQ)$,
which is non-vanishing only for $\bullet=0$ with the result $\Sigma^{(1,0,0,0,-1)}V^{*}\simeq sl(V)\simeq\mC^{\oplus24}$.
For $k\geq1$, use the Serre duality and the defining exact sequence
(\ref{eq:sE0}).

Finally the calculations of $H^{\bullet}(T_{\Zpq}(-kM_{\Zpq}))$ are
done with the relative Euler sequence (\ref{eq:EulerZG}) and also
using the results obtained so far. Since they are straightforward,
we omit them here. \end{proof}

Let $M$ be the pull-back of $\sO_{H}(1)$ to $Y$. The following
proposition refines the results in \cite[Propositions 3.11 and 3.12]{HoTa1}:

\vspace{0.5cm}
 \begin{Screen} \begin{prop} \label{prop:Y} The $3$-fold $Y$
is a simply connected Calabi-Yau $3$-fold such that $\Pic Y=\mZ[M]$,
${M}^{3}=10$, $c_{2}(Y).{M}=40$ and $e(Y)=-50$. In particular,
$h^{1,1}(Y)=1$ and $h^{1,2}(Y)=26$. \end{prop}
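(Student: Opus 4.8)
The strategy is to transport every computation from $Y$ to the $\mathbb{P}^8$-bundle $\Zpq=\mP(\sE^*)$ over $\mathrm{G}(3,V)$ of the diagram $(\ref{eq:Z})$, where intersection theory is controlled by Schubert calculus on $\mathrm{G}(3,V)$ together with the Grothendieck relation. Recall that for $n=4$ the variety $Y$ is a smooth Calabi--Yau threefold by Proposition $\ref{prop:CYgen}$ and the remark that $Y\cap G_\hcoY=\emptyset$. I would set $Y_\Zpq:=\Lpi_\Zpq^{-1}(Y)$. Since $H=\Hes\cap P$ is cut out of $\Hes$ by the ten linear forms defining the $4$-plane $P$, and $M_\Zpq$ is the pull-back of $\sO_\Hes(1)$, the subvariety $Y_\Zpq$ is a complete intersection of ten general members of $|M_\Zpq|$ in the smooth $14$-fold $\Zpq$, so that $[Y_\Zpq]=M_\Zpq^{10}$. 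Because $Y\cap G_\hcoY=\emptyset$, Proposition $\ref{cla:double2}$ shows that $p:=\Lpi_\Zpq|_{Y_\Zpq}\colon Y_\Zpq\to Y$ has every fibre a smooth conic $\cong\mP^1$; thus $p$ is a $\mP^1$-fibration with $M_\Zpq|_{Y_\Zpq}=p^*M$ (so $M_\Zpq$ is trivial on fibres), while $N_\Zpq:=\Lrho_\Zpq^{\;*}\sO_{\mathrm{G}(3,V)}(1)$ restricts to degree $2$ on each fibre. These two facts are what allow me to push numbers back and forth: $p_*N_\Zpq=2[Y]$ and $e(Y_\Zpq)=2\,e(Y)$.

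Next I would compute the three numerical invariants as integrals over $\Zpq$. From $p_*N_\Zpq=2[Y]$ and $[Y_\Zpq]=M_\Zpq^{10}$ I get $M^3=\tfrac12\int_\Zpq M_\Zpq^{13}N_\Zpq$. The defining sequence $(\ref{eq:sE0})$ gives the Segre identity $s(\sE^*)=c(\ft{S}^2\eS)$, so that $\Lrho_{\Zpq *}M_\Zpq^{13}=s_5(\sE^*)=c_5(\ft{S}^2\eS)$ reduces this to $\tfrac12\int_{\mathrm{G}(3,V)}c_5(\ft{S}^2\eS)\cdot\sigma_1$, a pure Schubert number, which I expect to equal $10$. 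For $c_2(Y)\cdot M$ I would use that $Y$ is Calabi--Yau, so $c_1(T_Y)=0$ and the relative tangent sequence of $p$ yields $p^*c_2(Y)=c_2(T_{Y_\Zpq})$; hence $c_2(Y)\cdot M=\tfrac12\int_\Zpq c_2(T_{Y_\Zpq})\,M_\Zpq^{11}N_\Zpq$, where $c(T_{Y_\Zpq})=\bigl(c(T_\Zpq)\,(1+M_\Zpq)^{-10}\bigr)|_{Y_\Zpq}$. Likewise $e(Y)=\tfrac12\,e(Y_\Zpq)=\tfrac12\int_\Zpq\bigl[c(T_\Zpq)(1+M_\Zpq)^{-10}\bigr]_{4}\,M_\Zpq^{10}$. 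In all three cases $c(T_\Zpq)$ is expanded through the relative Euler sequence $(\ref{eq:EulerZG})$, with $c(T_{\mathrm{G}(3,V)})=c(\eS\otimes\eQ)$ and $c(\sE)$ read off from $(\ref{eq:sE0})$ (compatibly with Lemma $\ref{cla:c1c2}$), after which the Grothendieck relation for $\mP(\sE^*)$ turns every monomial into a Schubert number on $\mathrm{G}(3,V)$. The expected outputs are $M^3=10$, $c_2(Y)\cdot M=40$ and $e(Y)=-50$.

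Finally I would pin down the topological invariants. By Proposition $\ref{cla:ZY}$, $\hcoY$ is a $\mQ$-factorial Gorenstein canonical Fano of Picard number one, hence rationally connected and therefore simply connected, with $b_2(\hcoY)=1$; moreover $Y$ is a general complete intersection of ten ample divisors lying in the smooth locus $\hcoY\setminus G_\hcoY$. The Lefschetz hyperplane theorem (in its stratified form, applicable since $Y$ avoids $\Sing\hcoY=G_\hcoY$) then gives $\pi_1(Y)\cong\pi_1(\hcoY)=1$ and $b_2(Y)=b_2(\hcoY)=1$, so $h^{1,1}(Y)=1$ and $H^2(Y,\mZ)$ is torsion-free of rank one. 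Since $M^3=10$ is not divisible by a nontrivial cube, $M$ is a primitive class, whence $\Pic Y=\mZ[M]$. The remaining Hodge number then follows from $e(Y)=2\bigl(h^{1,1}(Y)-h^{1,2}(Y)\bigr)$, giving $h^{1,2}(Y)=26$; as an independent check one may compute $h^{1,2}(Y)=h^1(Y,T_Y)=h^1\bigl(Y_\Zpq,p^*T_Y\bigr)$ directly from the Koszul complex of $Y_\Zpq\subset\Zpq$ and the vanishing statements of Lemma $\ref{lem:cohZ}$, matching the naive moduli count $\dim\mathrm{G}(5,\ft{S}^2V^*)-\dim\SL(V)=50-24=26$.

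The most laborious and error-prone step will be the reduction of the degree-$14$ integrals on $\Zpq$ to Schubert numbers on $\mathrm{G}(3,V)$: one must expand $c(T_\Zpq)$ through $(\ref{eq:EulerZG})$, track both $M_\Zpq$ and $N_\Zpq$, and apply the Grothendieck relation for the rank-$9$ bundle $\sE^*$ correctly, while the factors of $\tfrac12$ and the conic degree $2$ coming from the $\mP^1$-fibration $p$ must be inserted without slips. The only conceptually delicate point is the Lefschetz argument on the \emph{singular} Fano $\hcoY$, which I would justify by the general position of $Y$ relative to $\Sing\hcoY=G_\hcoY$, or, if needed, by passing to the resolution $\widetilde\hcoY$ of Proposition $\ref{prop:Y2tildeY}$, inside which $Y$ meets only the locus mapping isomorphically onto its image in $\hcoY$.
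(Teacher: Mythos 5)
Your treatment of the numerical invariants is a legitimate alternative to the paper's, and its setup is correct: the projection formula with $p_*N_{\Zpq}|_{Y_{\Zpq}}=2[Y]$, the identity $[Y_{\Zpq}]=M_{\Zpq}^{10}$, the Segre-class computation $\Lrho_{\Zpq *}M_{\Zpq}^{8+i}=s_i(\sE^*)=c_i(\ft{S}^2\eS)$ coming from (\ref{eq:sE0}), and your formulas for $c_2(Y)\cdot M$ and $e(Y)$ (including the degree-$4$ truncation, since $\dim Y_{\Zpq}=4$) are all consistent, so a careful Schubert evaluation would indeed return $10$, $40$, $-50$. The paper proceeds quite differently: $M^3=2\cdot 5=10$ in one line because $Y\to H$ is a double cover of a quintic in $\mP^4$; then $h^1(T_Y)=26$ and $h^2(T_Y)=1$ are computed by sheaf cohomology (the Koszul resolution of $Z=\Lpi_{\Zpq}^{-1}(Y)$ together with Lemma \ref{lem:cohZ} and the Bott theorem), and $e(Y)$ and $c_2(Y)\cdot M$ are deduced afterwards, the latter from Riemann--Roch with $\chi(\sO_Y(M))=5$. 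Note the logical direction is opposite to yours: the paper derives $e(Y)$ from the Hodge numbers, while you derive $h^{1,2}$ from $e(Y)$ and $h^{1,1}$. That makes your whole Hodge-theoretic conclusion hinge on an independent determination of $h^{1,1}(Y)$ --- and that is exactly where your argument has a genuine gap.

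The gap is the Lefschetz step. The transverse singularity of $\hcoY$ along $G_{\hcoY}$ is the cone over the Segre variety $\mP^2\times\mP^2$ (this is the content of Proposition \ref{cla:F}: $\Lrho_{\widetilde\hcoY}$ contracts $\mP^2\times\mP^2$ fibers with discrepancy $2$), which is not a local complete intersection, and for such ambient singularities the hyperplane-section Lefschetz theorem in ordinary cohomology fails in precisely the degree you need. A model computation: if $X$ is the projective cone over a Segre product $\mP^a\times\mP^b$ and $S$ a general hyperplane section, then $S\cong\mP^a\times\mP^b$ misses the vertex entirely, yet $b_2(X)=1\neq 2=b_2(S)$ (the cone is the Thom space of $\sO_S(-1)$, so $H^2(X,\mQ)\cong H^0(S,\mQ)$). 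So ``$Y$ avoids $\Sing\hcoY$'' is not a valid justification: the comparison is governed by the local topology of the ambient space, and in any case the intermediate sections $\hcoY\cap H_1\cap\cdots\cap H_j$ with $j\leq 8$ do meet $G_{\hcoY}$. Your fallback through $\widetilde\hcoY$ is self-defeating for $b_2$: there $\rho(\widetilde\hcoY)=2$, so if Lefschetz held in the range you need it would give $b_2(Y)=2$, which is false; consistently, $M_{\widetilde\hcoY}$ is only semiample, contracting $F_{\widetilde\hcoY}$ with fibers of dimension up to six (Corollary \ref{cla:new}), so only a Sommese-type statement with a large fiber-dimension defect is available, and it does not control $H^2$ through all ten cuts. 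A smaller but real issue in the same paragraph: ``rationally connected, therefore simply connected'' is not formal for the singular $\hcoY$ (the standard argument needs smoothness; for klt Fanos one must invoke Takayama's theorem).

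The repair is what the paper actually does. For $h^{1,1}$: use Serre duality on the Calabi--Yau threefold to write $h^{1,1}(Y)=h^2(Y,T_Y)$ and compute $h^2(T_Y)=1$ by the same Koszul/Bott machinery; note that the ``independent check'' you propose only recovers $h^1(T_Y)=26$, so you would need to extend it to $h^2(T_Y)$ anyway. For $\pi_1$: cut only nine times, obtaining a general $4$-dimensional complete intersection $W\supset Y$; since $\dim G_{\hcoY}=8<9$, $W$ misses $G_{\hcoY}$ and is a smooth Fano fourfold, hence simply connected, and the smooth Lefschetz theorem applied to the ample divisor $Y\subset W$ gives $\pi_1(Y)=1$. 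Once $H^2(Y,\mZ)\cong\mZ$ is established this way, your primitivity argument (no nontrivial cube divides $M^3=10$) is a nice and valid way to conclude $\Pic Y=\mZ[M]$.
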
 \end{Screen}

\begin{proof} We have already shown that $Y$ is a smooth Calabi-Yau
threefold. Since $Y\to H$ is a double cover, we have $M^{3}=2c_{1}(\sO_{H}(1))^{3}=10$.

To calculate other invariants, we use the restriction of $\Lpi_{\Zpq}:\Zpq\to\hcoY$
over $Y$, which is a $\mP^{1}$-fibration. Set $Z:=\Lpi_{\Zpq}^{-1}(Y)$
and $\Lpi_{Z}$ be the restriction of $\Lpi_{\Zpq}$ to $Z$. We also
set $N_{Z}$ and $M_{Z}$ be the restrictions of $N_{\Zpq}$ and $M_{\Zpq}$
to $Z$, respectively.

Let us first note that we have $K_{Z}=M_{Z}-N_{Z}$ for the canonical
divisor by Lemma \ref{lem:cohZ}, since $Z$ is a complete intersection
of ten members of $|M_{\Zpq}|$. Also we note the following Koszul
resolution of $\sO_{Z}$ as a $\sO_{\Zpq}$-module: \begin{equation}
0\to\wedge^{10}\{\sO_{\Zpq}(-M_{\Zpq})^{\oplus10}\}\to\cdots\to\sO_{\Zpq}(-M_{\Zpq})^{\oplus10}\to\sO_{\Zpq}\to\sO_{Z}\to0.\label{eq:KosZ}\end{equation}

We observe the following isomorphisms: \begin{equation}
H^{\bullet}(Z,T_{Z})\simeq H^{\bullet}(Z,\Lpi_{Z}^{\;*}T_{Y})\simeq H^{\bullet}(Y,T_{Y}),\label{eq:ZTisoYT}\end{equation}
 where we note that $H^{\bullet}(Y,T_{Y})$ vanishes for $\bullet=0,3$
since $Y$ is a Calabi-Yau threefold. The second isomorphism follows
from the fact that $Z\to Y$ is a $\mP^{1}$-fibration. To see the
first isomorphism, let us note the exact sequence $0\to T_{Z/Y}\to T_{Z}\to\Lpi_{Z}^{\;*}T_{Y}\to0$,
from which we have $T_{Z/Y}=\sO_{Z}(-K_{Z})$ since $K_{Y}=0$ and
$T_{Z/Y}$ is an invertible sheaf. Then we have $H^{\bullet}(Z,T_{Z/Y})=H^{\bullet}(Z,\sO_{Z}(-M_{Z}+N_{Z}))$.
Tensoring the resolution (\ref{eq:KosZ}) by $\sO(-M_{\Zpq}+N_{\Zpq})$
and using (1) of Lemma~\ref{lem:cohZ}, we see the vanishing $H^{\bullet}(Z,T_{Z/Y})=0$.
This entails the first isomorphism.

Next let us consider the exact sequence \begin{equation}
0\to T_{Z}\to T_{\Zpq}|_{Z}\to\sO_{Z}(M_{Z})^{\oplus10}\to0.\label{eq:TZY}\end{equation}
 Since $Z\to Y$ is a $\mP^{1}$-fibration, we have $H^{\bullet}(Z,\sO_{Z}(M_{Z}))\simeq H^{\bullet}(Y,\sO_{Y}(M))$,
where the r.h.s.\,vanish by the Kodaira vanishing theorem except
$H^{0}(Y,\sO_{Y}(M))\simeq\mC^{5}$. Therefore, by (\ref{eq:ZTisoYT})
and (\ref{eq:TZY}), we have \begin{equation}
\begin{matrix}H^{\bullet}(T_{Z})\simeq H^{\bullet}(T_{\Zpq}|_{Z})\text{ for }\bullet\geq2\\
0\to H^{0}(T_{\Zpq}|_{Z})\to\mC^{\oplus50}\to H^{1}(T_{Z})\to H^{1}(T_{\Zpq}|_{Z})\to0.\end{matrix}\label{eq:TZYcoh}\end{equation}

We finally calculate the cohomology of the sheaf $T_{\Zpq}\vert_{Z}$
as \begin{equation}
H^{0}(T_{\Zpq}\vert_{Z})=\mC^{\oplus24},\;\; H^{2}(T_{\Zpq}\vert_{Z})=\mC,\;\; H^{i}(T_{\Zpq}\vert_{Z})=0\;(i\not=0,2).\label{eq:TzRestrict}\end{equation}
 These results follow from tensoring the resolution (\ref{eq:KosZ})
by $T_{\Zpq}$ and using (3) of Lemma~\ref{lem:cohZ}. Now combining
(\ref{eq:TzRestrict}) with (\ref{eq:TZYcoh}) and (\ref{eq:ZTisoYT}),
we obtain $h^{1}(T_{Z})=h^{1}(T_{Y})=26$ and $h^{2}(T_{Z})=h^{2}(T_{Y})=1$.

Since $Y$ is a Calabi-Yau threefold, we have $e(Y)=2(h^{2}(T_{Y})-h^{1}(T_{Y}))=-50$.
Also by the Riemann-Roch formula, and the vanishings derived above,
we have \[
\chi(Y,\sO_{Y}(M))=\frac{M^{3}}{6}+\frac{c_{2}(Y).M}{12}=\dim H^{0}(Y,\sO_{Y}(M))=5.\]
 From this, we obtain $c_{2}(Y).M=40$.

It remains to show the simply connectedness of $Y$. This will imply
$\Pic Y\simeq\mZ$ since we have already shown $\rho(Y)=h^{2}(T_{Y})=1$.
Let $W$ be a general $4$-dimensional complete intersection of $\hcoY$
by members of $|M_{\hcoY}|$ containing $Y$. Then $W$ is a Fano
$4$-fold, and moreover smooth, since $\hcoY$ is smooth outside $G_{\hcoY}$
by Proposition \ref{cla:ZY} and the codimension of $G_{\hcoY}$ in
$\hcoY$ is $5$. Hence we know that $W$ is simply connected. The
simply connectedness of $Y$ follows from the Lefschetz theorem since
$Y$ is an ample divisor on $W$. \end{proof}

In a separate publication \cite{HoTa3}, we will show the derived
equivalence $D^{b}(X)\simeq D^{b}(Y)$ using the properties of the
(dual) Lefschetz collections (\ref{eqn:Lefschetz-D-X}) and (\ref{eqn:Lefschetz-D-Y}).

$\;$

$\;$

$\;$

$\;$\newpage{}

\section{The resolution $\widetilde{\hcoY}\to\hcoY$\label{sec:The-resolution-Y}}

\textit{We will restrict our attention to the case of $n=4$ $(\dim V=5)$.}

\subsection{Conics and planes in $\mathrm{G}(3,V)$ \label{sub:Conics-and-planes}}

Recall the Stein factorization $\Zpq\to\hcoY\to\Hes$. As shown in
Proposition \ref{cla:double2}, the fiber of $\Zpq\to\hcoY$ over
$y\in\hcoY\setminus G_{\hcoY}$ is a smooth conic in $\rG(3,V)$ which
parametrizes planes contained in the corresponding quadric $\Lrho_{\hcoY}(y)=[Q_{y}]\in\Hes$.
Writing this conic by $q_{y}$, we can represent $y\in\hcoY\setminus G_{\hcoY}$
by the pair $([Q_{y}],q_{y})$. Let us note that each conic $q_{y}\subset\rG(3,V)$
determines a conic in $\mP(\wedge^{3}V)$ under the Pl\"ucker embedding,
and in turn, determines a plane $\mP_{q_{y}}^{2}$ in $\mP(\wedge^{3}V)$
which contains the conic. 

If $\rank Q_{y}=4$, the plane $\mP_{q_{y}}^{2}$ recovers the conic
by \[
\mP_{q_{y}}^{2}\cap\rG(3,V)\subset\mP(\wedge^{3}V).\]
This fact can be seen as follows: First note that $Q_{y}$ determines
a quadric $\overline{Q}_{y}$ in $\mP(V/V_{1})$ with $[V_{1}]$ being
the vertex of $Q_{y}$. Note also that $q_{y}$ determines a conic
$\bar{q}_{y}$ in $\rG(2,V/V_{1})$ uniquely and the corresponding
plane $\mP_{\bar{q}_{y}}^{2}\subset\mP(\wedge^{2}(V/V_{1}))$. Since
$\rG(2,V/V_{1})$ is a quadric in $\mP(\wedge^{2}(V/V_{1}))$ and
$\mP_{q_{y}}^{2}\not\subset\rG(2,V/V_{1})$ holds for $\rank Q_{y}=4$,
the intersection with $\mP_{\bar{q}_{y}}^{2}$ recovers the conic
$\bar{q}_{y}$ and also $q_{y}$. 

If $\rank Q_{y}=3$, the plane $\mP_{q_{y}}^{2}$ does not recover
the conic $q_{y}.$ To see this, we recall the fact that there are
two types of planes contained in $\mathrm{G}(3,V)$. The first one
is a plane which is given by \[
{\rm P}_{V_{2}}:=\{[\Pi]\in\mathrm{G}(3,V)\mid V_{2}\subset\Pi\}\cong\mP^{2}\]
 with some $V_{2}$. The second one is \[
{\rm P}_{V_{1}V_{4}}:=\{[\Pi]\in\mathrm{G}(3,V)\mid V_{1}\subset\Pi\subset V_{4}\}\cong\mP^{2}\]
 with some $V_{1}$ and $V_{4}$ satisfying $V_{1}\subset V_{4}$.
We call ${\rm {P}_{V_{2}}}$ \textit{a $\rho$-plane} and ${\rm {P}_{V_{1}V_{4}}}$
\textit{a $\sigma$-plane}. While these are (projective) planes in
$\rG(3,V)$, we may (and will) consider them in $\mP(\wedge^{3}V)$
under the Pl\"ucker embedding $\rG(3,V)\subset\mP(\wedge^{3}V)$.
When $\rank Q_{y}=3$, the conic $q_{y}$ parametrizes planes containing
the vertex $\mP(V_{2})$ of $Q_{y}$, hence the corresponding plane
$\mP_{q_{y}}^{2}$ is a $\rho$-plane and the intersection becomes
$\mP_{q_{y}}^{2}\cap\rG(3,V)=\mP_{q_{y}}^{2}$. 

In \cite[\S 3.1]{IM}, the Hilbert scheme of conics in $\mathrm{G}(3,V)$
has been studied in general. As an element of the Hilbert scheme,
every conic $q$ in $\rG(3,V)$ determines the corresponding conic
in $\mP(\wedge^{3}V)$ and then the corresponding plane $\mP_{q}^{2}$.
We follow \cite{IM} in the following definition:

\begin{defn} 

A conic $q$ is called $\tau$-conic if $\mP_{q}^{2}\not\subset\rG(3,V)$
holds. $q$ is called $\rho$-conic and $\sigma$-conic, respectively,
if the plane $\mP_{q}^{2}$ is a $\rho$-plane and $\sigma$-plane.
\hfill {\rm []} \end{defn} 

$\,$

Clearly the $\rho$-planes ${\rm P}_{V_{2}}(V_{2}\subset V)$ are
parametrized by $\rG(2,V)$ while $\sigma$-planes ${\rm P}_{V_{1}V_{4}}(V_{1}\subset V_{4}\subset V)$
are parametrized by the flag variety $F(1,4,V)\simeq\mP(\Omega_{\mP(V)}^{1})\simeq\mP(\Omega_{\mP(V^{*})}^{1})$.
Since $\rG(2,V)$ and $F(1,4,V)$ parametrize planes in $\mP(\wedge^{3}V)$,
these define subvarieties in $\rG(3,\wedge^{3}V)$, which we denote
by $\overline{\Prt}_{\rho}$ and $\overline{\Prt}_{\sigma}$, respectively.

\subsection{Smooth conics and conics of rank two \label{sub:Smooth-conics-and}}

Denote by $\eS$ the universal subbundle on $\mathrm{G}(3,V)$, and
regard $\mP(\eS)$ as the universal family of the planes in $\mP(V)$.
Consider the natural projection \[
\varphi_{\eS}:\mP(\eS)\to\mP(V).\]
For a conic $q\subset\mathrm{G}(3,V)$, we consider the restriction
$\mP(\eS\vert_{q})$ and its image under $\varphi_{\eS}.$

\begin{prop} \label{prop:Y34}

Smooth $\tau$- and $\rho$-conics correspond bijectively to points
in $\hcoY\setminus G_{\hcoY}$. 

\end{prop} 

\begin{proof} We have seen in the beginning of the preceding subsection
that each points $y\in\hcoY\setminus G_{\hcoY}$ determines a smooth
conic $q_{y}$ in $\rG(3,V)$ (see also Proposition \ref{cla:double2}). 

For the converse, suppose a smooth conic $q\subset\mathrm{G}(3,V)$
is given. Note that the dual bundle $\eS^{*}$ on $\mathrm{G}(3,V)$
restricts as $\eS^{*}|_{q}\simeq\sO(1)_{\mP^{1}}^{\oplus2}\oplus\sO_{\mP^{1}}$,
or $\sO_{\mP^{1}}(2)\oplus\sO_{\mP^{1}}^{\oplus2}$ since $\eS^{*}$
is generated by its global sections and $\deg\eS^{*}|_{q}=c_{1}(\wedge^{3}\eS^{*}|_{q})=2$.
Let $Q$ be the image of $\mP({\eS}|_{q})$ under $\varphi_{\eS}$.
Then there are two possibilities; (i) the degree of $\mP({\eS}|_{q})\to Q$
is two and $Q$ is a $3$-plane, i.e., a quadric of rank 1, or (ii)
the degree of $\mP({\eS}|_{q})\to Q$ is one and $Q$ is a quadric
of rank $4$ or $3$ depending on the splitting type of $\eS^{*}|_{q}$,
i.e., $\sO(1)_{\mP^{1}}^{\oplus2}\oplus\sO_{\mP^{1}}$ or $\sO_{\mP^{1}}(2)\oplus\sO_{\mP^{1}}^{\oplus2}$,
respectively (see Example \ref{ex:conics} below). The case (i) is
excluded by the condition $y=([Q],q)\in\hcoY\setminus G_{\hcoY}.$
Also if $Q$ is a $3$-plane $\mP(V_{4})$, then $q\subset\{[U]\in\mathrm{G}(3,V)\mid U\subset V_{4}\}$
and $\mP_{q}^{2}$ must be a $\sigma$-plane, i.e., $q$ is a $\sigma$-conic
by definition. 

Thus we see that every smooth $\tau$- or $\rho$-conic determines
a point $y=([Q],q)\in\hcoY\setminus G_{\hcoY}$, and vice versa. 

\end{proof}

We present some explicit examples of conics according to their ranks.
It should be noted that the Hilbert scheme of conics admits the natural
$SL(V)$-action, and hence the examples below describe the general
properties of each orbit of the Hilbert scheme under the $SL(V)$-action. 

\begin{expl} \label{ex:conics} 

(Conics of rank 3) Taking a basis ${\bf e}_{1},\cdots,{\bf e}_{5}$
of $V$, consider the subspaces, for example, to be $V_{1}=\langle{\bf e}_{4}\rangle$,
$V_{4}=\langle{\bf e}_{1},{\bf e}_{2},{\bf e}_{3},{\bf e}_{4}\rangle$
and $V_{2}=\langle{\bf e}_{4},{\bf e}_{5}\rangle$. Then typical examples
of $\tau$-, $\rho$-, $\sigma$-conics, respectively, may be given
explicitly in terms of the homogeneous coordinates of $\mathrm{G}(3,V)$;\[
q_{\tau}=\left\{ \left[\begin{array}{c}
s\;\; t\;\;0\;\;0\;\;0\\
0\;\;0\;\; s\;\; t\;\;0\\
0\;\;0\;\;0\;\;0\;\;1\end{array}\right]\right\} ,\; q_{\rho}=\left\{ \left[\begin{array}{c}
s^{2}\; st\;\, t^{2}\;0\;\;0\\
0\;\;0\;\;\,0\;\;1\;\;0\\
0\;\;0\;\;\,0\;\;0\;\;1\end{array}\right]\right\} ,\; q_{\sigma}=\left\{ \left[\begin{array}{c}
s\;\; t\;\;0\;\;0\;\;0\\
0\;\; s\;\; t\;\;0\;\;0\\
0\;\;0\;\;0\;\;1\;\;0\end{array}\right]\right\} ,\]
where $[s,t]\in\mP^{1}$ parameterizes each conic $q$. The $\tau$-conic
above is on a unique plane $\mP_{q_{\tau}}^{2}=\mP(\langle{\bf e}_{135},{\bf e}_{245},{\bf e}_{235}+{\bf e}_{145}\rangle)\subset\mP(\wedge^{3}V)$
and characterized by the conic $\mP_{q_{\tau}}^{2}\cap\mathrm{G}(3,V)=\{p_{135}p_{245}-p_{145}^{2}=0\}$,
where $p_{ijk}$ are the Pl\"ucker coordinates and we have introduced
a notation ${\bf e}_{ijk}:={\bf e}_{i}\wedge{\bf e}_{j}\wedge{\bf e}_{k}$.
The $\rho$-conic above is on a plane ${\rm P}_{V_{2}}=\mP(\langle{\bf e}_{145},{\bf e}_{245},{\bf e}_{345}\rangle)$
with its equation $p_{145}p_{345}-p_{245}^{2}=0$. Similarly, the
$\sigma$-conic above is on the plane ${\rm {\rm P}_{V_{1}V_{4}}\subset\mathrm{G}(3,V)}$.
It is easy to see ${\rm P}{}_{V_{1}V_{4}}=\mP(\langle{\bf e}_{124},{\bf e}_{134},{\bf e}_{234}\rangle)\subset\mP(\wedge^{3}V)$
and the equation $p_{124}p_{234}-p_{134}^{2}=0$ for $q_{\sigma}$.\hfill {\rm []} \end{expl}

$\,$

Let $q$ be a $\tau$-conic of rank $2$. Then $q$ is a pair of intersecting
lines, say, $l_{1}$ and $l_{2}$. We may write $l_{i}=\{[\Pi]\mid V_{2}^{(i)}\subset\Pi\subset V_{4}^{(i)}\}$,
where $\mP(V_{2}^{(i)})\subset\mP(V)$ are lines and $\mP(V_{4}^{(i)})\subset\mP(V)$
are $3$-planes for $i=1,2$. Since $l_{1}\cap l_{2}\not=\emptyset$,
it holds $\dim(V_{2}^{(1)}\cap V_{2}^{(2)})\geq1$. Since $q$ is
not a $\rho$-conic, $V_{2}^{(1)}\not=V_{2}^{(2)}$. Therefore we
see that $\dim(V_{2}^{(1)}\cap V_{2}^{(2)})=1$ and $l_{1}\cap l_{2}=[V_{2}^{(1)}+V_{2}^{(2)}]$.
If $V_{4}^{(1)}=V_{4}^{(2)}$, then $q$ is contained in ${\rm {P}_{V_{2}^{(1)}\cap V_{2}^{(2)},V_{4}^{(1)}}}$
and then $q$ must be a $\sigma$-conic. Therefore $V_{4}^{(1)}\not=V_{4}^{(2)}$.
In summary, $q=l_{1}\cup l_{2}$ is a $\tau$-conic of rank two iff
\[
V_{d}^{(1)}\not=V_{d}^{(2)}\,(d=2,4),\; V_{4}^{(1)}\cap V_{4}^{(2)}=V_{2}^{(1)}+V_{2}^{(2)}.\]
 The other two types of conics of rank two may be described in a similar
way. 

\begin{expl} \label{ex:conics2} 

(Conics of rank 2) We present examples of $\tau$-,$\rho$-, and $\sigma$-conics
of rank 2, respectively. (1) From the above description, an example
of $\tau$-conic of rank 2 may be given by $q_{\tau}=l_{1}\cup l_{2}$
with \[
l_{1}=\{[{\bf e}_{1},{\bf e}_{2},s{\bf e}_{3}+t{\bf e}_{4}\mid[s,t]\in\mP^{1}\},\;\; l_{2}=\{[{\bf e}_{2},{\bf e}_{3},s{\bf e}_{1}+t{\bf e}_{5}\mid[s,t]\in\mP^{1}\}.\]
 It is easy to see that this conic is on a unique plane $\mP_{q_{\tau}}^{2}=\mP(\langle{\bf e}_{123},{\bf e}_{124},{\bf e}_{235}\rangle)\subset\mP(\wedge^{3}V)$.
Then the equation of $q_{\tau}$ can be read as $\mP_{q_{\tau}}\cap\mathrm{G}(3,V)=\{p_{124}p_{235}=0\}$.

\noindent (2) Take $V_{1},V_{2},V_{4}$ as in Example~\ref{ex:conics}.
Then ${\rm P}_{V_{1}V_{4}}=\mP(\langle{\bf e}_{124},{\bf e}_{134},{\bf e}_{234}\rangle)\subset\mathrm{G}(3,V)$
and ${\rm P}_{V_{2}}=\mP(\langle{\bf e}_{145},{\bf e}_{245},{\bf e}_{345}\rangle)\subset\mathrm{G}(3,V)$.
As a $\rho$-conic of rank 2 on the $\rho$-plane ${\rm {P}_{V_{2}}}$,
we may have $q_{\rho}=l_{3}\cup l_{4}$ with \[
l_{3}=\{[s{\bf e}_{1}+t{\bf e}_{2},{\bf e}_{4},{\bf e}_{5}]\mid[s,t]\in\mP^{1}\},\;\; l_{4}=\{[s{\bf e}_{2}+t{\bf e}_{3},{\bf e}_{4},{\bf e}_{5}]\mid[s,t]\in\mP^{1}\}.\]
 The equation of this conic is $p_{145}p_{345}=0$. Similarly, as
an example of $\sigma$-conic of rank 2 on the $\sigma$-plane ${\rm {P}_{V_{1}V_{4}}}$,
we may have $q_{\sigma}=l_{5}\cup l_{6}$ with \[
l_{5}=\{[s{\bf e}_{1}+t{\bf e}_{2},{\bf e}_{3},{\bf e}_{4}\mid[s,t]\in\mP^{1}\},\;\; l_{6}=\{[{\bf e}_{1},s{\bf e}_{2}+t{\bf e}_{3},{\bf e}_{4}\mid[s,t]\in\mP^{1}\}.\]
 The equation of this conic is $p_{124}p_{234}=0$. \hfill [] \end{expl} 

$\;$

\subsection{$\hcoY_{3}$ and $\overline{\hcoY}$}

Consider a smooth conic $q=\left\{ [\xi(s,t)]\in\rG(3,V)\mid[s,t]\in\mP^{1}\right\} ,$
then the planes $\mP(\xi(s,t))\subset\mP(V)$ contain (at least) a
point $[V_{1}]\in\mP(V)$ in common (see Proposition \ref{prop:Y34}
and Example \ref{ex:conics}). Therefore we may assume the form $[\xi(s,t)]=[\xi^{(1)}(s,t),\xi^{(2)}(s,t),v]$
with $V_{1}=\mC v$. Corresponding plane $\mP_{q}^{2}$ in $\mP(\wedge^{3}V)$
is a point $[U]=[u_{1},u_{2},u_{3}]\in\mathrm{G}(3,\wedge^{3}V)$
determined by \[
\xi^{(1)}(s,t)\wedge\xi^{(2)}(s,t)\wedge v=u_{1}\, s^{2}+u_{2}\, st+u_{3}\, t^{2}\quad(u_{i}\in\wedge^{3}V).\]
Note that $u_{i}=\bar{u}_{i}\wedge v$ with some $\bar{u}_{i}\in\wedge^{2}V$,
and $\bar{u}_{i}$ may be considered in $\wedge^{2}(V/V_{1}).$ Then
the point $[U]\in\mathrm{G}(3,\wedge^{3}V)$ may be represented by
$[\bar{U}]=[\bar{u}_{1},\bar{u}_{2},\bar{u}_{3}]\in\mathrm{G}(3,\wedge^{2}(V/V_{1})$).
We write this by $[U]=[\bar{U}\wedge v]$ in short. We also write
by $\bar{\mP}_{q}^{2}$ the plane in $\mP(\wedge^{2}(V/V_{1}))$ which
corresponds to $[\bar{U}]$.

For each $v\in V$, we define a linear map $e_{v}:\wedge^{3}V\to\wedge^{4}V$
by $u\mapsto v\wedge u$. Consider the restriction $e_{v}\vert_{U}$
to $U\subset\wedge^{3}V$, and set $a_{U}=\left\{ v\in V\mid e_{v}\vert_{U}=0\right\} $.
$a_{U}$ is nothing but the annihilator of $U$.

\begin{defn} \label{def:Y3}

We define\[
\hcoY_{3}=\left\{ ([U],[V_{1}])\mid V_{1}\subset a_{U}\right\} \subset\mathrm{G}(3,\wedge^{3}V)\times\mathbb{P}(V),\]
and $\overline{\hcoY}$ to be the projection of $\hcoY_{3}$ to the
first factor of $\mathrm{G}(3,\wedge^{3}V)\times\mP(V)$. We consider
the reduced structure on $\overline{\hcoY}.$ \hfill {\rm []} \end{defn} 

Since $e_{v}\vert_{U}=0\,(V_{1}=\mathbb{C}v)$ implies that $U$ is
the $\mathbb{C}$-span of non-vanishing vectors of the form $\bar{u}_{i}\wedge v\,(i=1,2,3)$
with $\bar{u}_{i}\in\wedge^{2}(V/V_{1})$, the fiber of the natural
projection $\hcoY_{3}\to\mathbb{P}(V)$ over $[V_{1}]\in\mathbb{P}(V)$
can be identified with $\mathrm{G}(3,\wedge^{2}(V/V_{1}))$. Hence
we see that \[
\hcoY_{3}=\mathrm{G}(3,T(-1)^{\wedge2}),\]
and in particular $\hcoY_{3}$ is smooth. In what follows, both $([\bar{U}],[V_{1}])\in\hcoY_{3}$
with $[\bar{U}]\in\mathrm{G}(3,\wedge^{2}(V/V_{1}))$ and $([U],[V_{1}])$
will be used to represent a point on $\hcoY_{3}.$ 

From the arguments above, it is clear that all planes $\mP_{q}^{2}$
corresponding to smooth conics $q$ are contained in $\overline{\hcoY}$.
We also note that each point $([\bar{U}],[V_{1}])$ of $\hcoY_{3}$
determines a conic $\bar{q}$ in $\rG(2,V/V_{1})$ by the intersection
$\mP(\bar{U})\cap\rG(2,V/V_{1})$ in $\mP(\wedge^{2}(V/V_{1}))$ if
$\mP(\bar{U})\not\subset\rG(2,V/V_{1})$. Then the conic $\bar{q}=\left\{ [\bar{\xi}^{(1)}(s,t),\bar{\xi}^{(2)}(s,t)]\in\rG(2,V/V_{1})\mid[s,t]\in\mP^{1}\right\} $
determines a conic $q$ in $\rG(3,V)$ by $q=\left\{ [\xi^{(1)}(s,t),\xi^{(2)}(s,t),v]\in\rG(3,V)\mid[s,t]\in\mP^{1}\right\} $
with $V_{1}=\mC v$. 

\begin{prop}  {\rm (1)} The image $\overline{\hcoY}$ of $\hcoY_{3}$
under the projection is described by \[
\overline{\hcoY}=\left\{ [U]\in\rG(3,\wedge^{3}V)\mid\dim a_{U}\geq1\right\} .\]
{\rm (2)} $\overline{\hcoY}$ is singular along $\overline{\hcoY}_{sing}$
which is given by \[
\left\{ [U]\in\rG(3,\wedge^{3}V)\mid\dim a_{U}=2\right\} =\overline{\Prt}_{\rho}(\simeq\rG(2,V)).\]
{\rm (3)} The morphism $\Lrho_{\hcoY_{3}}:\hcoY_{3}\to\overline{\hcoY}$
is a small resolution with its fiber being $\mP^{1}$ over each point
of $\overline{\hcoY}_{sing}$ and is isomorphic over $\overline{\hcoY}\setminus\overline{\hcoY}_{sing}$. 

\end{prop}

\begin{proof} The claim (1) is immediate from the definition. 

(2) From the definition of the annihilator $a_{U}$, it is clear that
the condition $\dim a_{U}\geq1$ is satisfied only when $[U]\in\rG(3,\wedge^{3}V)$
takes the form \begin{equation}
[U]=[\bar{u}_{1}\wedge v,\,\bar{u}_{2}\wedge v,\,\bar{u}_{3}\wedge v]\label{eq:Uv}\end{equation}
with some non-zero vector $v\in V$ and $\bar{u}_{i}\in\wedge^{2}(V/V_{1})$
with $V_{1}=\mC v$. It is easy to see that the possible values for
$\dim a_{U}$ are 1 or 2, and the case $\dim a_{U}=2$ occurs iff
$[U]$ specializes further to \begin{equation}
[U]=[a\wedge v_{1}\wedge v_{2},\, b\wedge v_{1}\wedge v_{2},\, c\wedge v_{1}\wedge v_{2}].\label{eq:Uvv}\end{equation}

Now let us note the duality $\wedge^{3}V\simeq\wedge^{2}V^{*}$, which
follows from the non-degenerate pairing $\wedge^{3}V\times\wedge^{2}V\to\wedge^{5}V$.
Using this, we consider the following sequence of maps:\[
\ft{S}^{2}(\wedge^{3}V)\simeq\ft{S}^{2}(\wedge^{2}V^{*})\to(\wedge^{2}V^{*})\wedge(\wedge^{2}V^{*})\to\wedge^{4}V^{*}\simeq V.\]
We denote the composition of these maps by $\varphi$, and consider
its restriction $\varphi_{U}:=\varphi\vert_{\ft{S}^{2}U}$: $\ft{S}^{2}U\to V$.
Using the form (\ref{eq:Uv}), we take a basis of $\ft{S}^{2}U$ by
$\left\{ (\bar{u}_{i}\wedge v)\otimes_{s}(\bar{u}_{j}\wedge v)\right\} $,
where $\otimes_{s}$ means the symmetric tensor product. Then the
composite map above may be described by \[
(\bar{u}_{i}\wedge v)\otimes_{s}(\bar{u}_{j}\wedge v)\mapsto\bar{u}_{i}^{*}\otimes_{s}\bar{u}_{j}^{*}\mapsto\bar{u}_{i}^{*}\wedge\bar{u}_{j}^{*},\]
where $\bar{u}_{i}^{*}\in\wedge^{2}(V/V_{1})^{*}$ are determined
by the non-degenerated pairing $\wedge^{2}(V/V_{1})\times\wedge^{2}(V/V_{1})\to\wedge^{4}(V/V_{1})$
and are considered in $\wedge^{2}V^{*}$ via the natural inclusion
$\wedge^{2}(V/V_{1})^{*}$ $\subset\wedge^{2}V^{*}$. We note that
$\bar{u}_{i}^{*}\wedge\bar{u}_{j}^{*}$'s belong to the one dimensional
subspace $\wedge^{4}(V/V_{1})^{*}\subset\wedge^{4}V^{*}$. Hence if
$[U]$ takes the form (\ref{eq:Uv}), then $\rank\varphi_{U}\leq1$.
It is also easy to see the converse. Therefore the condition $\dim a_{U}\geq1$
is equivalent to the condition $\rank\varphi_{U}\leq1$ on the rank,
which we can express by the two-by-two minors of the matrix representing
$\varphi_{U}$ (see Remark below). $\overline{\hcoY}$ is defined
by taking the reduced structure of the condition $\rank\varphi_{U}\leq1$.
We will see in Lemma \ref{lem:ideal-bar-Y} below that the scheme
structure coming from the rank condition contains embedded points
along the singular locus of $\overline{\hcoY}$. Also we will show
explicitly in (\ref{eq:Using-parameter}) that the singularities of
the variety $\overline{\hcoY}$ appear at the locus consisting of
points $[U]$ of the form (\ref{eq:Uvv}) which determine the corresponding
$\rho$-planes ${\rm P}_{V_{2}}$ with $V_{2}=\langle v_{1},v_{2}\rangle$
and vice versa. Hence the singular locus coincides with the set of
$\rho$-planes $\overline{\Prt}_{\rho}\simeq\rG(2,V)$. 

(3) Clearly, the fiber of $\hcoY_{3}\to\overline{\hcoY}_{sing}$ over
each point is $\mP(V_{2})\simeq\mP^{1}$, and $\hcoY_{3}\to\overline{\hcoY}$
is bijective over $\overline{\hcoY}\setminus\overline{\hcoY}_{sing}$. 

\end{proof}

\begin{rem} The condition $\rank\varphi_{U}\leq1$ was suggested
to the authors by the referee. If we write a point $[U]$ by the Pl\"ucker
coordinates $p_{ijk}$ of $\wedge^{3}V$ as $[U]=[p_{i_{1}j_{1}k_{1}}^{(1)},p_{i_{2}j_{2}k_{2}}^{(2)},p_{i_{3}j_{3}k_{3}}^{(3)}]$,
then it is straightforward to obtain the $\dim V\times\dim\ft{S}^{2}U$
matrix which represents $\varphi_{U}$ in terms of these coordinates.
For reader's convenience, we write the the matrix entries explicitly:
\begin{equation}
\sC_{i(ab)}=\sum_{j,j',k,k'}\epsilon^{ijj'kk'}\big(\sum_{l,m,n}\epsilon^{jklmn}p_{lmn}^{(a)}\big)\big(\sum_{l',m',n'}\epsilon^{j'k'l'm'n'}p_{l'm'n'}^{(b)}\big),\label{eq:matrixCiab}\end{equation}
where $\epsilon^{ijklm}$ is the signature function defined by ${\bf e}_{i}^{*}\wedge{\bf e}_{j}^{*}\wedge{\bf e}_{k}^{*}\wedge{\bf e}_{l}^{*}\wedge{\bf e}_{m}^{*}=\epsilon^{ijklm}{\bf e}_{1}^{*}\wedge{\bf e}_{2}^{*}\wedge{\bf e}_{3}^{*}\wedge{\bf e}_{4}^{*}\wedge{\bf e}_{5}^{*}$
with the basis ${\bf e}_{i}^{*}(i=1,..,5)$ of $V^{*}$.\hfill {\rm []}\end{rem}

\begin{prop} \label{prop:barPrho-barPsigma} The set of $\rho$-planes
$\overline{\Prt}_{\rho}=\overline{\hcoY}_{sing}\simeq\rG(2,V)$ and
the set of $\sigma$-planes $\overline{\Prt}_{\sigma}\simeq F(1,4,V)$
in $\rG(3,\wedge^{3}V)$ $($see Subsection $\ref{sub:Conics-and-planes})$
are subvarieties in $\overline{\hcoY}$. They are given by\begin{eqnarray*}
\overline{\Prt}_{\rho} & = & \left\{ \big[\left(V/V_{2}\right)\wedge\left(\wedge^{2}V_{2}\right)\big]\mid[V_{2}]\in\rG(2,V)\right\} \text{ and}\\
\overline{\Prt}_{\sigma} & = & \left\{ \;\big[\wedge^{2}\left(V_{4}/V_{1}\right)\wedge V_{1}\big]\;\mid[V_{1}\subset V_{4}]\in F(1,4,V)\right\} .\end{eqnarray*}

\end{prop} \begin{proof} Claims follow directly from the definitions.
\end{proof}

$\;$

\subsection{The resolution $\widetilde{\hcoY}\to\overline{\hcoY}$ \label{subsection:BlowUp}}

We study the singularity of $\overline{\hcoY}$ along $\overline{\hcoY}_{sing}$,
and find another (small) resolution $\widetilde{\hcoY}$ of $\overline{\hcoY}$.
The two smooth varieties $\hcoY_{3}$ and $\widetilde{\hcoY}$ turns
out to be related by the (anti-)flip with an exceptional set in $\hcoY_{2}$
of $\mP^{1}\times\mP^{5}$ fibration over $\overline{\hcoY}_{sing}$
(see (\ref{eq:smallDiag})). For concreteness, we describe below the
resolution in the local coordinates near a point $[U_{0}]=[{\bf e}_{1}\wedge{\bf e}_{4}\wedge{\bf e}_{5},{\bf e}_{2}\wedge{\bf e}_{4}\wedge{\bf e}_{5},{\bf e}_{3}\wedge{\bf e}_{4}\wedge{\bf e}_{5}]$
in $\overline{\hcoY}_{sing}$, which can also be written by $[\left(\wedge^{2}V_{2}^{0}\right)\wedge\left(V/V_{2}^{0}\right)]$
with $[V_{2}^{0}]=[{\bf e}_{4},{\bf e}_{5}]\in\rG(2,V)$. Let us fix
a basis of $\wedge^{3}V$ by\[
{\bf e}_{123};\,{\bf e}_{234},{\bf e}_{314},{\bf e}_{124};\,{\bf e}_{235},{\bf e}_{315},{\bf e}_{125};\,{\bf e}_{145},{\bf e}_{245},{\bf e}_{345},\]
with ${\bf e}_{ijk}:={\bf e}_{i}\wedge{\bf e}_{j}\wedge{\bf e}_{k}$.
We introduce an affine coordinate of $\rG(3,\wedge^{3}V)$ centered
at $[U_{0}]$ by the $3\times10$ matrix;\[
[U]=\left[\begin{matrix}z_{1} & y_{11} & y_{12} & y_{13} & x_{11} & x_{12} & x_{13} & 1 & 0 & 0\\
z_{2} & y_{21} & y_{22} & y_{23} & x_{21} & x_{22} & x_{23} & 0 & 1 & 0\\
z_{3} & y_{31} & y_{32} & y_{33} & x_{31} & x_{32} & x_{33} & 0 & 0 & 1\end{matrix}\right],\]
where $z_{i}=y_{ij}=x_{ij}=0$ represents $[U_{0}]$. With this coordinate,
it is straightforward to evaluate (\ref{eq:matrixCiab}). We denote
by $\sI_{\varphi_{U}}$ the ideal generated by all $2\times2$ minors
of the matrix $(\sC_{i(ab)})$. Then, evaluating the primary decomposition
of $\sI_{\varphi_{U}}$ by Macaulay2 \cite{M2}, we obtain 

\begin{lem} \label{lem:ideal-bar-Y}The decomposition consists of
two components: $\sI_{\varphi_{U}}=\sI_{\overline{\hcoY}}\cap\sI_{\overline{\hcoY}}^{1}$
with $\sI_{\overline{\hcoY}}$ and $\sI_{\overline{\hcoY}}^{1}$ representing
the reduced part and embedded points, respectively. $\sI_{\overline{\hcoY}}$
is a radical ideal which is generated by \[
z_{1}+\big|\begin{smallmatrix}y_{11} & y_{23}\\
x_{11} & x_{23}\end{smallmatrix}\big\vert-\big\vert\begin{smallmatrix}y_{12} & y_{13}\\
x_{12} & x_{13}\end{smallmatrix}\big\vert,\; z_{2}-\big\vert\begin{smallmatrix}y_{22} & y_{13}\\
x_{22} & x_{13}\end{smallmatrix}\big\vert-\big\vert\begin{smallmatrix}y_{23} & y_{21}\\
x_{23} & x_{21}\end{smallmatrix}\big\vert,\; z_{3}+\big\vert\begin{smallmatrix}y_{33} & y_{12}\\
x_{33} & x_{12}\end{smallmatrix}\big\vert-\vert\begin{smallmatrix}y_{31} & y_{32}\\
x_{31} & x_{32}\end{smallmatrix}\vert\]
and the $2\times2$ minors of \begin{equation}
\left(\begin{matrix}y_{11} & y_{12}+y_{21} & y_{13}+y_{31} & y_{22} & y_{23}+y_{32} & y_{33}\\
x_{11} & x_{12}+x_{21} & x_{13}+x_{31} & x_{22} & x_{23}+x_{32} & x_{33}\end{matrix}\right).\label{eq:matrix-xy-22}\end{equation}
The radical of the component $\sI_{\overline{\hcoY}}^{1}$ is generated
by all the matrix entries of $(\ref{eq:matrix-xy-22})$, i.e., $y_{ii},x_{jj}$,
$y_{ij}+y_{ji},x_{ij}+x_{ji}$ $(1\leq i<j\leq3)$, and $z_{1}-\big|\begin{smallmatrix}y_{12} & y_{13}\\
x_{12} & x_{13}\end{smallmatrix}\big\vert,$ $z_{2}-\big\vert\begin{smallmatrix}y_{23} & y_{21}\\
x_{23} & x_{21}\end{smallmatrix}\big\vert$, $z_{3}-\big\vert\begin{smallmatrix}y_{31} & y_{32}\\
x_{31} & x_{32}\end{smallmatrix}\big\vert.$

\end{lem}

From the form of the ideal generated by the $2\times2$ minors of
(\ref{eq:matrix-xy-22}), we observe that the singular locus of the
variety $V(\sI_{\overline{\hcoY}})$ is exactly along the variety
$V(\sI_{\overline{\hcoY}}^{1})$ of the embedded points. Note that,
due to the natural $SL(V)$ action, these properties are valid for
all other affine coordinates although we worked in a specific affine
coordinate of $\rG(3,\wedge^{3}V)$. $\overline{\hcoY}$ is defined
by the ideal $\sI_{\overline{\hcoY}}$ in each affine coordinate. 

Now, let us note that among $21$ variables $(z_{i},y_{ij},x_{ij})\in\mC^{21}$,
the parameters for the singular locus $\overline{\hcoY}_{sing}=V(\sI_{\overline{\hcoY}}^{1})$
are easily identified in the following form, \begin{equation}
[U]=[(\wedge^{2}V_{2})\wedge(V/V_{2})]=\left[\begin{smallmatrix}a_{2}b_{3}-a_{3}b_{2} & \;0 & \;\; b_{3} & -b_{2} & \;\;0 & -a_{3} & \;\; a_{2} & \;\;1 & \;0 & \;0\\
a_{3}b_{1}-a_{1}b_{3} & -b_{3} & \;\;0 & \;\; b_{1} & \;\; a_{3} & \;\;0 & -a_{1} & \;\;0 & \;1 & \;0\\
a_{1}b_{2}-a_{2}b_{1} & \;\; b_{2} & -b_{1} & \;\;0 & -a_{2} & \;\; a_{1} & \;\;0 & \;\;0 & \;0 & \;1\end{smallmatrix}\right]\label{eq:Using-parameter}\end{equation}
with $[V_{2}]=[{\bf e}_{4}+a_{1}{\bf e}_{1}+a_{2}{\bf e}_{2}+a_{3}{\bf e}_{3},{\bf e}_{5}+b_{1}{\bf e}_{1}+b_{2}{\bf e}_{2}+b_{3}{\bf e}_{3}]$.
In this form, it is clear that $\overline{\hcoY}_{sing}\simeq\rG(2,V)$.
Now, as a normal coordinate to $\overline{\hcoY}_{sing}$ at $[U_{0}]$,
we introduce the following coordinates $z_{i}$ and $y_{ij},x_{ij}\,(i\leq j)$
by \begin{equation}
[U]=\left[\begin{array}{cccccccccc}
z_{1} & y_{11} & y_{12} & y_{13} & x_{11} & x_{12} & x_{13} & 1 & 0 & 0\\
z_{2} & 0 & y_{22} & y_{23} & 0 & x_{22} & x_{23} & 0 & 1 & 0\\
z_{3} & 0 & 0 & y_{33} & 0 & 0 & x_{33} & 0 & 0 & 1\end{array}\right].\label{eq:Uzyx}\end{equation}

\begin{prop} \label{lem:p1p5}

Using the normal coordinates to $\overline{\hcoY}_{sing}$ at $[U_{0}]$
above, $\overline{\hcoY}$ is described by $z_{1}=z_{2}=z_{3}=0$
and the zeros of all $2\times2$ minors of\begin{equation}
\left(\begin{array}{cccccc}
y_{11} & y_{12} & y_{13} & y_{22} & y_{23} & y_{33}\\
x_{11} & x_{12} & x_{13} & x_{22} & x_{23} & x_{33}\end{array}\right),\label{eq:22minors}\end{equation}
 which describes the affine cone of the Segre embedding $\mP^{1}\times\mP^{5}$.
In particular, the variety $\overline{\hcoY}$ is a normal variety.

\end{prop}

\begin{proof} The first claim is immediate from Lemma \ref{lem:ideal-bar-Y}.
Due to the $SL(V)$-action on $\overline{\hcoY}$, all the singularities
along $\overline{\hcoY}_{sing}$ are isomorphic. Hence the second
claim follows. \end{proof}

From the above proposition, we see that the singularity of $\overline{\hcoY}$
can be resolved by introducing the ratio of the two lows or the six
columns of (\ref{eq:22minors}). These ratios introduce $\mP^{1}$
or $\mP^{5}$ along $\overline{\hcoY}_{sing}$ as the exceptional
sets. We can see that the former gives the resolution $\hcoY_{3}\to\overline{\hcoY}$.
The latter resolution gives the (anti-)flip of $\hcoY_{3}.$

\begin{defn} \label{defn:Grho}We define $\widetilde{\hcoY}$ to
be the (small) resolution of $\overline{\hcoY}$ with the exceptional
set being a $\mP^{5}$-bundle over $\overline{\hcoY}_{sing}$. We
denote the exceptional set by $G_{\rho}$.$\hfill[]$

\end{defn}

By definition, the blow-up over the point $[U_{0}]$ is given by the
ratio (representing a point $\mP^{5}$)\begin{equation}
y_{11}:y_{12}:\cdots:y_{33}=x_{11}:x_{12}:\cdots:x_{33}=S_{11}:S_{12}:S_{13}:S_{22}:S_{23}:S_{33}\label{eq:BlowUpCo-xyS}\end{equation}
 for the transversal directions $(y_{ij},x_{ij})_{i\leq j}$ introduced
in (\ref{eq:Uzyx}) with $z_{1}=z_{2}=z_{3}=0.$ Using this, one of
the affine coordinate for the transversal directions may be written
$(y_{11},x_{11},\frac{S_{ij}}{S_{11}})_{i\leq j}$ and similarly for
others. 

If we use the both ratios, the exceptional set becomes $\mP^{1}\times\mP^{5}$
fibered over $\overline{\hcoY}_{sing}$ which is a divisor. We denote
this blow-up by $\hcoY_{2}$ and the exceptional divisor by $F_{\rho}$.
Then $\hcoY_{3}\leftarrow\hcoY_{2}\to\widetilde{\hcoY}$ is the standard
(anti-)flip. Now we summarize the resolutions in the following diagram
(see also Fig.4):

\begin{equation}
\xyYYYYZYH\label{eq:smallDiag}\end{equation}

$\;$

\subsection{Double spin decomposition and the construction of $\Lpi_{\widetilde{\hcoY}}:\widetilde{\hcoY}\to\Hes$}

Consider a point $[U]\in\overline{\hcoY}$. By definition, we can
express $[U]=[\bar{U}\wedge V_{1}]$ with some $V_{1}=\mC v$ and
$[\bar{U]}\in\rG(3,\wedge^{2}(V/V_{1}))$. We describe $\wedge^{3}\bar{U}$
in $\wedge^{3}(\wedge^{2}(V/V_{1}))$. 

Let us note the following irreducible decomposition as $sl(V/V_{1})$-modules
(see \cite[$\S 19.1$]{FH} for example): \begin{equation}
\wedge^{3}(\wedge^{2}(V/V_{1}))=\ft{S}^{2}(V/V_{1})\oplus\ft{S}^{2}(V/V_{1})^{*}.\label{eq:spin}\end{equation}
 We will call this {}``double spin'' decomposition since the components
in the r.h.s. are identified with $V_{2\lambda_{s}}$ and $V_{2\lambda_{\bar{s}}}$
as the $so(\wedge^{2}V/V_{1})(\simeq sl(V/V_{1}))$-modules, where
$\lambda_{s}$ and $\lambda_{\bar{s}}$ represent the spinor and conjugate
spinor weights, respectively (see {[}\textit{loc. cit.}{]}). The projection
to the second factor of (\ref{eq:spin}) defines the following rational
map \[
\bar{\pi}_{2}^{DS}:\mP(\wedge^{3}(\wedge^{2}(V/V_{1}))\dashrightarrow\mP(\ft{S}^{2}(V/V_{1})^{*}).\]
We identify the projective space $\mP(\ft{S}^{2}(V/V_{1})^{*})$ with
the space of quadrics in $\mP(V/V_{1})$. Also, by the natural inclusion
$(V/V_{1})^{*}\hookrightarrow V^{*}$, we consider the following composition
\[
\pi_{2}^{DS}:\mP(\wedge^{3}(\wedge^{2}(V/V_{1}))\dashrightarrow\mP(\ft{S}^{2}(V/V_{1}){}^{*})\to\mP(\ft{S}^{2}V^{*}),\]
and identify its image with the singular quadrics in $\mP(V)$ which
contain $V_{1}$ in the singular locus. We denote the restrictions
of these rational maps to $\rG(3,\wedge^{2}(V/V_{1}))\subset\mP(\wedge^{3}(\wedge^{2}(V/V_{1}))$
by \[
\bar{\varphi}_{V_{1}}=\bar{\pi}_{2}^{DS}\vert_{\rG(3,\wedge^{2}(V/V_{1}))},\;\;\;\;\varphi_{V_{1}}=\pi_{2}^{DS}\vert_{\rG(3,\wedge^{2}(V/V_{1}))}.\]
By definition, these rational maps have the same set of indeterminacy.
Note also that the indeterminacy occurs when the projection to the
second factor is zero in (\ref{eq:spin}).

\begin{lem} \label{lem:indeterminacy}

For a point $[U]\in\rG(3,\wedge^{3}V)$, the following statements
hold:

\noindent {\rm (1)}  If $a_{U}=V_{1}$$(\dim a_{U}=1)$, $[U]$ decomposes
uniquely to $[U]=[\bar{U}\wedge V_{1}]$ and determines a point $\varphi_{V_{1}}([\bar{U}])$. 

\noindent {\rm (2)} If $a_{U}=V_{2}$ $(\dim a_{U}=2)$, $[U]$ decomposes
into $[\bar{U}\wedge V_{1}]=[(V/V_{2})\wedge(V_{2}/V_{1})\wedge V_{1}]$
for each $V_{1}\subset V_{2}$. For any choice of $V_{1}$, $[\wedge^{3}\bar{U}]$
is an indeterminacy point of the rational map $\varphi_{V_{1}}$. 

\end{lem}

\begin{proof} Both the claims follow from the explicit form (\ref{eq:vw-plucker})
of the decomposition (\ref{eq:spin}), where the Pl\"ucker coordinates
of $\wedge^{3}\bar{U}$ are related to symmetric matrices $[v_{ij},w_{kl}]$
representing the decomposition. The indeterminacy of $\varphi_{V_{1}}$
occurs exactly when $w_{kl}=0$. By the property (I.5) in Appendix,
this implies $\rank v\leq1$. Now we may assume $v_{ij}=x_{i}x_{j}$
for some vector $x\in V/V_{1}$. By the action of $GL(V/V_{1})$,
we may further assume $x_{1}=1$ and $x_{i}=0\,(i=2,3,4).$ Then we
obtain the conditions $p_{IJK}=0$ except $p_{{\bf 124}}=\frac{1}{2}$
for the Pl\"ucker coordinates of $\wedge^{3}\bar{U}$, which determine
$[\bar{U}]$ to be $[\bar{{\bf e}}_{1}\wedge\bar{{\bf e}}_{2},\bar{{\bf e}}_{1}\wedge\bar{{\bf e}}_{3},\bar{{\bf e}}_{1}\wedge\bar{{\bf e}}_{4}]$.
This implies $\dim a_{U}=2$. Hence, if $\dim a_{U}=1$, then $\varphi_{V_{1}}([\bar{U}])$
is defined. 

For the claim (2), we represent $[\bar{U}]$ as $[(V/V_{2})\wedge(V_{2}/V_{1})]$
by fixing $V_{1}\subset V_{2}$ arbitrarily. Using the $GL(V/V_{1})$
action, we may assume the form $[\bar{{\bf e}}_{2}\wedge\bar{{\bf e}}_{1},\bar{{\bf e}}_{3}\wedge\bar{{\bf e}}_{1},\bar{{\bf e}}_{4}\wedge\bar{{\bf e}}_{1}]$
for $[\bar{U}]$. Then, from the relation (\ref{eq:vw-plucker}),
we obtain $w_{kl}=0$ and see that $[\wedge^{3}\bar{U}]$ is an indeterminacy
point. 

\end{proof}

From the above lemma, and $\dim a_{U}=1$ or $2$ for $[U]\in\overline{\hcoY}$,
we have the following proposition (and definition):

\begin{prop} 

There is a rational map $\varphi_{DS}:\overline{\hcoY}\dashrightarrow\Hes\subset\mP(\ft{S}^{2}V^{*})$
defined by $\varphi_{DS}([U])=\varphi_{V_{1}}([\bar{U}])$ through
a decomposition $[U]=[\bar{U}\wedge V_{1}]$. 

\end{prop}

\begin{prop} \label{prop:phiDS} The rational map $\varphi_{DS}:\overline{\hcoY}\dashrightarrow\Hes$
extends to a morphism $\widetilde{\varphi}_{DS}:\widetilde{\hcoY}\to\Hes$.

\end{prop}

\begin{proof} We show that the indeterminacy of the rational map
$\pi_{2}^{DS}$ is resolved by the blow-up $\widetilde{\hcoY}\to\overline{\hcoY}.$
Since the indeterminacy occurs at the points $[U]$ with $\dim a_{U}=2$
(Lemma \ref{lem:indeterminacy}), we only need to analyze $\varphi_{DS}$
near a point $[U]\in\overline{\hcoY}_{sing}$. Note that such a point
can be written as $[U]=[(V/V_{2})\wedge\left(\wedge^{2}V_{2}\right)${]}
with $V_{2}=a_{U}$. By using the $GL(V)$ action, we can further
assume the form $[U_{0}]$ analyzed in Subsection \ref{subsection:BlowUp}.
Let us write the affine coordinate (\ref{eq:Uzyx}) in terms of the
ratio (\ref{eq:BlowUpCo-xyS}),\[
[U]=\left[\begin{array}{c}
0\\
0\\
0\end{array}\; y_{11}\left(\begin{array}{ccc}
1 & t_{12} & t_{13}\\
0 & t_{22} & t_{23}\\
0 & 0 & t_{33}\end{array}\right)\; x_{11}\left(\begin{array}{ccc}
1 & t_{12} & t_{13}\\
0 & t_{22} & t_{23}\\
0 & 0 & t_{33}\end{array}\right)\begin{array}{ccc}
1 & 0 & 0\\
0 & 1 & 0\\
0 & 0 & 1\end{array}\right],\]
where the coordinate $(y_{ij},x_{ij})_{i\leq j}$ for the normal direction
to $\overline{\hcoY}_{sing}$ are written by an affine coordinate
$(y_{11},x_{11},t_{ij}=\frac{S_{ij}}{S_{11}})$ of the blow-up. Let
us first consider the case $x_{11}\not=0.$ In this case, we find
a unique decomposition $[U]=[\bar{U}\wedge V_{1}]$ by $V_{1}=\mC v$
with $v=y_{11}{\bf e}_{4}+x_{11}{\bf e}_{5}$. Writing by $\bar{{\bf e}}_{i}$
the image of ${\bf e}_{i}$ in the quotient $V/V_{1}$, we introduce
the basis of $\wedge^{2}V/V_{1}$ by $\bar{{\bf e}}_{23},\bar{{\bf e}}_{31},\bar{{\bf e}}_{12},\bar{{\bf e}}_{14},\bar{{\bf e}}_{24},\bar{{\bf e}}_{34}$
with $\bar{{\bf e}}_{ij}:=\bar{{\bf e}}_{i}\wedge\bar{{\bf e}}_{j}$.
Then, writing $[\bar{U}]\in\rG(3,\wedge^{2}(V/V_{1}))$ explicitly
with this basis, it is straightforward to evaluate $\bar{\varphi}_{V_{1}}([\bar{U}])$
as \[
\bar{\varphi}_{V_{1}}([\bar{U}])=\left[x_{11}\left(\begin{array}{cccc}
2 & t_{12} & t_{13} & x_{11}t_{23}\\
t_{12} & 2t_{22} & t_{23} & x_{11}D\\
t_{13} & t_{23} & 2t_{33} & x_{11}t_{12}t_{33}\\
x_{11}t_{23} & x_{11}D & x_{11}t_{12}t_{33} & 2x_{11}^{2}t_{22}t_{33}\end{array}\right)\right],\]
where we set $D=t_{12}t_{23}-t_{13}t_{22}$. Note that the $4\times4$
matrix above is written with respect to the basis $\bar{{\bf e}}_{i}(i=1,2,3,4)$
of the quotient $V/V_{1}$. For the case $y_{11}\not=0$, we obtain
the same expression but $x_{11}$ being replaced by $-y_{11}$ with
the basis $\bar{{\bf e}}_{i}(i=1,2,3,5)$. These two cases are unified
when we write the quadric $\varphi_{V_{1}}([\bar{U}])\in\Hes\subset\mP(\ft{S}^{2}V^{*})$,
by dividing the expressions by $x_{11}$ and $-y_{11}$, respectively,
as \[
\varphi_{V_{1}}([\bar{U}])=\left[\left(\begin{matrix}2 & t_{12} & t_{13} & x_{11}t_{23} & -y_{11}t_{23}\\
t_{12} & 2t_{22} & t_{23} & x_{11}D & -y_{11}D\\
t_{13} & t_{23} & 2t_{33} & x_{11}t_{12}t_{33} & -y_{11}t_{12}t_{33}\\
x_{11}t_{23} & x_{11}D & x_{11}t_{12}t_{33} & 2x_{11}^{2}t_{22}t_{33} & -2x_{11}y_{11}t_{22}t_{33}\\
-y_{11}t_{23} & -y_{11}D & -y_{11}t_{12}t_{33} & -2x_{11}y_{11}t_{22}t_{33} & 2y_{11}^{2}t_{22}t_{33}\end{matrix}\right)\right].\]

The calculations are parallel for other affine coordinates of the
blow-up, and we see that the rational map $\varphi_{DS}$ is extended
to a morphism $\tilde{\varphi}_{DS}$ so that the point $[S_{ij}]$
of the exceptional set $\mP^{5}$ is mapped to the following $3\times3$
matrix \begin{equation}
\tilde{\varphi}_{DS}([S_{ij}])=\left[\begin{array}{ccc}
2S_{11} & S_{12} & S_{13}\\
S_{12} & 2S_{22} & S_{23}\\
S_{13} & S_{23} & 2S_{33}\end{array}\right]\in\mP(\ft{S}^{2}(V/V_{2})^{*}).\label{eq:indeterminacySij}\end{equation}

\end{proof}

$\;$

\subsection{The morphism $\widetilde{\hcoY}\to\hcoY$\label{sub:tildeY-Y}}

Here we prove the following proposition toward the end of this subsection:

\begin{prop} \label{prop:factorVarphiDS}There exists a morphism
$\Lrho_{\widetilde{\hcoY}}:\widetilde{\hcoY}\to\hcoY$ which factors
the morphism $\tilde{\varphi}_{DS}:\widetilde{\hcoY}\to\Hes$ as $\tilde{\varphi}_{DS}=\Lrho_{\hcoY}\circ\Lrho_{\widetilde{\hcoY}}$. 

\end{prop}

We recall that the morphism $\tilde{\varphi}_{DS}$ is defined by
extending the rational map $\varphi_{DS}:\overline{\hcoY}\dashrightarrow\Hes$
through the blow-up $\widetilde{\hcoY}\to\overline{\hcoY}$. Since
the value of $\tilde{\varphi}_{DS}$ over the exceptional set is described
by (\ref{eq:indeterminacySij}), studying the morphism $\varphi_{DS}$
over $\overline{\hcoY}\setminus\overline{\hcoY}_{sing}$ suffices
to describe $\tilde{\varphi}_{DS}$. In particular, we have $\tilde{\varphi}_{DS}^{-1}([Q])=\overline{\varphi_{DS}^{-1}([Q])}$
if $\varphi_{DS}^{-1}([Q])\not=\emptyset$, where the closure is taken
in $\widetilde{\hcoY}$. 

By our definition of the rational map $\varphi_{DS}$, we note the
following relation \begin{equation}
\varphi_{DS}^{-1}=\bigcup_{V_{1}\subset V}\mbox{\ensuremath{\varphi}}_{V_{1}}^{-1},\label{eq:inversePhiV1}\end{equation}
where, although it is implicit, the inverse image $\varphi_{V_{1}}^{-1}$
should be considered with the wedge product $V_{1}\wedge:$$\rG(3,\wedge^{3}(V/V_{1}))\hookrightarrow\rG(3,\wedge^{3}V)$.
Also we note that $\varphi_{V_{1}}^{-1}$ above can be replaced by
$\bar{\varphi}_{V_{1}}^{-1}$ due to the canonical embedding $\mP(\ft{S}^{2}(V/V_{1})^{*})\hookrightarrow\mP(\ft{S}^{2}V^{*}).$ 

\textcolor{black}{We will study the fiber over each point $[Q]\in\Hes$
according to the rank of quadric $Q$. In the arguments below, we
denote by $\Ker Q$ the cone of the singular locus of $Q$, which
is a linear subspace in $V$ of dimension $5-\rank Q$. }

\begin{lem} \label{lem:inverse-rk4and3} {\rm (1)} For quadrics
$Q$ of rank $4$, $\tilde{\varphi}_{DS}^{-1}([Q])$ consists of two
points. 

\noindent {\rm (2)} If $\rank Q=3$, then $\tilde{\varphi}_{DS}^{-1}([Q])$
is one point in the fiber of $\widetilde{\hcoY}\to\overline{\hcoY}$
over $[U]=[(V/V_{2})\wedge(\wedge^{2}V_{2})]$ with $V_{2}=\Ker Q$. 

\end{lem}

\begin{proof}(1) We use (\ref{eq:inversePhiV1}) to determine the
inverse image. If $\rank Q$=4, then $\varphi_{V_{1}}^{-1}([Q])$
is non-empty only for $V_{1}=\Ker Q$. It is useful to see $\bar{\varphi}_{V_{1}}^{-1}([w])$
instead of $\varphi_{V_{1}}^{-1}([Q])$ by expressing the quadric
$[Q]\in\mP(\ft{S}^{2}V^{*})$ with the corresponding $\mbox{4\ensuremath{\times}4}$
matrix $[w]\in\mP(\ft{S}^{2}(V/V_{1})^{*})$. The inverse image $\bar{\varphi}_{V_{1}}^{-1}([w])$
can then be determined from the Pl\"ucker relation (\ref{eq:Ivw})
for $\rG(3,\wedge^{2}(V/V_{1}))$ in terms of the {}``double spin
coordinates'' $[v,w]$. It is immediate from (I.2) in Appendix (and
$\rank w$=4) to see that $\bar{\varphi}_{V_{1}}^{-1}([w])$ consists
of two points $[v,w]$ satisfying $v.w=\pm\sqrt{\det w}\id_{4}$.
Then we have the claim for $\tilde{\varphi}_{DS}^{-1}([Q])=\overline{\varphi_{DS}^{-1}([Q])}$. 

(2) As above, we consider $[w]$ which corresponds to a quadric $[Q]$
by choosing $V_{1}\subset V_{2}$ ($V_{2}=\Ker Q$). Also, we may
assume $V_{2}=\langle{\bf e}_{4},{\bf e}_{5}\rangle$ using suitable
$GL(V)$ actions. From (I.3) in Appendix, and the assumption $\rank w=3$,
we see that $[w]$ cannot be the image of $\bar{\varphi}_{V_{1}}$
for any choice of $V_{1}$. However, it is clear that there exists
exactly one point in the exceptional set $\mP^{5}$ which corresponds
to the rank three matrix $w$ (see (\ref{eq:indeterminacySij}) and
the equation above it). \end{proof}

\vspace{0.1cm}

\[
\xyFigCoYs\]

\vspace{0.2cm}
 \begin{fcaption} 

\item \textbf{Fig.2. The fibers of the morphism $\tilde{\varphi}_{DS}:\widetilde{\hcoY}\to\Hes$.}
$\Hes^{i}$ represent the loci of symmetric matrices of rank $i$.
Note that $\Hes^{k}$ may be identified with $G_{\hcoY}^{k}$ for
$k=1,2$. Each fiber is interpreted in terms of the geometry of conics
in Sections \ref{sub:Conics-and-planes} and \ref{sub:Smooth-conics-and}.
\end{fcaption}

$\;$

Let us now describe $\tilde{\varphi}_{DS}^{-1}([Q])$ for $\rank Q=2$.
Set $V_{3}=\Ker Q$. Then the quadric considered in $\mP(V/V_{3})$
defines two points $[V_{4}^{(i)}/V_{3}](i=1,2)$. Note that $V_{4}^{(1)}\cap V_{4}^{(2)}=V_{3}$
since the two points are distinct. With these data, we define a subset
$\Gamma(V_{4}^{(1)},V_{4}^{(2)},V_{3})$ in $\overline{\hcoY}$ by\[
\Gamma(V_{4}^{(1)},V_{4}^{(2)},V_{3})=\left\{ [l_{V_{2}^{(1)}V_{4}^{(1)}}\cup l_{V_{2}^{(2)}V_{4}^{(2)}}]\,\vert\, V_{2}^{(1)},V_{2}^{(2)}\subset V_{3},V_{2}^{(1)}\not=V_{2}^{(2)}\right\} ,\]
where $l_{V_{2}V_{4}}$ is a line in $\rG(3,V)$ defined by $l_{V_{2}V_{4}}=\left\{ [\wedge^{3}\mC^{3}]\vert V_{2}\subset\mC^{3}\subset V_{4}\right\} $(cf.
the lines $l_{i}$ described in Subsection \ref{sub:Smooth-conics-and})
and $[l\cup l']$ represents the projective plane containing $l\cup l'$
with $l\not=l'$ and $l\cap l'\not\not=\emptyset$. By the condition
$V_{2}^{(1)}\not=V_{2}^{(2)}$, the intersection $V_{2}^{(1)}\cap V_{2}^{(2)}=V_{1}$
determines a point in $V_{3}$. Then we can decompose the set as $\Gamma(V_{4}^{(1)},V_{4}^{(2)},V_{3})=\underset{V_{1}\subset V_{3}}{\cup}\Gamma_{V_{1}}(V_{4}^{(1)},V_{4}^{(2)},V_{3})$
with the obvious notation. 

\begin{lem} \label{lem:inverse-rk2}With the above definitions for
quadrics of rank $2$, we have \[
\tilde{\varphi}_{DS}^{-1}([Q])=\overline{\Gamma(V_{4}^{(1)},V_{4}^{(2)},V_{3})}\simeq\mP(V_{3}^{*})\times\mP(V_{3}^{*}).\]

\end{lem}

\begin{proof}Since the subspaces $V_{2}^{(i)}\subset V_{3}$ are
described by points in $\mP(V_{3}^{*})$ for each, it is clear that
the closure in $\widetilde{\hcoY}$ of $\Gamma(V_{4}^{(1)},V_{4}^{(2)},V_{3})$
is $\mP(V_{3}^{*})\times\mP(V_{3}^{*})$. Similarly we see that the
closure of $\Gamma_{V_{1}}(V_{4}^{(1)},V_{4}^{(2)},V_{3})$ is $\mP((V_{3}/V_{1})^{*})\times\mP((V_{3}/V_{1})^{*})\simeq\mP^{1}\times\mP^{1}$.
We now show $\varphi_{V_{1}}^{-1}([Q])=\Gamma_{V_{1}}(V_{4}^{(1)},V_{4}^{(2)},V_{3})$.
The inclusion $\varphi_{V_{1}}^{-1}([Q])\supset\Gamma_{V_{1}}(V_{4}^{(1)},V_{4}^{(2)},V_{3})$
can be verified by evaluating $\varphi_{V_{1}}$ explicitly, for example,
with $V_{1}=\langle{\bf e}_{1}\rangle,V_{2}^{(i)}=\langle{\bf e}_{1},{\bf e}_{i+1}\rangle$
and $V_{4}^{(i)}=\langle{\bf e}_{1},{\bf e}_{2},{\bf e}_{3},{\bf e}_{3+i}\rangle$.
To show the equality, we consider as before the matrix $[w]$ which
represents the quadric $Q$ in $\mP(V/V_{1})$ and describe $\bar{\varphi}_{V_{1}}^{-1}([w])$
using the Pl\"ucker relations (\ref{eq:Ivw}) in terms of $[v,w]$.
Changing the coordinate of $V/V_{1}$ suitably, we may assume that
$[w]$ is given in the form $w_{0}=\left(\begin{smallmatrix}\begin{smallmatrix}0 & 1\\
1 & 0\end{smallmatrix} & O_{2}\\
O_{2} & O_{2}\end{smallmatrix}\right)$ with $O_{2}$ being the $2\times2$ zero matrix. Then by the properties
(I.4) and (I.2), we obtain $v=\left(\begin{smallmatrix}O_{2} & O_{2}\\
O_{2} & \begin{smallmatrix}v_{11} & v_{12}\\
v_{12} & v_{22}\end{smallmatrix}\end{smallmatrix}\right)$ . Now substituting $[v,w]=[v,tw_{0}](t\not=0)$ into the equation
in the first line of (\ref{eq:Ivw}), we have \[
v_{11}v_{22}-v_{12}^{2}+t^{2}=0\;\;(t\not=0).\]
From this, we obtain $\overline{\bar{\varphi}_{V_{1}}^{-1}([w])}\simeq\mP^{1}\times\mP^{1}$
for the closure. Since both sides of $\varphi_{V_{1}}^{-1}([Q])\supset\Gamma_{V_{1}}(V_{4}^{(1)},V_{4}^{(2)},V_{3})$
have the same closure in $\widetilde{\hcoY}$, they must coincide.
Now the claim follows since $\varphi_{DS}^{-1}([Q])=\underset{V_{1}\subset V_{3}}{\cup}\varphi_{V_{1}}^{-1}([Q])$
and $\tilde{\varphi}_{DS}^{-1}([Q])=\overline{\varphi_{DS}^{-1}([Q])}$.\end{proof}

Quadrics of rank 1 are determined by specifying $V_{4}=\Ker Q$ or
the corresponding points in $\mP(V^{*})$. We now describe $\tilde{\varphi}_{DS}^{-1}(V_{4}):=\tilde{\varphi}_{DS}^{-1}([Q])$.
This inverse image is determined by careful analysis of the loci representing
$\rho$-planes and $\sigma$-planes described in Proposition \ref{prop:barPrho-barPsigma}.

Let us consider a triple $V_{1}\subset V_{2}\subset V_{4},$ and one
dimensional subspaces $(V/V_{4})\wedge(\wedge^{2}V_{2})$ and $\wedge^{2}(V_{4}/V_{2})\wedge V_{1}$
in the quotient space $\wedge^{3}V$ mod $V_{4}\wedge(\wedge^{2}V_{2})$.
We introduce an affine two plane in the quotient space by\[
\mC^{2}(V_{1},V_{2},V_{4})=(V/V_{4})\wedge(\wedge^{2}V_{2})\oplus\wedge^{2}(V_{4}/V_{2})\wedge V_{1}.\]
 Using this, we define the following subset in $\overline{\hcoY}$:\begin{align*}
\Gamma(V_{4})= & \bigg\{[(V_{4}/V_{2})\wedge(\wedge^{2}V_{2}),\xi]\bigg|\begin{array}{l}
\xi\in\mC^{2}(V_{1},V_{2},V_{4})\\
\mC\xi\not=(V/V_{4})\wedge(\wedge^{2}V_{2})\end{array},V_{1}\subset V_{2}\subset V_{4}\bigg\},\end{align*}
where it should be noted that $[(V_{4}/V_{2})\wedge(\wedge^{2}V_{2}),\xi]$
defines a (projective) plane in $\mP(\wedge^{3}V$) and also a point
in $\overline{\hcoY}_{sing}$ if $\mC\xi=(V/V_{4})\wedge(\wedge^{2}V_{2})$.
We decompose this set into $\Gamma(V_{4})=\underset{V_{1}\subset V_{4}}{\cup}\Gamma(V_{1},V_{4})$
with fixing $V_{1}\subset V_{4}$, and similarly $\Gamma(V_{4})=\underset{V_{2}\subset V_{4}}{\cup}\Gamma(V_{2},V_{4})$
with fixing $V_{2}\subset V_{4}.$

\begin{lem}\label{lem:inverse-rk1} $(1)\;$$\tilde{\varphi}_{DS}^{-1}([V_{4}])=\overline{\Gamma(V_{4})}$. 

\noindent $(2)$ There is a birational morphism $\mP(\sO_{\rG(2,V_{4})}\oplus\sU_{\rG(2,V_{4})}^{*}(1))\to\overline{\Gamma(V_{4})}$
which contracts a divisor $E_{\sigma}=\mP(\sU_{\rG(2,V_{4})}^{*}(1))$
to $\mP(V_{4})$, where $\sU_{\rG(2,V_{4})}$ is the universal subbundle
of the Grassmann $\rG(2,V_{4})$.\end{lem}

\begin{proof}

(1) As in the preceding lemma, it suffices to show the equality $\bar{\varphi}_{V_{1}}^{-1}([w])=\Gamma(V_{1},V_{4})$.
The inclusion $\varphi_{V_{1}}^{-1}([V_{4}])\supset\Gamma(V_{1},V_{4})$
is easily verified by taking $V_{4}=\langle{\bf e}_{1},{\bf e}_{2},{\bf e}_{3},{\bf e}_{4}\rangle$
and $V_{1}=\langle{\bf e}_{1}\rangle$, for example. To show the equality,
we represent the quadric $Q$ by the corresponding matrix $[w]\in\mP(\ft{S}^{2}(V/V_{1})^{*})$
and determine $\bar{\varphi}_{V_{1}}^{-1}([w])$. By assumption, we
have $\rank w=1$ and hence $[w]$ can be written as $[w_{kl}]=[a_{k}a_{l}]$
with some non-zero vector $a\in\mP((V/V_{1})^{*})$. Then from (I.5)
in Appendix, we see that $\rank v\leq1$. Writing $v_{ij}=x_{i}x_{j}$
with $x\in V/V_{1}$ and also solving (\ref{eq:Ivw}) we obtain\[
\bar{\varphi}_{V_{1}}^{-1}([w])=\left\{ [x_{i}x_{j},ta_{k}a_{l}]\mid a.x=0,t\not=0\right\} .\]
 From this, we see that the closure of $\bar{\varphi}_{V_{1}}^{-1}([w])=\varphi_{V_{1}}^{-1}([V_{4}])$
in $\widetilde{\hcoY}$ is the cone over $v_{2}(\mP^{2})\simeq\mP^{2}$
from the vertex $[0,a_{k}a_{l}]\in\mP(\ft{S}^{2}(V/V_{1})\oplus\ft{S}^{2}(V/V_{1})^{*})$,
which is isomorphic to $\mP(1^{3},2)$. 

The same cone arises from the closure $\Gamma(V_{1},V_{4}).$ To see
this, let us write a point $[U]=[(V_{4}/V_{2})\wedge(\wedge^{2}V_{2}),\xi]$
of $\Gamma(V_{1},V_{4})$ with $\xi\in\mC^{2}(V_{1},V_{2},V_{4})$
and $[V_{2}]\in F(V_{1},2,V_{4})$$\simeq\mP(V_{4}/V_{1})$. Here
we observe that, when $\mC\xi=\wedge^{2}(V_{4}/V_{2})\wedge V_{1}$
in $\mC^{2}(V_{1},V_{2},V_{4})$, the corresponding point $[U]=[(V_{4}/V_{2})\wedge(\wedge^{2}V_{2}),\xi]$
is reduced to $[U_{v}]:=[\wedge^{2}(V_{4}/V_{1})\wedge V_{1}]$ which
is constant for any choice of $[V_{2}]$. Otherwise $[U]$ varies
when $[V_{2}]$ moves in $\mP(V_{4}/V_{1})$. This defines a cone
over $\mP^{2}\simeq\mP(V_{4}/V_{1})$ in $\overline{\Gamma(V_{1},V_{4})}$
from the vertex $[U_{v}]$. Combined with the inclusion $\varphi_{V_{1}}^{-1}([V_{4}])\supset\Gamma(V_{1},V_{4})$,
we see that the equality $\bar{\varphi}_{V_{1}}^{-1}([w])=\Gamma(V_{1},V_{4})$
must hold.

(2) We note that the elements of $\Gamma(V_{2},V_{4})$ are parametrized
by the lines $\mC\xi$ contained in the union $\underset{V_{1}\subset V_{2}}{\cup}\mC^{2}(V_{1},V_{2},V_{4})$
which simplifies to $(V/V_{4})\wedge(\wedge^{2}V_{2})\oplus\wedge^{2}(V_{4}/V_{2})\wedge V_{2}$.
Namely, the set $\Gamma(V_{2},V_{4})$ is parametrized by the projective
plane $\mP((V/V_{4})\wedge(\wedge^{2}V_{2})\oplus\wedge^{2}(V_{4}/V_{2})\wedge V_{2})$
. Now moving $[V_{2}]$ in the Grassmann $\rG(2,V_{4})$, we obtain
the following projective bundle which parametrizes $\Gamma(V_{4}$):
\[
\mP(\wedge^{2}\sU_{\rG(2,V_{4})}\oplus(\wedge^{2}\sU_{\rG(2,V_{4})}^{*})\otimes\sU_{\rG(2,V_{2})})\simeq\mP(\sO_{\rG(2,V_{2})}\oplus\sU_{\rG(2,V_{2})}^{*}(1)),\]
where we use $\wedge^{2}\sU_{\rG(2,V_{4})}^{*}\simeq\sO_{\rG(2,V_{4})}(1)$
and $\sU_{\rG(2,V_{4})}(1)\simeq\sU_{\rG(2,V_{4})}^{*}$ for the universal
bundle $\sU_{\rG(2,V_{4})}$. Since $\sU_{\rG(2,V_{2})}^{*}\vert_{[V_{2}]}\simeq\wedge^{2}(V_{4}/V_{2})\wedge V_{2}=\left\{ \wedge^{2}(V_{4}/V_{2})\wedge V_{1}\vert V_{1}\subset V_{2}\right\} $
and $[(V_{4}/V_{2})\wedge(\wedge^{2}V_{2}),\xi]=[\wedge^{2}(V_{4}/V_{1})\wedge V_{1}]$
holds for all $\mC\xi\subset\sU_{\rG(2,V_{2})}^{*}\vert_{[V_{2}]}$,
we see that the divisor $\mP(\sU_{\rG(2,V_{2})}^{*}(1))$ collapses
to $\left\{ [\wedge^{2}(V_{4}/V_{1})\wedge V_{1}]\vert V_{1}\subset V_{4}\right\} \simeq\mP(V_{4})$
in $\overline{\Gamma(V_{4})}$ as claimed. \end{proof}

$\;$

\begin{rem} $\tilde{\varphi}_{DS}^{-1}([V_{4}])$ is the weighted
Grassmann ${\rm {w}\mathrm{G}(2,5)}$ as in \cite[Example 2.5]{CR},
which is defined in $\mP(1^{6},2^{4})$ by the equations \[
\left(\begin{smallmatrix}0 & x_{ij}\\
-x_{ij} & 0\end{smallmatrix}\right)\left(\begin{smallmatrix}y_{1}\\
:\\
y_{4}\end{smallmatrix}\right)=\left(\begin{smallmatrix}0\\
:\\
0\end{smallmatrix}\right),\;\; x_{12}x_{34}-x_{13}x_{24}+x_{14}x_{23}=0,\]
 where $[x_{ij},y_{k}]=[x_{12},x_{13},x_{14},x_{23},x_{24},x_{34},y_{1},..,y_{4}]$
is the (weighted) homogeneous coordinate of $\mP(1^{6},2^{4})$. This
${\rm {w}\mathrm{G}(2,5)}$ has singularities along $\{x_{ij}=0\}\simeq\mP^{3}$
of type $\frac{1}{2}(1,1,1)$. Over the complement of this singular
locus, we see ${\rm {w}\mathrm{G}(2,5)}$ has a $\mC^{2}$-fibration
over $\mathrm{G}(2,4)$. In Fig.3, we have depicted schematically
the fiber $\tilde{\varphi}_{DS}^{-1}([V_{4}])=\Lrho_{\widetilde{\hcoY}}^{-1}([V_{4}])$
over $[V_{4}]\in G_{\hcoY}^{1}$. \hfill []\end{rem}

\vspace{0.2cm}

\[
\xyFigwG\]

\vspace{0.2cm}
 \begin{fcaption} 

\item \textbf{Fig.3. The fiber of $\tilde{\varphi}_{DS}$ over $[V_{4}]$.}
Fibers are interpreted in terms of the geometry of conics in Sections
\ref{sub:Conics-and-planes} and \ref{sub:Smooth-conics-and}. The
contraction of $E_{\sigma}$ may be understood that the $\sigma$-conics
(double lines) are reduced $\sigma$-planes on which they lie. \end{fcaption}

$\;$

\noindent\textit{\textcolor{black}{Proof of Proposition \ref{prop:factorVarphiDS}}}\textbf{.}
It suffices to show that each fiber of $\tilde{\varphi}_{DS}$ over
quadrics of rank $\leq3$ consists only one connected component, and
two components over quadrics of tank 4 (see Proposition \ref{cla:double}).
We have seen these properties in the preceding three lemmas. \hfill $\square$

$\;$

\subsection{More properties of $\Lrho_{\widetilde{\hcoY}}:\widetilde{\hcoY}\to\hcoY$}

We show that the contraction ${\Lrho}_{\widetilde{\hcoY}}\colon\widetilde{\hcoY}\to{\hcoY}$
is a $K_{\widetilde{\hcoY}}$-negative extremal divisorial contraction. 

\begin{prop} \label{cla:FY} The exceptional locus of ${\Lrho}_{\widetilde{\hcoY}}$
is an $\mathrm{SL}(V)$-invariant prime divisor, which will be denoted
by $F_{\widetilde{\hcoY}}$. The image of $F_{\widetilde{\hcoY}}$
under $\Lrho_{\widetilde{\hcoY}}:\widetilde{\hcoY}\to\hcoY$ is $G_{\hcoY}$.
\end{prop}

\begin{proof} By construction, ${\Lrho}_{\widetilde{\hcoY}}$ is
$\mathrm{SL}(V)$-equivariant, and also $\widetilde{\hcoY}$ is smooth
and $\rho(\widetilde{\hcoY})=2$. By Proposition \ref{cla:ZY}, $\hcoY$
is a $\mQ$-factorial Gorenstein Fano variety with Picard number one.
Therefore ${\Lrho}_{\widetilde{\hcoY}}$ is neither a composite of
non-trivial morphisms nor a small contraction. Hence we have the first
assertion.

By Lemmas \ref{lem:inverse-rk4and3}, \ref{lem:inverse-rk2} and \ref{lem:inverse-rk1}
(see also Fig.2), it is immediate to see that $\Lrho_{\widetilde{\hcoY}}(F_{\widetilde{\hcoY}})=G_{\hcoY}$.
\end{proof}

\begin{prop} \label{cla:F} $K_{\widetilde{\hcoY}}=\Lrho_{\widetilde{\hcoY}}^{\;*}K_{\hcoY}+2F_{\widetilde{\hcoY}}$.
In particular, $\Lrho_{\widetilde{\hcoY}}$ is a $K_{\widetilde{\hcoY}}$-negative
divisorial extremal contraction and $\hcoY$ has only terminal singularities
with $\Sing\hcoY=G_{\hcoY}$. 

\end{prop} 

\begin{proof} $F_{\widetilde{\hcoY}}$ is generically a $\mP^{2}\times\mP^{2}$-fibration
over $G_{\hcoY}$(see Lemma \ref{lem:inverse-rk2}). Let $r$ be a
line in a ruling of the generic fiber $\Gamma\simeq\mP^{2}\times\mP^{2}$
of $F_{\widetilde{\hcoY}}$. It is enough to show $K_{\widetilde{\hcoY}}\cdot r=-2$.
Indeed, assume this, then by the adjunction formula\[
K_{\Gamma}=K_{F_{\widetilde{\hcoY}}}\vert_{\Gamma}=(K_{\widetilde{\hcoY}}+F_{\widetilde{\hcoY}})\vert_{\Gamma},\]
it holds $(K_{\widetilde{\hcoY}}+F_{\widetilde{\hcoY}})\cdot r=-3$,
and hence $F_{\widetilde{\hcoY}}\cdot r=-1$. Set $K_{\widetilde{\hcoY}}=\Lrho_{\widetilde{\hcoY}}^{\;*}K_{\hcoY}+aF_{\widetilde{\hcoY}}$
with unknown $a$. Then it holds that $K_{\widetilde{\hcoY}}\cdot r=aF_{\widetilde{\hcoY}}\cdot r$
and we obtain $a=2$ as claimed. 

Now we show that $K_{\widetilde{\hcoY}}\cdot r=-2$. Let us choose
$r$ so that it intersects with the diagonal of $\Gamma$. Let $r'$
be the strict transform on ${\hcoY_{2}}$ of $r$ and $r''$ the image
of $r'$ on $\hcoY_{3}$. Since $\hcoY_{2}\to\widetilde{\hcoY}$ is
the blow-up along $G_{\rho}$, we have \[
K_{\hcoY_{2}}\cdot r'=K_{\widetilde{\hcoY}}\cdot r+1.\]
 Moreover, since $K_{\hcoY_{2}}=\Lrho_{\hcoY_{2}}^{\;*}K_{\hcoY_{3}}+5F_{\rho}$
(cf. (\ref{eq:adj})), we have \[
K_{\hcoY_{2}}\cdot r'=K_{\hcoY_{3}}\cdot r''+5.\]
 Therefore it suffices to show $K_{\hcoY_{3}}\cdot r''=-6$. The strict
transform of $\Gamma$ on $\hcoY_{2}$ has a natural $\mP^{1}\times\mP^{1}$-fibration
over $\mP^{2}$ since it is the blow-up of $\Gamma$ along the diagonal.
Its fiber is described in the proof of Lemma \ref{lem:inverse-rk2}
and then $r''$ is a line in a fiber of $\hcoY_{3}\to\mP(V)$. Therefore
we have $K_{\hcoY_{3}}\cdot r''=-6$.

Finally, $\hcoY$ has only terminal singularity along $G_{\hcoY}$
since the minimal discrepancy for a smooth point of $\hcoY$ is equal
to $\dim\hcoY-1$. \end{proof}

\vspace{0.5cm}
 \newpage{}

\section{Sheaves $\tilde{\sS}_{L}^{*}$, $\tilde{\sQ}$, $\tilde{\sT}$ on
$\widetilde{\hcoY}$ and their properties \label{sec:SheavesDef}}

Here we introduce locally free sheaves $\tilde{\sS}_{L}^{*}$, $\tilde{\sQ}$,
$\tilde{\sT}$ on $\widetilde{\hcoY}$, which will play central roles
in our construction of the Lefschetz collection. For this, it will
be helpful to have the following picture of the birational geometry
of $\widetilde{\hcoY}$ (Fig.4).

\[
\FigYsDisplay\]

\begin{fcaption} 

\item \textbf{Fig.4. Birational geometries of $\hcoY$.} $F_{\widetilde{\hcoY}}$
in $\widetilde{\hcoY}$ represents the prime divisor parameterizing
reducible conics on $\mathrm{G}(3,V)$. Note that the diagram from
$\mP(V)$ to the endpoint $\hcoY$ defines a so-called Sarkisov link.
\end{fcaption}

$\:$

\textit{We will use the following convention without mentioning at
each time}: \begin{myitem} 

\item[$L_{\Sigma}$:] the pull back on a variety $\Sigma$ of $\sO(1)$
if there is a morphism $\Sigma\to\mP(V)$. 

\item[$M_{\Sigma}$:] the pull back on a variety $\Sigma$ of $\sO_{\Hes}(1)$
if there is a morphism $\Sigma\to\Hes$. \end{myitem}

\vspace{0.5cm}

\subsection{Locally free sheaves $\widetilde{\sS}_{L}^{*}$, $\widetilde{\sQ}$
on $\widetilde{\hcoY}$}

Consider the following universal sequence of the Grassmann bundle
$\hcoY_{3}=$ $\rG(3,\wedge^{2}T(-1))$ over $\mP(V)$ (cf. \cite[p.434]{Ful}):
\begin{equation}
0\to\sS\to\Lpi_{\hcoY_{3}}^{\;*}(T(-1)^{\wedge2})\to\sQ\to0,\label{eq:univ}\end{equation}
where $\sS$ is the relative universal subbundle of rank three and
$\sQ$ is the relative universal quotient bundle of rank three. Similarly,
we denote by $\bar{\sS}$ the universal subbundle of rank three of
the Grassmann $\rG(2,\wedge^{3}V)$. Then 

\begin{prop} $\sS^{*}(L_{\hcoY_{3}})=\Lrho_{\hcoY_{3}}^{\;*}\bar{\sS}^{*}$.
\end{prop}

\begin{proof} We may write a point of $\hcoY_{3}$ by $y=([\bar{U}],[V_{1}])$
(see the description right after Definition \ref{def:Y3}), which
is mapped to $[U]=[\bar{U}\wedge V_{1}]\in\overline{\hcoY}$. Note
that $V_{1}=\Lpi_{\hcoY_{3}}^{\;*}\sO_{\mP(V)}(-1)\vert_{y}=-L_{\hcoY_{3}}\vert_{y}$.
Therefore $\sS(-L_{\hcoY_{3}})=\Lrho_{\hcoY_{3}}^{\;*}\bar{\sS}$
holds for the universal subbundles. 

\end{proof}

Now we have the following proposition (and definition):

\begin{prop}\label{def:SQT1} There exist locally free sheaves $\widetilde{\sS}_{L}^{*}$
and $\widetilde{\sQ}$ on $\widetilde{\hcoY}$ which satisfy \[
\Lrho_{\hcoY_{2}}^{\;*}\sS^{*}(L_{\hcoY_{2}})=\tLrho_{\hcoY_{2}}^{\;*}\widetilde{\sS}_{L}^{*}\text{ and }\Lrho_{\hcoY_{2}}^{\;*}\sQ=\tLrho_{\hcoY_{2}}^{\;*}\widetilde{\sQ}.\]

\end{prop}

\begin{proof} We define $\widetilde{\sS}_{L}^{*}$ to be the pullback
of $\bar{\sS}^{*}$ to $\widetilde{\hcoY}$, then the first claim
is immediate by the commutativity of the morphisms in Fig.4. To see
the existence of $\widetilde{\sQ}$, consider the universal sequence
(\ref{eq:univ}) on $\hcoY_{3}$. Let $[V_{1},V_{2}]$ be a point
on the exceptional locus $\Prt_{\rho}=F(1,2,V)\to\rG(2,V)$ of the
blow-up $\hcoY_{3}\to\overline{\hcoY}$. Since $\sS\vert_{[V_{1},V_{2}]}=(V/V_{2})\wedge(V_{2}/V_{1})$
(see Proposition \ref{prop:barPrho-barPsigma}), we have $0\to(V/V_{2})\wedge(V_{2}/V_{1})\to\wedge^{2}(V/V_{1})\to\sQ\vert_{[V_{1},V_{2}]}\to0$.
Hence we have $\sQ\vert_{[V_{1},V_{2}]}\simeq\wedge^{2}(V/V_{2})$,
which implies $\sQ\vert_{\gamma}\simeq\sO^{\oplus3}$ for a fiber
$\gamma=\mP^{1}$ of $F(1,2,V)\to\rG(2,V)$. The last property ensures
the existence of a locally free sheaf $\bar{\sQ}$ on $\overline{\hcoY}$
such that $\sQ=\Lrho_{\hcoY_{3}}^{\;*}\bar{\sQ}$. From the commutativity
of the diagram in Fig.4, the pull-back $\widetilde{\sQ}$ of $\bar{\sQ}$
to $\widetilde{\hcoY}$ has the claimed property. \end{proof}

$\;$

\subsection{Locally free sheaf $\widetilde{\sT}$ on $\widetilde{\hcoY}$}

Let us focus on the local geometry of the blow-up $\hcoY_{3}\to\overline{\hcoY}$
which is described by $\Prt_{\rho}=F(1,2,V)\to\rG(2,V)$. We denote
the universal sub-bundles of the partial flag variety $F(1,2,V)$
by $\sR_{1}\subset\sR_{2}\subset\sR_{V}$, where we set $\sR_{V}:=V\otimes\sO_{F(1,2,V)}$
and $\rank\sR_{k}=k$. There is an exact sequence \begin{equation}
0\to\sR_{2}/\sR_{1}\to\sR_{V}/\sR_{1}\to\sR_{V}/\sR_{2}\to0.\label{eq:exact-sequence-Rv}\end{equation}
It is sometimes useful to identify $F(1,2,V)$ with the projective
bundle $\mP(T(-1))$ over $\mP(V)$. Under this identification, the
exact sequence above is nothing but the relative Euler sequence of
the projective bundle. For example, we have $\sR_{2}/\sR_{1}=\sO_{\mP(T(-1))}(-1)$
and $\sR_{V}/\sR_{1}=\pi_{F}^{*}T(-1)$ with $\pi_{F}:\mP(T(-1))\to\mP(V)$.

\begin{prop}

Let $\Lpi_{\hcoY_{2}}=\Lpi_{\hcoY_{3}}\circ\Lrho_{\hcoY_{2}}$ be
the composition of $\Lrho_{\hcoY_{2}}:\hcoY_{2}\to\hcoY_{3}$ and
$\Lpi_{\hcoY_{3}}:\hcoY_{3}\to\mP(V)$. Also denote by $\Lrho_{\hcoY_{2}}\vert_{F_{\rho}}$
the restriction of the morphism $\Lrho_{\hcoY_{2}}$ to the exceptional
divisor $F_{\rho}\subset\hcoY_{2}$. Then

\noindent {\rm (1)} There is a surjective morphism $\Lpi_{\hcoY_{2}}^{\;*}\Omega(1)\to(\Lrho_{\hcoY_{2}}\vert_{F_{\rho}})^{*}\big(\sR_{2}/\sR_{1}\big)^{*}$.

\noindent {\rm (2)} The kernel of the surjective morphism \[
\sT_{2}^{*}=\Ker\left\{ \Lpi_{\hcoY_{2}}^{\;*}\Omega(1)\to(\Lrho_{\hcoY_{2}}\vert_{F_{\rho}})^{*}\big(\sR_{2}/\sR_{1}\big)^{*}\right\} \]
\;\;\;\; is a locally free sheaf on $\hcoY_{2}$.

\end{prop}

\begin{proof}From the exact sequence (\ref{eq:exact-sequence-Rv}),
we have a surjection $(\sR_{V}/\sR_{1})^{*}\to(\sR_{2}/\sR_{1})^{*}\to0$,
and hence $(\Lrho_{\hcoY_{2}}\vert_{F_{\rho}})^{*}(\sR_{V}/\sR_{1})^{*}\overset{q}{\to}(\Lrho_{\hcoY_{2}}\vert_{F_{\rho}})^{*}(\sR_{2}/\sR_{1})^{*}\to0$.
Here we note that $(\Lrho_{\hcoY_{2}}\vert_{F_{\rho}})^{*}(\sR_{V}/\sR_{1})^{*}=i^{*}\circ\Lpi_{\hcoY_{2}}^{\;*}\Omega(1)$
with $i:F_{\rho}\hookrightarrow\hcoY_{2}$. We now obtain the exact
sequence \begin{equation}
0\to\sT_{2}^{*}\to\Lpi_{\hcoY_{2}}^{\;*}\Omega(1)\overset{\,_{q\circ i^{*}}}{\longrightarrow}(\Lrho_{\hcoY_{2}}\vert_{F_{\rho}})^{*}(\sR_{2}/\sR_{1})^{*}\to0,\label{eq:exact-seq-T2}\end{equation}
with the surjection claimed in (1) and $\sT_{2}^{*}$ defined in (2).
By taking $\sE xt(-,\sO_{\hcoY_{2}})$ of this sequence, we see that
$\sT_{2}^{*}$ is a locally free sheaf on $\hcoY_{2}$ (see \cite[III, Ex 6.6]{Ha}).
\end{proof}

\begin{lem}\label{lem:T2-sum} $\sT_{2}^{*}\vert_{\delta}=\sO_{\delta}^{\oplus4}$
for each fiber $\delta\simeq\mP^{1}$ of $F_{\rho}\to G_{\rho}$.

\end{lem}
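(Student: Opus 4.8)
The plan is to restrict the defining sequence (\ref{eq:E_a1}) to $\delta$ and evaluate each term explicitly; the one delicate point is that the right-hand term of (\ref{eq:E_a1}) is supported on $F_{\rho}$ while $\delta$ lies inside $F_{\rho}$, so a careless restriction would lose a $\mathrm{Tor}$ contribution.

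First I would pin down the geometry of $\delta$. By the diagram (\ref{eq:Gp}) and the analysis in the proof of Proposition \ref{prop:Y2tildeY}, over a point $[V_2]\in \mathrm{G}(2,V)$ the trivialization $\sR_{\rho}|_{\gamma}\simeq \sO_{\gamma}^{\oplus 3}$ of Lemma \ref{cla:Sgamma} identifies $F_{\rho}|_{\gamma}=\mP(\ft{S}^2\sR_{\rho}^{*})|_{\gamma}$ with $\gamma\times\mP^{5}$, and $F_{\rho}\to G_{\rho}$ with the second projection; hence a fiber $\delta$ of $F_{\rho}\to G_{\rho}$ is carried isomorphically by $\Lrho_{\hcoY_2}|_{F_{\rho}}$ onto a fiber $\gamma$ of $\Prt_{\rho}\to \mathrm{G}(2,V)$. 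By Lemma \ref{cla:tiny} this $\gamma$ is a line $l=\mP(V_2)\subset\mP(V)$ under $\Lpi_{\Prt_{\rho}}$, so $\delta\simeq\gamma\simeq l\simeq\mP^1$ and $\Lpi_{\hcoY_2}|_{\delta}$ is the embedding of this line. From this I read off the outer terms of (\ref{eq:E_a1}): restricting the Euler sequence of $\mP(V)$ to $l$ (equivalently, dualizing the splitting $T(-1)|_l\simeq\sO_l(1)\oplus\sO_l^{\oplus 3}$ used in the proof of Proposition \ref{cla:dec}) gives $\Lpi_{\hcoY_2}^{\;*}\Omega(1)|_{\delta}\simeq\sO_{\delta}(-1)\oplus\sO_{\delta}^{\oplus 3}$, while Lemma \ref{cla:tiny} gives $(\Lrho_{\hcoY_2}|_{F_{\rho}})^{*}\sO_{\mP(T(-1))}(1)|_{\delta}\simeq \sO_{\mP(T(-1))}(1)|_{\gamma}=\Ht|_{\gamma}\simeq\sO_{\delta}(-1)$.

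The main work, and the expected obstacle, is the restriction itself, where the $\mathrm{Tor}$/normal-bundle twist must be tracked carefully. I would first restrict (\ref{eq:E_a1}) to the divisor $F_{\rho}$: since $\Lpi_{\hcoY_2}^{\;*}\Omega(1)$ is locally free and the quotient is invertible on $F_{\rho}$, the standard elementary-modification computation (the derived pullback of the third term has $\mathcal{H}^{-1}$ equal to the normal-bundle twist) yields on $F_{\rho}$ the exact sequence
\[
0\to (\Lrho_{\hcoY_2}|_{F_{\rho}})^{*}\sO_{\mP(T(-1))}(1)\otimes \sO_{\hcoY_2}(-F_{\rho})|_{F_{\rho}}\to \sT^{*}|_{F_{\rho}}\to (\Lrho_{\hcoY_2}|_{F_{\rho}})^{*}\sR_{\rho}^{*}\to 0 ,
\]
where $\sR_{\rho}^{*}=\ker\bigl(\Lpi_{\Prt_{\rho}}^{\;*}\Omega(1)\to \sO_{\mP(T(-1))}(1)\bigr)$ is the kernel in the dual of the Euler sequence (\ref{eq:Euler}), the map $\pi$ in (\ref{eq:E_a1}) being exactly its pullback. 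Now restrict this sequence to $\delta$. The right-hand term is locally free on $F_{\rho}$, so no further $\mathrm{Tor}$ appears and, by Lemma \ref{cla:Sgamma}, it restricts to $\sR_{\rho}^{*}|_{\gamma}\simeq\sO_{\delta}^{\oplus 3}$. For the left-hand term I use $\sO_{\mP(T(-1))}(1)|_{\delta}\simeq\sO_{\delta}(-1)$ together with $F_{\rho}\cdot\delta=-1$ (from the proof of Proposition \ref{prop:Y2tildeY}), so that $\sO_{\hcoY_2}(-F_{\rho})|_{\delta}\simeq\sO_{\delta}(1)$ and the whole term becomes $\sO_{\delta}(-1)\otimes\sO_{\delta}(1)\simeq\sO_{\delta}$. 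The value $F_{\rho}\cdot\delta=-1$ is precisely what makes the two twists cancel, and it is the crux of the bookkeeping.

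Putting the restricted terms together gives
\[
0\to \sO_{\delta}\to \sT^{*}|_{\delta}\to \sO_{\delta}^{\oplus 3}\to 0 ,
\]
and since $\Ext^{1}(\sO_{\delta}^{\oplus 3},\sO_{\delta})\simeq H^{1}(\mP^1,\sO)^{\oplus 3}=0$ this sequence splits, yielding $\sT^{*}|_{\delta}\simeq\sO_{\delta}^{\oplus 4}$, as claimed. As a consistency check, the rank and the degree of $\sT^{*}|_{\delta}$ computed directly from (\ref{eq:E_a1}) agree with those of $\sO_{\delta}^{\oplus 4}$.
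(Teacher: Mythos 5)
Your proof is correct, and its skeleton is the one the paper uses: restrict the defining sequence (\ref{eq:E_a1}) to $\delta$, exhibit $\sT^*|_{\delta}$ as an extension of $\sO_{\delta}^{\oplus 3}$ by a line bundle of degree zero, and split. Where you genuinely diverge is in how that kernel line bundle is identified. The paper never forms the $\mathrm{Tor}$-corrected restriction to $F_{\rho}$: it restricts (\ref{eq:E_a1}) to $\delta$ using right-exactness only, which gives a surjection $\sT^*|_{\delta}\to\sO_{\delta}^{\oplus 3}$ with invertible kernel $\sL$, and then pins down $\sL\simeq\sO_{\delta}$ by a determinant computation --- from (\ref{eq:E_a1}) one gets $\det\sT^*=\sO_{\hcoY_2}(-L_{\hcoY_2}-F_{\rho})$, so $\deg\big(\det\sT^*|_{\delta}\big)=-L_{\hcoY_2}\cdot\delta-F_{\rho}\cdot\delta=-1+1=0$. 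You instead compute the restriction of (\ref{eq:E_a1}) to the divisor $F_{\rho}$ exactly, keeping the $\mathrm{Tor}_1$ term $(\Lrho_{\hcoY_2}|_{F_{\rho}})^*\sO_{\mP(T(-1))}(1)\otimes\sO_{\hcoY_2}(-F_{\rho})|_{F_{\rho}}$ (the conormal twist), and this exhibits the sub-line-bundle on $\delta$ directly as $\sO_{\delta}(-1)\otimes\sO_{\delta}(1)\simeq\sO_{\delta}$. Both arguments hinge on exactly the same inputs --- Lemmas \ref{cla:tiny} and \ref{cla:Sgamma} together with $F_{\rho}\cdot\delta=-1$ from the proof of Proposition \ref{prop:Y2tildeY} --- so the cancellation you call the crux is the same one the paper exploits, packaged additively (elementary modification) rather than multiplicatively (determinant). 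Your route costs a bit more homological bookkeeping but buys more: the sequence $0\to(\Lrho_{\hcoY_2}|_{F_{\rho}})^*\sO_{\mP(T(-1))}(1)\otimes\sO_{\hcoY_2}(-F_{\rho})|_{F_{\rho}}\to\sT^*|_{F_{\rho}}\to(\Lrho_{\hcoY_2}|_{F_{\rho}})^*\sR_{\rho}^*\to 0$ describes $\sT^*$ on all of $F_{\rho}$, not just on a single fiber $\delta$, whereas the paper's determinant trick is shorter and avoids the $\mathrm{Tor}$ analysis altogether.
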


\begin{proof}Each fiber $\delta$ of $F_{\rho}\to G_{\rho}$ projects
isomorphically to a fiber of $F(1,2,V)\to\rG(2,V)$, and further to
a line $\mP^{1}$ in $\mP(V)$. Therefore $\Lpi_{\hcoY_{2}}^{\;*}\Omega(1)\vert_{\delta}\simeq\sO_{\mP^{1}}^{\oplus3}\oplus\sO_{\mP^{1}}(-1)$
and also $(\Lrho_{\hcoY_{2}}\vert_{F_{\rho}})^{*}(\sR_{2}/\sR_{1})^{*}\vert_{\delta}\simeq\sO_{\mP^{1}}(-1)$.
By restricting (\ref{eq:exact-seq-T2}), we obtain \[
\sT_{2}^{*}\vert_{\delta}\to\sO_{\mP^{1}}^{\oplus3}\oplus\sO_{\mP^{1}}(-1)\to\sO_{\mP^{1}}(-1)\to0,\]
which shows that there is a surjection $\sT_{2}^{*}\vert_{\delta}\to\sO_{\mP^{1}}^{\oplus3}$
with its kernel being an invertible sheaf $\sL$. Note that $\det\sT_{2}^{*}\simeq\sO_{\hcoY_{2}}(-L_{\hcoY_{2}}-F_{\rho})$
from (\ref{eq:exact-seq-T2}) and $(\sR_{2}/\sR_{1})^{*}=\sO_{\mP(T(-1))}(1)$.
Now, since $L_{\hcoY_{2}}\vert_{\delta}=\sO_{\delta}(1)$ by definition
and also $F_{\rho}\cdot\delta=-1$, we have $\det\,\sT_{2}^{*}\vert_{\delta}\simeq\sO_{\delta}$
and $\sL\simeq\sO_{\delta}$. Hence $\sT_{2}^{*}\vert_{\delta}\simeq\sO_{\delta}^{\oplus4}$.
\end{proof}

Define $\sT_{2}:=(\sT_{2}^{*})^{*}$. From the above lemma, we have
the following proposition (and definition):

\begin{prop} \label{def:tidelT} There exists a locally free sheaf
$\widetilde{\sT}$ on $\widetilde{\hcoY}$ such that \[
\sT_{2}=\tLrho_{\hcoY_{2}}^{\;*}\widetilde{\sT}.\]

\end{prop}

The following exact sequence will be used in our later calculations:

\begin{prop} \begin{equation}
0\to\Lpi_{\hcoY_{2}}^{\;*}T(-1)\to\sT_{2}\to(\Lrho_{\hcoY_{2}}|_{F_{\rho}})^{*}\big(\sR_{2}/\sR_{1}\big)({F_{\rho}}|_{F_{\rho}})\to0.\label{eq:Eb}\end{equation}
 \end{prop}

\begin{proof} By definition of $\sT_{2}^{*}$, we have $0\to\sT_{2}^{*}\to\Lpi_{\hcoY_{2}}^{\;*}\Omega(1)\to(\Lrho_{\hcoY_{2}}\vert_{F_{\rho}})^{*}\big(\sR_{2}/\sR_{1}\big)^{*}\to0.$
Now, by taking $\mathcal{H}om(-,\sO_{\hcoY_{2}})$, we obtain: \[
0\to\Lpi_{\hcoY_{2}}^{\;*}T(-1)\to\sT_{2}\to\sE xt_{\sO_{\hcoY_{2}}}^{1}((\Lrho_{\hcoY_{2}}|_{F_{\rho}})^{*}\big(\sR_{2}/\sR_{1}\big)^{*},\sO_{\hcoY_{2}})\to0.\]
 The claim follows from the isomorphism: \[
\sE xt_{\sO_{\hcoY_{2}}}^{1}\big((\Lrho_{\hcoY_{2}}|_{F_{\rho}})^{*}\big(\sR_{2}/\sR_{1}\big)^{*},\sO_{\hcoY_{2}}\big)\simeq(\Lrho_{\hcoY_{2}}|_{F_{\rho}})^{*}\big(\sR_{2}/\sR_{1}\big)(F_{\rho}|_{F_{\rho}}),\]
which we derive by the spectral sequence \[
\begin{aligned}\sE xt_{\sO_{\hcoY_{2}}}^{i}\big((\Lrho_{\hcoY_{2}}|_{F_{\rho}})^{*}\big(\sR_{2}/\sR_{1}\big)^{*}, & \,\sE xt_{\sO_{\hcoY_{2}}}^{j}(\sO_{F_{\rho}},\sO_{\hcoY_{2}})\big)\\
 & \Rightarrow\sE xt_{\sO_{\hcoY_{2}}}^{i+j}((\Lrho_{\hcoY_{2}}|_{F_{\rho}})^{*}\big(\sR_{2}/\sR_{1}\big)^{*},\,\sO_{\hcoY_{2}}),\end{aligned}
\]
 and also $\sE xt_{\sO_{\hcoY_{2}}}^{1}(\sO_{F_{\rho}},\sO_{\hcoY_{2}})\simeq\omega_{F_{\rho}}\otimes\omega_{\hcoY_{2}}^{-1}\simeq\sO_{F_{\rho}}({F_{\rho}}|_{F_{\rho}})$,
$\sE xt_{\sO_{\hcoY_{2}}}^{j}(\sO_{F_{\rho}},\sO_{\hcoY_{2}})=0$
if $j\not=1$. \end{proof}

\vspace{0.5cm}

\subsection{Properties of $\sS^{*},\sQ$ restricted on $\Prt_{\rho}$ and $\Prt_{\sigma}$}

As in the last subsection, we identify $\Prt_{\rho}=F(1,2,V)$ with
the projective bundle $\mP(T(-1))$ with $\pi_{F}:\mP(T(-1))\to\mP(V)$.
We introduce two divisors on $\Prt_{\rho}$;\[
H_{\Prt_{\rho}}=\sO_{\mP(T(-1))}(1)\text{\;\;{and}}\;\; L_{\Prt_{\rho}}:=\pi_{F}^{*}\sO(1).\]

\begin{prop} \label{cla:mislead} $\sQ|_{\Prt_{\rho}}\simeq\sS^{*}(L_{\hcoY_{3}})|_{\Prt_{\rho}}$
and $\sQ|_{\Prt_{\sigma}}\simeq\sS^{*}(L_{\hcoY_{3}})|_{\Prt_{\sigma}}$. 

\end{prop}

\begin{proof} 

Take a point $[V_{1},V_{2}]\in\Prt_{\rho}$, then the universal sequence
(\ref{eq:univ}) restricts to \[
0\to(V/V_{2})\wedge(V_{2}/V_{1})\to\wedge^{2}(V/V_{1})\to\wedge^{2}(V/V_{2})\to0,\]
where $\sS\vert_{[V_{1},V_{2}]}=(V/V_{2})\wedge(V_{2}/V_{1})$ and
$\sQ\vert_{[V_{1},V_{2}]}=\wedge^{2}(V/V_{2})$. By the wedge product,
we have a natural map $\sS\vert_{[V_{1},V_{2}]}\times\sQ\vert_{[V_{1},V_{2}]}\to\wedge^{4}(V/V_{1})$.
In fact, it is easy to see that this defines a non-degenerate form.
Hence, noting $\wedge^{4}(V/V_{1})=L_{\hcoY_{3}}\vert_{[V_{1},V_{4}]}$,
we obtain the claimed isomorphism $\sQ|_{\Prt_{\rho}}\simeq\sS^{*}(L_{\hcoY_{3}})|_{\Prt_{\rho}}$. 

The second isomorphism follows from a similar argument starting with
the restrictions $\sS\vert_{[V_{1},V_{4}]}=\wedge^{2}(V_{4}/V_{1})$
(see Proposition \ref{prop:barPrho-barPsigma}) and $\sQ\vert_{[V_{1},V_{4}]}=(V/V_{4})\wedge(V_{4}/V_{1})$.\end{proof}

\begin{prop} \label{cla:det} $\;$ 

\noindent {\rm (1)} $\sQ|_{\Prt_{\rho}}=\wedge^{2}(\sR_{V}/\sR_{2})\simeq(\sR_{V}/\sR_{2})^{*}(H_{\Prt_{\rho}}+L_{\Prt_{\rho}})$. 

\noindent {\rm (2)} $\det\sQ|_{\Prt_{\rho}}=2(H_{\Prt_{\rho}}+L_{\Prt_{\rho}})$. 

\end{prop}

\begin{proof}(1) Since $(\sR_{V}/\sR_{2})\vert_{[V_{1},V_{2}]}=V/V_{2}$
and $\sQ\vert_{[V_{1},V_{2}]}=\wedge^{2}(V/V_{2})$, we have $\sQ\vert_{\Prt_{\rho}}=\wedge^{2}(\sR_{V}/\sR_{2})$.
By taking the determinant of (\ref{eq:exact-sequence-Rv}), we have
\[
\wedge^{3}(\sR_{V}/\sR_{2})\simeq\wedge^{4}(\sR_{V}/\sR_{1})\otimes(\sR_{2}/\sR_{1})^{*}\simeq L_{\Prt_{\rho}}\otimes\sO_{\mP(T(-1))}(1),\]
where we use the relations $\sR_{V}/\sR_{1}=\pi_{F}^{*}T(-1)$ and
$\sR_{2}/\sR_{1}=\sO_{\mP(T(-1))}(-1)$ with $\pi_{F}:\mP(T(-1))\to\mP(V)$.
Noting the non-degenerate pairing $\sR_{V}/\sR_{2}\times\wedge^{2}(\sR_{V}/\sR_{2})\to\wedge^{3}(\sR_{V}/\sR_{2})$,
we obtain the isomorphism $\sQ\vert_{\Prt_{\rho}}=\wedge^{2}(\sR_{V}/\sR_{2})\simeq(\sR_{V}/\sR_{2})^{*}(H_{\Prt_{\rho}}+L_{\Prt_{\rho}}).$
The claim (2) follows from $\det\sQ\vert_{\Prt_{}}=\wedge^{3}(\wedge^{2}(\sR_{V}/\sR_{2}))\simeq(\wedge^{3}(\sR_{V}/\sR_{2}))^{\otimes2}$.
\end{proof}

\vspace{0.5cm}

\subsection{Divisors on $\hcoY_{3}$ and $\hcoY_{2}$ }

Recall the universal sequence of the Grassmann bundle $\hcoY_{3}=\rG(3,T(-1)^{\wedge2})$;

\[
0\to\sS\to\Lpi_{\hcoY_{3}}^{\;*}(T(-1)^{\wedge2})\to\sQ\to0.\]
Taking the determinant and using $\wedge^{6}(T(-1)^{\wedge^{2}})=(\wedge^{4}T(-1))^{\otimes3}=\sO(3)$,
we have \begin{equation}
\det\sQ=\det\sS^{*}+3L_{\hcoY_{3}}=\det\{\sS^{*}(L_{\hcoY_{3}})\}.\label{eq:detdiff}\end{equation}
 Also, since $T_{\hcoY_{3}/\mP(V)}=\sS^{*}\otimes\sQ$ (see \cite[p.435]{Ful}),
we have \begin{equation}
K_{\hcoY_{3}}=-\det(\sQ\otimes\sS^{*})+5L_{\hcoY_{3}}=-3(\det\sQ+\det\sS^{*})-5L_{\hcoY_{3}}=-6\det\sQ+4L_{\hcoY_{3}},\label{eq:KY3}\end{equation}
 where we note $\rank\sS=\rank\sQ=3$ and we use (\ref{eq:detdiff})
in the last equality.

We now investigate several relations among basic divisors on $\hcoY_{2}$.

\vspace{0.2cm}

Note that \begin{equation}
K_{\hcoY_{2}}=\Lrho_{\hcoY_{2}}^{\;*}K_{\hcoY_{3}}+5F_{\rho}\label{eq:adj}\end{equation}
 since $\Lrho_{\hcoY_{2}}$ is the blow-up along a smooth subvariety
of codimension $6$. By (\ref{eq:KY3}), we have \begin{equation}
K_{\hcoY_{2}}=-6\Lrho_{\hcoY_{2}}^{\;*}\det\sQ+4L_{\hcoY_{2}}+5F_{\rho}.\label{eq:canY2}\end{equation}

\begin{prop} \label{cla:M} The pull-back $M_{\hcoY_{2}}$ of $\sO_{\Hes}(1)$
is given by \[
M_{\hcoY_{2}}=\Lrho_{\hcoY_{2}}^{\;*}(\det\sQ)-L_{\hcoY_{2}}-F_{\rho}.\]
 \end{prop}

\begin{proof} Recall the definition of $\tilde{\varphi}_{DS}$ (Proposition
\ref{prop:phiDS}) and the second relation of (\ref{eq:vw-general-fromula}).
We note that the Pl\"ucker coordinates of $G(3,\wedge^{2}T(-1))$
are sections of $\det\,\sS^{*}$ and also $\wedge^{4}T(-1)=\sO(1)$
in (\ref{eq:vw-general-fromula}) (cf. Lemma \ref{cla:dec} below).
$\tilde{\varphi}_{DS}$ is defined by removing the zero in $w_{kl}$
along the indeterminacy set $\Prt_{\rho}$ of $\varphi_{V_{1}}$,
which is blown-up to the exceptional divisor $F_{\rho}$ under $\Lrho_{\hcoY_{2}}$.
Hence, using commutativity of the diagram in Fig.4, we have $\tLrho_{\hcoY_{2}}^{\;*}\circ\tilde{\varphi}_{DS}^{*}\sO_{\Hes}(1)=\Lrho_{\hcoY_{2}}^{\;*}(\det\,\sS^{*}+2L_{\hcoY_{3}})-F_{\rho}.$
The claim follows from (\ref{eq:detdiff}) and the definition $M_{\hcoY_{2}}:=\tLrho_{\hcoY_{2}}^{\;*}\circ\tilde{\varphi}_{DS}^{*}\sO_{\Hes}(1).$\end{proof}

\begin{lem} \label{cla:dec} \begin{equation}
\wedge^{3}(T(-1)^{\wedge2})=\bigl(\ft{S}^{2}(T(-1))\otimes\sO(1)\bigr)\oplus\bigl(\ft{S}^{2}(T(-1))^{*}\otimes\sO(2)\bigr).\qquad\quad\label{eq:relspin}\end{equation}
 \end{lem}

\begin{proof} Globalizing the decomposition (\ref{eq:spin}), we
have $\wedge^{3}(T(-1)^{\wedge2})=\ft{S}^{2}(T(-1))\otimes\sO(a)\oplus\ft{S}^{2}(T(-1))^{*}\otimes\sO(b)$
with some integers $a$ and $b$. To determine $a$ and $b$, we restricts
this equality to a line $l\subset\mP(V)$. Noting $T(-1)|_{l}\simeq\sO_{l}^{\oplus3}\oplus\sO_{l}(1)$,
we see that $a=1$ and $b=2$. \end{proof}

\vspace{3cm}

\section{Birational models of $F_{\widetilde{\hcoY}}$ and flattening of $F_{\widetilde{\hcoY}}\to G_{\hcoY}$}

\label{subsubsection:Global}

In this section, we study the birational geometry of the divisor $F_{\widetilde{\hcoY}}\subset\widetilde{\hcoY}$
in details, and obtain its explicit description $F_{\widetilde{\hcoY}}=\widehat{F}/\mZ_{2}$
in Proposition~\ref{cla:F'}. In particular, we obtain a natural
flattening of the fibration $F_{\widetilde{\hcoY}}\to G_{\hcoY}$
in Proposition~\ref{cla:union}, which is required for our cohomology
calculations of the sheaves $\widetilde{\sS}_{L}^{*}$, $\widetilde{\sQ}$,
$\widetilde{\sT}$ on $\widetilde{\hcoY}$ in Section~\ref{section:comp}.
The properties summarized in Lemma~\ref{cla:A} and Lemma~\ref{cla:ST}
will be used in our proof of Lemma~\ref{cla:key}.

\subsection{Birational models $F^{(k)}/\mZ_{2}$ of $F_{\widetilde{\hcoY}}$ }

\label{subsection:Bir}

Let us see that $F_{\widetilde{\hcoY}}$ is birationally equivalent
to the $\mZ_{2}$-quotient of the following $\mZ_{2}$-variety: \begin{equation}
\begin{aligned}{F}^{(1)} & :=\{([V_{2}^{(1)}],[V_{2}^{(2)}];[V_{4}^{(1)}],[V_{4}^{(2)}])\mid\\
 & \hskip3cmV_{2}^{(1)},V_{2}^{(2)}\subset V_{4}^{(1)}\cap V_{4}^{(2)},\dim V_{2}^{(1)}\cap V_{2}^{(2)}\geq1\}\\
 & \;\;\subset\mathrm{G}(2,V)\times\mathrm{G}(2,V)\times\mP(V^{*})\times\mP(V^{*}),\end{aligned}
\label{eq:FsY}\end{equation}
where $\mZ_{2}$ acts by the simultaneous exchanges $V_{2}^{(1)}\leftrightarrow V_{2}^{(2)}$
and $V_{4}^{(1)}\leftrightarrow V_{4}^{(2)}$. There is a natural
morphism ${F}^{(1)}/\mZ_{2}\to\ft{S}^{2}\mP(V^{*})$, and its fiber
over a point $([V_{4}^{(1)}],[V_{4}^{(2)}])$ with $V_{4}^{(1)}\not=V_{4}^{(2)}$
is isomorphic to $\mP(V_{3}^{*})\times\mP(V_{3}^{*})$ with $V_{3}=V_{4}^{(1)}\cap V_{4}^{(2)}$.
Therefore ${F}^{(1)}/\mZ_{2}\to\ft{S}^{2}\mP(V^{*})$ is birationally
equivalent to $F_{\widetilde{\hcoY}}\to G_{\hcoY}$ by Lemma \ref{lem:inverse-rk2}.

It turns out that ${F}^{(1)}/\mZ_{2}$ is not isomorphic to ${F}_{\widetilde{\hcoY}}$
but we can reconstruct $F_{\widetilde{\hcoY}}$ from ${F}^{(1)}/\mZ_{2}$
explicitly. Our reconstruction of $F_{\widetilde{\hcoY}}$ proceeds
as follows; we first construct the (anti-)flip $F^{(2)}\dashrightarrow F^{(4)}$
of a resolution ${F}^{(2)}\to{F}^{(1)}$ (Lemma \ref{cla:flip});
then we describe the strict transform $D^{(4)}$ on $F^{(4)}$ of
the inverse image $D^{(1)}$ in ${F}^{(1)}$ of the diagonal of $\widehat{G}=\mP(V^{*})\times\mP(V^{*})$
(Lemma \ref{cla:div}) and then consider a contraction of $D^{(4)}\subset F^{(4)}$
to obtain $F^{(4)}\to\hat{F}$; finally we divide $\hat{F}\to\hat{G}$
by the naturally induced $\mZ_{2}$-action to obtain $F_{\widetilde{\hcoY}}\to G_{\hcoY}$
(Proposition \ref{cla:F'}). The entire picture is summarized in the
diagram (\ref{eq:house}).

\vspace{0.3cm}
 Now let us first construct a small resolution of ${F}^{(1)}$. 

\begin{lem} \label{cla:smallres} Let \begin{equation}
\begin{aligned}{F}^{(2)} & :=\{([V_{2}^{(1)}],[V_{2}^{(2)}];[V_{3}];[V_{4}^{(1)}],[V_{4}^{(2)}])\mid V_{2}^{(1)},V_{2}^{(2)}\subset V_{3}\subset V_{4}^{(1)}\cap V_{4}^{(2)}\}\\
 & \;\;\subset\mathrm{G}(2,V)\times\mathrm{G}(2,V)\times\mathrm{G}(3,V)\times\mP(V^{*})\times\mP(V^{*})\end{aligned}
\label{eq:FsY'}\end{equation}
 and \begin{equation}
\widehat{G}':=\{([V_{3}];[V_{4}^{(1)}],[V_{4}^{(2)}])\mid V_{3}\subset V_{4}^{(1)}\cap V_{4}^{(2)}\}\subset\mathrm{G}(3,V)\times\mP(V^{*})\times\mP(V^{*}).\label{eqn:GY}\end{equation}
 Then, $1)$ $\widehat{G}'$ is the blow-up of $\widehat{G}:=\mP(V^{*})\times\mP(V^{*})$
along the diagonal, and ${F}^{(2)}$ has a $\mP^{2}\times\mP^{2}$-fibration
structure over $\widehat{G}'$. In particular, ${F}^{(2)}$ is smooth.
$2)$ Moreover, the natural morphism ${F}^{(2)}\to{F}^{(1)}$ is a
small resolution. \end{lem}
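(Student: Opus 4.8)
The plan is to exploit the two forgetful projections out of $F^{(2)}$: the map $\mu\colon F^{(2)}\to \widehat{G}'$ that drops the pair $(V_2^{(1)},V_2^{(2)})$, and the map $\pi\colon F^{(2)}\to F^{(1)}$ that drops $V_3$. First I would record two readings of $\widehat{G}'$. The defining conditions $V_3\subset V_4^{(i)}$ say exactly that the annihilators $\mathrm{Ann}(V_4^{(i)})\subset V^*$ lie in $\mathrm{Ann}(V_3)$; since $\mathrm{Ann}(V_3)$ is the rank-two fibre of $\eQ^*$ in the dual universal sequence $0\to\eQ^*\to V^*\otimes\sO\to\eS\to 0$ on $\mathrm{G}(3,V)$, this exhibits $\widehat{G}'$ as the fibre product $\mP(\eQ^*)\times_{\mathrm{G}(3,V)}\mP(\eQ^*)$, a $\mP^1\times\mP^1$-bundle over $\mathrm{G}(3,V)$. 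In particular $\widehat{G}'$ is smooth of dimension $8$.

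For the blow-up claim I would instead use the projection $\widehat{G}'\to\widehat{G}=\mP(V^*)\times\mP(V^*)$. Identifying $\mathrm{G}(3,V)$ with $\mathrm{G}(2,V^*)$ via $V_3\mapsto\mathrm{Ann}(V_3)$, the rule $(\langle\phi_1\rangle,\langle\phi_2\rangle)\mapsto\langle\phi_1,\phi_2\rangle=[\phi_1\wedge\phi_2]$ is precisely the rational map $\widehat{G}\dashrightarrow\mathrm{G}(2,V^*)\subset\mP(\wedge^2 V^*)$ whose graph-closure is $\widehat{G}'$: over the complement of the diagonal $\Delta$ the two hyperplanes $V_4^{(1)},V_4^{(2)}$ meet in a unique $3$-space $V_3=V_4^{(1)}\cap V_4^{(2)}$, so $\widehat{G}'\to\widehat{G}$ is there an isomorphism. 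This map is given by the linear subsystem of $|\sO(1,1)|$ spanned by the $2\times 2$ minors $\phi_{1i}\phi_{2j}-\phi_{1j}\phi_{2i}$, whose base scheme is the (reduced) diagonal $\Delta$; by the standard fact that the closure of the graph of the map attached to a linear system equals the blow-up of its base scheme, I would conclude $\widehat{G}'\cong\mathrm{Bl}_\Delta\widehat{G}$. With $\widehat{G}'$ smooth, the fibration $\mu$ realises $F^{(2)}$ as $\mathrm{G}(2,\eS^*)\times_{\widehat{G}'}\mathrm{G}(2,\eS^*)$, since the fibre over a point is $\{V_2^{(1)},V_2^{(2)}\subset V_3\}=\mathrm{G}(2,3)\times\mathrm{G}(2,3)\cong\mP^2\times\mP^2$. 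Being a projective bundle over a smooth base, $F^{(2)}$ is smooth of dimension $12$, which settles $1)$.

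For $2)$ I would analyse $\pi$ fibrewise. It is well defined: if $V_2^{(1)},V_2^{(2)}\subset V_3\subset V_4^{(1)}\cap V_4^{(2)}$ then $V_2^{(i)}\subset V_4^{(1)}\cap V_4^{(2)}$, and two $2$-planes inside the $3$-space $V_3$ always meet, so $\dim(V_2^{(1)}\cap V_2^{(2)})\geq 1$ and the image lies in $F^{(1)}$. The fibre of $\pi$ over a point of $F^{(1)}$ is the set of $V_3$ with $A:=V_2^{(1)}+V_2^{(2)}\subset V_3\subset B:=V_4^{(1)}\cap V_4^{(2)}$, i.e.\ $\mathrm{G}(3-\dim A,\,B/A)$. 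A short case check gives $\dim A\in\{2,3\}$ and $\dim B\in\{3,4\}$, and the fibre is a single point unless $\dim A=2$ and $\dim B=4$, that is unless $V_2^{(1)}=V_2^{(2)}$ and $V_4^{(1)}=V_4^{(2)}$; on this locus $C\cong\mathrm{F}(2,4,V)$ the fibre is $\mP(B/A)\cong\mP^1$. I would then construct the inverse over $F^{(1)}\setminus C$ by the regular rule $V_3=V_2^{(1)}+V_2^{(2)}$ where $\dim A=3$ and $V_3=V_4^{(1)}\cap V_4^{(2)}$ where $\dim B=3$ (the two rules agree on the overlap, where $A\subset B$ forces $A=B$), so $\pi$ is an isomorphism away from $C$. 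Finally $\dim C=8$, so $\codim_{F^{(1)}}C=4$, and its preimage, a $\mP^1$-bundle over $C$, has codimension $3$ in $F^{(2)}$; hence $\pi$ is an isomorphism outside a locus of codimension $\geq 2$ and is a small resolution. Moreover, since the exceptional locus of a birational morphism of smooth varieties is pure of codimension one, $F^{(1)}$ must be singular along $C$, so this is a genuine small resolution of a singular variety.

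The step I expect to be the main obstacle is the clean identification of $\widehat{G}'$ with the blow-up of $\Delta$, as opposed to merely some resolution of $\widehat{G}$: the delicate point is that the $2\times 2$ minors generate the full saturated ideal $I_\Delta$ with its reduced structure, so that the graph-closure is the blow-up of $I_\Delta$ itself with the correct scheme structure. This is exactly where I must invoke the classical determinantal description of the diagonal of $\mP^n\times\mP^n$; the remaining fibre-dimension counts and the explicit inverse in $2)$ are routine.
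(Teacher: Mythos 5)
Your proof is correct and, for part 2), it is essentially the paper's own argument: the same two recovery rules $V_3=V_2^{(1)}+V_2^{(2)}$ (when $V_2^{(1)}\neq V_2^{(2)}$) and $V_3=V_4^{(1)}\cap V_4^{(2)}$ (when $V_4^{(1)}\neq V_4^{(2)}$) giving an isomorphism off the locus $C=\Delta_{F^{(1)}}\simeq \mathrm{F}(2,4,V)$, the same $\mP^1$-fibres over that locus, and the same dimension count $8+1=9$, hence smallness. The paper dismisses part 1) as ``almost obvious,'' so your identification of $\widehat{G}'$ as the graph closure of the map defined by the $2\times 2$ minors, together with the classical fact that these minors generate the ideal of the reduced diagonal so that the graph closure is $\mathrm{Bl}_\Delta\widehat{G}$, is a correct fleshing-out of what the paper leaves implicit rather than a different route.
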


\begin{proof} The first part is almost obvious. We only show that
${F}^{(2)}\to{F}^{(1)}$ is a small resolution. Note that, since $V_{3}=V_{2}^{(1)}+V_{2}^{(2)}$
holds for $F^{(2)}$ when $V_{2}^{(1)}\not=V_{2}^{(2)}$, or $V_{3}=V_{4}^{(1)}\cap V_{4}^{(2)}$
holds for $F^{(2)}$ when $V_{4}^{(1)}\not=V_{4}^{(2)}$, the morphism
${F}^{(2)}\to{F}^{(1)}$ is isomorphic outside the diagonal set \begin{equation}
\Delta_{{F}^{(1)}}:=\{([V_{2}],[V_{2}];[V_{4}],[V_{4}])\mid V_{2}\subset V_{4}\}\simeq\mathrm{F}(2,4,V)\subset{F}^{(1)}.\label{eq:DelF}\end{equation}
 The fiber of ${F}^{(2)}\to{F}^{(1)}$ over a point $([V_{2}],[V_{2}];[V_{4}],[V_{4}])\in\Delta_{{F}^{(1)}}$
is \[
\{([V_{2}],[V_{2}];[V_{3}];[V_{4}],[V_{4}])\mid[V_{3}]\in\mathrm{G}(3,V),V_{2}\subset V_{3}\subset V_{4}\}\simeq\mP^{1}.\]
 Therefore the dimension of the exceptional set of ${F}^{(2)}\to{F}^{(1)}$
is equal to $\dim\Delta_{{F}^{(1)}}+1=9$, hence ${F}^{(2)}\to{F}^{(1)}$
is small. \end{proof}

\begin{lem} \label{cla:flip} 

{\rm 1)} $\sN_{\gamma/{F}^{(2)}}\simeq\sO_{\mP^{1}}(-1)^{\oplus3}\oplus\sO_{\mP^{1}}^{\oplus8}$
for non-trivial fibers $\gamma\simeq\mP^{1}$ of ${F}^{(2)}\to{F}^{(1)}$.
{\rm 2)} There exists another small resolution ${F}^{(4)}\to{F}^{(1)}$
which is isomorphic outside $\Delta_{{F}^{(1)}}$ and whose nontrivial
fiber is isomorphic to $\mP^{2}$. The variety ${F}^{(4)}$ is constructed
by taking the blow-up ${F}^{(3)}\to{F}^{(2)}$ along the exceptional
locus of ${F}^{(2)}\to{F}^{(1)}$ and contracting the exceptional
divisor of this blow-up in the other direction. \end{lem}

\begin{rem} The small resolution ${F}^{(4)}\to{F}^{(1)}$ as in the
above statement can be considered locally a family of the small resolution
of a $4$-dimensional singularity, which is studied in \cite{Ka}.
\hfill{}{[}{]} \end{rem}

\begin{proof} The main part of our proof is to determine the singularities
of ${F}^{(1)}$. To describe it, we set \[
Z:=\{([V_{2}^{(1)}],[V_{4}^{(1)}])\mid V_{2}^{(1)}\subset V_{4}^{(1)}\}\subset\mathrm{G}(2,V)\times\mP(V^{*})\]
 and consider the projection ${F}^{(1)}\to Z$. Let $F_{z}^{(1)}$
be the fiber of this projection over a point $z=([V_{2}^{(1)}],[V_{4}^{(1)}])\in Z$.
To describe the fiber $F_{z}^{(1)}$, we choose a basis $\{{\bf e}_{1},\dots,{\bf e}_{5}\}$
of $V$ such that $V_{2}^{(1)}=\langle{\bf e}_{1},{\bf e}_{2}\rangle$
and $V_{4}^{(1)}=\langle{\bf e}_{1},\dots,{\bf e}_{4}\rangle=(0,0,0,0,1)^{\perp}$.
Then the conditions $V_{2}^{(1)},V_{2}^{(2)}\subset V_{4}^{(1)}\cap V_{4}^{(2)}$
and $\dim(V_{2}^{(1)}\cap V_{2}^{(2)})\geq1$ for the points $([V_{2}^{(2)}],[V_{4}^{(2)}])\in\rG(2,V_{4}^{(1)})\times\mP(V^{*})$
on the fiber are easily analyzed to obtain \begin{equation}
F_{z}^{(1)}=\left\{ \left(\left[\begin{matrix}a_{1} & a_{2} & a_{3} & a_{4}\\
b_{1} & b_{2} & b_{3} & b_{4}\end{matrix}\right],[0,0,c_{3},c_{4},c_{5}]\right)\bigg|\;\rank\left(\begin{matrix}a_{3} & a_{4}\\
b_{3} & b_{4}\\
c_{4} & -c_{3}\end{matrix}\right)\leq1\right\} .\label{eq:fiberFz}\end{equation}
From this, we see that $F_{z}^{(1)}$ is singular only at the origin
$o$ of the affine chart with $\left(\begin{smallmatrix}a_{1} & a_{2}\\
b_{1} & b_{2}\end{smallmatrix}\right)\not=0$ and $c_{5}\not=0$, i.e., $([V_{2}^{(2)}],[V_{4}^{(2)}])=([V_{2}^{(1)}],[V_{4}^{(1)}])$,
and the singularity is isomorphic to the vertex of the cone over the
Segre variety $\mP^{1}\times\mP^{2}$.

We may consider ${F}^{(1)}\to Z$ locally as an equisingular family
of the cone over the Segre variety $\mP^{1}\times\mP^{2}$. It is
well-known that the cone $C$ over the Segre variety $\mP^{1}\times\mP^{2}$
has exactly two small resolutions $p_{1}\colon C_{1}\to C$ and $p_{2}\colon C_{2}\to C$,
where the exceptional locus $E_{1}$ of $p_{1}$ is a copy of $\mP^{2}$
with $\sN_{E_{1}/C_{1}}\simeq\sO_{\mP^{2}}(-1)^{\oplus2}$, and the
exceptional locus $E_{2}$ of $p_{2}$ is a copy of $\mP^{1}$ with
$\sN_{E_{2}/C_{2}}\simeq\sO_{\mP^{1}}(-1)^{\oplus3}$. We can conclude
that ${F}^{(2)}\to{F}^{(1)}$ is locally a family of $p_{2}\colon C_{2}\to C$,
and then we have the assertion (cf. \cite{Ka}). ${F}^{(4)}\to{F}^{(1)}$
is nothing but locally a family of $p_{1}\colon C_{1}\to C$. \end{proof}

\subsection{Divisors $D^{(k)}$ in $F^{(k)}$}

Now, let $\Delta_{\mP}=\rG(4,V)$ be the diagonal of $\widehat{G}=\mP(V^{*})\times\mP(V^{*})$
and ${D}^{(1)}$ the inverse image of $\Delta_{\mP}$ by the natural
morphism ${F}^{(1)}\to\mP(V^{*})\times\mP(V^{*})$, namely, \[
{D}^{(1)}:=\{([V_{2}^{(1)}],[V_{2}^{(2)}];[V_{4}],[V_{4}])\mid V_{2}^{(1)},V_{2}^{(2)}\subset V_{4},\dim V_{2}^{(1)}\cap V_{2}^{(2)}\geq1\}\subset{F}^{(1)}.\]
 Let \begin{eqnarray*}
{D}^{(2)}:=\{([V_{2}^{(1)}],[V_{2}^{(2)}];[V_{3}];[V_{4}],[V_{4}])\mid V_{2}^{(1)},V_{2}^{(2)}\subset V_{3}\subset V_{4}\}\subset{F}^{(2)}.\end{eqnarray*}
The natural morphism ${D}^{(2)}\to{D}^{(1)}$ over $\Delta_{\mP}$
is the restriction of the morphism ${F}^{(2)}\to{F}^{(1)}$ in Lemma
\ref{cla:smallres}. ${D}^{(2)}$ has a $\mP^{2}\times\mP^{2}$-bundle
structure over the flag variety $\mathrm{F}(3,4,V)$. In particular
${D}^{(2)}$ is a smooth variety. Hence ${D}^{(1)}$ is a prime divisor
on ${F}^{(1)}$ and ${D}^{(2)}$ is its strict transform on ${F}^{(2)}$.

\begin{lem} \label{cla:D_4} Set \begin{equation}
\begin{aligned}{D}^{(4)} & :=\{([V_{1}];[V_{2}^{(1)}],[V_{2}^{(2)}];[V_{4}],[V_{4}])\mid V_{2}^{(1)},V_{2}^{(2)}\subset V_{4},V_{1}\subset V_{2}^{(1)}\cap V_{2}^{(2)}\}\\
 & \;\;\subset\mP(V)\times\mathrm{G}(2,V)\times\mathrm{G}(2,V)\times\mP(V^{*}).\end{aligned}
\label{eqn:FsY''}\end{equation}
 Then ${D}^{(4)}$ is the strict transform on ${F}^{(4)}$ of ${D}^{(2)}$.
Moreover, the restriction ${D}^{(2)}\dashrightarrow{D}^{(4)}$ of
the $($anti-$)$flip ${F}^{(2)}\dashrightarrow{F}^{(4)}$ is a family
of Atiyah flops. Noting ${D}^{(2)}$ $($resp. ${D}^{(4)})$ has a
natural $\mP^{2}\times\mP^{2}$-fibration structure over $\mathrm{F}(3,4,V)$
$($resp. $\mathrm{F}(1,4,V))$, we obtain the following commutative
diagram$:$ \begin{equation}
\begin{matrix}\xymatrix{{D}^{(2)}\ar[d]_{\;_{\text{\ensuremath{\mP^{2}\times\mP^{2}}-fib.}}}\ar@{-->}[rr]^{\;_{\text{Atiyah flop}}}\ar[dr] &  & {D}^{(4)}\ar[d]^{\;_{\text{\ensuremath{\mP^{2}\times\mP^{2}}-fib.}}}\ar[dl]\\
\mathrm{F}(3,4,V)\ar[dr] & {D}^{(1)}\ar[d] & \mathrm{F}(1,4,V)\ar[dl]\\
 & \Delta_{\mP}.}
\end{matrix}\label{eq:smallhouse}\end{equation}
 \end{lem}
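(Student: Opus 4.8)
The plan is to reduce the whole statement to the local structure of $D^{(1)}$ along the diagonal $\Delta_{{F}^{(1)}}\simeq \mathrm{F}(2,4,V)$ of (\ref{eq:DelF}), which I will show is a family of ordinary conifold (threefold double point) singularities whose two small resolutions are precisely $D^{(2)}\to D^{(1)}$ and $D^{(4)}\to D^{(1)}$. Granting this, the flop assertions become fiberwise statements and the diagram (\ref{eq:smallhouse}) assembles formally.

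First I would record the two projections. Forgetting $V_2^{(1)},V_2^{(2)}$ and keeping $(V_3,V_4)$ (resp.\ keeping $(V_1,V_4)$) exhibits $D^{(2)}$ (resp.\ $D^{(4)}$) as a $\mP^2\times\mP^2$-fibration over $\mathrm{F}(3,4,V)$ (resp.\ $\mathrm{F}(1,4,V)$): over $V_3\subset V_4$ the two factors are $\mathrm{G}(2,V_3)\cong\mP^2$, and over $V_1\subset V_4$ they are $\mP(V_4/V_1)\cong\mP^2$. The forgetful morphisms $D^{(2)}\to D^{(1)}$ and $D^{(4)}\to D^{(1)}$ (drop $V_3$, resp.\ $V_1$) are isomorphisms over the open locus $\{V_2^{(1)}\neq V_2^{(2)}\}$, where $V_3=V_2^{(1)}+V_2^{(2)}$ and $V_1=V_2^{(1)}\cap V_2^{(2)}$ are forced; over $\Delta_{{F}^{(1)}}$ (where $V_2^{(1)}=V_2^{(2)}=:V_2$) the fibers are $\mP(V_4/V_2)\cong\mP^1$ and $\mP(V_2)\cong\mP^1$ respectively. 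Together with the maps to $\Delta_{\mP}\simeq\mP(V^*)$ recording $V_4$, this produces the commutative diagram (\ref{eq:smallhouse}), leaving only the flop claims.

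Next I would pin down the singularities of $D^{(1)}$ by restricting the local computation in the proof of Lemma \ref{cla:flip} to $D^{(1)}$, i.e.\ imposing $V_4^{(1)}=V_4^{(2)}$. This sets $y_3=y_4=0$ and collapses the $2\times 3$ matrix found there to $\left(\begin{smallmatrix} x_{13} & x_{23}\\ x_{14} & x_{24}\end{smallmatrix}\right)$, whose rank $\leq 1$ locus is the conifold $\{x_{13}x_{24}-x_{14}x_{23}=0\}$. Hence, transverse to $\Delta_{{F}^{(1)}}$, the divisor $D^{(1)}$ is a family of ordinary double points, and $D^{(2)}\to D^{(1)}$, $D^{(4)}\to D^{(1)}$ are its two small resolutions with $\mP^1$-fibers. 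The induced birational map $D^{(2)}\dashrightarrow D^{(4)}$ over $D^{(1)}$ is therefore fiberwise the flop of a $(-1,-1)$-curve, i.e.\ a family of Atiyah flops; this is the second assertion.

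The remaining and, I expect, hardest point is identifying $D^{(4)}$ with the strict transform of $D^{(2)}$ under the ambient flip, since it requires matching this threefold flop with the restriction of the flip of Lemma \ref{cla:flip}. In the local model the ambient singularity is the cone over $\mP^1\times\mP^2$, with $M=uv^{\mathrm t}$, $[u]\in\mP^1$, $[v]\in\mP^2$; the resolution ${F}^{(2)}$ remembers $[u]$ (and in Plücker coordinates $[u]\leftrightarrow V_3$) while ${F}^{(4)}$ remembers $[v]$. On $D^{(1)}$ the third column of $M$ vanishes, forcing $[v]$ into the line $\{v_3=0\}\cong\mP^1$, and reading off $[v_1:v_2]=[x_{13}:x_{23}]$ identifies this line with $\mP(V_2)$, i.e.\ with the $\mP^1$ of subspaces $V_1\subset V_2^{(1)}\cap V_2^{(2)}$. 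Thus the strict transform of $D^{(2)}$ meets the $\mP^2$-exceptional locus of ${F}^{(4)}\to {F}^{(1)}$ exactly along the locus recording $V_1$, so it coincides with $D^{(4)}$, proving the first assertion; the flop $D^{(2)}\dashrightarrow D^{(4)}$ then interchanges the two $\mP^2\times\mP^2$-fibrations, completing (\ref{eq:smallhouse}).
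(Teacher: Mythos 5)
Your proposal is correct, and its first two steps coincide in substance with the paper's proof: the paper also reduces everything to the local model set up in the proof of Lemma \ref{cla:flip}, phrasing the outcome as $\sN_{\gamma/D^{(2)}}\simeq\sO_{\mP^1}(-1)^{\oplus 2}\oplus\sO_{\mP^1}^{\oplus 8}$ for a nontrivial fiber $\gamma$ of $D^{(2)}\to D^{(1)}$, which is equivalent to your observation that $D^{(1)}$ is, transversally to $\Delta_{F^{(1)}}$, a family of ordinary double points. Where you genuinely diverge is the identification of $D^{(4)}$ with the strict transform $D^{(4)'}$ of $D^{(2)}$. The paper argues globally: $D^{(4)}\to D^{(1)}$ is a small resolution and the flop is unique, so it suffices to show $D^{(2)}$ and $D^{(4)}$ are not isomorphic over $\Delta_{\mP}$; this is done by comparing $\mZ_2$-invariant relative Picard numbers and the two extra $\mZ_2$-equivariant contractions to $\mathrm{F}(3,4,V)$ and $\mathrm{F}(1,4,V)$. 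You instead identify the rulings in coordinates: $D^{(2)}$ remembers the column space $[x_{13}:x_{14}]=[b_3:b_4]$, i.e.\ $V_3$, while the strict transform in $F^{(4)}$ remembers the row space $[x_{13}:x_{23}]=[a_1:a_2]$, i.e.\ $V_1$, which is exactly the datum defining $D^{(4)}$. Your route buys explicitness: it simultaneously shows the two small resolutions contract \emph{different} rulings (which is what makes $D^{(2)}\dashrightarrow D^{(4)}$ a genuine flop rather than an isomorphism), and it sidesteps the somewhat delicate non-isomorphism argument, which needs care since $\mathrm{F}(3,4,V)$ and $\mathrm{F}(1,4,V)$ are abstractly isomorphic; the paper's route, in exchange, avoids tracking coordinates through the abstractly constructed $F^{(4)}$. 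Two small points to tighten. First, your concluding "so it coincides with $D^{(4)}$" needs a routine globalization: the ruling you read off is intrinsic (it is the limit position of $V_1=V_2^{(1)}\cap V_2^{(2)}$), so it defines a morphism $D^{(4)'}\to\mP(V)$ and hence a finite birational morphism $D^{(4)'}\to D^{(4)}$ over $D^{(1)}$, which is an isomorphism by Zariski's main theorem since $D^{(4)}$ is smooth. Second, your second paragraph already calls $D^{(2)}$ and $D^{(4)}$ "the two small resolutions" and deduces the Atiyah flop, but that they contract different rulings is only established by the computation in your third paragraph; the logical order should be reversed, though nothing is missing in substance.
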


\begin{proof} Let $D_{[V_{4}]}^{(1)}$ be the fiber of $D^{(1)}\to\Delta_{\mP}$
over the diagonal point $([V_{4}],[V_{4}])$. Then, from the definition,
we have \[
D_{[V_{4}]}^{(1)}=\left\{ ([V_{2}^{(1)}],[V_{2}^{(2)}])\mid V_{2}^{(1)},V_{2}^{(2)}\subset V_{4},\dim(V_{2}^{(1)}\cap V_{2}^{(2)})\geq1\right\} \]
which is singular along $[V_{2}^{(1)}]=[V_{2}^{(2)}]$. As in the
proof of Lemma \ref{cla:flip}, we fix $V_{2}^{(1)}=\langle\bm{e}_{1},\bm{e}_{2}\rangle$
and $V_{4}=\langle\bm{e}_{1},\bm{e}_{2},\bm{e}_{3},\bm{e}_{4}\rangle$.
Then the natural restriction $D_{[V_{4}]}^{(1)}\vert_{[V_{2}^{(1)}]}$
can be described (by (\ref{eq:fiberFz})) as\[
D_{[V_{4}]}^{(1)}\vert_{[V_{2}^{(1)}]}=\left\{ \bigg([V_{2}^{(1)}],\left[\begin{matrix}a_{1} & a_{2} & a_{3} & a_{4}\\
b_{1} & b_{2} & b_{3} & b_{4}\end{matrix}\right]\bigg)\bigg|\,\rank\left(\begin{matrix}a_{3} & a_{4}\\
b_{3} & b_{4}\end{matrix}\right)\leq1\right\} .\]
In this form, it is clear that the two small resolutions $F^{(2)}\to F^{(1)}$
and $F^{(4)}\to F^{(1)}$ given in Lemma \ref{cla:flip} restricts
to $D^{(2)}\to D^{(1)}$ and $D^{(4)'}\to D^{(1)}$, respectively.
We show the equality $D^{(4)'}$ to $D^{(4)}$. 

Since the equality holds over the smooth locus of $D^{(1)}$, it suffices
to see the correspondence between the exceptional set over the diagonal
set $[V_{2}^{(1)}]=[V_{2}^{(2)}]$. To see this, we fix $[V_{2}^{(1)}]=\langle\bm{e}_{1},\bm{e}_{2}\rangle$
and consider $D_{[V_{4}]}^{(1)}\vert_{[V_{2}^{(1)}]}$ as above. Then
the exceptional set of $D^{(4)'}$ over $D_{[V_{4}]}^{(1)}\vert_{[V_{2}^{(1)}]}$
consists of points $[V_{1}]$ such that $V_{1}=V_{2}^{(1)}\cap\tilde{V}_{2}^{(2)}(s,t)=\mC(t,-s,0,0)$
with $[\tilde{V}_{2}^{(2)}(s,t)]=\left[\begin{smallmatrix}1 & 0 & a_{3} & a_{4}\\
0 & 1 & b_{3} & b_{4}\end{smallmatrix}\right]$ and \[
a_{3}:b_{3}=a_{4}:b_{4}=s:t\;\in\mP^{1}.\]
This exactly describes $\mP^{1}$ over the diagonal set of $D^{(4)}$. 

Other statements follow directly from the definitions. \end{proof}

\begin{rem} From the proof of the above lemma, we see that the strict
transform in $F^{(3)}$ (see Lemma \ref{cla:flip}) of the divisor
$D^{(2)}\subset F^{(2)}$ may be described by \begin{equation}
D^{(3)}:=\left\{ ([V_{1}];[V_{2}^{(1)}],[V_{2}^{(2)}];[V_{3}];[V_{4}],[V_{4}])\bigg|\begin{matrix}V_{1}\subset V_{2}^{(1)}\cap V_{2}^{(2)}\;\;\;\\
V_{2}^{(1)},V_{2}^{(2)}\subset V_{3}\subset V_{4}\end{matrix}\right\} .\label{eq:divisorD3}\end{equation}
Then the natural projections $D^{(2)}\leftarrow D^{(3)}\rightarrow D^{(4)}$
describe the Atiyah flop. $\hfill${[}{]}

\end{rem}

$\;$

\subsection{A divisorial contraction $F^{(4)}\to\hat{F}$ }

We can now complete our construction of $F_{\widetilde{\hcoY}}=\hat{F}/\mZ_{2}$.
We start with the following contraction of the divisor $D^{(4)}\subset F^{(4)}.$

\begin{lem} \label{cla:div} Let ${F}^{(4)}$ be as in Lemma $\ref{cla:flip}$.
Then there exists a divisorial contraction ${F}^{(4)}\to\widehat{F}$
which contracts the strict transform ${D}^{(4)}$ of ${D}^{(1)}$
to the locus isomorphic to the flag variety $\mathrm{F}(1,4,V)$$($see
$(\ref{eq:smallhouse}))$. The discrepancy of ${D}^{(4)}$ is two.
\end{lem}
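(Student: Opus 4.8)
The plan is to realise $\phi\colon{F}^{(4)}\to\widehat{F}$ as a family, over $\mathrm{F}(1,4,V)$, of contractions of the cone over the Segre variety $\mP^2\times\mP^2$, and to read off both the existence of the contraction and the discrepancy from this local model. I use throughout the explicit description of ${D}^{(4)}$ from Lemma~\ref{cla:D_4}: forgetting $[V_2^{(1)}],[V_2^{(2)}]$ gives a morphism ${D}^{(4)}\to\mathrm{F}(1,4,V)$ whose fibre over $([V_1],[V_4])$ is $\mP(V_4/V_1)\times\mP(V_4/V_1)\cong\mP^2\times\mP^2$, so that ${D}^{(4)}=\mP(\sW)\times_{\mathrm{F}(1,4,V)}\mP(\sW)$ is a fibre bundle, where $\sW$ is the rank-three bundle with fibre $V_4/V_1$. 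I write $G\cong\mP^2\times\mP^2$ for a general such fibre and $\ell\subset G$ for a line in one of the two rulings.

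First I would construct the contraction. The coordinate computation in the proof of Lemma~\ref{cla:flip} exhibits ${F}^{(1)}\to B_{\widetilde\hcoY}$ locally as the equisingular family of the cone over the Segre threefold $\mP^1\times\mP^2$; restricting the whole construction over the diagonal $\Delta_{\mP}\subset\widehat{G}$ turns the relevant fibre into the cone over $\mP^2\times\mP^2$, and makes ${F}^{(4)}$ the blow-up of its vertex. Collapsing these $\mP^2\times\mP^2$'s fibrewise is a projective morphism (equivalently, one applies the Kawamata--Shokurov base point free theorem to a suitable nef and big $H$ on ${F}^{(4)}$ with $H\cdot\ell=0$ for both rulings), and its image of ${D}^{(4)}$ is the locus of vertices. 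By Lemma~\ref{cla:D_4} this locus is exactly $\mathrm{F}(1,4,V)$, namely the flag $V_1\subset V_4$ that survives the collapse of the two subspaces $V_2^{(i)}$. This gives the divisorial contraction ${F}^{(4)}\to\widehat{F}$ with exceptional divisor ${D}^{(4)}$ contracted onto $\mathrm{F}(1,4,V)$.

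For the discrepancy I would argue by intersection numbers on the fibre line $\ell$. Since $\phi$ contracts $\ell$, writing $K_{{F}^{(4)}}=\phi^{*}K_{\widehat{F}}+a\,{D}^{(4)}$ and intersecting with $\ell$ gives $K_{{F}^{(4)}}\cdot\ell=a\,({D}^{(4)}\cdot\ell)$, because $\phi^{*}K_{\widehat{F}}\cdot\ell=0$. On the other hand ${D}^{(4)}\to\mathrm{F}(1,4,V)$ is a $\mP^2\times\mP^2$-fibre bundle and $\ell$ is a fibre line, so $K_{{D}^{(4)}}|_{G}=K_{G}$ and by adjunction
\[
(K_{{F}^{(4)}}+{D}^{(4)})\cdot\ell=K_{{D}^{(4)}}\cdot\ell=K_{\mP^2\times\mP^2}\cdot\ell=-3 .
\]
Combining the two relations yields $a=-1-3/({D}^{(4)}\cdot\ell)$, so everything reduces to the single identity ${D}^{(4)}\cdot\ell=-1$, i.e. $\sN_{{D}^{(4)}/{F}^{(4)}}|_{G}\cong\sO_{\mP^2\times\mP^2}(-1,-1)$; granting this, $K_{{F}^{(4)}}\cdot\ell=-2$ and $a=2$, as claimed. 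This value is precisely what the Segre-cone model predicts, since $G$ is then the exceptional divisor of the blow-up of the vertex of the affine cone over $\mP^2\times\mP^2\hookrightarrow\mP^8$, whose normal bundle is the Segre line bundle $\sO(-1,-1)$.

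The main obstacle is the normal-bundle identity $\sN_{{D}^{(4)}/{F}^{(4)}}|_{G}\cong\sO(-1,-1)$. It cannot simply be imported from the ${F}^{(2)}$-side: there ${D}^{(2)}$ is the preimage of the exceptional divisor $\widehat{E}'$ of $\widehat{G}'\to\widehat{G}$ under the $\mP^2\times\mP^2$-bundle ${F}^{(2)}\to\widehat{G}'$, so $\sN_{{D}^{(2)}/{F}^{(2)}}$ is the pull-back of the line bundle $\sN_{\widehat{E}'/\widehat{G}'}\cong\sO(-1)$ and restricts trivially to each fibre of ${D}^{(2)}\to\mathrm{F}(3,4,V)$, giving ${D}^{(2)}\cdot\ell'=0$; but the (anti-)flip of Lemma~\ref{cla:flip} changes the fibration from $\mathrm{F}(3,4,V)$ to $\mathrm{F}(1,4,V)$, and the fibre line $\ell\subset{D}^{(4)}$ meets the flopping locus, so its intersection number with ${D}^{(4)}$ genuinely differs from ${D}^{(2)}\cdot\ell'$. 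I would therefore establish the identity by the explicit local coordinates already set up in the proofs of Lemmas~\ref{cla:flip} and \ref{cla:D_4} (the basis $\{{\bf e}_1,\dots,{\bf e}_5\}$ and the associated Pl\"ucker coordinates), writing the local equation of ${D}^{(4)}$ in the blow-up of the Segre cone and computing the restriction of its conormal sheaf to $G$; the four-dimensional flop of \cite{Ka} supplies the model for this local analysis.
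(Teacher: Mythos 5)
Your reduction of the discrepancy to the single identity $D^{(4)}\cdot\ell=-1$ is correct, and it recovers exactly the numbers the paper works with ($K_{F^{(4)}}\cdot r'=-2$, $D^{(4)}\cdot r'=-1$); you also correctly diagnose why these numbers cannot be imported naively from the $F^{(2)}$-side. However, both halves of your argument rest on steps that are not carried out, and the route you propose for the crucial one is not well-founded. For the existence of the contraction, your construction is circular: the claim that restricting the picture of Lemma \ref{cla:flip} over $\Delta_{\mP}$ ``turns the relevant fibre into the cone over $\mP^2\times\mP^2$ and makes $F^{(4)}$ the blow-up of its vertex'' does not follow from anything established. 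The local model of Lemma \ref{cla:flip} is the cone over $\mP^1\times\mP^2$, transverse to the $8$-dimensional locus $\Delta_{F^{(1)}}$, whereas the fibres of $F^{(1)}\to\widehat{G}$ over diagonal points are $7$-dimensional (pairs of incident $2$-planes in $V_4$), not $5$-dimensional Segre cones; the $\mP^2\times\mP^2$-cone picture is a description of $\widehat{F}$ along $\mathrm{F}(1,4,V)$, i.e.\ of the conclusion. Moreover, ``collapsing fibrewise'' is not automatically a morphism: one must exhibit a semiample divisor. This is precisely what the paper does: it proves $-K_{F^{(4)}}+2D^{(4)}$ is relatively nef over $\widehat{G}$ by a case analysis on curves, shows its null face in $\overline{\mathrm{NE}}(F^{(4)}/\widehat{G})$ is generated by ruling lines of fibres of $D^{(4)}\to\mathrm{F}(1,4,V)$ and is $K_{F^{(4)}}$-negative, and then invokes the contraction theorem --- and note that even writing down the candidate divisor $-K_{F^{(4)}}+2D^{(4)}$ requires the intersection numbers you have deferred.

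For the identity $D^{(4)}\cdot\ell=-1$ itself, the missing step is the flip calculus through the common resolution $F^{(3)}$, which is how the paper gets it. Take $r$ a line in a ruling of a fibre $\Gamma$ of $D^{(2)}\to\Delta'_{\mP}$ meeting the flopping locus transversally in one point, so that $K_{F^{(2)}}\cdot r=-3$ and $D^{(2)}\cdot r=0$ (the latter for the pullback reason you give). Write $\phi_2\colon F^{(3)}\to F^{(2)}$ and $\phi_4\colon F^{(3)}\to F^{(4)}$ with common exceptional divisor $E$, and let $D^{(3)}$ be the strict transform of $D^{(2)}$ (equivalently of $D^{(4)}$). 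Then $K_{F^{(3)}}=\phi_2^*K_{F^{(2)}}+2E=\phi_4^*K_{F^{(4)}}+E$, while $\phi_2^*D^{(2)}=D^{(3)}+E$ (the flopping locus lies on the smooth divisor $D^{(2)}$ with multiplicity one) but $\phi_4^*D^{(4)}=D^{(3)}$, because $D^{(4)}$ does \emph{not} contain the flipped locus: by Lemma \ref{cla:D_4} it meets each exceptional $\mP^2$ of $F^{(4)}\to F^{(1)}$ only in a $\mP^1$. Intersecting with the strict transform $\widetilde{r}$, which meets $E$ transversally in one point, gives $K_{F^{(4)}}\cdot r'=-3+1=-2$ and $D^{(4)}\cdot r'=0-1=-1$; it is this asymmetry between $\phi_2^*D^{(2)}$ and $\phi_4^*D^{(4)}$ that produces the $-1$. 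Your alternative --- computing $\sN_{D^{(4)}/F^{(4)}}|_G$ in ``the blow-up of the Segre cone'' --- cannot work as stated, because a fibre $G$ of $D^{(4)}\to\mathrm{F}(1,4,V)$ is not contained in any transverse slice of the $\mP^1\times\mP^2$-cone model: its image in $F^{(1)}$ meets $\Delta_{F^{(1)}}$ along a whole $\mP^2$, so the normal bundle sees the family direction and cannot be read off from the $4$-fold model of \cite{Ka} alone (and the adjunction-based variant of this computation just reproduces the identity you are trying to prove).
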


\begin{proof} Let $\Delta'_{\mP}$ be the inverse image in $\widehat{G}'$
of $\Delta_{\mP}$. Note that $\Delta'_{\mP}\simeq\mathrm{F}(3,4,V)$.
Let $\Gamma$ be a fiber of ${D}^{(2)}\to\Delta'_{\mP}$, where we
recall $\Gamma\simeq\mP^{2}\times\mP^{2}$. Then, by the proof of
Lemma \ref{cla:D_4}, $\Gamma$ intersects the flopping locus along
the diagonal transversally. Take a line $r\subset\mP^{2}\times\mP^{2}$
which is contained in a fiber of a projection $\Gamma\to\mP^{2}$
and intersects the flopping locus. Then its strict transform $r'$
on ${D}^{(4)}$ is contracted by the morphism ${D}^{(4)}\to\mathrm{F}(1,4,V)$.
Since ${F}^{(2)}\to\widehat{G}'$ is a $\mP^{2}\times\mP^{2}$-fibration
and ${D}^{(2)}$ is the pull-back of $\Delta'_{\mP}$, we see that
$K_{{F}^{(2)}}\cdot r=-3$ and ${D}^{(2)}\cdot r=0$. By the standard
calculations of the changes of the intersection numbers by the flip,
we have $K_{{F}^{(4)}}\cdot r'=-3+1=-2$ and ${D}^{(4)}\cdot r'=0-1=-1$.
These equalities of the intersection numbers still hold for any line
in a ruling of a fiber of ${D}^{(4)}\to\mathrm{F}(1,4,V)$.

We show that $-K_{F^{(4)}}+2D^{(4)}$ is relatively nef over $\widehat{G}$.
Let $\gamma$ be a curve on $F^{(4)}$ which is contracted to a point
$t$ on $\widehat{G}$. If $t\not\in\Delta_{\mP}$, then $(-K_{F^{(4)}}+2D^{(4)})\cdot\gamma>0$
since $D^{(4)}\cap\gamma=\emptyset$ and $F^{(4)}\to\widehat{G}$
is a $\mP^{2}\times\mP^{2}$ fibration outside $\Delta_{\mP}$. If
$t\in\Delta_{\mP}$ and $\gamma$ is an exceptional curve of $F^{(4)}\to F^{(1)}$,
then $(-K_{F^{(4)}}+2D^{(4)})\cdot\gamma>0$ since $-K_{F^{(4)}}\cdot\gamma>0$
and $D^{(4)}\cdot\gamma>0$. In the remaining cases, $t\in\Delta_{\mP}$
and $\gamma\subset D^{(4)}$. Therefore we have only to consider the
relative nefness of $(-K_{F^{(4)}}+2D^{(4)})|_{D^{(4)}}$ over $\Delta_{\mP}$.
Now we take for $\gamma$ any line in a ruling of a fiber of ${D}^{(4)}\to\mathrm{F}(1,4,V)$.
As we see above, $(-K_{F^{(4)}}+2D^{(4)})\cdot\gamma=0$. Therefore
$(-K_{F^{(4)}}+2D^{(4)})|_{D^{(4)}}$ is the pull-back of some divisor
$D_{F}$ on $\mathrm{F}(1,4,V)$. It suffices to show $D_{F}$ is
relatively nef over $\Delta_{\mP}$, which is true since an exceptional
curve of $D^{(4)}\to D^{(1)}$ is positive for $(-K_{F^{(4)}}+2D^{(4)})|_{D^{(4)}}$
as above and is mapped to a curve on a fiber of $\mathrm{F}(1,4,V)\to\Delta_{\mP}$.
Therefore $-K_{F^{(4)}}+2D^{(4)}$ is relatively nef over $\widehat{G}$.

Finally, from the above argument, we see that $(-K_{F^{(4)}}+2D^{(4)})^{\perp}\cap\overline{\mathrm{NE}}(F^{(4)}/\widehat{G})$
is generated by the numerical class of the curves on fibers of $D^{(4)}\to\mathrm{F}(1,4,V)$.
In particular, $(-K_{F^{(4)}}+2D^{(4)})^{\perp}\cap\overline{\mathrm{NE}}(F^{(4)}/\widehat{G})\subset(K_{F^{(4)}})^{<0}$.
Therefore, by Mori theory, there exists a contraction associated to
this extremal face, which is nothing but the divisorial contraction
contracting $D^{(4)}$ to $\mathrm{F}(1,4,V)$.

By the equalities $K_{{F}^{(4)}}\cdot r'=-2$ and ${D}^{(4)}\cdot r'=-1$,
we see that the discrepancy of ${D}^{(4)}$ is two. \end{proof}

Recall the $\mZ_{2}$-action on $F^{(1)}$ described in (\ref{eq:FsY}).
Since all the morphisms constructed to obtain $\widehat{F}$ from
${F}^{(1)}$ are $\mZ_{2}$-equivariant, the variety $\widehat{F}$
also has a naturally induced $\mZ_{2}$-action. We also note that
\[
{G}'_{{\hcoY}}:=\widehat{G}'/\mZ_{2}\simeq\Hilb^{2}\mP(V^{*}).\]
Now we have 

\begin{prop} \label{cla:F'} The $\Lrho_{\widetilde{\hcoY}}$-exceptional
divisor $F_{\widetilde{\hcoY}}$ is isomorphic to $\widehat{F}/\mZ_{2}$.
\end{prop}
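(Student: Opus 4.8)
The plan is to promote the birational equivalence between $\widehat{F}/\mZ_2$ and $F_{\widetilde{\hcoY}}$, which comes essentially for free from the construction, to an isomorphism by a fibre-by-fibre comparison over $G_{\hcoY}\simeq \ft{S}^2\mP(V^*)$. Every variety and every morphism in the tower ${F}^{(1)}\leftarrow {F}^{(2)}\to {F}^{(3)}\to {F}^{(4)}\to \widehat{F}$ is $\mZ_2$-equivariant (the involution interchanges the two superscripts), so passing to $\mZ_2$-quotients yields a birational map $\widehat{F}/\mZ_2\dashrightarrow {F}^{(1)}/\mZ_2$ over $\ft{S}^2\mP(V^*)$; composing with the birational equivalence of ${F}^{(1)}/\mZ_2$ with $F_{\widetilde{\hcoY}}$ established just before Lemma \ref{cla:smallres} produces a birational map $\Phi\colon \widehat{F}/\mZ_2\dashrightarrow F_{\widetilde{\hcoY}}$ commuting with the two contractions to $G_{\hcoY}$. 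I would then show that $\Phi$ is a genuine isomorphism.

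First I would dispose of the generic locus. The involution acts freely on $\widehat{G}\setminus \Delta_{\mP}$, and all the non-trivial operations in the tower, namely the small resolution ${F}^{(2)}\to {F}^{(1)}$ (Lemma \ref{cla:smallres}), the (anti-)flip ${F}^{(2)}\dashrightarrow {F}^{(4)}$ (Lemma \ref{cla:flip}), and the divisorial contraction ${F}^{(4)}\to \widehat{F}$ (Lemma \ref{cla:div}), all take place over the diagonal $\Delta_{\mP}$. Hence over $\widehat{G}\setminus \Delta_{\mP}$ the morphism $\widehat{F}\to \widehat{G}$ is already the $\mP^2\times \mP^2$-fibration ${F}^{(1)}\to \widehat{G}$, and its $\mZ_2$-quotient is a $\mP^2\times \mP^2$-fibration over $G_{\hcoY}\setminus G^1_{\hcoY}$ whose fibre over a rank-two quadric is $\mP(V_3^*)\times \mP(V_3^*)$, with the diagonal corresponding to the $\rho$-conics. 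This is exactly the description of $F_{\widetilde{\hcoY}}\to G_{\hcoY}$ over the rank-two locus furnished by Proposition \ref{cla:F}(1), so $\Phi$ is an isomorphism there.

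The crux is the comparison over the rank-one locus $G^1_{\hcoY}\simeq \mP(V^*)$, the image of $\Delta_{\mP}$, where the fibres of both families degenerate. On the $\widehat{F}$ side the relevant fibre is governed by the divisorial contraction of $D^{(4)}$ to $\mathrm{F}(1,4,V)$ (Lemma \ref{cla:div}) together with the induced $\mZ_2$-action; on the $F_{\widetilde{\hcoY}}$ side it is the six-dimensional fibre $\Lrho_{\widetilde{\hcoY}}^{-1}([V_4])$ whose dimension is computed in Corollary \ref{cla:new} and whose precise structure will be determined in Subsection \ref{subsection:Spfib} and used in Proposition \ref{cla:fib}. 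I would match these two descriptions directly, via the modular interpretation of the points of $F_{\widetilde{\hcoY}}$ given in Proposition \ref{cla:Y0Y}: such a point records a plane spanned by a double line supported on a line $l_{V_2 V_4}$, and on the double cover this is precisely the flag datum parameterized by $D^{(4)}$, contracted to $\mathrm{F}(1,4,V)$. Granting that $\Phi$ is a morphism and finite, and that $F_{\widetilde{\hcoY}}$ is normal (which I would deduce from the smoothness of $\widetilde{\hcoY}$ and the explicit fibre structure), the finite birational morphism $\Phi$ onto a normal variety is then an isomorphism by Zariski's main theorem.

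The main obstacle is exactly this last comparison over $G^1_{\hcoY}$: one must verify that the engineered sequence of MMP operations turning ${F}^{(1)}$ into $\widehat{F}$ reproduces the special fibres of $F_{\widetilde{\hcoY}}\to G_{\hcoY}$ on the nose rather than merely birationally, and in particular that $\Phi$ extends to a morphism which contracts and extracts nothing relative to $F_{\widetilde{\hcoY}}$. Establishing that no fibre is hidden or duplicated, so that the hypotheses of Zariski's main theorem genuinely hold, is where the detailed fibre analysis of Subsection \ref{subsection:Spfib} is indispensable.
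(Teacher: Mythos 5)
Your strategy has a genuine gap at exactly the point you flag as the crux. A birational map $\Phi\colon \widehat{F}/\mZ_2 \dashrightarrow F_{\widetilde{\hcoY}}$ over $G_{\hcoY}$ which is an isomorphism over $G_{\hcoY}\setminus G^1_{\hcoY}$ is very far from being a finite morphism: nothing in your argument rules out that the two sides differ by a small modification (a flop) over the locus lying above $\Delta_{\mP}$, and Zariski's main theorem only applies \emph{after} you have a finite morphism in hand. Your plan for producing that morphism -- matching the special fibers ``on the nose'' using the analysis of Subsection \ref{subsection:Spfib} -- is circular in the paper's logical order: the proof of Proposition \ref{cla:fib} \emph{begins} by observing that $\Lrho_{\widetilde{\hcoY}}^{-1}([V_4])$ lies in the branch locus of $\widehat{F}\to F_{\widetilde{\hcoY}}$, i.e.\ it presupposes Proposition \ref{cla:F'}. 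Before Proposition \ref{cla:F'} the paper knows only the dimension of these fibers (Corollary \ref{cla:new}) and a set-theoretic modular description (Proposition \ref{cla:new1}); and a pointwise modular matching does not by itself produce a morphism of schemes.

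The missing idea is the one the paper actually uses: both contractions $a\colon F_{\widetilde{\hcoY}}\to G_{\hcoY}$ and $b\colon \widehat{F}/\mZ_2\to G_{\hcoY}$ are relative ``anti-canonical models,'' and such models are unique. Concretely, by \cite[Lemma 5.5]{Tk} it suffices that both sides are normal, that $a$ and $b$ agree in codimension one (this needs only the dimension count of Corollary \ref{cla:new}, since $4+6=10$ is codimension two in the $12$-dimensional divisor $F_{\widetilde{\hcoY}}$ -- no precise fiber structure required), and that $-K_{F_{\widetilde{\hcoY}}}$ and $-K_{\widehat{F}/\mZ_2}$ are $\mQ$-Cartier and relatively ample over $G_{\hcoY}$. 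The relative anti-ampleness is precisely the hypothesis that excludes flops, which your ZMT argument cannot. The real work on the $\widehat{F}/\mZ_2$ side -- showing $-K_{\widehat{F}/\mZ_2}$ is $\mQ$-Cartier via the discrepancy-two computation of Lemma \ref{cla:div} and the Kawamata--Shokurov base point free theorem, and showing $b$-ampleness via the count of $\mZ_2$-invariant relative Picard numbers through the tower $F^{(2)}\dashrightarrow F^{(4)}\to \widehat{F}$ -- is absent from your proposal, and it cannot be replaced by the fiber analysis you invoke without making the argument circular.
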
 

\begin{proof} We compare the morphisms $a\colon F_{\widetilde{\hcoY}}\to G_{{\hcoY}}$
and $b\colon\widehat{F}/\mZ_{2}\to G_{{\hcoY}}$. By \cite[Lemma 5.5]{Tk}
for example, it suffices to check the following properties hold for
them: 
\begin{itemize}
\item Both $F_{\widetilde{\hcoY}}$ and $\widehat{F}/\mZ_{2}$ are normal. 
\item the morphisms $a$ and $b$ are isomorphic to each other in codimension
one. 
\item $-K_{F_{\widetilde{\hcoY}}}$ and $-K_{\widehat{F}/\mZ_{2}}$ are
$\mQ$-Cartier. 
\item $-K_{F_{\widetilde{\hcoY}}}$ is $a$-ample and $-K_{\widehat{F}/\mZ_{2}}$
is $b$-ample. 
\end{itemize}
The variety $F_{\widetilde{\hcoY}}$ is normal. Indeed, it satisfies
the $S_{2}$ condition since it is a Cartier divisor on a smooth variety.
Moreover, it satisfies the $R_{1}$ condition since it is a $\mP^{2}\times\mP^{2}$-fibration
outside the locus of codimension two by Lemmas \ref{lem:inverse-rk2}
and \ref{lem:inverse-rk1}. We see also that the variety $\widehat{F}/\mZ_{2}$
is normal by its explicit construction as above.

The morphisms $a$ and $b$ are isomorphic outside $G_{\hcoY}^{1}$
by Lemma \ref{lem:inverse-rk2} and the construction of ${F}^{(1)}/\mZ_{2}$.
Moreover, the inverse images of $G_{\hcoY}^{1}$ by the morphism $a$
has codimension two in $F_{\widetilde{\hcoY}}$ by Lemma \ref{lem:inverse-rk1}
(and Remark after it), and the inverse images of $G_{\hcoY}^{1}$
by the morphism $b$ has codimension two in $\widehat{F}/\mZ_{2}$
by the construction of $\widehat{F}/\mZ_{2}$. Therefore the morphisms
$a$ and $b$ are isomorphic to each other in codimension one.

The divisor $-K_{F_{\widetilde{\hcoY}}}$ is $\mQ$-Cartier since
${F_{\widetilde{\hcoY}}}$ is a divisor on the smooth variety $\widetilde{\hcoY}$.
Since the relative Picard number $\rho(\widetilde{\hcoY}/\hcoY)$
is one and $a$ is generically a $\mP^{2}\times\mP^{2}$-fibration,
we see that $-K_{F_{\widetilde{\hcoY}}}$ is $a$-ample.

We see that similar facts hold for the morphism $b$. Also we see
that $-K_{\widehat{F}/\mZ_{2}}$ is $\mQ$-Cartier. Indeed, by Lemma
\ref{cla:div}, the discrepancy of ${D}^{(4)}$ is two. Then, by the
Kawamata-Shokurov base point free theorem, $-K_{{F}^{(4)}}-2{D}^{(4)}$
is the pull-back of a Cartier divisor on $\widehat{F}$, which turns
out to be the anti-canonical divisor $-K_{\widehat{F}}$. Thus $-K_{\widehat{F}/\mZ_{2}}$
is $\mQ$-Cartier.

To show that $-K_{\widehat{F}/\mZ_{2}}$ is $b$-ample, it suffices
to see the relative Picard number $\rho((\widehat{F}/\mZ_{2})/G_{{\hcoY}})$
is one because $b$ is generically a $\mP^{2}\times\mP^{2}$-fibration.
Let us note that the relative Picard number $\rho({F}^{(2)}/\widehat{G}')$
is two since ${F}^{(2)}\to\widehat{G}'$ is a $\mP^{2}\times\mP^{2}$-fibration
and it is easy to see that it is a composite of two $\mP^{2}$-fibrations.
Moreover we have $\rho^{\mZ_{2}}({F}^{(2)}/\widehat{G}')=1$ since
the two rulings of a fiber $\mP^{2}\times\mP^{2}$ of ${F}^{(2)}\to\widehat{G}'$
are exchanged by the $\mZ_{2}$-action. Therefore $\rho^{\mZ_{2}}({F}^{(2)})=3$
since $\rho^{\mZ_{2}}(\widehat{G}')=2$. It holds that $\rho^{\mZ_{2}}({F}^{(4)})=3$
since the flip preserves the Picard number and it is $\mZ_{2}$-equivariant.
Since a divisorial contraction decreases the Picard number at least
by one, we have $\rho^{\mZ_{2}}(\widehat{F})\leq2$. Now we see that
$\rho((\widehat{F}/\mZ_{2})/G_{{\hcoY}})$ is one since $\rho(G_{{\hcoY}})=1$
and the morphism $\widehat{F}/\mZ_{2}\to G_{{\hcoY}}$ is non-trivial.
Therefore we conclude that $-K_{\widehat{F}/\mZ_{2}}$ is $b$-ample.
\end{proof}

$\;$ 

In summary, we have obtained the following diagram:\begin{equation}
\begin{matrix}\xymatrix{ & {F}^{(3)}\ar[dl]\ar[dr]\\
{F}^{(2)}\ar[ddr]_{\;_{\text{\ensuremath{\mP^{2}\times\mP^{2}}-fib.}}}\ar@{-->}[rr]^{\;_{\text{(anti-)flip(Lem.\ref{cla:flip})}}}\ar[dr] &  & {F}^{(4)}\ar[d]^{\;_{\text{div. cont.(Lem.\ref{cla:div})}}}\ar[dl]\\
 & {F}^{(1)}\ar[dr] & \widehat{F}\ar[d]\ar[r]_{\;_{\text{\ensuremath{\mZ_{2}}-quot.}}} & F_{\widetilde{\hcoY}}\ar[d]^{\Lrho_{\widetilde{\hcoY}}|_{F_{\widetilde{\hcoY}}}}\\
 & \widehat{G}'\ar[r]_{_{\text{{diag.blow up}}}} & \widehat{G}\ar[r]^{\;_{\text{\ensuremath{\mZ_{2}}-quot.}}} & {G}_{{\hcoY}}.}
\end{matrix}\label{eq:house}\end{equation}

$\;$

\subsection{Flattening of $F_{\widetilde{\hcoY}}\to G_{\hcoY}$}

Here we describe the fibers of ${F}^{(3)}\to\widehat{G}'$ in the
diagram (\ref{eq:house}) and show the flatness of the morphism $F^{(3)}\to\widehat{G}'$.

\begin{prop} \label{cla:union} $\;$

\begin{myitem2} \item[\rm (1)] The fiber of ${F}^{(3)}\to\widehat{G}'$
over a point $([V_{3}];[V_{4}^{(1)}],[V_{4}^{(2)}])$ with $V_{4}^{(1)}\not=V_{4}^{(2)}$
is $\mP(V_{3}^{*})\times\mP(V_{3}^{*})$. 

\item[\rm (2)] The fiber of ${F}^{(3)}\to\widehat{G}'$ over a point
$([V_{3}];[V_{4}],[V_{4}])$ is the union of the following two $4$-dimensional
varieties $A$ and $B:$ 

\begin{itemize} \item$A\simeq\mP(\sO_{\mP(V_{3}^{*})}\oplus T_{\mP(V_{3}^{*})})$,
which is isomorphic to the restriction of $\mP(\sO_{\mathrm{G}(2,V_{4})}\oplus\sU_{\rG(2,V_{4})}^{*}(1))$
over $\mP(V_{3}^{*})=\mathrm{G}(2,V_{3})\subset\mathrm{G}(2,V_{4})$. 

\item$B$ is the blow-up of $\mP(V_{3}^{*})\times\mP(V_{3}^{*})$
along the diagonal $\Delta_{V_{3}^{*}}$. It is endowed with a morphism
$p_{B}\colon B\to\mP(V_{3})$ induced from the rational map \begin{eqnarray*}
\mP(V_{3}^{*})\times\mP(V_{3}^{*})\setminus\Delta_{V_{3}^{*}} & \to & \mP(V_{3})\\
([V_{2}^{(1)}],[V_{2}^{(2)}]) & \mapsto & [V_{2}^{(1)}\cap V_{2}^{(2)}],\end{eqnarray*}
 and is a $\mP^{1}\times\mP^{1}$-bundle over $\mP(V_{3})$. \end{itemize}

\noindent In particular the morphism ${F}^{(3)}\to\widehat{G}'$
is flat. Moreover, the intersection $E_{AB}:=A\cap B$ is $\mP(T_{\mP(V_{3}^{*})})$
in $A$, which is the restriction of $E_{\sigma}=\mP(\sU_{\rG(2,V_{4})}^{*}(1))$
$($cf. Lemma $\ref{lem:inverse-rk1}$$)$ over $\rG(2,V_{3})$,
and also $E_{AB}$ is the exceptional divisor of $B\to\mP(V_{3}^{*})\times\mP(V_{3}^{*})$
in $B$. 

\end{myitem2}\end{prop}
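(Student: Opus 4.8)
The plan is to study the fibres of the composite $\pi\colon {F}^{(3)}\to {F}^{(2)}\to \widehat{G}'$, where ${F}^{(2)}\to \widehat{G}'$ is the relative product $\mathrm{G}(2,\sU_3)\times_{\widehat{G}'}\mathrm{G}(2,\sU_3)$ of Grassmann bundles ($\sU_3$ the rank-three tautological bundle pulled back from the $\mathrm{G}(3,V)$-factor) exhibiting the $\mP^2\times\mP^2$-fibration of Lemma~\ref{cla:smallres}, and ${F}^{(3)}\to{F}^{(2)}$ is the blow-up along the centre $C:=\mathrm{Exc}({F}^{(2)}\to{F}^{(1)})$ of Lemma~\ref{cla:flip}. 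First I would record the geometry of $C$: by the proof of Lemma~\ref{cla:smallres} it is the preimage of $\Delta_{{F}^{(1)}}\simeq\mathrm{F}(2,4,V)$, so $C=\{([V_2],[V_2];[V_3];[V_4],[V_4])\mid V_2\subset V_3\subset V_4\}\simeq\mathrm{F}(2,3,4,V)$. Relative to ${F}^{(2)}\to\widehat{G}'$, the locus $C$ lies over the exceptional divisor $\Delta'_{\mP}\simeq\mathrm{F}(3,4,V)$ of $\widehat{G}'\to\widehat{G}$, and over a point $([V_3];[V_4],[V_4])$ it is exactly the fibrewise diagonal $\Delta_{V_3}\subset\mP(V_3^*)\times\mP(V_3^*)$. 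Part $(1)$ is then immediate: when $V_4^1\neq V_4^2$ the base point lies off $\Delta'_{\mP}$, so $C$ is disjoint from the fibre $\Phi=\mP(V_3^*)\times\mP(V_3^*)$ and $\pi$ is an isomorphism near $\Phi$, whence the fibre of ${F}^{(3)}$ is again $\mP(V_3^*)\times\mP(V_3^*)$.

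For part $(2)$, fix $t_0=([V_3];[V_4],[V_4])$ with fibre $\Phi$; then $C\cap\Phi=\Delta_{V_3}$. A local computation of the blow-up (with $C=\{s=u_1=u_2=0\}$, $\{s=0\}$ the preimage of the divisor $\Delta'_{\mP}$, and $u=(u_1,u_2)$ normal to the fibrewise diagonal) shows that $\pi^{-1}(\Phi)$ is the union of the strict transform $B=\mathrm{Bl}_{\Delta_{V_3}}\Phi$ and the full restriction $A=\mP(\sN_{C/{F}^{(2)}})|_{\Delta_{V_3}}$ of the exceptional divisor. The central computation is $\sN_{C/{F}^{(2)}}|_{\Delta_{V_3}}$: writing $C$ as the relative diagonal over $\Delta'_{\mP}$ produces the extension
\[
0 \to T_{\mP(V_3^*)} \to \sN_{C/{F}^{(2)}}|_{\Delta_{V_3}} \to \sO_{\mP(V_3^*)} \to 0,
\]
whose sub is the relative-diagonal normal bundle $T_{\mathrm{G}(2,\sU_3)/\widehat{G}'}|_{\Delta_{V_3}}\simeq T_{\mP(V_3^*)}$ and whose quotient is the pullback of $\sN_{\Delta'_{\mP}/\widehat{G}'}$, trivial because $\Delta_{V_3}$ maps to a single point of $\Delta'_{\mP}$. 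As $H^1(\mP^2,T_{\mP^2})=0$ the sequence splits, giving $A\simeq\mP(\sO_{\mP(V_3^*)}\oplus T_{\mP(V_3^*)})$; the Euler-sequence identification $T_{\mP(V_3^*)}\simeq\sF_{V_4}(1)|_{\mathrm{G}(2,V_3)}$ then matches $A$ with the restriction of $\mP(\sO_{\mathrm{G}(2,V_4)}\oplus\sF_{V_4}(1))$ from Proposition~\ref{cla:fib}.

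For $B$, the same local model gives $\tilde\Phi=\mathrm{Bl}_{\Delta_{V_3}}\Phi$, and the rational map $([V_2^1],[V_2^2])\mapsto[V_2^1\cap V_2^2]$ is resolved by $p_B\colon B\to\mP(V_3)$ with fibre $\mP(V_3/V_1)\times\mP(V_3/V_1)\simeq\mP^1\times\mP^1$ over $[V_1]$. The intersection $E_{AB}=A\cap B$ is the common exceptional locus $\mP(T_{\mP(V_3^*)})$, i.e.\ the exceptional divisor of $B\to\Phi$ and the restriction of $E_\sigma$ inside $A$. Finally, flatness follows by miracle flatness: ${F}^{(3)}$ is smooth (blow-up of the smooth ${F}^{(2)}$ along the smooth $C\simeq\mathrm{F}(2,3,4,V)$), $\widehat{G}'$ is smooth, and every fibre is pure of dimension $4=\dim{F}^{(3)}-\dim\widehat{G}'$, both $A$ and $B$ being $4$-dimensional.

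The hard part will be the two local verifications feeding part $(2)$: the splitting of $\sN_{C/{F}^{(2)}}|_{\Delta_{V_3}}$ together with the identification $T_{\mP(V_3^*)}\simeq\sF_{V_4}(1)|_{\mathrm{G}(2,V_3)}$, and the claim that restricting the blow-up to the single fibre $\Phi$ yields both $\mathrm{Bl}_{\Delta_{V_3}}\Phi$ and the \emph{entire} exceptional $\mP^2$-bundle $A$. The latter is delicate because $\Phi$ and $C$ are not transverse; the key point is that $\Phi\subset{F}^{(2)}|_{\Delta'_{\mP}}$ shares the defining equation $\{s=0\}$ with $C$, which is exactly what contributes the extra $\xi_0$-direction and so produces the full $A$ rather than a proper subbundle.
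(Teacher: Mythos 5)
Your proposal is correct, and for the crucial identification of the component $A$ it takes a genuinely different route from the paper. The paper never computes $\sN_{C/F^{(2)}}$ at all: it obtains $A$ by pushing it forward into the second small resolution $F^{(4)}$, observing that $A$ maps isomorphically onto the restriction over $\mathrm{G}(2,V_3)$ of the exceptional locus of $F^{(4)}\to F^{(1)}$, which was already identified as $\mP(\sO_{\mathrm{G}(2,V_4)}\oplus \sF_{V_4}(1))$ in the proof of Proposition \ref{cla:fib}; the description of $A$ then follows from $\sF_{V_4}|_{\mP(V_3^*)}\simeq T_{\mP(V_3^*)}(-1)$ and $\sN_{\Delta_{V_3}}\simeq T_{\mP(V_3^*)}$. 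You instead identify $A$ intrinsically as $\mP\bigl(\sN_{C/F^{(2)}}|_{\Delta_{V_3}}\bigr)$, compute the normal bundle from the two-step sequence for $C\subset F^{(2)}|_{\Delta'_{\mP}}\subset F^{(2)}$ (sub the relative tangent bundle of the diagonal, quotient the pullback of $\sN_{\Delta'_{\mP}/\widehat{G}'}$, trivial on a fiber), and split it using $H^1(\mP^2,T_{\mP^2})=0$; this is correct, and your caution about the non-transversality of $\Phi$ and $C$ (the shared equation $s=0$ forcing the \emph{full} exceptional $\mP^2$-bundle into the fiber) is exactly the right point to flag. Your approach buys self-containedness, an explicit normal-bundle formula, and an explicit flatness argument (miracle flatness from smoothness of $F^{(3)}$, $\widehat{G}'$ and equidimensionality of fibers — the paper leaves flatness essentially implicit); the paper's approach buys the geometric compatibility of the isomorphism $A\simeq \mP(\sO_{\mathrm{G}(2,V_4)}\oplus\sF_{V_4}(1))|_{\mathrm{G}(2,V_3)}$ with the contractions $F^{(3)}\to F^{(4)}\to \widehat{F}\to F_{\widetilde{\hcoY}}$, which is what Lemma \ref{cla:A} (1)--(2) actually invokes later; with your argument that compatibility would need the one-line supplement that $A=E|_{\Delta_{V_3}}$ maps isomorphically to its image in $F^{(4)}$, since $\Delta_{V_3}$ is a section of $C\to \Delta_{F^{(1)}}$ over $\mP(V_3^*)$.
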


\begin{proof} Part (1) follows from the construction of ${F}^{(2)}\to\widehat{G}'$.

We show Part (2). The fiber of ${F}^{(2)}\to\widehat{G}'$ over a
point $([V_{3}];[V_{4}],[V_{4}])$ is $\mP(V_{3}^{*})\times\mP(V_{3}^{*})$.
The intersection of the fiber $\mP(V_{3}^{*})\times\mP(V_{3}^{*})$
with the exceptional locus of ${F}^{(2)}\to{F}^{(1)}$ is \[
\{([V_{2}],[V_{2}];[V_{3}];[V_{4}],[V_{4}])\mid V_{2}\subset V_{3}\}\simeq\mP^{2},\]
 which is nothing but the diagonal of $\mP(V_{3}^{*})\times\mP(V_{3}^{*})$.
Therefore we have $B$ as an irreducible component of the fiber of
${F}^{(3)}\to\widehat{G}'$ over the point $([V_{3}];[V_{4}],[V_{4}])$.

Another component $A$ is a $\mP^{2}$-bundle over the diagonal of
$\mP(V_{3}^{*})\times\mP(V_{3}^{*})$ since the exceptional divisor
of ${F}^{(3)}\to{F}^{(2)}$ is a $\mP^{2}$-bundle over the exceptional
locus of ${F}^{(2)}\to{F}^{(1)}$. As described in Lemma \ref{cla:flip},
it is introduced by the blow-up with respect to the three normal coordinates
of the singular locus of $F^{(1)}.$ Hence $A$ is given by the projective
bundle $\mP(\sO_{\Delta_{V_{3}^{*}}}\oplus\sN_{\Delta_{V_{3}^{*}}})$
over the diagonal. Now we note that $\sN_{\Delta_{V_{3}^{*}}}\cong T_{\mP(V_{3}^{*})}$
for the normal bundle. Also we note that the image of $\Delta_{V_{3}^{*}}$
under $F^{(2)}\to F^{(1)}$ is identified with $\mathrm{G}(2,V_{3})=\mP(V_{3}^{*})$
in $\mathrm{G}(2,V_{4})$ and the isomorphism $\sU_{\rG(2,V_{4})}^{*}|_{\mP(V_{3}^{*})}\simeq T_{\mP(V_{3}^{*})}(-1)$
holds. Therefore, we see that the image of $A$ under $F^{(3)}\to F^{(4)}$
can be described by the restriction of $\mP(\sO_{\mathrm{G}(2,V_{4})}\oplus\sU_{V_{4}}^{*}(1))$
over $\mP(2,V_{3})$. 

The properties of the intersection $E_{AB}$ follow from the above
descriptions. \end{proof}

\begin{rem} Recall the description (\ref{eq:divisorD3}) of the divisor
$D^{(3)}$ in $F^{(3)}$. The component $B$ above is nothing but
the restriction $D^{(3)}\vert_{([V_{3}];[V_{4}],[V_{4}])}$ of $D^{(3)}$
over the point $([V_{3}];[V_{4}],[V_{4}])$. Also the component $A$
can be identified in Lemma \ref{lem:inverse-rk1} (2) and also in
Fig. 3. \hfill{[}{]}

\end{rem}

\vspace{0.2cm}
 By Proposition~\ref{cla:union} and the results in the previous
subsection summarized in (\ref{eq:house}), we obtain the flattening
of $F_{\widetilde{\hcoY}}\to G_{\hcoY}$, \begin{equation}
\begin{matrix}\xymatrix{{F}^{(3)}\ar[r]\ar[d] & F_{\widetilde{\hcoY}}\ar[d]\\
\widehat{G}'\ar[r] & G_{\hcoY}.}
\end{matrix}\label{eq:flatten}\end{equation}
 The flatness of the morphism ${F}^{(3)}\to\widehat{G}'$ is crucial
for our proof of Lemma \ref{cla:key}. By this property, we can reduce
computations of cohomology groups on ${F}_{\widetilde{\hcoY}}$ to
those on ${F}^{(3)}$ and then those on special fibers of ${F}^{(3)}\to\widehat{G}'$.

$\;$

\subsection{The pull-backs of $\widetilde{\sS}_{L}^{*}$, $\widetilde{\sQ}$
and $\widetilde{\sT}$ on $A$ and $B$}

In this subsection, we consider the situation of Proposition~\ref{cla:union}
(2) fixing $V_{3}$ and $V_{4}$. We describe the pull-backs of the
divisor $F_{\widetilde{\hcoY}}$ and the sheaves $\widetilde{\sS}_{L}^{*}$,
$\widetilde{\sQ}$,\, $\widetilde{\sT}$ on $A$ and $B$ in the
fiber of $F^{(3)}\to\hat{G}'$.

\begin{lem} \label{cla:A} Denote by $A_{\widetilde{\hcoY}}$ the
image of $A$ on $\widetilde{\hcoY}$, by $A_{\hcoY_{2}}$ the strict
transform on $\hcoY_{2}$ of $A_{\widetilde{\hcoY}}$, and by $A_{\hcoY_{3}}$
the image on $\hcoY_{3}$ of $A_{\hcoY_{2}}$. Then, 

\begin{myitem2} 

\item[\rm(1)]

$A\to A_{\widetilde{\hcoY}}$ is the contraction of $E_{AB}\simeq\mP(T_{\mP(V_{3}^{*})})$
to $\mP(V_{3})$. $\mP(V_{3})$ is given by the image of $B$ by the
morphism $p_{B}$ in Proposition $\ref{cla:union}$ $(2)$, and is
equal to the singular locus of $A_{\widetilde{\hcoY}}$ $($see Fig.$3)$,

\item[\rm(2)] $A_{\widetilde{\hcoY}}=\overline{\Gamma(V_{3},V_{4})}$
with $\Gamma(V_{3},V_{4}):=\cup_{V_{2}\subset V_{3}}\Gamma(V_{2},V_{4})$,

\item[\rm(3)] $A_{\hcoY_{2}}\to A_{\widetilde{\hcoY}}$ is the
blow-up along the image on $A_{\widetilde{\hcoY}}$ of the section
$s_{A}$ of $A\to\mP(V_{3}^{*})$ associated to the injection $\sO_{\mP(V_{3}^{*})}\hookrightarrow\sO_{\mP(V_{3}^{*})}\oplus T_{\mP(V_{3}^{*})}$,

\item[\rm(4)]let $\widehat{A}\to A$ be the blow-up of $A$ along
the section $s_{A}$. Then there exists a natural morphism $\widehat{A}\to A_{\hcoY_{2}}$,
which is the blow-up of $A_{\hcoY_{2}}$ along its singular locus,
and

\item[\rm(5)] $A_{\hcoY_{3}}\simeq A_{\hcoY_{2}}$. Moreover,
$\Prt_{\rho}|_{A_{\hcoY_{3}}}$ is isomorphic to $\mP(T_{\mP(V_{3})}(-1))$
and $\Lrho_{\hcoY_{3}}|_{A_{\hcoY_{3}}}\colon A_{\hcoY_{3}}\to\mP(V_{3})$
is a quadric cone fibration.

\[
\xymatrix{ &  & \widehat{A}\ar[dl]\ar[dr]\\
A_{\hcoY_{3}}\ar@{<-}[r]^{\;_{\text{isom.}}}\ar[d]_{\Lpi_{A_{\hcoY_{3}}}} & A_{\hcoY_{2}}\ar[dr] &  & A\ar[dl]\ar[d]\\
\mP(V_{3}) &  & A_{\widetilde{\hcoY}} & \mP(V_{3}^{*}).}
\]

\end{myitem2}\end{lem}

\begin{proof} The claims (1)--(3) follow from Lemmas \ref{lem:inverse-rk1},
\ref{cla:union}. The claim (4) is almost obvious.

Now we prove (5). By the proof of Lemma \ref{lem:inverse-rk1}, the
fiber of $\Lrho_{\hcoY_{3}}\colon A_{\hcoY_{3}}\to\mP(V_{3})$ over
$[V_{1}]\in\mP(V_{3})$ is $\overline{\Gamma(V_{1},V_{3})}$, which
is defined similarly to $\overline{\Gamma(V_{1},V_{4})}$. Noting
that $\overline{\Gamma(V_{1},V_{4})}$ is the cone over $v_{2}(\mP(V_{4}/V_{1}))$,
we see that $\overline{\Gamma(V_{1},V_{3})}$ is the cone over $v_{2}(\mP(V_{3}/V_{1}))$,
namely, $\overline{\Gamma(V_{1},V_{3})}$ is the quadric cone. Therefore,
$\Lrho_{\hcoY_{3}}\colon A_{\hcoY_{3}}\to\mP(V_{3})$ is a quadric
cone fibration. In particular, $\rho(A_{\hcoY_{3}})=2$. On the other
hand, we have $\rho(A_{{\hcoY}_{2}})=2$ since $\rho(A_{\widetilde{\hcoY}})=1$
and $A_{{\hcoY}_{2}}\to A_{\widetilde{\hcoY}}$ is a simple blow-up.
Thus $A_{{\hcoY}_{2}}\to A_{\hcoY_{3}}$ must be an isomorphism since
it is birational.

Finally we show $\Prt_{\rho}|_{A_{\hcoY_{3}}}\simeq\mP(T_{\mP(V_{3})}(-1))$.
Note that $\Prt_{\rho}|_{A_{\hcoY_{3}}}$ is isomorphic to the exceptional
divisor $G$ of $\widehat{A}\to A$, which we determine from now on.
Let $\sI_{s_{A}}$ be the ideal sheaf of the section $s_{A}$ in $A$.
Note that $\sO_{\mP(\sO_{\mP(V_{3}^{*})}\oplus T_{\mP(V_{3}^{*})})}(1)|_{s_{A}}=\sO_{s_{A}}$.
Tensoring $0\to\sI_{s_{A}}\to\sO_{A}\to\sO_{s_{A}}\to0$ with $\sO_{\mP(\sO_{\mP(V_{3}^{*})}\oplus T_{\mP(V_{3}^{*})})}(1)$
and pushing forward to $\mP(V_{3}^{*})$, we see that $\sI_{s_{A}}/\sI_{s_{A}}^{2}\simeq\Omega_{\mP(V_{3}^{*})}$.
Therefore $G$ is isomorphic to $\mP(T_{\mP(V_{3}^{*})})$. It is
well-known that $\mP(T_{\mP(V_{3}^{*})})$ is isomorphic to the incident
variety $\{([V_{1}],[V_{2}])\mid V_{1}\subset V_{2}\}\subset\mP(V_{3})\times\mP(V_{3}^{*})$,
which is also isomorphic to $\mP(T_{\mP(V_{3})}(-1))$.\end{proof}

For a locally free sheaf $\sE$ on $\widetilde{\hcoY}$, we denote
by $\sE_{A}$ and $\sE_{B}$ its pull-backs on $A$ and $B$, respectively
unless stated otherwise. Denote by $H_{A}$ the pull back on $A$
of $\sO_{\mP(V_{3}^{*})}(1)$, and $F_{A}$ and $F_{B}$ the pull-backs
of (the line bundle) $F_{\widetilde{\hcoY}}$ to $A$ and $B$, respectively.

\begin{lem} \label{cla:ST} 

\begin{myitem2}\item[\rm(1)]

$F_{A}\sim-(E_{AB}+2H_{A})$, $(\widetilde{\sS}_{L}^{*})_{A}\simeq\widetilde{\sQ}_{A}\simeq\sO_{A}\oplus\sV$,
and $\widetilde{\sT}_{A}\simeq\sO_{A}^{\oplus2}\oplus\sV$, where
$\sV$ is a locally free sheaf obtained as a unique nonsplit extension
\[
0\to\sO_{A}(H_{A}+E_{AB})\to\sV\to\sO_{A}(H_{A})\to0.\]

\item[\rm(2)] $\sO_{B}(F_{B})\simeq p_{B}^{*}\sO_{\mP(V_{3})}(-1)$,
$(\widetilde{\sS}_{L}^{*})_{B}\simeq\widetilde{\sQ}_{B}\simeq\sO_{B}\oplus p_{B}^{*}T_{\mP(V_{3})}(-1)$,
and $\widetilde{\sT}_{B}\simeq\sO_{B}^{\oplus2}\oplus p_{B}^{*}T_{\mP(V_{3})}(-1)$,
where $p_{B}\colon B\to\mP(V_{3})$ is as in Proposition $\ref{cla:union}$
$(2)$. 

\end{myitem2}\end{lem}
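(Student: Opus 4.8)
The plan is to treat the two parts by different reductions dictated by the geometry of Lemma~\ref{cla:A}. For part $(2)$, the crucial observation is that the composite $B\to F^{(3)}\to F_{\widetilde{\hcoY}}\hookrightarrow\widetilde{\hcoY}$ factors through $p_B\colon B\to\mP(V_3)$, because by Lemma~\ref{cla:A}~$(1)$ the image of $B$ in $\widetilde{\hcoY}$ is exactly $\mP(V_3)=\Sing A_{\widetilde{\hcoY}}$. Hence the pull-back to $B$ of any sheaf or divisor on $\widetilde{\hcoY}$ is the $p_B$-pull-back of its restriction to $\mP(V_3)$, and I only need to identify $\widetilde{\sQ}|_{\mP(V_3)}$, $(\widetilde{\sS}_L)|_{\mP(V_3)}$, $\widetilde{\sT}|_{\mP(V_3)}$ and $F_{\widetilde{\hcoY}}|_{\mP(V_3)}$. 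Since $\mP(V_3)$ lies in the $\rho$-locus $G_\rho$, I would compute these along $\Prt_\rho$ on $\hcoY_3$ using the identifications $\sQ|_{\Prt_\rho}\simeq\sS(L_{\hcoY_3})|_{\Prt_\rho}\simeq\wedge^2\sR_\rho$ from Propositions~\ref{cla:mislead} and \ref{cla:det}~$(2)$ together with $\sR_\rho|_\gamma\simeq\sO^{\oplus 3}$ from Lemma~\ref{cla:Sgamma}; this should yield the split form $\sO_B\oplus p_B^*T_{\mP(V_3)}(-1)$, with the twist $\sO_{\mP(V_3)}(-1)$ for $F_B$ coming from the discrepancy computation already recorded in the proof of Proposition~\ref{cla:fib}.

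For part $(1)$ I would work on the common resolution $\widehat A$ in the diagram of Lemma~\ref{cla:A}, which dominates both $A$ (via the blow-up of $s_A$) and $A_{\hcoY_2}\simeq A_{\hcoY_3}$. The sheaves are controlled on $A_{\hcoY_3}$: restricting the universal sequence (\ref{eq:univ}) and applying Propositions~\ref{cla:mislead} and \ref{cla:det} gives $\sQ$ and $\sS(L_{\hcoY_3})$ there, and the defining identities $\Lrho_{\hcoY_2}^*\sS(L_{\hcoY_2})=\tLrho_{\hcoY_2}^*\widetilde{\sS}_L$, $\Lrho_{\hcoY_2}^*\sQ=\tLrho_{\hcoY_2}^*\widetilde{\sQ}$ transport these to $A$ after pulling everything back to $\widehat A$ and checking that the resulting sheaf is constant on the fibers contracted by $\widehat A\to A$; the isomorphism $(\widetilde{\sS}_L)_A\simeq\widetilde{\sQ}_A$ is then a direct reflection of Proposition~\ref{cla:mislead}. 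This is where the rank-two summand $\sV$ appears: on $A\simeq\mP(\sO_{\mP(V_3^*)}\oplus T_{\mP(V_3^*)})$ the tautological and relative Euler data produce an extension of $\sO_A(H_A)$ by $\sO_A(H_A+E_{AB})$, and I would pin down $\sV$ as the unique non-split such extension by computing $\Ext^1(\sO_A(H_A),\sO_A(H_A+E_{AB}))$ on the projective bundle $A$ and matching first Chern classes against $\det\sQ$.

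The sheaf $\widetilde{\sT}$ I would handle last, by restricting the exact sequences (\ref{eq:E_a1}) and (\ref{eq:Eb}) to $A$ and to $\mP(V_3)$: on $B$ the term $\Lpi_{\hcoY_2}^*T(-1)$ contributes the $p_B^*T_{\mP(V_3)}(-1)$ factor while the cokernel line bundle splits off an extra trivial summand, giving $\sO_B^{\oplus 2}\oplus p_B^*T_{\mP(V_3)}(-1)$, and similarly on $A$ the same sequences split off two copies of $\sO_A$ alongside $\sV$. The divisor classes $F_A\sim-(E_{AB}+2H_A)$ and $\sO_B(F_B)\simeq p_B^*\sO_{\mP(V_3)}(-1)$ I would obtain from $K_{\widetilde{\hcoY}}=\Lrho_{\widetilde{\hcoY}}^*K_{\hcoY}+2F_{\widetilde{\hcoY}}$ (Proposition~\ref{cla:F}~$(2)$) by adjunction on these fibers, cross-checked against the intersection numbers $F_{\widetilde{\hcoY}}\cdot r=-1$ established in the proof of Lemma~\ref{cla:div}.

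I expect the main obstacle to be the bookkeeping on $A$: correctly descending $\sS(L_{\hcoY_3})$ and $\sQ$ from $A_{\hcoY_3}$ through $\widehat A$ down to $A$, and in particular proving that the extension defining $\sV$ is genuinely non-split rather than splitting as $\sO_A(H_A)\oplus\sO_A(H_A+E_{AB})$. This hinges on a careful $\Ext$ computation on the $\mP^2$-bundle $A$ and on verifying that the non-triviality survives pull-back to $\widehat A$; the fibration structure of $\Lpi_{A_{\hcoY_3}}$ over $\mP(V_3)$ will have to be used to control the relevant direct images.
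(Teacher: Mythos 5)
Your proposal contains two genuine problems, one in each part.

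For part (2), your key geometric premise is wrong: you assert that $\mP(V_3)$, the image of $B$ in $\widetilde{\hcoY}$, ``lies in the $\rho$-locus $G_\rho$'' and propose to compute the restrictions along $\Prt_\rho$ via $\sQ|_{\Prt_\rho}\simeq\wedge^2\sR_\rho$ and Lemma \ref{cla:Sgamma}. In fact the opposite is true: by Lemma \ref{cla:A} (1) the $\mP(V_3)$ in question is the image of $E_{AB}\subset E_\sigma$ under the contraction $A\to A_{\widetilde\hcoY}$, and by Proposition \ref{cla:fib} it parameterizes $\sigma$-planes; the $\rho$-locus inside $A_{\widetilde\hcoY}$ is the (disjoint) image of the section $s_A$. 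The paper's computation of $\widetilde{\sT}_B$ hinges precisely on this disjointness from the $\rho$-conic locus: the third term of the sequence (\ref{eq:Eb}) is supported over $G_\rho$, so it dies on restriction to $\mP(V_3)$ and one gets $\widetilde{\sT}_B\simeq p_B^*(T(-1)|_{\mP(V_3)})\simeq\sO_B^{\oplus2}\oplus p_B^*T_{\mP(V_3)}(-1)$; the statements for $\widetilde{\sQ}_B$ and $(\widetilde{\sS}_L)_B$ then follow from the relation $\widetilde{\sT}_A\simeq\sO_A\oplus\widetilde{\sQ}_A\simeq\sO_A\oplus(\widetilde{\sS}_L)_A$ established on the $A$-side, and $F_B$ from restricting $F_A\sim-(E_{AB}+2H_A)$ to $E_{AB}$ (not from the discrepancy count in Lemma \ref{cla:div}). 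Had $\mP(V_3)$ really sat inside $G_\rho$, the restriction of $\widetilde{\sQ}$ there would be a pull-back of $\wedge^2\sG\simeq\sG^*(1)$ from $\mathrm{G}(2,V)$, which does not split off a trivial summand, so your route would not even produce the asserted answer.

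For part (1), your skeleton (work on $\widehat A$, restrict the universal sequence (\ref{eq:univ}), use Propositions \ref{cla:mislead} and \ref{cla:det}, descend to $A$) matches the paper, but the step you yourself flag as the main obstacle---non-splitness of
\[
0\to \sO_A(H_A+E_{AB}) \to\sV\to \sO_A(H_A)\to 0
\]
---is not closed by the tools you name. Computing $\Ext^1(\sO_A(H_A),\sO_A(H_A+E_{AB}))\simeq H^1(A,\sO_A(E_{AB}))\simeq H^1(\mP(V_3^*),\sO\oplus\Omega^1_{\mP(V_3^*)})\simeq\mC$ only shows there is a \emph{unique} non-split extension; it cannot decide whether the extension you constructed is that one or the split one, and matching first Chern classes against $\det\sQ$ is equally blind to the extension class. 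The argument the paper uses is a descent obstruction: $(\widetilde{\sS}_L)_A$ and $\widetilde{\sQ}_A$ are by construction pull-backs of locally free sheaves from $A_{\widetilde\hcoY}$ through the contraction of $E_{AB}$, whereas $\sO_A(H_A)$ is \emph{not} such a pull-back (it is non-trivial on the contracted fibers); a splitting would exhibit $\sO_A(H_A)$ as a direct summand of a pulled-back sheaf, a contradiction. Without this observation (or an equivalent one), your proof of part (1) does not go through.
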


\begin{proof} $\empty$

\textbf{Step 1. $\det(\widetilde{\sS}_{L}^{*})_{A}=\det\widetilde{\sQ}_{A}=E_{AB}+2H_{A}$.}\\
 By (\ref{eq:detdiff}), we have only to determine $\det\widetilde{\sQ}_{A}$.
Let $L_{\widehat{A}}$ be the pull-back of $\sO_{\mP(V_{3})}(1)$,
and $G$ the exceptional divisor for $\widehat{A}\to A$. Note that
$G$ is the pull-back of the exceptional divisor $F_{\rho}$ of $\hcoY_{2}\to\widetilde{\hcoY}$.
By Proposition \ref{cla:M}, we have $\det\widetilde{\sQ}_{A}=G+L_{\widehat{A}}$
since $M_{\widetilde{\hcoY}}$ is trivial on a fiber of $F_{\widetilde{\hcoY}}\to G_{\hcoY}$.
Therefore it suffices to show $\widehat{E}_{AB}+2H_{\widehat{A}}-G=L_{\widehat{A}}$,
where $H_{\widehat{A}}$ and $\widehat{E}_{AB}$ are the pull-backs
on $\widehat{A}$ of $H_{A}$ and $E_{AB}$, respectively. Note that
we can write $a\widehat{E}_{AB}+bH_{\widehat{A}}-cG=L_{\widehat{A}}$
with some $a,b,c\in\mZ$. Since $\widehat{E}_{AB}\cap G=\emptyset$,
we have $a\widehat{E}_{AB}+bH_{\widehat{A}}|_{\widehat{E}_{AB}}=L_{\widehat{A}}$.
Since $\widehat{E}_{AB}\simeq E_{AB}\simeq\mP(T_{\mP(V_{3}^{*})})$,
we may consider $\widehat{E}_{AB}$ is the incident variety $\{[V_{1}],[V_{2}])\mid V_{1}\subset V_{2}\}\subset\mP(V_{3})\times\mP(V_{3}^{*})$.
Also, since $E_{AB}$ is the tautological divisor with respect to
$\sO_{\mP(V_{3}^{*})}\oplus T_{\mP(V_{3}^{*})}$, $\widehat{E}_{AB}|_{\widehat{E}_{AB}}={E}_{AB}|_{{E}_{AB}}$
is the tautological divisor with respect to $T_{\mP(V_{3}^{*})}$.
Hence $\widehat{E}_{AB}|_{\widehat{E}_{AB}}$ is the restriction of
a $(1,-2)$-divisor of $\mP(V_{3})\times\mP(V_{3}^{*})$. This is
equivalent to that $a=1$ and $b=2$. Now we note the equality $a\widehat{E}_{AB}|_{G}+bH_{\widehat{A}}|_{G}-cG|_{G}=bH_{\widehat{A}}|_{G}-cG|_{G}=L_{\widehat{A}}|_{G}$.
Recall that the conormal bundle of $s_{A}$ in $A$ is $\Omega_{\mP(V_{3}^{*})}$
as in the proof of Lemma \ref{cla:A} (5). Therefore $-G|_{G}$ is
the tautological divisor with respect to $T_{\mP(V_{3}^{*})}$, which
is the restriction of a $(1,-2)$-divisor of $\mP(V_{3})\times\mP(V_{3}^{*})$.
This is equivalent to that $b=2$ and $c=1$. 

\textbf{Step 2. $F_{A}=-(E_{AB}+2H_{A})$.}\\
 By Proposition \ref{cla:M}, we have $L_{\widetilde{\hcoY}}=\det\widetilde{\sQ}-M_{\widetilde{\hcoY}}$,
where $L_{\widetilde{\hcoY}}$ is the image of $L_{\widetilde{\hcoY}_{2}}$
on $\widetilde{\hcoY}$. Therefore, by (\ref{eq:canY2}), we have
$K_{\widetilde{\hcoY}}=-6\det\widetilde{\sQ}+4L_{\widetilde{\hcoY}}=-2\det\widetilde{\sQ}-4M_{\widetilde{\hcoY}}$.
Further, by Proposition \ref{cla:F} (2), we have $-2\det\widetilde{\sQ}-4M_{\widetilde{\hcoY}}=-10M_{\widetilde{\hcoY}}+2F_{\widetilde{\hcoY}}$.
Therefore, since the pull-back of $M_{\widetilde{\hcoY}}$ on $A$
is trivial, we have $F_{A}=-\det\widetilde{\sQ}_{A}$. Consequently,
we obtain $F_{A}=-(E_{AB}+2H_{A})$ by the equality in Step 1. 

\textbf{Step 3. $(\widetilde{\sS}_{L}^{*})_{A}\simeq\widetilde{\sQ}_{A}\simeq\sO_{A}\oplus\sV$.}\\
 We investigate the restriction of the universal exact sequence
(\ref{eq:univ}) on $A_{\hcoY_{3}}$. Let $\sS_{A_{\hcoY_{3}}}$ and
$\sQ_{A_{\hcoY_{3}}}$ be the restrictions of $\sS$ and $\sQ$, respectively.
Then we obtain \begin{equation}
0\to\sS_{A_{\hcoY_{3}}}\to\Lpi_{A_{\hcoY_{3}}}^{\;*}(T(-1)^{\wedge2}|_{\mP(V_{3})})\to\sQ_{A_{\hcoY_{3}}}\to0.\label{eq:resuniv}\end{equation}
Note the following isomorphisms, \begin{eqnarray}
\wedge^{2}(T(-1)|_{\mP(V_{3})})\simeq\wedge^{2}(T_{\mP(V_{3})}(-1)\oplus V/V_{3}\otimes\sO_{\mP(V_{3})})\simeq\label{eq:resT-1}\\
\sO_{\mP(V_{3})}(1)\oplus V/V_{3}\otimes T_{\mP(V_{3})}(-1)\oplus\wedge^{2}(V/V_{3})\otimes\sO_{\mP(V_{3})}.\nonumber \end{eqnarray}
 Let $([\bar{U}],[V_{1}])$ be a point in $A_{\hcoY_{3}}\subset\hcoY_{3}$
with $[U]=[\bar{U}\wedge V_{1}]$ (see Definition \ref{def:Y3}).
Since the morphism $A_{\hcoY_{3}}\simeq A_{\hcoY_{2}}\to A_{\widetilde{\hcoY}}$
is given by $([\bar{U}],[V_{1}])\mapsto[U]$ and $A_{\widetilde{\hcoY}}=\overline{\Gamma(V_{3},V_{4})}$,
we can assume the following form of $[\bar{U}]$ (see Lemma \ref{lem:inverse-rk1});\[
[\bar{U}]=[(V_{4}/V_{2})\wedge(V_{2}/V_{1}),a(V/V_{4})\wedge(V_{2}/V_{1})+b\wedge^{2}(V_{4}/V_{2})\wedge V_{1}],\]
with $V_{1}\subset V_{2}\subset V_{3}$. For simplicity, we write
$[\bar{U}]\in A_{\hcoY_{3}}$ with $[V_{1}]$ being implicit. 

By definition, we have $\sS_{A_{\hcoY_{3}}}\vert_{[\bar{U}]}=\bar{U}$
for the fiber of $\sS_{A_{\hcoY_{3}}}$ over $[\bar{U}]\in A_{\hcoY_{3}}$.
Similarly, the fiber of the pull-back $L_{A_{\hcoY_{3}}}$ of $\sO_{\mP(V_{3})}(1)\simeq\wedge^{2}T_{\mP(V_{3})}(-1)$
is given by $L_{A_{\hcoY_{3}}}\vert_{[\bar{U}]}=\wedge^{2}(V_{3}/V_{1})\simeq(V_{3}/V_{2})\wedge(V_{2}/V_{1})$.
Hence we see that $\sS_{A_{\hcoY_{3}}}$ contains $L_{A_{\hcoY_{3}}}$
as a direct summand. Let us write $\sS_{A_{\hcoY_{3}}}=\sS'_{A_{\hcoY_{3}}}\oplus L_{A_{\hcoY_{3}}}$
with a locally free sheaf $\sS'_{A_{\hcoY_{3}}}$ of rank two on $A_{\hcoY_{3}}$.
We note that $\sS_{A_{\hcoY_{3}}}$ is contained in $L_{A_{\hcoY_{3}}}\oplus(V/V_{3}\otimes\Lpi_{A_{\hcoY_{3}}}^{\;*}T_{\mP(V_{3})}(-1)$)
since this does not contain (the pull-back on $A_{\hcoY_{3}}$ of)
the factor $\wedge^{2}(V/V_{3})\otimes\sO_{\mP(V_{3})}$ in (\ref{eq:resT-1}).
Therefore, we have the following exact sequence from (\ref{eq:resuniv}):
\[
0\to\sS'{}_{A_{\hcoY_{3}}}\to V/V_{3}\otimes\Lpi_{A_{\hcoY_{3}}}^{\;*}T_{\mP(V_{3})}(-1)\to\sQ'_{A_{\hcoY_{3}}}\to0,\]
 where $\sQ_{A_{\hcoY_{3}}}\simeq\sQ'_{A_{\hcoY_{3}}}\oplus\sO_{A_{\hcoY_{3}}}$. 

Consider the pull-back $\sS'_{\hat{A}}$ on $\widehat{A}$ of $\sS'_{A_{\hcoY_{3}}}$.
This contains a subbundle of rank one whose fiber at a point over
$[\bar{U}]$ is isomorphic to $V_{4}/V_{3}\otimes V_{2}/V_{1}$. Since
$V_{3}$ and $V_{4}$ are fixed, the vector space $V_{4}/V_{3}$ is
the fiber of the trivial bundle on $\widehat{A}$. Also, since $V_{1}$
is the fiber of $-L_{\widehat{A}}$ and $V_{2}$ is the fiber of the
pull-back of $\Omega_{\mP(V_{3}^{*})}(1)$, we see that $V_{2}/V_{1}$
is the fiber of $\sO_{\widehat{A}}(-H_{\widehat{A}}+L_{\widehat{A}})$
by taking the determinants. Therefore $\sS'_{\hat{A}}(-L_{\widehat{A}})$
is presented as an extension, \[
0\to\sO(-H_{\widehat{A}})\to\sS'_{\hat{A}}(-L_{\widehat{A}})\to\sO(-H_{\widehat{A}}-\widehat{E}_{AB})\to0,\]
 where the quotient is determined by taking determinants. Since $\sO(-H_{\widehat{A}})$,
$\sS'_{\hat{A}}(-L_{\widehat{A}})$, and $\sO(-H_{\widehat{A}}-\widehat{E}_{AB})$
are the pull-backs of locally free sheaves on $A$, we have \[
0\to\sO_{A}(-H_{{A}})\to(\widetilde{\sS}'_{L})_{{A}}\to\sO_{A}(-H_{{A}}-E_{AB})\to0,\]
 where $(\widetilde{\sS}'_{L})_{{A}}$ is the sheaf which represents
$\sS'_{\hat{A}}(-L_{\widehat{A}})$ by the pull-back. The sequence
does not split since $H_{{A}}$ is not a locally free sheaf on $A_{\widetilde{\hcoY}}$
while $(\widetilde{\sS}'_{L})_{{A}}$ is. Since\begin{align*}
 & \Ext^{1}(\sO_{A}(-H_{{A}}-E_{AB}),\sO_{A}(-H_{{A}}))\simeq\\
 & H^{1}(A,\sO_{A}(E_{AB}))\simeq H^{1}(\mP(V_{3}^{*}),\sO_{\mP(V_{3}^{*})}\oplus\Omega_{\mP(V_{3}^{*})}^{1})\simeq\mC\end{align*}
 we see that $(\widetilde{\sS}_{L}^{'*})_{A}\simeq\sV$, and hence
$(\widetilde{\sS}_{L}^{*})_{A}\simeq\sV\oplus\sO_{A}$, with a locally
free sheaf $\sV$ as described in (1).

Let ${\sQ}'_{\widehat{A}}$ be the pull-back on $\widehat{A}$ of
${\sQ}'_{{A_{\hcoY_{3}}}}$. Taking a basis of $V$ so that $V_{1}=\langle\bm{e}_{5}\rangle$,$V_{2}=\langle\bm{e}_{1},\bm{e}_{5}\rangle$,$V_{3}=\langle\bm{e}_{1},\bm{e}_{2},\bm{e}_{5}\rangle$
and $V_{4}=\langle\bm{e}_{1},\bm{e}_{2},\bm{e}_{3},\bm{e}_{5}\rangle$,
we see that there is a surjective map from ${\sQ}'_{\widehat{A}}$
to the invertible sheaf whose fiber at a point over $[\bar{U}]$ is
isomorphic to $V_{3}/V_{2}\otimes V/V_{4}$. We identify this invertible
sheaf with $\sO_{\widehat{A}}(H_{\widehat{A}})$. Therefore $\sQ'_{\hat{A}}$
is presented as an extension: \[
0\to\sO(H_{\widehat{A}}+\widehat{E}_{AB})\to{\sQ}'_{\widehat{A}}\to\sO(H_{\widehat{A}})\to0,\]
 where the kernel is determined by taking determinants. Therefore
we see that ${\sQ}'_{\widehat{A}}$ is also the pull-back of $\sV$
and $\widetilde{\sQ}_{A}\simeq\sV\oplus\sO_{A}$ as we have determined
$(\widetilde{\sS}_{L}^{*})_{{A}}$.

\textbf{Step 4. $\widetilde{\sT}_{A}\simeq\sO_{A}^{\oplus2}\oplus\sV$.}\\
 By Lemma \ref{cla:A} (3), $\Prt_{\rho}|_{A_{\hcoY_{3}}}\simeq\mP(T_{\mP(V_{3})}(-1))$
and this lifts to $\hcoY_{2}$ isomorphically. Therefore, restricting
(\ref{eq:exact-seq-T2}) to ${A_{{\hcoY}_{2}}}$, we obtain \begin{equation}
0\to\sT_{A_{\hcoY_{2}}}^{*}\to\Lpi_{{A_{\hcoY_{2}}}}^{\;*}(\Omega_{\mP(V_{3})}^{1}(1)\oplus\sO_{\mP(V_{3})}^{\oplus2})\to\sO_{\mP(T_{\mP(V_{3})}(-1))}(1)\to0,\label{eq:E_a'res}\end{equation}
 where we set $\sT_{A_{\hcoY_{2}}}^{*}=\sT_{2}^{*}\vert_{A_{\hcoY_{2}}},\Lpi_{A_{\hcoY_{2}}}=\Lpi_{\hcoY_{2}}\vert_{A_{\hcoY_{2}}}$
and used $\sR_{2}/\sR_{1}=\sO_{\mP(T(-1))}(-1)$. Since $H^{0}(\sO_{\mP(T_{\mP(V_{3})}(-1))}(1))=H^{0}(\Omega_{\mP(V_{3})}^{1}(1))=0$,
we have \[
\sT_{A_{\hcoY_{2}}}^{*}\simeq\sO_{A_{\hcoY_{2}}}^{\oplus2}\oplus\sV',\]
 where $\sV'$ is the kernel of the map \begin{equation}
\Lpi_{{A_{\hcoY_{2}}}}^{\;*}\Omega_{\mP(V_{3})}^{1}(1)\to\sO_{\mP(T_{\mP(V_{3})}(-1))}(1).\label{eq:V'}\end{equation}
Let us consider a point $[\bar{U}]\in A_{\hcoY_{3}}$. We note that
the fiber of $\Lpi_{{A_{\hcoY_{3}}}}^{\;*}\Omega_{\mP(V_{3})}^{1}(1)$
at $[\bar{U}]\in A_{\hcoY_{3}}$ is isomorphic to $(V_{3}/V_{1})^{*}$.
In a similar way to the arguments after (\ref{eq:resT-1}), the vector
space $(V_{3}/V_{2})^{*}$ can be considered to be the fiber of $\sO_{\widehat{A}}(-H_{\widehat{A}})$
at a point over $[V_{2}]\in\mP(V_{3})$. Therefore we have a natural
injection \[
\sO_{\widehat{A}}(-H_{\widehat{A}})\hookrightarrow\Lpi_{\widehat{A}}^{\;*}\Omega_{\mP(V_{3})}^{1}(1),\]
 where the cokernel $\sK_{1}$ is an invertible sheaf and $\Lpi_{\widehat{A}}$
is the naturally induced map $\widehat{A}\to\mP(V_{3})$. By taking
the determinants, we see that $\sK_{1}=\sO_{\widehat{A}}(H_{\widehat{A}}-L_{\widehat{A}})$.
We show that the composite morphism $\sO_{\widehat{A}}(-H_{\widehat{A}})\to\sO_{\mP(T_{\mP(V_{3})}(-1))}(1)$
of the injection above with the pull-back of (\ref{eq:V'}) is zero.
Here we note that $\mP(T_{\mP(V_{3})}(-1))$ lifts isomorphically
on $\widehat{A}$. Indeed, $H_{\mP(\Omega_{\mP(V_{3})}^{1}(1))}$
is nothing but the pull-back of $H_{A}$ by the map $\widehat{A}\to A$.
Since $H_{\mP(T_{\mP(V_{3})}(-1))}=H_{\mP(\Omega_{\mP(V_{3})}^{1}(1))}-L$,
where $L$ is the pull-back of $\sO_{\mP(V_{3})}(1)$ to $\mP(T_{\mP(V_{3})}(-1))$,
we have only to show $H^{0}(2H_{\mP(\Omega_{\mP(V_{3})}^{1}(1))}-L)=0$,
which follows from the Bott theorem \ref{thm:Bott} by noting $H^{0}(2H_{\mP(\Omega_{\mP(V_{3})}^{1}(1))}-L)\simeq H^{0}(\ft{S}^{2}(T_{\mP(V_{3})}(-1))\otimes\sO_{{A_{\hcoY_{2}}}}(-1))$.
Therefore we have an injection $\sO_{\widehat{A}}(-H_{\widehat{A}})\hookrightarrow\sV'_{\widehat{A}}$,
where $\sV'_{\widehat{A}}$ is the pull-back of $\sV'$ on $\widehat{A}$
and the cokernel $\sK_{2}$ has the following expression as an extension:
\[
0\to\sK_{2}\to\sO_{\widehat{A}}(H_{\widehat{A}}-L_{\widehat{A}})\to H_{\mP(T_{\mP(V_{3})}(-1))}\to0.\]
 Taking $\sE xt^{\bullet}(-,\sO_{\widehat{A}})$ of this exact sequence,
we see that $\sK_{2}$ is also an invertible sheaf by \cite[III, Ex 6.6]{Ha}.
By taking the determinants, we see that $\sK_{2}=\sO_{\widehat{A}}(H_{\widehat{A}}-L_{\widehat{A}}-F_{\rho}|_{\widehat{A}})$,
where $F_{\rho}|_{\widehat{A}}$ is the pull-back of $F_{\rho}$.
Since $M_{\hcoY_{2}}|_{A_{\hcoY_{2}}}=0$, we have $\det\sQ_{\widehat{A}}=L_{\widehat{A}}+F_{\rho}|_{\widehat{A}}$
by Proposition \ref{cla:M}, where $\sQ_{\widehat{A}}$ is the pull-back
of $\sQ$. Therefore, $\sK_{2}=\sO_{\widehat{A}}(H_{\widehat{A}}-\det\sQ_{\widehat{A}})=\sO_{\widehat{A}}(-H_{\widehat{A}}-\widehat{E}_{AB})$,
where the second equality follows from Step 1. Therefore $\sV'_{\widehat{A}}$
fits into the following expression as an extension: \[
0\to\sO_{\widehat{A}}(-H_{\widehat{A}})\to\sV'_{\widehat{A}}\to\sO_{\widehat{A}}(-H_{\widehat{A}}-\widehat{E}_{AB})\to0.\]
 Consequently, we have $\widetilde{\sT}_{A}\simeq\sO_{A}^{\oplus2}\oplus\sV$
as we have determined $(\widetilde{\sS}_{L}^{*})_{A}$.

\textbf{Step 5. $F_{B}$, $(\widetilde{\sS}_{L}^{*})_{B}$, $\widetilde{\sQ}_{B}$
and $\widetilde{\sT}_{B}$.}\\
 By Lemma \ref{cla:A} (1), the image of $B$ on $F_{\widetilde{\hcoY}}$
is the $\mP(V_{3})$ contained in $A_{\widetilde{\hcoY}}$. Therefore,
$F_{B}$, $(\widetilde{\sS}_{L}^{*})_{B}$, $\widetilde{\sQ}_{B}$
and $\widetilde{\sT}_{B}$, respectively, are the pull-backs of the
restrictions of $F_{\widetilde{\hcoY}}$, $\widetilde{\sQ}$, and
$\widetilde{\sT}$ to $\mP(V_{3})$. Since $F_{A}|_{E_{AB}}\simeq-(E_{AB}+2H_{A})|_{E_{AB}}$
by Step 2, and this is the pull-back of $\sO_{\mP(V_{3})}(-1)$, we
have $F_{B}=p_{B}^{*}\sO_{\mP(V_{3})}(-1)$. Also, since $\widetilde{\sT}_{A}\simeq\sO_{A}\oplus(\widetilde{\sS}_{L}^{*})_{A}\simeq\sO_{A}\oplus\widetilde{\sQ}_{A}$
as above, we have $\widetilde{\sT}_{B}\simeq\sO_{B}\oplus(\widetilde{\sS}_{L}^{*})_{B}\simeq\sO_{B}\oplus\widetilde{\sQ}_{B}$.
Thus we have only to determine $\widetilde{\sT}_{B}$. Recall that
$\mP(V_{3})$ in $A_{\widetilde{\hcoY}}$ consists of the points of
the form $[U]=[\wedge^{2}(V_{4}/V_{1})\wedge V_{1}]\in\Prt_{\sigma}$
with $V_{1}\subset V_{3}$. Therefore $\mP(V_{3})$ is disjoint from
$G_{\rho}$ (Definition \ref{defn:Grho}, Fig. 4), and then, by (\ref{eq:Eb}),
we have $\widetilde{\sT}_{B}\simeq p_{B}^{*}(T(-1)|_{\mP(V_{3})})\simeq\sO_{B}^{\oplus2}\oplus p_{B}^{*}(T_{\mP(V_{3})}(-1))$.
\end{proof}

%%%%%%%%%%%%%%%%%%%%%%%%%%%%%%%%%%%%%%%%%%%%%%%%%%%%%%%%%%%%%%%%%%%%%%%%%%%%%%%%%%%%%%%%%%%%%%%%%%%%%%%%%%%%%%%%%%%%%%%%%%%%%%%%%%%%%%%%%%%%%%%%%%%%%%%%%%%%%%%%

\newpage{}

\section{The Lefschetz collection in $\sD^{b}(\widetilde{\hcoY})$}

\label{section:comp}

Using the sheaves $\widetilde{\sS}_{L}^{*},\widetilde{\sQ},\,\widetilde{\sT}$
and divisors introduced in Section \ref{sec:SheavesDef}, we describe
a Lefschetz collection in $\sD^{b}(\widetilde{\hcoY})$, which shows
an interesting duality between the (dual) Lefschetz collection obtained
in Theorem \ref{thm:Gvan1}. 

$\,$

\subsection{Results}

Our results on the sheaves $\widetilde{\sS}_{L}^{*},\widetilde{\sQ},\,\widetilde{\sT}$
and $\sO_{\widetilde{\hcoY}}$ are summarized in the following theorem. 

\begin{thm} \label{thm:Gvan} $\;$

\begin{myitem2}\item[\rm(1)] Let \[
(\sE_{3},\sE_{2},\sE_{1a},\sE_{1b})=((\widetilde{\sS}_{L}^{*})^{*},\widetilde{\sT}^{*},\sO_{\widetilde{\hcoY}},\widetilde{\sQ}^{*}(M_{\widetilde{\hcoY}}))\]
 be an ordered collection of sheaves on $\widetilde{\hcoY}$. Then
$(\widetilde{\sB}_{i})_{1\leq i\leq4}:=(\sE_{3},\sE_{2},\sE_{1a},\sE_{1b})$
is a strongly exceptional collection, namely, it satisfies \[
H^{\bullet}(\widetilde{\sB}_{i}^{*}\otimes\widetilde{\sB}_{j})=0\text{ for }1\leq i,j\leq4\text{ and }\bullet>0\]
 and $H^{0}(\widetilde{\sB}_{i}^{*}\otimes\widetilde{\sB}_{j})=0\;(i>j)$,
$H^{0}(\widetilde{\sB}_{i}^{*}\otimes\widetilde{\sB}_{i})=\mC\;(1\leq i\leq4)$. 

\item[\rm (2)] For $i<j$, $H^{0}(\widetilde{\sB}_{i}^{*}\otimes\widetilde{\sB}_{j})$
are given by \[
\begin{aligned} & H^{0}(\sE_{3}^{*}\otimes\sE_{2})\simeq V,\, H^{0}(\sE_{3}^{*}\otimes\sE_{1a})\simeq\wedge^{2}V,\, H^{0}(\sE_{3}^{*}\otimes\sE_{1b})\simeq\ft{S}^{2}V,\\
 & H^{0}(\sE_{2}^{*}\otimes\sE_{1a})\simeq V,\, H^{0}(\sE_{2}^{*}\otimes\sE_{1b})\simeq V,\;\; H^{0}(\sE_{1a}^{*}\otimes\sE_{1b})\simeq0,\end{aligned}
\]
 and may be summarized in the following diagram: \begin{equation}
\xyQuiverII\label{eq:quiver2}\end{equation}

\item[\rm (3)] Set \[
\sD_{\widetilde{\hcoY}}:=\langle\sE_{3},\sE_{2},\sE_{1a},\sE_{1b}\rangle\subset\sD^{b}(\widetilde{\hcoY}).\]
 Then \[
\sD_{\widetilde{\hcoY}},\sD_{\widetilde{\hcoY}}(1),\dots,\sD_{\widetilde{\hcoY}}(9)\]
 is a Lefschetz collection, namely, for $1\leq i,j\leq4$ and $\bullet>0$
it holds that \[
H^{\bullet}(\widetilde{\sB}_{i}^{*}\otimes\widetilde{\sB}_{j}(-t))=0\;\;(1\leq t\leq9),\]
 where $(-t)$ represents the twist by the sheaf $\sO_{\widetilde{\hcoY}}(-tM_{\widetilde{\hcoY}})$. 

\end{myitem2}\end{thm}

\vspace{0.5cm}
 The rest of this section is devoted to our proof of Theorem \ref{thm:Gvan},
where we compute the cohomology groups $H^{\bullet}(\widetilde{\sB}_{i}^{*}\otimes\widetilde{\sB}_{j}(-t))$
$(0\leq t\leq9)$. Our strategy is to reduce the computations of cohomology
groups on $\widetilde{\hcoY}$ to those on $\hcoY_{3}$ and use the
Bott Theorem \ref{thm:Bott} for the $\mathrm{G}(3,6)$-bundle $\hcoY_{3}\to\mP(V)$.
This idea works for small values of $t$ as we formulate in Proposition
\ref{cla:cohY2-Y3} below. The computations will be completed in the
next subsection. 

\vspace{0.2cm}
 \begin{lem} $K_{\widetilde{\hcoY}}=-10M_{\widetilde{\hcoY}}+2F_{\widetilde{\hcoY}}$
\end{lem}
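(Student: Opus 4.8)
The plan is to combine the discrepancy formula of Proposition~\ref{cla:F}~(2) with an explicit identification of the canonical class of the singular model $\hcoY$. First I would record from Proposition~\ref{cla:F}~(2) that, since $\Lrho_{\widetilde{\hcoY}}\colon\widetilde{\hcoY}\to\hcoY$ is a $K_{\widetilde{\hcoY}}$-negative divisorial extremal contraction with exceptional divisor $F_{\widetilde{\hcoY}}$ and discrepancy $2$, one has
\[
K_{\widetilde{\hcoY}}=\Lrho_{\widetilde{\hcoY}}^{\;*}K_{\hcoY}+2F_{\widetilde{\hcoY}}.
\]
Here $K_{\hcoY}$ is Cartier because $\hcoY$ is Gorenstein (Proposition~\ref{cla:ZY}), so that the pull-back $\Lrho_{\widetilde{\hcoY}}^{\;*}K_{\hcoY}$ is well defined.

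Next I would compute $K_{\hcoY}$ itself. By Proposition~\ref{cla:double} the morphism $\Lrho_{\hcoY}\colon\hcoY\to\Hes$ is a double cover branched along the locus of quadrics of rank $\leq n-1$, whose codimension in $\Hes$ is greater than one; hence the ramification contributes no divisor and $K_{\hcoY}=\Lrho_{\hcoY}^{\;*}K_{\Hes}$. Since $\Hes$ is the determinantal quintic hypersurface in $\mP(\ft{S}^2V^*)\cong\mP^{14}$, adjunction gives
\[
K_{\Hes}=(K_{\mP^{14}}+\Hes)|_{\Hes}=\sO_{\Hes}(-15+5)=\sO_{\Hes}(-10),
\]
which is precisely the statement that the Fano index of $\hcoY$ is $\tfrac{n(n+1)}{2}=10$ for $n=4$ (Proposition~\ref{cla:ZY}). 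Therefore $K_{\hcoY}=-10\,M_{\hcoY}$, where $M_{\hcoY}=\Lrho_{\hcoY}^{\;*}\sO_{\Hes}(1)$ in the convention following the diagram~(\ref{eq:big}).

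Finally I would substitute. By the same convention, $M_{\widetilde{\hcoY}}$ is the pull-back of $\sO_{\Hes}(1)$ along the composite $\widetilde{\hcoY}\to\hcoY\to\Hes$, so $\Lrho_{\widetilde{\hcoY}}^{\;*}M_{\hcoY}=M_{\widetilde{\hcoY}}$. Plugging $K_{\hcoY}=-10\,M_{\hcoY}$ into the discrepancy formula yields
\[
K_{\widetilde{\hcoY}}=-10\,\Lrho_{\widetilde{\hcoY}}^{\;*}M_{\hcoY}+2F_{\widetilde{\hcoY}}=-10\,M_{\widetilde{\hcoY}}+2F_{\widetilde{\hcoY}},
\]
as claimed.

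The computation is essentially bookkeeping, so there is no serious obstacle. The only points that require care are the vanishing of the ramification contribution to $K_{\hcoY}$ — that is, that the branch locus of $\hcoY\to\Hes$ really has codimension at least two, which is guaranteed by Proposition~\ref{cla:double} — and ensuring that the discrepancy and the Fano index enter with the correct signs and that all pull-backs refer to the one ample class $\sO_{\Hes}(1)$.
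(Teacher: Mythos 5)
Your proposal is correct and follows essentially the same route as the paper: the paper's proof likewise combines $K_{\hcoY}=-10M_{\hcoY}$ (cited from Proposition~\ref{cla:ZY}) with the discrepancy formula $K_{\widetilde{\hcoY}}=\Lrho_{\widetilde{\hcoY}}^{\;*}K_{\hcoY}+2F_{\widetilde{\hcoY}}$ of Proposition~\ref{cla:F}~(2). The only difference is that you unpack the citation to Proposition~\ref{cla:ZY} by re-deriving the Fano index from the codimension-two branch locus and adjunction on the quintic symmetroid, which is exactly how that proposition is proved in the paper.
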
 \begin{proof} We have $K_{\hcoY}=-10M_{\hcoY}$ by Proposition
\ref{cla:ZY}. Then from Proposition \ref{cla:F} (2), $K_{\widetilde{\hcoY}}=\Lrho_{\widetilde{\hcoY}}^{\;*}K_{\hcoY}+2F_{\widetilde{\hcoY}}=-10M_{\widetilde{\hcoY}}+2F_{\widetilde{\hcoY}}$.
\end{proof}

\vspace{0.3cm}
 Let us introduce \[
\widetilde{\sC}_{ij}=\widetilde{\sB}_{i}^{*}\otimes\widetilde{\sB}_{j}\;\;(1\leq i,j\leq4)\]
 for the ordered collection $(\widetilde{\sB}_{i})_{1\leq i\leq4}$.

\vspace{0.2cm}
 \begin{lem} \label{cla:key} For $\widetilde{\sC}=\widetilde{\sC}_{ij}$
as above, it holds \[
H^{\bullet}(\widetilde{\hcoY},\widetilde{\sC}(-t))\simeq H^{13-\bullet}(\widetilde{\hcoY},\widetilde{\sC}^{*}(t-10))\]
 for any integer $t$. \end{lem}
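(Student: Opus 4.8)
The plan is to mimic the proof of Lemma \ref{cla:key1}, with $F_{\widetilde{\hcoY}}$ playing the role of the exceptional divisor $E_f$ there, but using the extra geometric input of the flattening (\ref{eq:flatten}) to cope with the non-flatness of $F_{\widetilde{\hcoY}}\to G_{\hcoY}$. Since $\widetilde{\hcoY}$ is smooth and projective of dimension $13$, the Serre duality together with $K_{\widetilde{\hcoY}}=-10M_{\widetilde{\hcoY}}+2F_{\widetilde{\hcoY}}$ gives
\[
H^{\bullet}(\widetilde{\hcoY},\widetilde{\sC}(-t))\simeq
H^{13-\bullet}(\widetilde{\hcoY},\widetilde{\sC}^*((t-10)M_{\widetilde{\hcoY}}+2F_{\widetilde{\hcoY}}))
\]
for any $t$ (suppressing the vector space dual as in Lemma \ref{cla:key1}). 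Thus it suffices to remove the term $2F_{\widetilde{\hcoY}}$. Restricting the inclusions $\widetilde{\sC}^*((t-10)M_{\widetilde{\hcoY}}+(k-1)F_{\widetilde{\hcoY}})\hookrightarrow \widetilde{\sC}^*((t-10)M_{\widetilde{\hcoY}}+kF_{\widetilde{\hcoY}})$ to $F_{\widetilde{\hcoY}}$ for $k=1,2$ and using the resulting two short exact sequences, I reduce to proving
\[
H^{\bullet}\bigl(F_{\widetilde{\hcoY}},\,\widetilde{\sC}^*((t-10)M_{\widetilde{\hcoY}}+kF_{\widetilde{\hcoY}})|_{F_{\widetilde{\hcoY}}}\bigr)=0
\qquad(k=1,2,\ \text{all }\bullet).
\]

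Now the point is that $a\colon F_{\widetilde{\hcoY}}\to G_{\hcoY}$ is not flat, its fibres jumping over $G^1_{\hcoY}$ by Propositions \ref{cla:F} and \ref{cla:fib}, so these cohomologies cannot be computed fibrewise directly. Instead I pass to the flattening (\ref{eq:flatten}), writing $\pi\colon F^{(3)}\to\widehat{G}'$ for the flat, proper structure morphism and $\mu\colon F^{(3)}\to F_{\widetilde{\hcoY}}$ for the morphism of (\ref{eq:flatten}); recall from (\ref{eq:house}) that $\mu$ factors as $F^{(3)}\xrightarrow{\beta}\widehat{F}\xrightarrow{q}\widehat{F}/\mZ_2=F_{\widetilde{\hcoY}}$, with $\beta$ birational and $q$ the finite $\mZ_2$-quotient. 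Since $q$ is finite and $R\beta_*\sO_{F^{(3)}}=\sO_{\widehat{F}}$ (these being birational morphisms of varieties with rational singularities), for any locally free $\sG$ on $F_{\widetilde{\hcoY}}$ the group $H^{\bullet}(F_{\widetilde{\hcoY}},\sG)$ is a direct summand of $H^{\bullet}(F^{(3)},\mu^*\sG)$; hence it is enough to prove the vanishing of the latter. Because $M_{\widetilde{\hcoY}}|_{F_{\widetilde{\hcoY}}}$ is pulled back from $G_{\hcoY}$ and is trivial on fibres (cf.~Lemma \ref{cla:ST}), the bundle $\mu^*M_{\widetilde{\hcoY}}$ is the $\pi$-pull-back of a line bundle on $\widehat{G}'$, so by the projection formula the entire $t$-dependence factors out of $R\pi_*$. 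I am thus reduced to the $t$-independent statement
\[
R\pi_*\bigl(\mu^*(\widetilde{\sC}^*(kF_{\widetilde{\hcoY}})|_{F_{\widetilde{\hcoY}}})\bigr)=0\qquad(k=1,2),
\]
which, as $\pi$ is flat and proper, follows by the theorem on cohomology and base change (cf.~\cite{Ha}) from the vanishing of all cohomology of $\mu^*(\widetilde{\sC}^*(kF_{\widetilde{\hcoY}}))$ restricted to every fibre of $\pi$.

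It remains to check this fibrewise vanishing. Over a general point of $\widehat{G}'$ the fibre is $\mP^2\times\mP^2$, on which $\sO(kF_{\widetilde{\hcoY}})$ restricts to $\sO(-k,-k)$ and the restrictions of $\widetilde{\sS}_L,\widetilde{\sQ},\widetilde{\sT}$ are computed exactly as in Lemma \ref{cla:ST}; the required vanishing for $k=1,2$ then follows from the K\"unneth formula and the Bott theorem \ref{thm:Bott}. Over the diagonal locus the fibre is the reducible $A\cup B$ of Proposition \ref{cla:union} (2), and here I use the explicit descriptions of $(\widetilde{\sS}_L)_A,\widetilde{\sQ}_A,\widetilde{\sT}_A$ and $(\widetilde{\sS}_L)_B,\widetilde{\sQ}_B,\widetilde{\sT}_B$ together with $F_A\sim-(E_{AB}+2H_A)$ and $\sO_B(F_B)\simeq p_B^*\sO_{\mP(V_3)}(-1)$ from Lemma \ref{cla:ST}, combined with the Mayer--Vietoris sequence
\[
0\to \sG|_{A\cup B}\to \sG|_A\oplus \sG|_B\to \sG|_{E_{AB}}\to 0
\]
to reduce everything to cohomology on the projective bundles $A$, $B$, $E_{AB}$ over $\mP(V_3^*)$ and $\mP(V_3)$, again evaluated by Bott. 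I expect this special-fibre computation to be the main obstacle: one must carry out the bookkeeping for all sixteen pairs $\widetilde{\sC}_{ij}=\widetilde{\sB}_i^*\otimes\widetilde{\sB}_j$ and both $k=1,2$, and it is precisely here that the flatness of $F^{(3)}\to\widehat{G}'$ together with Lemma \ref{cla:ST} is indispensable, replacing an a priori intractable computation over the singular fibres $\mathrm{w}\mathrm{G}(2,5)$ by elementary ones on $\mP^2\times\mP^2$ and its blow-up.
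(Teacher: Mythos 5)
Your proposal follows the paper's own proof almost step for step: Serre duality with $K_{\widetilde\hcoY}=-10M_{\widetilde\hcoY}+2F_{\widetilde\hcoY}$, the two short exact sequences peeling off $2F_{\widetilde\hcoY}$, the passage to $F^{(3)}$ via the finite $\mZ_2$-quotient $\widehat{F}\to F_{\widetilde\hcoY}$ and rational singularities, the flatness of $\pi\colon F^{(3)}\to\widehat{G}'$ together with cohomology-and-base-change, and the Mayer--Vietoris argument on the special fibers $A\cup B$ using Lemma \ref{cla:ST}. The one place you diverge is the fibrewise reduction. The paper, after invoking flatness, uses upper semi-continuity of the fibrewise $h^q$ to reduce the required vanishing to the fibers over the diagonal locus of $\widehat{G}'$ alone: the non-vanishing locus is closed and $\mathrm{SL}(V)$-invariant, so if it were non-empty it would meet the closed orbit, which is exactly the diagonal locus. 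You instead undertake to verify the vanishing on \emph{every} fiber, including the general fibers $\Gamma\simeq\mP^2\times\mP^2$.

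This is where your argument has a genuine gap. You assert that on a general fiber the restrictions of $\widetilde{\sS}_L$, $\widetilde{\sQ}$, $\widetilde{\sT}$ are ``computed exactly as in Lemma \ref{cla:ST}'', but that lemma computes restrictions only to the components $A$ and $B$ of the \emph{special} fibers; it says nothing about general fibers. Nor can one read them off from it: $B$ is abstractly the blow-up of $\Gamma$ along its diagonal, but $\widetilde{\sQ}_B=p_B^*(\widetilde{\sQ}|_{\mP(V_3)})$ is the pull-back of the restriction of $\widetilde{\sQ}$ to a $\mP^2$ sitting inside the special fiber, not the pull-back of $\widetilde{\sQ}|_{\Gamma}$. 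Determining $\widetilde{\sQ}|_{\Gamma}$ (and likewise for $\widetilde{\sS}_L$, $\widetilde{\sT}$) requires tracing the universal bundles through the strict transform of $\Gamma$ in $\hcoY_2$ and its image in $\hcoY_3$, a computation of the same order as Steps 3--4 of Lemma \ref{cla:ST}; and, as the non-split extension $\sV$ appearing there warns, one should not expect direct sums of line bundles, so ``K\"unneth plus Bott'' is not automatic. As written, the general-fiber vanishing is therefore asserted rather than proved. The repair is cheap and is exactly the paper's move: keep your base-change framework, but insert the semi-continuity observation so that only the fibers $A\cup B$ over the diagonal need checking; then Lemma \ref{cla:ST} and your Mayer--Vietoris step finish the proof, with the small precision (made in the paper) that on $B$ the cohomology does not vanish outright but maps isomorphically onto that of $A\cap B$.
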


\begin{proof} Note that each of the cohomology groups $H^{\bullet}(\widetilde{\hcoY},\widetilde{\sC}(-t))$
is Serre dual to \[
H^{13-\bullet}(\widetilde{\hcoY},\widetilde{\sC}^{*}((t-10)M_{\widetilde{\hcoY}}+2F_{\widetilde{\hcoY}})).\]
 Considering the exact sequence \[
0\to\widetilde{\sC}^{*}((t-10)M_{\widetilde{\hcoY}}+(i-1)F_{\widetilde{\hcoY}})\to\widetilde{\sC}^{*}((t-10)M_{\widetilde{\hcoY}}+iF_{\widetilde{\hcoY}})\to\widetilde{\sC}^{*}((t-10)M_{\widetilde{\hcoY}}+iF_{\widetilde{\hcoY}})|_{F_{\widetilde{\hcoY}}}\to0,\]
 it suffices to show \begin{equation}
H^{13-\bullet}(F_{\widetilde{\hcoY}},\sA_{i})=0\text{\text{ for }}\ensuremath{i=1,2}\text{\text{ and any}}\;\ensuremath{\bullet},\label{eq:vanF}\end{equation}
 where we set \[
\sA_{i}:=\widetilde{\sC}^{*}((t-10)M_{\widetilde{\hcoY}}+iF_{\widetilde{\hcoY}})|_{F_{\widetilde{\hcoY}}}.\]
 Recall the diagrams (\ref{eq:house}) and (\ref{eq:flatten}). Since
the morphism $\widehat{F}\to F_{\widetilde{\hcoY}}$ is finite, and
$\widehat{F}$ has only rational singularities by its construction,
the vanishing follows from the vanishings of the cohomology groups
of the pull-backs of $\sA_{i}$ on ${F}^{(3)}$. Since the morphism
${F}^{(3)}\to\widehat{G}'$ is flat by Proposition~\ref{cla:union},
we have only to show the vanishing along its fibers. By the upper
semi-continuity of cohomology groups on fibers, it suffices to prove
the vanishing on fibers over points of the diagonal subset of $\widehat{G}'$.
By Proposition~\ref{cla:union}, such fibers are of the form $A\cup B$.
Note that the restriction of the pull-back of $M_{\widetilde{\hcoY}}$
to the fibers is trivial. Therefore, by Lemma \ref{cla:ST}, it suffices
to show the vanishing of the following cohomology groups: \[
H^{\bullet}(A\cup B,\sC_{A\cup B}^{*}(iF_{A\cup B}))\,(i=1,2),\]
 where $\sC_{A\cup B}$ and $F_{A\cup B}$ are the pull-backs of $\widetilde{\sC}$
and $F_{\widetilde{\hcoY}}$ to $A\cup B$, respectively. Tensoring
$\sC_{A\cup B}^{*}(iF_{A\cup B})$ with the Mayor-Vietris sequence,
we have \begin{equation}
0\to\sC_{A\cup B}^{*}(iF_{A\cup B})\to\sC_{A}^{*}(iF_{A})\oplus\sC_{B}^{*}(iF_{B})\to\sC_{A\cap B}^{*}(iF_{A\cap B})\to0,\label{eq:MV}\end{equation}
 where $\sC_{A}$, $\sC_{B}$, and $\sC_{A\cap B}$ are the restrictions
of $\sC_{A\cup B}$ to $A$, $B$ and $A\cap B$ respectively, and
$F_{A\cap B}$ is the restriction of $F_{A\cup B}$ to $A\cap B$.
By Lemma \ref{cla:ST} (1), we can easily show the vanishing of $H^{\bullet}(A,\sC_{A}^{*}(iF_{A}))$.
Moreover, by Lemma \ref{cla:ST} (2), we easily see that the restriction
maps $H^{\bullet}(B,\sC_{B}^{*}(iF_{B}))\to H^{\bullet}(A\cap B,\sC_{A\cap B}^{*}(iF_{A\cap B}))$
are isomorphisms. Therefore we have the vanishing of $H^{\bullet}(A\cup B,\sC_{A\cup B}^{*}(iF_{A\cup B}))$.
\end{proof}

\vspace{0.3cm}
 \begin{rem} Form the above lemma, we conjecture that $\sD_{\widetilde{\hcoY}},\sD_{\widetilde{\hcoY}}(1),\dots,\sD_{\widetilde{\hcoY}}(9)$
generates a strongly crepant categorical resolution (cf.~\cite{Lef}).
\hfill{}{[}{]} \end{rem}

\vspace{0.3cm}
 Let us introduce a sequence of sheaves on $\hcoY_{2}$ \[
(\sB_{i})_{1\leq i\leq4}=(\Lrho_{\hcoY_{2}}^{\;*}\sS(-L_{\hcoY_{2}}),\sT_{2}^{*},\sO_{\hcoY_{2}},\Lrho_{\hcoY_{2}}^{\;*}\sQ^{*}),\]
 and define $\sC_{ij}=\sB_{i}^{*}\otimes\sB_{j}$ for $1\leq i,j\leq4$.
Note that $\sB_{i}$ have its corresponding form to $\widetilde{\sB}_{i}$
for $i=1,2,3$, but $\sB_{4}$ is defined by removing the twist of
$M_{\widetilde{\hcoY}}$ from $\widetilde{\sB}_{4}$.

Let $\widetilde{\sD}$ be a locally free sheaf on $\widetilde{\hcoY}$
and $\sD:=\tLrho_{\hcoY_{2}}^{\;*}\widetilde{\sD}$. Since $\tLrho_{\hcoY_{2}}\colon\hcoY_{2}\to\widetilde{\hcoY}$
is a blow-up of a smooth variety, it holds that \begin{equation}
H^{\bullet}(\widetilde{\hcoY},\widetilde{\sD}(-t))\simeq H^{\bullet}(\hcoY_{2},{\sD}(-t)),\label{eq:rhs}\end{equation}
 where $(-t)$ on the right hand side represents the twist by $\sO_{\widetilde{\hcoY_{2}}}(-tM_{\widetilde{\hcoY_{2}}})$.
By Propositions \ref{def:SQT1}, \ref{def:tidelT} and Lemma \ref{cla:key},
for our cohomology calculations, it suffices to know \begin{equation}
\begin{aligned} & H^{\bullet}(\widetilde{\hcoY},\widetilde{\sC}_{i4}(-t))=H^{\bullet}(\hcoY_{2},{\sC}_{i4}(-t+1))\;\;(t=0,1,...,6)\\
 & H^{\bullet}(\widetilde{\hcoY},\widetilde{\sC}_{4j}(-t))=H^{\bullet}(\hcoY_{2},{\sC}_{4j}(-t-1))\;\;(t=0,1,...,4)\end{aligned}
\label{eq:cohY2-1}\end{equation}
 for $1\leq i,j\leq3$ and \begin{equation}
H^{\bullet}(\widetilde{\hcoY},\widetilde{\sC}_{ij}(-t))=H^{\bullet}(\hcoY_{2},{\sC}_{ij}(-t))\;\;(t=0,1,...,5)\label{eq:cohY2-2}\end{equation}
 for $1\leq i,j\leq3$ or $i=j=4$.

\vspace{0.3cm}
 \begin{lem} \label{cla:T} For the computations of $(\ref{eq:cohY2-1})$
and $(\ref{eq:cohY2-2})$, we may replace $\sT_{2}$ by $\Lpi_{\hcoY_{2}}^{\;*}T(-1)$
except in one case $\sC_{24}(1)=\sT_{2}\otimes\Lrho_{\hcoY_{2}}^{\;*}\sQ^{*}(M_{\hcoY_{2}})$.
From now on we may use the following relations: \[
\begin{aligned} & \sC_{42}(-t-1)=(\Lrho_{\hcoY_{2}}^{\;*}\sQ\otimes\Lpi_{\hcoY_{2}}^{\;*}\Omega(1))(-t-1),\;\\
 & \sC_{24}(-t+1)=(\Lpi_{\hcoY_{2}}^{\;*}T(-1)\otimes\Lrho_{\hcoY_{2}}^{\;*}\sQ^{*})(-t+1)\;\;\;(t\not=0),\;\\
 & \sC_{i2}(-t)=(\sB_{i}^{*}\otimes\Lpi_{\hcoY_{2}}^{\;*}\Omega(1))(-t)\;\;\;(i=1,3),\\
 & \sC_{2j}(-t)=(\Lpi_{\hcoY_{2}}^{\;*}T(-1)\otimes\sB_{j})(-t)\;\;\;(j=1,3),\\
 & \sC_{22}(-t)=(\Lpi_{\hcoY_{2}}^{\;*}T(-1)\otimes\Lpi_{\hcoY_{2}}^{\;*}\Omega(1))(-t).\end{aligned}
\]
 \end{lem}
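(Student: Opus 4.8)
The plan is to play the two defining sequences (\ref{eq:E_a1}) and (\ref{eq:Eb}) against each other. They present $\sT^{*}$ (resp.\ $\sT$) as a modification of $\Lpi_{\hcoY_2}^{\;*}\Omega(1)$ (resp.\ $\Lpi_{\hcoY_2}^{\;*}T(-1)$) by a sheaf supported on the divisor $F_{\rho}$: putting $\mathcal{F}:=(\Lrho_{\hcoY_2}|_{F_{\rho}})^{*}\sO_{\mP(T(-1))}(1)$ and $\mathcal{K}:=(\Lrho_{\hcoY_2}|_{F_{\rho}})^{*}\sO_{\mP(T(-1))}(-1)(F_{\rho}|_{F_{\rho}})$, these read
\[
0\to \sT^{*}\to \Lpi_{\hcoY_2}^{\;*}\Omega(1)\to \mathcal{F}\to 0,\qquad 0\to \Lpi_{\hcoY_2}^{\;*}T(-1)\to \sT\to \mathcal{K}\to 0 .
\]
For each entry of (\ref{eq:cohY2-1}) and (\ref{eq:cohY2-2}) in which $\sB_{2}=\sT^{*}$ or $\sB_{2}^{*}=\sT$ occurs, I would tensor the relevant sequence by the complementary sheaf and by the twist $\sO_{\hcoY_2}(sM_{\hcoY_2})$, where $s$ denotes the integer occurring there, and take cohomology. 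Replacing $\sT$ by $\Lpi_{\hcoY_2}^{\;*}T(-1)$ (and $\sT^{*}$ by $\Lpi_{\hcoY_2}^{\;*}\Omega(1)$) is legitimate precisely when the third term, $\mathcal{F}\otimes(\cdots)$ or $\mathcal{K}\otimes(\cdots)$, has vanishing cohomology; since this term is supported on $F_{\rho}$, by the projection formula its cohomology is computed on $F_{\rho}$. Everything therefore reduces to vanishing statements on $F_{\rho}$, for which I would use the two projective bundle structures of Proposition \ref{prop:Y2tildeY}: the blow-down $\Lrho_{\hcoY_2}|_{F_{\rho}}\colon F_{\rho}=\mP(\sN_{\rho})\to\Prt_{\rho}$ (a $\mP^{5}$-bundle) and the contraction $\tLrho_{\hcoY_2}|_{F_{\rho}}\colon F_{\rho}\to G_{\rho}$ (a $\mP^{1}$-bundle).

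The terms containing $\mathcal{F}$ are immediate. On a fibre $\delta$ of $F_{\rho}\to G_{\rho}$ one has $\mathcal{F}|_{\delta}\simeq\sO_{\delta}(-1)$: indeed $\delta$ maps isomorphically onto a fibre $\gamma$ of $\Prt_{\rho}\to \mathrm{G}(2,V)$, and $\Ht|_{\gamma}\simeq\sO_{\gamma}(-1)$ by Lemma \ref{cla:tiny}. Moreover $M_{\hcoY_2}|_{\delta}\simeq\sO_{\delta}$ and each complementary factor ($\sB_{1}^{*},\sB_{3}^{*},\sB_{4}^{*}$, or $\sT$ for $\sC_{22}$, where $\sT^{*}|_{\delta}\simeq\sO_{\delta}^{\oplus 4}$ by Subsection \ref{subsection:SLQT}) restricts to a trivial bundle on $\delta$ by Lemmas \ref{cla:tiny} and \ref{cla:Sgamma}. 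Hence the error term restricts on every fibre to a sum of copies of $\sO_{\mP^{1}}(-1)$, which is acyclic, so all higher direct images along $F_{\rho}\to G_{\rho}$ vanish and so does the total cohomology, whatever $s$ is. This disposes of $\sC_{12},\sC_{32},\sC_{42}$ and of the first replacement step for $\sC_{22}$.

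For the terms containing $\mathcal{K}$ I would push forward along $\pi:=\Lrho_{\hcoY_2}|_{F_{\rho}}\colon F_{\rho}\to\Prt_{\rho}$, using that $-F_{\rho}|_{F_{\rho}}$ is the tautological divisor of $\sN_{\rho}\simeq\ft{S}^{2}\sQ|_{\Prt_{\rho}}(-L_{\Prt_{\rho}})$. Propositions \ref{cla:M} and \ref{cla:det}\,(1) give, as divisor classes, $M_{\hcoY_2}|_{F_{\rho}}=\pi^{*}(2\Ht+L_{\Prt_{\rho}})-F_{\rho}|_{F_{\rho}}$ and $\mathcal{K}=-\pi^{*}\Ht+F_{\rho}|_{F_{\rho}}$, whence $\mathcal{K}\otimes\sB_{\bullet}(sM_{\hcoY_2})$ is a sheaf pulled back from $\Prt_{\rho}$ tensored with the $(s-1)$-st power of the tautological divisor. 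Consequently $R^{\bullet}\pi_{*}$ vanishes identically for $s\in\{0,-1,-2,-3,-4\}$, which settles the interior twists of $\sC_{21},\sC_{23},\sC_{24}$ and the second step for $\sC_{22}$. At the endpoint $s=-5$ one has $R^{5}\pi_{*}(\,\cdot\,)\simeq\det\sN_{\rho}^{*}\otimes(\text{pull-back})$, whose cohomology along $\Prt_{\rho}\simeq\mP(T(-1))\to\mP(V)$ I would show to vanish by the Bott theorem \ref{thm:Bott}. The genuinely exceptional value is $s=1$, reached only by $\sC_{24}$ (i.e.\ at $\sC_{24}(1)$): here $\pi_{*}\sO_{F_{\rho}}\simeq\sO_{\Prt_{\rho}}$ and, using $\sQ^{*}|_{\Prt_{\rho}}\simeq\sR_{\rho}(-\Ht-L_{\Prt_{\rho}})$ from Proposition \ref{cla:det}\,(2), one finds $\pi_{*}\bigl(\mathcal{K}\otimes\Lrho_{\hcoY_2}^{\;*}\sQ^{*}(M_{\hcoY_2})\bigr)\simeq\sR_{\rho}$, and then $H^{0}(\Prt_{\rho},\sR_{\rho})\simeq H^{0}(\mP(V),T(-1))\simeq V\neq 0$. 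This nonvanishing is precisely what forbids the replacement in the single case $\sC_{24}(1)$.

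The main obstacle is not conceptual but lies in the twist bookkeeping: one must check that the interior range $s\in\{0,\dots,-4\}$ together with the two boundary values covers every entry of (\ref{eq:cohY2-1}) and (\ref{eq:cohY2-2}) that contains $\sT$ or $\sT^{*}$, and that $s=1$ is attained by $\sC_{24}$ alone. The delicate point is the endpoint $s=-5$, where the fibrewise argument breaks down and one must carry out the full Bott computation on the $\mP^{3}$-bundle $\Prt_{\rho}\to\mP(V)$ to confirm vanishing; this is the only step that is not forced by pure fibre-dimension reasons.
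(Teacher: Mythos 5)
Your proposal is correct and follows essentially the same route as the paper's proof: both tensor the defining sequences (\ref{eq:E_a1}) and (\ref{eq:Eb}) with the complementary factors, reduce to vanishing of the error terms supported on $F_{\rho}$, kill the $\sT^*$-type terms via the $\mP^1$-bundle $F_{\rho}\to G_{\rho}$ (fibres restricting to sums of $\sO_{\mP^1}(-1)$, for every twist), and kill the $\sT$-type terms via the $\mP^5$-bundle $F_{\rho}\to \Prt_{\rho}$ in the dead zone of twists, with the endpoint twist handled by a Serre-duality reduction to a computation on $\Prt_{\rho}\to\mP(V)$ (the paper uses global Serre duality on $F_{\rho}$ where you use relative duality along $\pi$, a purely cosmetic difference). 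Your explicit verification that $\pi_*$ of the error term at the exceptional twist equals $\sR_{\rho}$ with $H^0\simeq V\neq 0$ is not needed for the lemma itself but correctly matches the paper's later computation of $\widetilde{\sC}_{24}$.
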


\begin{proof} First we consider $\sC_{i2}(-t)$. By the exact sequence
(\ref{eq:exact-seq-T2}), we have \[
0\to\sT_{2}^{*}\otimes{\sD}(-t)\to\Lpi_{\hcoY_{2}}^{*}\Omega(1)\otimes{\sD}(-t)\to(\Lrho_{\hcoY_{2}}|_{F_{\rho}})^{*}(\sR_{2}/\sR_{1})^{*}\otimes{\sD}(-t)|_{F_{\rho}}\to0\]
 for a locally free sheaf ${\sD}$ on $\hcoY_{2}$. For ${\sD}=\sO_{\hcoY_{2}}$,
$\Lrho_{\hcoY_{2}}^{\;*}\sQ$, $\Lrho_{\hcoY_{2}}^{\;*}\sS^{*}(L_{\hcoY_{2}})$
or $\sT_{2}$, it holds that \[
\text{\ensuremath{H^{\bullet}(F_{\rho},(\Lrho_{\hcoY_{2}}|_{F_{\rho}})^{*}(\sR_{2}/\sR_{1})^{*}\otimes{\sD}(-t)|_{F_{\rho}})=0} for any \ensuremath{t}}\]
 by the Leray spectral sequence for $\tLrho_{\hcoY_{2}}|_{F_{\rho}}\colon F_{\rho}\to G_{\rho}$
since $\tLrho_{\hcoY_{2}}|_{F_{\rho}}$ is a $\mP^{1}$-bundle and
the restriction of $(\Lrho_{\hcoY_{2}}|_{F_{\rho}})^{*}(\sR_{2}/\sR_{1})^{*}\otimes\sD(-t)|_{F_{\rho}}$
to its fiber is a direct sum of $\sO_{\mP^{1}}(-1)$ (cf. the proof
of Lemma \ref{lem:T2-sum}). Therefore we have \[
H^{\bullet}({\hcoY_{2}},\sT_{2}^{*}\otimes{\sD}(-t))\simeq H^{\bullet}({\hcoY_{2}},\Lpi_{\hcoY_{2}}^{\;*}\Omega(1)\otimes{\sD}(-t))\]
 for ${\sD}=\sO_{\hcoY_{2}}$, $\Lrho_{\hcoY_{2}}^{\;*}\sQ$, $\Lrho_{\hcoY_{2}}^{\;*}\sS^{*}(L_{\hcoY_{2}})$
or $\sT_{2}$ and for any $t$.

Next we consider $\sC_{2j}(-t)$ except $\sC_{24}(1)$. By the exact
sequence (\ref{eq:Eb}), we have \[
0\to\Lpi_{\hcoY_{2}}^{\;*}T(-1)\otimes{\sD}(-t)\to\sT_{2}\otimes{\sD}(-t)\to(\Lrho_{\hcoY_{2}}|_{F_{\rho}})^{*}(\sR_{2}/\sR_{1})\otimes{\sD}(-t{M}_{\hcoY_{2}}+F_{\rho})|_{F_{\rho}}\to0\]
 for a locally free sheaf $\sD$ on $\hcoY_{2}$. Set ${\sD}=\sO_{\hcoY_{2}}$,
$\Lrho_{\hcoY_{2}}^{\;*}\sQ^{*}$, $\Lrho_{\hcoY_{2}}^{\;*}\sS(-L_{\hcoY_{2}})$,
or $\Lpi_{\hcoY_{2}}^{\;*}\Omega(1)$. We show the vanishing of \begin{equation}
H^{\bullet}(F_{\rho},(\Lrho_{\hcoY_{2}}|_{F_{\rho}})^{*}(\sR_{2}/\sR_{1})\otimes{\sD}(-t{M}_{\hcoY_{2}}+F_{\rho})|_{F_{\rho}}).\label{eq:Frho}\end{equation}
 By Proposition \ref{cla:mislead}, we have only to consider the case
where ${\sD}=\sO_{\hcoY_{2}}$, $\Lrho_{\hcoY_{2}}^{\;*}\sQ^{*}$
or $\Lpi_{\hcoY_{2}}^{\;*}\Omega(1)$. For $0\leq t\leq4$, the vanishing
of (\ref{eq:Frho}) follows from the Leray spectral sequence for ${\Lrho}_{\hcoY_{2}}|_{F_{\rho}}\colon F_{\rho}\to\Prt_{\rho}$
since ${\Lrho}_{\hcoY_{2}}|_{F_{\rho}}$ is a $\mP^{5}$-bundle and
the restriction of $(\Lrho_{\hcoY_{2}}|_{F_{\rho}})^{*}(\sR_{2}/\sR_{1})\otimes{\sD}(-t{M}_{\hcoY_{2}}+F_{\rho})|_{F_{\rho}}$
to its fiber is a direct sum of $\sO_{\mP^{5}}(-(t+1))$ by Proposition
\ref{cla:M}. Therefore we may assume that $t=5$ from now on. Each
of the cohomology groups (\ref{eq:Frho}) is Serre dual to \begin{equation}
H^{12-\bullet}(F_{\rho},(\Lrho_{\hcoY_{2}}|_{F_{\rho}})^{*}(\sR_{2}/\sR_{1})^{*}\otimes{\sD}^{*}(\rho_{\hcoY_{2}}^{*}(-\det\sQ-L_{\hcoY_{3}}))|_{F_{\rho}})\label{eq:dammy}\end{equation}
 by (\ref{eq:canY2}) and Proposition \ref{cla:M}. Since $\Lrho_{\hcoY_{2}}$
is the blow-up of a smooth variety and ${\sD}$ is the pull-back of
a locally free sheaf $\overline{\sD}$ on $\hcoY_{3}$, each of the
cohomology groups (\ref{eq:dammy}) is isomorphic to \begin{equation}
H^{12-\bullet}(\Prt_{\rho},(\sR_{2}/\sR_{1})^{*}\otimes{\overline{\sD}}^{*}(-\det\sQ-L_{\hcoY_{3}})|_{\Prt_{\rho}}).\label{eq:dammy2}\end{equation}
 Using Proposition \ref{cla:det} (1) and (2), we can write \[
(\sR_{2}/\sR_{1})^{*}\otimes{\overline{\sD}}^{*}(-\det\sQ-L_{\hcoY_{3}})|_{\Prt_{\rho}}=\begin{cases}
\sO_{\Prt_{\rho}}(-H_{\Prt_{\rho}}\text{-- }3L_{\Prt_{\rho}})\text{ for }\overline{\sD}=\sO_{\hcoY_{3}}\\
(\sR_{V}/\sR_{2})^{*}(-2L_{\Prt_{\rho}})\text{ for }\overline{\sD}=\sQ^{*}\\
(\sR_{V}/\sR_{1})(-H_{\Prt_{\rho}}\text{-- }3L_{\Prt_{\rho}})\text{ for }\overline{\sD}=\Lpi_{\hcoY_{3}}^{\;*}\Omega(1),\end{cases}\]
where we use $(\sR_{2}/\sR_{1})^{*}=\sO_{\mP(T(-1))}(1)=H_{\Prt_{\rho}}$.
All the cohomology groups of the restrictions of these sheaves to
a fiber of $\Lpi_{\hcoY_{3}}|_{\Prt_{\rho}}\colon\Prt_{\rho}=F(1,2,V)\to\mP(V)$
vanish. Hence, by the Leray spectral sequence for $\Lpi_{\hcoY_{3}}|_{\Prt_{\rho}}$,
the cohomology groups (\ref{eq:dammy2})\textcolor{red}{{} }vanish,
too. \end{proof}

By the following simple {lemma,} we can reduce most of the computations
of cohomology groups to those on $\hcoY_{3}$:

\begin{lem} \label{cla:van2} $\;$

\noindent{\rm (1)} $R^{q}\Lrho_{\hcoY_{2}*}\sO_{\hcoY_{2}}(tF_{\rho})=0$
for any $t\leq5$ and $q>0$. 

\noindent{\rm (2)} $\Lrho_{\hcoY_{2}*}\sO_{\hcoY_{2}}(tF_{\rho})=\sO_{\hcoY_{3}}$
for $t\geq0$. 

\end{lem}

\begin{proof} Part (1) follows from the relative Kodaira vanishing
theorem since $tF_{\rho}-K_{\hcoY_{2}}\equiv_{\hcoY_{3}}(t-5)F_{\rho}$
is $\Lrho_{\hcoY_{2}}$-nef and $\Lrho_{\hcoY_{2}}$-big if $t\leq5$.

Part (2) is well-known. \end{proof}

\vspace{0.3cm}
 Now define an ordered collection of sheaves on $\hcoY_{3}$, \[
(\overline{\sB}_{i})_{1\leq i\leq4}=\big(\sS(-L_{\hcoY_{3}}),\Lpi_{\hcoY_{3}}^{\;*}\Omega(1),\sO_{\hcoY_{3}},\sQ^{*}\big),\]
 and set $\overline{\sC}_{ij}=\overline{\sB}_{i}^{*}\otimes\overline{\sB}_{j}$.

\begin{prop} \label{cla:cohY2-Y3} The cohomology groups on $\hcoY_{2}$
in the r.h.s. of $(\ref{eq:cohY2-1})$ and $(\ref{eq:cohY2-2})$ can
be evaluated by using \begin{equation}
H^{\bullet}(\hcoY_{2},\sC_{ij}(-t))\simeq H^{\bullet}(\hcoY_{3},\overline{\sC}_{ij}(-t\det\sQ+tL_{\hcoY_{3}}))\;\;(t=0,1,...,5)\label{eq:lastvan}\end{equation}
 for $1\leq i,j\leq4$ except the cases of $\sC_{i4}(1)\;(1\leq i\leq3)$.
$($We read $t-1$ and $t+1$ in the r.h.s. of $(\ref{eq:cohY2-1})$
as $t$ here.$)$ \end{prop}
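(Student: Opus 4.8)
The plan is to push every cohomology group forward along the blow-up $\Lrho_{\hcoY_2}\colon \hcoY_2\to \hcoY_3$ and reduce it to a cohomology group on $\hcoY_3$, where the Bott theorem \ref{thm:Bott} for the $\mathrm{G}(3,6)$-bundle $\hcoY_3\to \mP(V)$ becomes available. First I would invoke Lemma \ref{cla:T} to eliminate the only sheaf that is not a pull-back from $\hcoY_3$, namely $\sT$, replacing $\sT^*$ by $\Lpi_{\hcoY_2}^{\;*}\Omega(1)$ and $\sT$ by $\Lpi_{\hcoY_2}^{\;*}T(-1)$ in all the relevant groups. This substitution is valid in every case except $\sC_{24}(1)$, which is precisely why the cases $\sC_{i4}(1)$ are set aside in the statement. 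Since $\Lpi_{\hcoY_2}=\Lpi_{\hcoY_3}\circ\Lrho_{\hcoY_2}$, a direct inspection of the collections $(\sB_i)$ and $(\overline{\sB}_i)$ then shows $\sB_i=\Lrho_{\hcoY_2}^{\;*}\overline{\sB}_i$ for $i=1,3,4$ and, after the replacement, for the modified $\sB_2$ as well; hence $\sC_{ij}=\Lrho_{\hcoY_2}^{\;*}\overline{\sC}_{ij}$ for all indices occurring in the cases under consideration.

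The second step treats the twist. By Proposition \ref{cla:M} we have $M_{\hcoY_2}=\Lrho_{\hcoY_2}^{\;*}(\det\sQ)-L_{\hcoY_2}-F_{\rho}$, and since $L_{\hcoY_2}=\Lrho_{\hcoY_2}^{\;*}L_{\hcoY_3}$ this yields
\[
\sC_{ij}(-t)\simeq \Lrho_{\hcoY_2}^{\;*}\bigl(\overline{\sC}_{ij}(-t\det\sQ+tL_{\hcoY_3})\bigr)\otimes \sO(tF_{\rho}).
\]
Applying the projection formula together with Lemma \ref{cla:van2}, which gives $R^q\Lrho_{\hcoY_2\,*}\sO(tF_{\rho})=0$ for $q>0$ and $\Lrho_{\hcoY_2\,*}\sO(tF_{\rho})=\sO_{\hcoY_3}$ in the range $0\leq t\leq 5$, the Leray spectral sequence degenerates and produces the isomorphism (\ref{eq:lastvan}). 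Here the twisting sheaf $-t\det\sQ+tL_{\hcoY_3}$ is invertible and $\overline{\sC}_{ij}$ is locally free, so the projection formula applies without complication.

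Finally I would reconcile the shifts in the right-hand sides of (\ref{eq:cohY2-1}). For $\sC_{i4}$ the actual twist is $-t+1$, i.e. $\sO(-sM_{\hcoY_2})$ with $s=t-1$ running over $-1,0,\dots,5$; the boundary value $s=-1$ is the excluded $\sC_{i4}(1)$, where $\Lrho_{\hcoY_2\,*}\sO(-F_{\rho})=\sI_{\Prt_{\rho}}$ rather than $\sO_{\hcoY_3}$, so Lemma \ref{cla:van2} does not apply and the clean reduction genuinely fails. For the remaining $0\leq s\leq 5$, and for $\sC_{4j}$ with twist $-t-1$ (so $s=t+1\in\{1,\dots,5\}$), the preceding argument goes through verbatim. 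I do not expect a real obstacle in this proposition: all of the substance is carried by Proposition \ref{cla:M} and Lemmas \ref{cla:T} and \ref{cla:van2}, and the only point requiring care is the bookkeeping of the twist shifts against the range $0\leq t\leq 5$ dictated by Lemma \ref{cla:van2}.
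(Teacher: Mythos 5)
Your proposal is correct and follows essentially the same route as the paper: Lemma \ref{cla:T} to replace $\sT$ by the pull-back $\Lpi_{\hcoY_2}^{\;*}T(-1)$, Proposition \ref{cla:M} to rewrite the twist $-tM_{\hcoY_2}$ as $\Lrho_{\hcoY_2}^{\;*}(-t\det\sQ+tL_{\hcoY_3})+tF_{\rho}$, and then the projection formula with Lemma \ref{cla:van2} and the Leray spectral sequence to descend to $\hcoY_3$. Your extra remark that $\Lrho_{\hcoY_2\,*}\sO(-F_{\rho})=\sI_{\Prt_{\rho}}$, explaining why the boundary case $t=-1$ (i.e.\ $\sC_{i4}(1)$) genuinely fails, is a correct elaboration of the paper's terse exclusion of those cases.
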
 

\begin{proof} Note that we may assume that the relation $\sC_{ij}=\Lrho_{\hcoY_{2}}^{\;*}\overline{\sC}_{ij}$
holds for $\sC_{ij}(-t)$ except $\sC_{24}(1)$ by Lemma \ref{cla:T}.
Then by the Leray spectral sequence for $\Lrho_{\hcoY_{2}}$, and
Proposition \ref{cla:M} and Lemma \ref{cla:van2}, we have the claimed
isomorphisms for the range of $t$ in (\ref{eq:cohY2-1}) and (\ref{eq:cohY2-2})
except the cases $\sC_{i4}(1)\;(1\leq i\leq3)$, for which $t=-1$.
\end{proof}

\vspace{0.5cm}

\subsection{Calculations on $\hcoY_{3}$}

Here first we calculate the r.h.s. of (\ref{eq:lastvan}) postponing
the cases $C_{i4}(1)(1\leq i\leq3)$ to the latter half of this subsection.
We use the Leray spectral sequence associated with $\Lpi_{\hcoY_{3}}\colon\hcoY_{3}\to\mP(V)$.

Let $G\simeq\mathrm{G}(3,6)$ be a fiber of $\Lpi_{\hcoY_{3}}$. Since
$L_{\hcoY_{3}}$, $\Lpi_{\hcoY_{3}}^{\;*}T(-1)$ and $\Lpi_{\hcoY_{3}}^{\;*}\Omega(1)$
are pull-backs of sheaves on $\mP(V)$, and also $\det Q|_{G}=\sO_{G}(1)$
holds, we can write the restrictions of $\overline{\sC}_{ij}(-t\det Q+tL_{\hcoY_{3}})$
to $G$ by using the following sheaves: \begin{equation}
\begin{aligned} & \sS^{*}|_{G}\otimes\sS|_{G}(-t),\;\;\;\sS^{*}|_{G}(-t),\;\;\;\sS^{*}|_{G}\otimes\sQ^{*}|_{G}(-t),\;\;\;\\
 & \sS|_{G}(-t),\;\;\;\sO_{G}(-t),\;\;\;\sQ^{*}|_{G}(-t),\;\;\;\sQ|_{G}\otimes\sQ^{*}|_{G}(-t),\\
 & \sQ|_{G}\otimes\sS|_{G}(-t),\;\;\;\sQ|_{G}(-t)\end{aligned}
\quad\begin{aligned} & (0\leq t\leq5)\\
 & (0\leq t\leq5)\\
 & (1\leq t\leq5)\end{aligned}
\label{eq:coh-fiberG}\end{equation}
 where $(-t)$ represents the twist by $\sO_{G}(-t)$. 

\begin{lem} \label{lem:nonvan} All the cohomology groups of the
sheaves in $(\ref{eq:coh-fiberG})$ vanish except \[
H^{0}(\sS^{*}|_{G}\otimes\sS|_{G}),\;\; H^{0}(\sS^{*}|_{G}),\;\; H^{0}(\sO_{G}),\text{ and }H^{0}(\sQ|_{G}\otimes\sQ^{*}|_{G}).\]
 \end{lem}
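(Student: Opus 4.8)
The plan is to compute every cohomology group in the list directly on the fiber $G \simeq \mathrm{G}(3,6)$ by the Bott theorem (Theorem~\ref{thm:Bott}), applied to the structure morphism $G = \mathrm{G}(3,\wedge^2(V/V_1)) \to \mathrm{pt}$ whose universal sequence is the restriction of (\ref{eq:univ}), namely $0 \to \sS|_G^* \to \wedge^2(V/V_1)\otimes \sO_G \to \sQ|_G \to 0$ with both $\sS|_G$ and $\sQ|_G$ of rank three. First I would fix the normalizations: since $\wedge^6(\wedge^2(V/V_1))$ is trivial we have $\det\sS|_G = \det\sQ|_G = \sO_G(1)$, so the twist $\sO_G(-t)$ equals $(\det\sS|_G)^{-t} = (\det\sQ|_G^*)^{t}$ and simply shifts the relevant dominant weight by $(t,t,t)$. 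With $\rho = (6,5,4,3,2,1)$, a summand $\ft{\Sigma}^\beta\sS|_G \otimes \ft{\Sigma}^\gamma\sQ|_G^*$ with $\alpha = (\beta,\gamma) \in \mZ^6$ contributes nonzero cohomology if and only if the six entries of $\alpha+\rho$ are pairwise distinct, and then only in the single degree $l(\sigma)$, where $\sigma$ is the permutation sorting $\alpha + \rho$.

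The second step is to decompose each of the nine sheaf types into irreducibles of the form $\ft{\Sigma}^\beta\sS|_G \otimes \ft{\Sigma}^\gamma\sQ|_G^*$. Using the rank-three identities $\sS|_G^* = \ft{\Sigma}^{(0,0,-1)}\sS|_G$ and $\sQ|_G = \ft{\Sigma}^{(0,0,-1)}\sQ|_G^*$ together with the $\mathrm{GL}_3$-splitting $E\otimes E^* = \ft{\Sigma}^{(0,0,0)}E \oplus \ft{\Sigma}^{(1,0,-1)}E$, I get $\sS|_G\otimes\sS|_G^* = \sO_G \oplus \ft{\Sigma}^{(1,0,-1)}\sS|_G$ and $\sQ|_G\otimes\sQ|_G^* = \sO_G \oplus \ft{\Sigma}^{(1,0,-1)}\sQ|_G^*$, while $\sS|_G\otimes\sQ|_G^*$ and $\sQ|_G\otimes\sS|_G^*$ are already irreducible, with weights $\big((1,0,0),(1,0,0)\big)$ and $\big((0,0,-1),(0,0,-1)\big)$ respectively; the single bundles $\sS|_G$, $\sS|_G^*$, $\sQ|_G$, $\sQ|_G^*$, $\sO_G$ correspond to the evident one-box or empty weights. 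Then I would feed each weight, shifted by its twist $(t,t,t)$, into Bott's criterion and read off the answer.

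The outcome I expect is that for every $t \geq 1$ the twisted weight lands in the region where $\alpha+\rho$ has a repeated entry (or forces a permutation of positive length), so all those groups vanish, and that at $t=0$ the only survivors are the degree-zero contributions of the trivial or identity summands, giving $H^0(\sO_G)\simeq\mC$, $H^0(\sS|_G\otimes\sS|_G^*)\simeq\mC$, $H^0(\sQ|_G\otimes\sQ|_G^*)\simeq\mC$, and $H^0(\sS|_G)$. The adjoint pieces contribute nothing even at $t=0$ because $(1,0,-1,0,0,0)+\rho = (7,5,3,3,2,1)$ and $(0,0,0,1,0,-1)+\rho = (6,5,4,4,2,0)$ each repeat an entry; similarly $\sS|_G^*$, $\sQ|_G^*$, $\sS|_G\otimes\sQ|_G^*$ and $\sQ|_G\otimes\sS|_G^*$ already vanish at $t=0$ (their weights give $(6,5,3,3,2,1)$, $(6,5,4,4,2,1)$, $(7,5,4,4,2,1)$ and $(6,5,3,3,2,0)$ respectively, each with a repetition), and negative twisting only worsens this. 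The computation is otherwise entirely mechanical, so the one real obstacle is bookkeeping: keeping the $\sQ$-versus-$\sQ^*$ convention straight so that the $\gamma$-slot genuinely records $\ft{\Sigma}^\gamma\sQ|_G^*$, and shifting each of the roughly fifty weight vectors correctly by both $\rho$ and the twist. I would present this verification as a single compact table, indexed by the nine sheaf types and the admissible range of $t$, exhibiting in each row the repeated coordinate of $\alpha+\rho$ that forces the vanishing and flagging only the four $t=0$ exceptions.
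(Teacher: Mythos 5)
Your proposal is correct and is essentially the paper's own proof: the paper disposes of this lemma in a single line (``These claims follow by the Bott theorem \ref{thm:Bott}''), and your weight-by-weight verification --- writing each of the nine sheaf types as a sum of $\ft{\Sigma}^\beta\sS|_G\otimes\ft{\Sigma}^\gamma\sQ^*|_G$, absorbing the twist $\sO_G(-t)=(\det\sQ|_G^*)^{t}$ into $\gamma$, and checking $\alpha+\rho$ against $\rho=(6,5,4,3,2,1)$ --- is exactly the computation that line leaves implicit, with the four $t=0$ survivors and all repeated entries as you state. One small caution: your parenthetical ``or forces a permutation of positive length'' is not by itself a reason for vanishing (a repetition-free $\alpha+\rho$ sorted by a positive-length permutation gives \emph{non}vanishing cohomology in that positive degree), but the slip is harmless here since in every vanishing case, for all $0\leq t\leq 5$, the vector $\alpha+\rho$ genuinely contains a repeated entry, which your table would exhibit.
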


\begin{proof} We can verify these properties by using the Bott theorem
\ref{thm:Bott}. \end{proof}

\begin{prop} Consider the cohomologies $H^{\bullet}(\hcoY_{2},\sC_{ij}(-t))$
except the cases $\sC_{i4}(1)\,(1\leq i\leq3)$ as in Proposition
$\ref{cla:cohY2-Y3}$. The r.h.s. of $(\ref{eq:lastvan})$ vanishes
except possibly \begin{equation}
H^{\bullet}(\hcoY_{2},\sC_{ij})\simeq H^{\bullet}(\mP(V),{\Lpi_{\hcoY_{3}*}}\overline{\sC}_{ij})\;\;\text{with }1\leq i,j\leq3\text{ or }i=j=4.\label{eq:non-vCij}\end{equation}
 \end{prop}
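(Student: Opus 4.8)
The plan is to evaluate the right-hand side of (\ref{eq:lastvan}) by means of the Leray spectral sequence for the Grassmann bundle $\Lpi_{\hcoY_3}\colon \hcoY_3\to\mP(V)$, whose fibres $G$ are copies of $\mathrm{G}(3,6)$. For $\mathcal{G}:=\overline{\sC}_{ij}(-t\det\sQ+tL_{\hcoY_3})$ this spectral sequence reads
\[
E_2^{p,q}=H^p(\mP(V),R^q\Lpi_{\hcoY_3 *}\mathcal{G})\Rightarrow H^{p+q}(\hcoY_3,\mathcal{G}).
\]
Since $\Lpi_{\hcoY_3}$ is smooth and proper, the higher direct images $R^q\Lpi_{\hcoY_3 *}\mathcal{G}$ are governed by the fibrewise cohomology $H^q(G,\mathcal{G}|_G)$ through the theorem on cohomology and base change. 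First I would restrict $\mathcal{G}$ to a fibre $G$: because $L_{\hcoY_3}$, $\Lpi_{\hcoY_3}^{\;*}T(-1)$ and $\Lpi_{\hcoY_3}^{\;*}\Omega(1)$ are pulled back from $\mP(V)$ they become trivial on $G$, while $\det\sQ|_G\simeq\sO_G(1)$; hence $\mathcal{G}|_G$ is a direct sum of the $\sO_G(-t)$-twisted sheaves enumerated in (\ref{eq:coh-fiberG}), with the pairs $(i,j)$ and twists $t$ dictated by (\ref{eq:cohY2-1}) and (\ref{eq:cohY2-2}).

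Next I would feed this into Lemma \ref{lem:nonvan}, which asserts that every one of those fibre cohomology groups vanishes except $H^0(\sS|_G\otimes\sS^*|_G)$, $H^0(\sS|_G)$, $H^0(\sO_G)$ and $H^0(\sQ|_G\otimes\sQ^*|_G)$, all occurring only at $t=0$. In each case where the fibre cohomology vanishes in all degrees --- this covers all $t\geq 1$, together with the sheaves $\sS^*|_G$, $\sQ^*|_G$, $\sS|_G\otimes\sQ^*|_G$ and $\sQ|_G\otimes\sS^*|_G$ even at $t=0$ --- descending induction in cohomology and base change forces $R^q\Lpi_{\hcoY_3 *}\mathcal{G}=0$ for all $q$, whence $H^{\bullet}(\hcoY_3,\mathcal{G})=0$ by the spectral sequence. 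In the remaining cases the fibre cohomology is concentrated in degree $0$ with locally constant rank, so by Grauert's theorem $R^0\Lpi_{\hcoY_3 *}\mathcal{G}$ is locally free and $R^q\Lpi_{\hcoY_3 *}\mathcal{G}=0$ for $q>0$; the spectral sequence then degenerates and gives, at $t=0$,
\[
H^{\bullet}(\hcoY_3,\overline{\sC}_{ij})\simeq H^{\bullet}(\mP(V),\Lpi_{\hcoY_3 *}\overline{\sC}_{ij}),
\]
which is precisely (\ref{eq:non-vCij}) once (\ref{eq:lastvan}) is invoked.

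It then remains to read off the admissible index range, matching the four surviving fibre sheaves back to the pairs $(i,j)$. The factor $\sS|_G\otimes\sS^*|_G$ arises from $\overline{\sC}_{11}$, the factor $\sS|_G$ from $\overline{\sC}_{12}$ and $\overline{\sC}_{13}$ (while $\sS^*|_G$, with vanishing $H^0$, comes from $\overline{\sC}_{21},\overline{\sC}_{31}$), the factor $\sO_G$ from $\overline{\sC}_{ij}$ with $i,j\in\{2,3\}$, and $\sQ|_G\otimes\sQ^*|_G$ from $\overline{\sC}_{44}$. The pairs mixing the index $4$ with $\{1,2,3\}$ contribute nothing: the sheaves $\sS|_G\otimes\sQ^*|_G$ and $\sQ^*|_G$ attached to $\overline{\sC}_{14},\overline{\sC}_{24},\overline{\sC}_{34}$ already have no cohomology at $t=0$, while the pairs $\overline{\sC}_{4j}$ with $j\in\{1,2,3\}$ only ever occur for $t\geq 1$ by the bookkeeping of (\ref{eq:cohY2-1}) and so fall under the vanishing case. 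This pins the nonvanishing locus down to $1\leq i,j\leq 3$ or $i=j=4$, as claimed.

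The main obstacle will be the careful cohomology-and-base-change argument upgrading the fibrewise vanishing of Lemma \ref{lem:nonvan} to vanishing (or locally-free concentration in degree $0$) of the higher direct images, together with the bookkeeping that tracks exactly which twists $t$ are needed for each pair $(i,j)$; the essential subtlety is that the $\overline{\sC}_{4j}$ pairs enter only with a strictly negative shift and hence never at $t=0$, so they never produce the potentially nonzero term.
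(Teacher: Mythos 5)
Your proposal is correct and takes essentially the same route as the paper: the paper's one-sentence proof (``this follows from (\ref{eq:lastvan}) and Lemma \ref{lem:nonvan}'') implicitly relies on exactly the Leray spectral sequence for $\Lpi_{\hcoY_3}\colon \hcoY_3\to \mP(V)$ and the cohomology-and-base-change argument that you spell out, combined with the same matching of fiber sheaves in (\ref{eq:coh-fiberG}) to index pairs $(i,j)$. In particular, your two key bookkeeping observations --- that the sheaves $\overline{\sC}_{i4}$ $(1\leq i\leq 3)$ restrict to $\sS|_G\otimes\sQ^*|_G$ or $\sQ^*|_G$, whose cohomology vanishes even at $t=0$, while the pairs $\overline{\sC}_{4j}$ $(1\leq j\leq 3)$ enter only with twist $t\geq 1$ because of the shift in (\ref{eq:cohY2-1}) --- are precisely what the paper's range restrictions in (\ref{eq:coh-fiberG}) encode.
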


\begin{proof} This follows from the isomorphisms (\ref{eq:lastvan})
and Lemma \ref{lem:nonvan}. \end{proof}

\vspace{0.3cm}
 For the evaluations of the r.h.s. of (\ref{eq:non-vCij}), we can
use the Bott theorem for $\Lpi_{\hcoY_{3}}\colon\hcoY_{3}\to\mP(V)$.
All non-vanishing sheaves turns out to be \[
\begin{matrix}\Lpi_{\hcoY_{3}*}\overline{\sC}_{11}, & \Lpi_{\hcoY_{3}*}\overline{\sC}_{12}, & \Lpi_{\hcoY_{3}*}\overline{\sC}_{13}\\
 & \Lpi_{\hcoY_{3}*}\overline{\sC}_{22}, & \Lpi_{\hcoY_{3}*}\overline{\sC}_{23}\\
 &  & \Lpi_{\hcoY_{3}*}\overline{\sC}_{33}\end{matrix}\;\;\;\;\simeq\;\;\begin{matrix}\sO, & T(-1)^{\wedge2}\otimes\Omega(1), & T(-1)^{\wedge2}\\
 & T(-1)\otimes\Omega(1), & T(-1)\\
 &  & \sO\end{matrix}\]
 and $\Lpi_{\hcoY_{3}*}\overline{\sC}_{44}\simeq\sO$.

We can compute the cohomology groups on $\mP(V)$ by applying the
Bott theorem \ref{thm:Bott} again. For the calculations, we first
evaluate irreducible decompositions, for example, \[
T(-1)^{\wedge2}\otimes\Omega(1)\simeq\Sigma^{(0,0,0,-1)}\Omega(1)\oplus\Sigma^{(1,0,-1,-1)}\Omega(1)\]
 by the Littlewood-Richardson rule. In this way, we finally obtain
the following non-vanishing cohomology groups: \[
\begin{matrix}H^{0}(\hcoY_{2},\sC_{11}), & H^{0}(\hcoY_{2},\sC_{12}), & H^{0}(\hcoY_{2},\sC_{13})\\
 & H^{0}(\hcoY_{2},\sC_{22}), & H^{0}(\hcoY_{2},\sC_{23})\\
 &  & H^{0}(\hcoY_{2},\sC_{33})\end{matrix}\;\;\simeq\begin{matrix}\mC, & V, & \wedge^{2}V\\
 & \mC, & V\\
 &  & \mC\end{matrix}\]
 and $H^{0}(\hcoY_{2},\sC_{44})\simeq\mC$.

\vspace{0.5cm}
 Now we turn our attention to the cases $H^{\bullet}(\hcoY_{2},\sC_{i4}(1))\;(1\leq i\leq3)$
for which the isomorphism (\ref{eq:lastvan}) does not apply.

\pagebreak{}

${\bf (1)}$ $H^{\bullet}(\widetilde{\hcoY},\widetilde{\sC}_{14})\simeq H^{\bullet}(\hcoY_{2},\sC_{14}(1))\simeq H^{0}(\hcoY_{2},\sC_{14}(1))\simeq\ft{S}^{2}V$.

\noindent \;

For $\sC_{14}(1)=\Lrho_{\hcoY_{2}}^{\;*}\overline{\sC}_{14}(1)$,
we have \[
\sC_{14}(1)=\Lrho_{\hcoY_{2}}^{\;*}(\sS^{*}(L_{\hcoY_{3}})\otimes\sQ^{*})(M_{\hcoY_{2}})\simeq\Lrho_{\hcoY_{2}}^{\;*}(\sS^{*}\otimes\sQ^{*}(\det\sQ))(-F_{\rho})\]
 by Proposition \ref{cla:M}. Consider the exact sequence \[
\begin{aligned}0\to\Lrho_{\hcoY_{2}}^{\;*}(\sS^{*}\otimes\sQ^{*}(\det\sQ))(-F_{\rho}) & \to\Lrho_{\hcoY_{2}}^{\;*}(\sS^{*}\otimes\sQ^{*}(\det\sQ))\\
 & \to(\Lrho_{\hcoY_{2}}|_{F_{\rho}})^{*}(\sS^{*}\otimes\sQ^{*}(\det\sQ)|_{\Prt_{\rho}})\to0.\end{aligned}
\]
 We evaluate the cohomology of the middle term by $H^{\bullet}(\hcoY_{3},\sS^{*}\otimes\sQ^{*}(\det\sQ))\simeq\oplus_{i}H^{\bullet-i}(\mP(V),R_{\Lpi_{\hcoY_{3}}*}^{i}(\sS^{*}\otimes\sQ^{*}(\det\sQ)))$.
By the Bott theorem for $\Lpi_{\hcoY_{3}}\colon\hcoY_{3}\to\mP(V)$,
it is easy to see that the only non-trivial term comes from ${\Lpi_{\hcoY_{3}*}}(\sS^{*}\otimes\sQ^{*}(\det\sQ))\simeq\Sigma^{(1,1,0,0,0,-1)}T(-1)^{\wedge2}.$
To use the Bott theorem again for the cohomology over $\mP(V)$, we
note the following plethysm of the Schur functors: \[
\Sigma^{(1,1,0,0,0,-1)}T(-1)^{\wedge2}\simeq\Sigma^{(2,0,0,0)}T(-1)\oplus\Sigma^{(1,1,1,-1)}T(-1)\oplus\Sigma^{(2,1,0,-1)}T(-1).\]
 Then the only non-vanishing result comes from the first summand,
and we finally obtain $H^{\bullet}(\hcoY_{3},\sS^{*}\otimes\sQ^{*}(\det\sQ))=H^{0}(\mP(V),\Sigma^{(2,0,0,0)}T(-1))\simeq\ft{S}^{2}V$.

Now, let us note that $H^{\bullet}(F_{\rho},({\Lrho_{\hcoY_{2}}|_{F_{\rho}}})^{*}(\sS^{*}\otimes\sQ^{*}(\det\sQ)|_{\Prt_{\rho}}))\simeq H^{\bullet}(\Prt_{\rho},\sS^{*}\otimes\sQ^{*}(\det\sQ)|_{\Prt_{\rho}}).$
By Propositions \ref{cla:mislead} and \ref{cla:det}, we have $\sS^{*}\otimes\sQ^{*}(\det\sQ)|_{\Prt_{\rho}}\simeq(\sR_{V}/\sR_{2})\otimes(\sR_{V}/\sR_{2})^{*}(2H_{\Prt_{\rho}}+L_{\Prt_{\rho}})$.
In a similar way to the above calculations, by considering the fibration
$\Prt_{\rho}\to\mP(V)$, we see that \[
H^{\bullet}(\Prt_{\rho},(\sR_{V}/\sR_{2})\otimes(\sR_{V}/\sR_{2})^{*}(2H_{\Prt_{\rho}}+L_{\Prt_{\rho}}))\simeq H^{\bullet}(\Prt_{\rho},\sO_{\Prt_{\rho}}(2H_{\Prt_{\rho}}+L_{\Prt_{\rho}})).\]
The r.h.s. vanishes for $\bullet\not=0$. For $\bullet=0$, we note
that $H^{0}(\Prt_{\rho},\sO_{\Prt_{\rho}}(2H_{\Prt_{\rho}}+L_{\Prt_{\rho}}))$
is isomorphic to $H^{0}(\mP(V),\ft{S}^{2}T(-1)\otimes\sO(-1))$. Then
this vanishes by the Bott theorem \ref{thm:Bott} on $\mP(V)$.

This completes our calculation of $H^{\bullet}(\widetilde{\hcoY},\widetilde{\sC}_{14})$.

\vspace{0.5cm}
 ${\bf (2)}$ $H^{\bullet}(\widetilde{\hcoY},\widetilde{\sC}_{34})\simeq H^{\bullet}(\hcoY_{2},\sC_{34}(1))\simeq0$.

\noindent \;

\;

For $\sC_{34}(1)=\Lrho_{\hcoY_{2}}^{\;*}\overline{\sC}_{34}(1)$,
we have \[
\sC_{34}(1)=\Lrho_{\hcoY_{2}}^{\;*}(\sQ^{*})(M_{\hcoY_{2}})\simeq\Lrho_{\hcoY_{2}}^{\;*}(\sQ^{*}(\det\sQ-L_{\hcoY_{3}}))(-F_{\rho}).\]
 by Proposition \ref{cla:M}. Since the following calculations proceed
exactly in the same ways as above, we only sketch them.

First, we have $H^{\bullet}(\hcoY_{2},\Lrho_{\hcoY_{2}}^{\;*}(\sQ^{*}(\det\sQ-L_{\hcoY_{3}})))\simeq H^{\bullet}(\hcoY_{3},\sQ^{*}(\det\sQ-L_{\hcoY_{3}}))$,
and then evaluate this by the Bott theorem to be $H^{\bullet}(\mP(V),\Sigma^{(1,1,0,0,0,0)}T(-1)^{\wedge2}\otimes\sO(-1))$.
We use the plethysm $\Sigma^{(1,1,0,0,0,0)}T(-1)^{\wedge2}\simeq\Sigma^{(2,1,1,0)}T(-1)$.
Then we evaluate $H^{\bullet}(\mP(V),\Sigma^{(1,1,0,0,0,0)}T(-1)^{\wedge2}\otimes\sO(-1))\simeq H^{\bullet}(\mP(V),\Sigma^{1,0,0,-1}T(-1))=0$
by the Bott theorem.

Second, we note the isomorphism \[
H^{\bullet}(F_{\rho},({\Lrho_{\hcoY_{2}}|_{F_{\rho}}})^{*}(\sQ^{*}(\det\sQ-L_{\hcoY_{3}})|_{\Prt_{\rho}}))\simeq H^{\bullet}(\Prt_{\rho},\sQ^{*}(\det\sQ-L_{\hcoY_{3}})|_{\Prt_{\rho}}).\]
 By Proposition \ref{cla:det}, we have now $\sQ^{*}(\det\sQ-L_{\hcoY_{3}})|_{\Prt_{\rho}}\simeq(\sR_{V}/\sR_{2})(H_{\Prt_{\rho}})$.
Then we see that $H^{\bullet}(\Prt_{\rho},\sQ^{*}(\det\sQ-L_{\hcoY_{3}})|_{\Prt_{\rho}})\simeq H^{\bullet}(\mP(V),\Sigma^{(1,0,0,-1)}\Omega(1))$,
which vanish for all $\bullet$.

This completes our calculation of $H^{\bullet}(\widetilde{\hcoY},\widetilde{\sC}_{34})$.

\vspace{0.5cm}
 ${\bf (3)}$ $H^{\bullet}(\widetilde{\hcoY},\widetilde{\sC}_{24})\simeq H^{\bullet}(\hcoY_{2},\sC_{24}(1))\simeq H^{0}(\hcoY_{2},\sC_{24}(1))\simeq V$.

\noindent \;

\;

Finally, for $\sC_{24}(1)=\sT_{2}\otimes\Lrho_{\hcoY_{2}}^{\;*}\sQ^{*}(M_{\hcoY_{2}})$,
we consider the following exact sequence which we derive from (\ref{eq:Eb}):
\[
\begin{aligned}0\to\Lpi_{\hcoY_{2}}^{\;*}T(-1)\otimes & \Lrho_{\hcoY_{2}}^{\;*}{\sQ}^{*}({M}_{\hcoY_{2}})\to\sT_{2}\otimes\Lrho_{\hcoY_{2}}^{\;*}{\sQ}^{*}({M}_{\hcoY_{2}})\\
 & \to(\Lrho_{\hcoY_{2}}|_{F_{\rho}})^{*}(\sR_{2}/\sR_{1})^{*}\otimes\Lrho_{\hcoY_{2}}^{\;*}{\sQ}^{*}({M}_{\hcoY_{2}}+F_{\rho})|_{F_{\rho}}\to0.\end{aligned}
\]
 Then for our purpose it suffices to compute $H^{\bullet}(\hcoY_{2},\Lpi_{\hcoY_{2}}^{\;*}T(-1)\otimes\Lrho_{\hcoY_{2}}^{\;*}{\sQ}^{*}({M}_{\hcoY_{2}}))$
and also $H^{\bullet}(F_{\rho},(\Lrho_{\hcoY_{2}}|_{F_{\rho}})^{*}(\sR_{2}/\sR_{1})\otimes\Lrho_{\hcoY_{2}}^{\;*}{\sQ}^{*}({M}_{\hcoY_{2}}+F_{\rho})|_{F_{\rho}})$.
We can compute the former in a similar way to the above two cases,
and we see that they all vanish. Using (1) and (2) of Propositions~\ref{cla:det},
\ref{cla:M}, we see that the latter cohomologies are isomorphic to
$H^{\bullet}(\Prt_{\rho},(\sR_{V}/\sR_{2}))$. Now, from the defining
exact sequence (\ref{eq:exact-sequence-Rv}) of $\sR_{V}/\sR_{2}$,
we obtain $H^{\bullet}(\Prt_{\rho},(\sR_{V}/\sR_{2}))\simeq H^{\bullet}(\mP(V),T(-1))$,
which vanish except $H^{0}(\mP(V),T(-1))\simeq V$.

This completes our calculation of $H^{\bullet}(\widetilde{\hcoY},\widetilde{\sC}_{24})$.

\vspace{0.3cm}
 Now our calculations of the cohomology groups (\ref{eq:lastvan})
and the cases (1)-(3) above complete our proof of Theorem \ref{thm:Gvan}.

\newpage{}

\appendix
%dummy comment inserted by tex2lyx to ensure that this paragraph is not empty

\section{{The {}``double spin'' coordinates of $\mathrm{G}(3,6)$ } }

In this appendix, we set $V_{4}=\mathbb{C}^{4}$ with the standard
basis. We can write the irreducible decomposition (\ref{eq:spin})
as \[
\wedge^{3}(\wedge^{2}V_{4})=\Sigma^{(3,1,1,1)}V_{4}\,\oplus\,\Sigma^{(2,2,2,0)}V_{4}\simeq\mathsf{S}^{2}V_{4}\,\oplus\,\mathsf{S}^{2}V_{4}^{*},\]
 where $\Sigma^{\beta}$ is the Schur functor. We define the projective
space $\mathbb{P}(\wedge^{3}(\wedge^{2}V_{4}))=\mathbb{P}(\mathsf{S}^{2}V_{4}\,\oplus\,\mathsf{S}^{2}V_{4}^{*})$.
The homogeneous coordinate of $\mathbb{P}(\mathsf{S}^{2}V_{4}\,\oplus\,\mathsf{S}^{2}V_{4}^{*})$
is naturally introduced by $[v_{ij},w_{kl}]$, where $v_{ij}$ and
$w_{kl}$ are entries of $4\times4$ symmetric matrices. Let $\mathcal{I}=\left\{ \{i,j\}\mid1\leq i<j\leq4\right\} $
the index set to write the standard basis of $\wedge^{2}V_{4}$, then
the homogeneous coordinate of $\mathbb{P}(\wedge^{3}(\wedge^{2}V_{4}))$
is naturally given by the $[p_{IJK}]$ where $p_{IJK}$ is totally
anti-symmetric for the indices $I,J,K\in\mathcal{I}.$ These two coordinates
are related by the above irreducible decomposition. Focusing the different
symmetry properties of the Schur functors, it is rather straightforward
to decompose $p_{IJK}$ into the two components. When we use the signature
function defined by ${\bf e}_{i_{1}}\wedge{\bf e}_{i_{2}}\wedge{\bf e}_{i_{3}}\wedge{\bf e}_{i_{4}}=\epsilon^{i_{1}i_{2}i_{3}i_{4}}{\bf e}_{1}\wedge{\bf e}_{2}\wedge{\bf e}_{3}\wedge{\bf e}_{4}$
with a basis ${{\bf e}_{1},..,{\bf e}_{4}}$ of $V_{4}$, they are
given by \begin{equation}
v_{ij}=\frac{1}{6}\sum_{k,l,m,n}\epsilon^{klmn}p_{[ik][jl][mn]},\quad w_{kl}=\frac{1}{6}\sum_{a,b,c}\sum_{m,n,q}\epsilon^{kabc}\epsilon^{lmnq}p_{[am][bn][cq]},\label{eq:vw-general-fromula}\end{equation}
where the square brackets in $p_{[ij][kl][mn]}$ represents the anti-symmetric
extensions of the indices, i.e., $p_{[ij][J][K]}=p_{\{ij\}[J][K]}$
for $i<j$ while $p_{[ij][J][K]}=-p_{\{ji\}[J][K]}$ for $i\geq j$.
For convenience, we write them in the following (symmetric) matrices:
\begin{equation}
\begin{aligned}v=(v_{ij})=\left(\begin{matrix}2p_{{\bf 124}} & p_{{\bf 134}}+p_{{\bf 125}} & p_{{\bf 234}}+p_{{\bf 126}} & p_{{\bf 146}}-p_{{\bf 245}}\\
 & 2p_{{\bf 135}} & p_{{\bf 235}}+p_{{\bf 136}} & p_{{\bf 156}}-p_{{\bf 345}}\\
 &  & 2p_{{\bf 236}} & p_{{\bf 256}}-p_{{\bf 346}}\\
 &  &  & 2p_{{\bf 456}}\end{matrix}\right),\\
w=(w_{kl})=\left(\begin{matrix}2p_{{\bf 356}} & -p_{{\bf 346}}-p_{{\bf 256}} & p_{{\bf 345}}+p_{{\bf 156}} & p_{{\bf 235}}-p_{{\bf 136}}\\
 & 2p_{{\bf 246}} & -p_{{\bf 245}}-p_{{\bf 146}} & p_{{\bf 126}}-p_{{\bf 234}}\\
 &  & 2p_{{\bf 145}} & p_{{\bf 134}}-p_{{\bf 125}}\\
 &  &  & 2p_{{\bf 123}}\end{matrix}\right),\end{aligned}
\label{eq:vw-plucker}\end{equation}
 where we ordered the index set $\mathcal{I}$ as $\{{\bf 1},{\bf 2},...,{\bf 6}\}=\{\{1,2\},\{1,3\},\{2,3\},\{1,4\},$
$\{2,4\},$ $\{3,4\}\}$. Inverting the relations (\ref{eq:vw-plucker}),
we can write the Pl\"ucker relations among $p_{IJK}$ in terms of
the entries of $v$ and $w$. After some algebra, we find: \vspace{0.3cm}
 \textbf{Proposition A.1} \textit{The Pl\"ucker ideal $I_{G}$ of
$\mathrm{G}(3,6)\subset\mathbb{P}(\wedge^{3}(\wedge^{2}V_{4}))$ is
generated by \begin{equation}
\begin{aligned}|v_{IJ}|-\epsilon_{I\check{I}}\epsilon_{J\check{J}}|w_{\check{I}\check{J}}|\qquad(I,J\in\mathcal{I}),\qquad\qquad\\
(v.w)_{ij},\;\;(v.w)_{ii}-(v.w)_{jj}\;\;(i\not=j,1\leq i,j\leq4),\end{aligned}
\label{eq:Ivw}\end{equation}
 where $\check{I}$ represents the complement of $I$, i.e., $x\in\mathcal{I}$
such that $x\cup I=\{1,2,3,4\}$ and similarly for $\check{J}$. $|v_{IJ}|$
and $|w_{IJ}|$ represent the $2\times2$ minors of $v$ and $w$,
respectively, with the lows and columns specified by $I$ and $J$.
$\epsilon_{I\check{I}}$ is the signature of the permutation of the
'ordered' union $I\cup\check{I}$. $(v.w)_{ij}$ is the $ij$-entry
of the matrix multiplication $v.w$. }

\noindent \vspace{0.5cm}

For all $[v,w]\in V(I_{G})\simeq\mathrm{G}(3,6)$, we show the following
relations (I.1)-(I.5):

\vspace{0.2cm}

\noindent \textbf{(I.1)} $\det\, v=\det\, w$.

By the Laplace expansion of the determinant of $4\times4$ matrix
$v$, we have $\det\, v=\sum_{J\in\mathcal{I}}\epsilon_{J\check{J}}|v_{IJ}||v_{\check{I}\check{J}}|$.
Then, using the first relations of (\ref{eq:Ivw}), we obtain the
equality.

\vspace{0.2cm}

\noindent \textbf{(I.2)} $v.w=\pm\sqrt{\det\, w}\, id_{4}$, where
$id_{4}$ is the $4\times4$ identity matrix.

Note that the second line of (\ref{eq:Ivw}) may be written in a matrix
form $v.w=d\, id_{4}$ with $d=(v.w)_{11}=\cdots=(v.w)_{44}$. Then,
by {(I.1)}, we have $\det v\cdot w=(\det\, w)^{2}=d^{4}$ and hence
$d^{4}-(\det\, w)^{2}=(d^{2}-\det\, w)(d^{2}+\det\, w)=0$. We consider
a special case; $v=a\, id_{4}$, $w=a\, id_{4}$. Then $d=(v.w)_{11}=a^{2}$.
Therefore $d^{2}=a^{4}=\det\, w$ must holds for all since $V(I_{G})\simeq\mathrm{G}(3,6)$
is irreducible. Hence $d=\pm\sqrt{\det\, w}$ as claimed.

\vspace{0.2cm}

\noindent \textbf{(I.3)} ${\rm rk}\, w\not=3$ and also ${\rm rk}\, v\not=3$.

Assume ${\rm rk}\, w=3$, then from (I.2) we have $v.w=0$, which
implies ${\rm rk}\, v\leq1$. However, this contradicts the first
relations of (\ref{eq:Ivw}). Hence ${\rm rk}\, w\not=3$. By symmetry,
we also have ${\rm rk}\, v\not=3$.

\vspace{0.2cm}

\noindent \textbf{(I.4)} ${\rm rk}\, w=2\Leftrightarrow{\rm rk}\, v=2$.

When ${\rm rk}\, w=2$, we see ${\rm rk}\, v\geq2$ by the first relations
of (\ref{eq:Ivw}). From (I.1) and (I.3), we must have ${\rm rk}\, v=2$.
The converse follows in the same way.

\vspace{0.2cm}

\noindent \textbf{(I.5)} ${\rm rk}\, w\leq1\Leftrightarrow{\rm rk}\, v\leq1$.

This is immediate from the the first relations of (\ref{eq:Ivw}).

\vspace{1cm}
 %\begin{thebibliography}{Muk04}

%\bibitem[AM]{AM}
%M.~Artin and D.~Mumford, 
%\emph{Some elementary examples of unirational varieties which 
%are not rational}, 
%Proc. London Math. Soc. (3) {\bf{25}} (1972), 75--95. 

%\bibitem[BK]{BK}
%V.~Batyrev, and M.~Kreuzer, 
%\emph{Integral cohomology and mirror symmetry for Calabi-Yau 3-folds},
%Mirror symmetry. V,  255--270, AMS/IP Stud. Adv. Math., 38, Amer. Math. Soc., 
%Providence, RI, 2006.

$\;$

$\;$

\noindent {\footnotesize  Graduate School of Mathematical Sciences, University of Tokyo \hfill\break  Megro-ku, Tokyo 153-8914, Japan }

\noindent{\footnotesize e-mail addresses: hosono@ms.u-tokyo.ac.jp, takagi@ms.u-tokyo.ac.jp} 

\begin{thebibliography}{35}
\bibitem{BC} L.~Borisov and A.~Caldararu, \textit{The Pfaffian-Grassmannian
derived equivalence}, J. Algebraic Geom. \textbf{{18}} (2009), no.
2, 201-222.

\bibitem{Bondal} A.~Bondal, \textit{Representations of associate
algebras ans coherent sheaves}, Math. USSR. Izvestiya \textbf{34}
(1990), no. 1, 23--42.

\bibitem{BO} A.~Bondal and D.~Orlov, \textit{Semiorthogonal decomposition
for algebraic varieties}, arXiv:alg-geom/9506012

\bibitem{Bo} R.~Bott, \textit{Homogeneous vector bundles}, Ann.
of Math. (2) \textbf{66} (1957), 203--248


%\bibitem{B}
%T.~Bridgeland, 
%{\it Equivalences of triangulated categories and Fourier-Mukai  transforms},
%Bull. London Math. Soc. {\textbf{31}} (1999),  no. 1, 25--34.
{\small \par}

\bibitem{Ch} P.~E.~Chaput, \textit{Scorza varieties and Jordan
algebras}, Indag. Math. (N.S.) {\textbf{14}} (2003), no.~2, 169--182.

\bibitem{Co} F.~Cossec, \textit{Reye congruences}, Trans. Amer.
Math. Soc. \textbf{280} (1983), 737--751.

\bibitem{CR} A.~Corti and M.~Reid, \textit{Weighted Grassmannians},
Algebraic geometry, 141--163, de Gruyter, Berlin, 2002.

\bibitem{D} M.~Demazure, \textit{A very simple proof of Bott's theorem},
Invent. Math. \textbf{33} (1976), no. 3, 271--272.

\bibitem{Fuj} O.~Fujino, \textit{Applications of Kawamata's positivity
theorem}, Proc. Japan Acad. Ser. A Math. Sci. \textbf{75} (1999),
no. 6, 75--79.

\bibitem{Fujita} T.~Fujita, \textit{Classification theories of polarized
varieties}, London Mathematical Society Lecture Note Series, \textbf{155},
Cambridge University Press, Cambridge, 1990. xiv+205 pp.

\bibitem{Ful} W.~Fulton, \textit{Intersection theory}, Ergebnisse
der Mathematik und ihrer Grenzgebiete (3), {\textbf{2}}. Springer-Verlag,
Berlin, 1984. xi+470 pp.

\bibitem{FH} W. Fulton and J. Harris, \textit{Representation Theory,
a first course}, GTM 129, Springer-Verlag 1991.

\bibitem{GKZ} I.~M.~Gel'fand, A.~V.~Zelevinski, and M.~M.~Kapranov,
\textit{Discriminants, Resultants and Multidimensional Determinants},
Birkh\"auser Boston, 1994.


%\bibitem{Gr}
%A.~Grothendieck, 
%{\it Cohomologie locale des faisceaux coh\'erents et 
%th\'eor\`emes de Lefschetz locaux et globaux $(SGA$ $2)$}.
{\small \par}

\bibitem{M2}D. R. Grayson and M. E. Stillman, Macaulay2, a software
system for research in algebraic geometry, Available at http://www.math.uiuc.edu/Macaulay2/. 

\bibitem{Ha} R.~Hartshorne, \textit{Algebraic Geometry}, GTM \textbf{52},
Springer-Verlag, New York-Heidelberg, 1977. xvi+496 pp.

\bibitem{HoTa1} S.~Hosono and H.~Takagi, \textit{Mirror symmetry
and projective geometry of Reye congruences I}, arXiv:1101.2746, to
appear in Journal of Algebraic Geometry.

\bibitem{HoTa2} S.~Hosono and H.~Takagi, \textit{Determinantal
quintics and mirror symmetry of Reye congruences}, preprint, arXiv:1208.1813,
submitted.

\bibitem{HoTa3} S.~Hosono and H.~Takagi, \textit{Double quintic
symmetroids, Reye congruences, and their derived equivalence}, preprint,
arXiv:1302.5883, submitted.


%\bibitem[HoTa4]{HoTa4}
%\bysame,
%{\it Derived categories of
%Reye congruence Enriques surfaces and
%Artin-Mumford double solids}, preprint.
{\small \par}

\bibitem{HoTa4} S.~Hosono and H.~Takagi, \textit{On homological
projective duality for $\ft{S}^{2}\mP^{n}$}, in preparation. %\bibitem[HoTa2]{HoTa2}
%\bysame, 
%{\it Mirror symmetry and projective geometry of Reye congruences III--
%Homological projective duality--}, in preparation.


\bibitem{Huy} D.~Huybrechts, \textit{Fourier-Mukai Transforms in
Algebraic Geometry}, Oxford Mathematical Monographs, Oxford 2006.


%\bibitem{IK}
%C.~Ingalls and A.~Kuznetsov, 
%\textit{On nodal Enriques surfaces and quadric double 
%solids}, math.AG/arXive:1012.3530.
{\small \par}

\bibitem{IM} A.~Iliev and L.~Manivel, \textit{Fano manifolds of
degree ten and EPW sextics}, Annales scientifiques de l'Ecole Normale
Sup\'erieure \textbf{44} (2011), 393--426.

\bibitem{Ka} Y.~Kawamata, \textit{Small contractions of four-dimensional
algebraic manifolds}, Math. Ann. 284 (1989), no. 4, 595--600.

\bibitem{KMM} Y.~Kawamata, K.~Matsuda, and K.~Matsuki, \textit{Introduction
to the minimal model problem.} Algebraic geometry, Sendai, 1985, 283--360,
Adv. Stud. Pure Math., \textbf{10}, North-Holland, Amsterdam, 1987.

\bibitem{HypSec} A.~Kuznetsov, \textit{Hyperplane sections and derived
categories}, Izv. Ross. Akad. Nauk Ser. Mat. 70 (2006), no. 3, 23--128;
translation in Izv. Math. 70 (2006), no. 3, 447--547.

\bibitem{HPD1} A.~Kuznetsov, {\textit{Homological projective duality}},
Publ. Math. Inst. Hautes \'Etudes Sci. No. 105 (2007), 157--220.

\bibitem{Lef} A.~Kuznetsov, \textit{Lefschetz decompositions and
categorical resolutions of singularities}, Selecta Math. (N.S.) \textbf{13}
(2008), no.~4, 661--696.

\bibitem{KuzGrIsotropic} A.~Kuznetsov, \textit{Exceptional collections
for Grassmannians of isotropic lines}, Proc. London Math. Soc. (3)
\textbf{97 }(2008) 155--182.


%\bibitem{Ku1}%\bysame,%{\it Derived categories of quadric fibrations and %intersections of quadrics.},  %Adv. Math. \textbf{218} (2008) no.~5, 1340--1369. 
{\small \par}

\bibitem{HPD2} A.~Kuznetsov, \textit{Homological projective duality
for Grassmannians of lines}, arXiv:math/0610957.


%\bibitem{Ku5}
%\bysame, 
%\textit{
%Scheme of lines on a family of $2$-dimensional quadrics: 
%geometry and derived category}, math.AG/arXiv:1011.4146.
{\small \par}

%\bibitem{M}
%H.~Matsumura,
%\textit{Commutative ring theory},
%Translated from the Japanese by M. Reid. Second edition. 
%Cambridge Studies in Advanced Mathematics, 
%{\bf 8}. Cambridge University Press, Cambridge, 1989. xiv+320
{\small \par}

\bibitem{Mu} S.~Mukai, \textit{ New developments in the theory of
Fano threefolds: vector bundle method and moduli problems}, Sugaku
Expositions 15 (2002), no. 2, 125150.

\bibitem{Ol} C.~Oliva, \textit{Algebraic cycles and Hodge theory
on generalized Reye congruences}, Compositio Math. \textbf{92}(1994),
1--22.


%\bibitem{Or}
%D.~Orlov,
%{\it Projective bundles, monoidal transformations, and derived categories of 
%coherent sheaves},
%Russian Academy of Sciences. Izvestiya Mathematics, {\bf 41:1}, 133--141
%(1993) 
{\small \par}

\bibitem{Sa} A.~Samokhin, \textit{Some remarks on the derived categories
of coherent sheaves on homogeneous spaces}, J. Lond. Math. Soc., II.
Ser. \textbf{76}, no.~1, 122-134 (2007),

\bibitem{Tk} H.~Takagi, \textit{Classification of primary $\mQ$-Fano
threefolds with anti-canonical Du Val $K3$ surfaces. I}, J. Algebraic
Geom. \textbf{15} (2006), no. 1, 31--85.


%\bibitem{Ty}
%S.~Takayama, 
%{\it On the fundamental groups associated to contractions of extremal rays},
%J. Algebraic Geom. {\bf 10} (2001), no. 4, 713--724.
{\small \par}

\bibitem{Tj} A.~N.~Tjurin, \textit{On intersections of quadrics},
Russian Math. Surveys \textbf{30} (1975), 51--105.

\bibitem{Z} F.~L.~Zak, \textit{Tangents and secants of algebraic
varieties}, Translations of Mathematical Monographs, {\textbf{1}27}.
American Mathematical Society, Providence, RI, 1993. viii+164 pp.

\bibitem{W} J.~Weyman, \textit{Cohomology of vector bundles and
}, Cambridge Tracts in Mathematics, \textbf{149}. Cambridge University
Press, Cambridge, 2003. xiv+371 pp. 

\end{thebibliography}
\end{document}